\DeclareSymbolFont{largesymbols}{OMX}{zplm}{m}{n} 
\newtheorem{thm}{Theorem}[section]
\newtheorem{cor}[thm]{Corollary}
\newtheorem{lemma}[thm]{Lemma}
\newtheorem{prop}[thm]{Proposition}
\theoremstyle{definition}
\newtheorem{defin}[thm]{Definition}
\newtheorem{remark}[thm]{Remark}
\newtheorem{example}{Example} 
\Crefname{thm}{Theorem}{Theorems}
\Crefname{prop}{Proposition}{Propositions}
\Crefname{lemma}{Lemma}{Lemmas}
\Crefname{cor}{Corollary}{Corollaries}
\Crefname{defin}{Definition}{Definitions}
\numberwithin{equation}{section}
\newcommand{\cW}{\VOA{W}}  
\newcommand{\alg}[1]{\mathfrak{#1}}  
\newcommand{\Mod}[1]{\mathsf{#1}}    
\newcommand{\VOA}[1]{\mathsf{#1}}    
\newcommand{\grp}[1]{\mathcal{#1}}   
\newcommand{\fld}[1]{\mathbb{#1}}    
\newcommand{\categ}[1]{\mathscr{#1}} 
\newcommand{\set}[1]{\left\{ #1 \right\}}
\newcommand{\st}{\mspace{3mu} : \mspace{3mu}} 
\newcommand{\bilin}[2]{\left\langle #1 , #2 \right\rangle}
\newcommand{\abs}[1]{\left\lvert #1 \right\rvert}
\newcommand{\norm}[1]{\left\lVert #1 \right\rVert}
\newcommand{\NN}{\fld{N}}
\newcommand{\ZZ}{\fld{Z}}
\newcommand{\QQ}{\fld{Q}}
\newcommand{\RR}{\fld{R}}
\newcommand{\CC}{\fld{C}}
\DeclareMathOperator{\End}{End}
\DeclareMathOperator{\id}{id}
\DeclareMathOperator{\ptr}{ptr}
\newcommand{\voa}{vertex operator algebra}
\newcommand{\voas}{vertex operator algebras}
\newcommand{\vosa}{vertex operator subalgebra}
\newcommand{\vosas}{vertex operator subalgebras}
\newcommand{\opes}{operator product expansions}
\newcommand{\hw}{highest-weight}
\newcommand{\hwv}{\hw{} vector}
\newcommand{\hwvs}{\hw{} vectors}
\newcommand{\hwms}{\hw{} modules}
\newcommand{\fus}[1]{\mathbin{\boxtimes_{#1}}} 
\newcommand{\fusco}[3]{\binom{#3}{#1\:#2}}     
\newcommand{\intw}{\mathcal{Y}}                
\newcommand{\Com}[2]{\operatorname{Com} ( #1 , #2 )} 
\newcommand{\sfaut}{\sigma}                          
\newcommand{\sfmod}[2]{\sfaut^{#1} (#2)}             
\newcommand{\affine}[1]{\widehat{#1}}
\newcommand{\SLA}[2]{\alg{#1}_{#2}}                             
\newcommand{\SLSAp}[3]{\alg{#1}(#2 \vert #3)}                   
\newcommand{\AKMA}[2]{\affine{\alg{#1}}_{#2}}                   
\newcommand{\MinMod}[2]{\VOA{M} \bigl( #1 , #2 \bigr)}          
\newcommand{\UAffVOA}[2]{\VOA{V}_{#1}(#2)}                      
\newcommand{\AffVOA}[2]{\VOA{L}_{#1}(#2)}                       
\newcommand{\Sing}[1]{\VOA{I}(#1)}                              
\newcommand{\Trip}[1]{\VOA{W}(#1)}                              
\DeclareMathOperator{\tr}{tr}
\newcommand{\traceover}[1]{\tr_{\raisebox{-3pt}{$\scriptstyle #1$}}} 
\newcommand{\chmap}{\mathrm{ch}}
\newcommand{\Gr}[1]{\bigl[ #1 \bigr]}                                
\newcommand{\ch}[1]{\chmap \Gr{#1}}                                  
\newcommand{\fch}[2]{\ch{#1} \bigl( #2 \bigr)}                       
\newcommand{\blank}{{-}} 
\DeclareMathOperator{\rad}{rad}
\DeclareMathOperator{\soc}{soc}
\newcommand{\Ra}{\Rightarrow}
\newcommand{\ra}{\rightarrow}
\newcommand{\lra}{\longrightarrow}
\newcommand{\lsra}{\ensuremath{\relbar\joinrel\twoheadrightarrow}}        
\newcommand{\ses}[3]{0 \ra #1 \ra #2 \ra #3 \ra 0}
\newcommand{\dses}[3]{0 \lra #1 \lra #2 \lra #3 \lra 0}
\newcommand{\drses}[3]{#1 \lra #2 \lra #3 \lra 0}
\newcommand{\vac}{{\mathbf{1}}}
\DeclareMathOperator{\Aut}{Aut}
\DeclareMathOperator{\Id}{Id}
\newcommand{\cY}{\mathcal Y}
\begin{document}

\title{Schur-Weyl Duality for Heisenberg Cosets}

\author{Thomas Creutzig}
\address{
	Department of Mathematical and Statistical Sciences \\
	University of Alberta\\
	Edmonton, Canada T6G 2G1
}
\email{creutzig@ualberta.ca}
\thanks{T.C. is supported by NSERC discovery grant  \#RES0020460}

\author{Shashank Kanade}
\address{
	Department of Mathematical and Statistical Sciences \\
	University of Alberta\\
	Edmonton, Canada T6G 2G1
}
\email{kanade@ualberta.ca}
\thanks{S.K. is supported by PIMS post-doctoral fellowship}

\author{Andrew R. Linshaw}
\address{
	Department of Mathematics \\
	University of Denver \\
	Denver, USA, 80208}
\email{andrew.linshaw@du.edu}
\thanks{A.R.L. is supported by Simons Foundation Grant \#318755}

\author{David Ridout}
\address{
School of Mathematics and Statistics \\
The University of Melbourne \\
Parkville, Australia, 3010}
\email{david.ridout@unimelb.edu.au}
\thanks{D.R.'s research is supported by the Australian Research Council Discovery Projects DP1093910 and DP160101520}

\subjclass{Primary 17B69; Secondary 13A50}


\begin{abstract}
Let $\VOA{V}$ be a simple vertex operator algebra containing a rank $n$ Heisenberg vertex algebra $\VOA{H}$ and let $\VOA{C}=\text{Com}\left( \VOA{H}, \VOA{V}\right)$ be the coset of $\VOA{H}$ in $\VOA{V}$. Assuming that the representation categories of interest are vertex tensor categories in the sense of Huang, Lepowsky and Zhang, a Schur-Weyl type duality for both simple and indecomposable but reducible modules is proven. Families of vertex algebra extensions of $\VOA{C}$ are found and every simple $\VOA{C}$-module is shown to be contained in at least one $\VOA{V}$-module. A corollary of this is that if $\VOA{V}$ is rational and $C_2$-cofinite and  CFT-type, and $\text{Com}\left( \VOA{C}, \VOA{V}\right)$ is a rational lattice vertex operator algebra, then so is $\VOA{C}$. These results are illustrated with many examples and the $C_1$-cofiniteness of certain interesting classes of modules is established.
\end{abstract}

\maketitle

\onehalfspacing 

\section{Introduction}

Let $\VOA{V}$ be a 
\voa{}.\footnote{We mention that much of this discussion generalises immediately to vertex operator superalgebras.  However, we shall generally state results for \voas{} for simplicity, leaving explicit mention of the super-case to exceptions and examples.}  If $\grp{G}$ is a subgroup of the automorphism group of $\VOA{V}$, then the invariants $\VOA{V}^{\grp{G}}$ form a \vosa{} called the $\grp{G}$-orbifold of $\VOA{V}$. If $\VOA{W}$ is any \vosa{} of $\VOA{V}$, then the $\VOA{W}$-coset of $\VOA{V}$ is the commutant $\VOA{C}=\Com{\VOA{W}}{\VOA{V}}$. Both the orbifold and coset constructions provide a way to construct new \voas{} from known ones. Unfortunately, few general results concerning the structure of the resulting \vosas{} are known, but it is believed that many nice properties of $\VOA{V}$ are inherited by its orbifolds and cosets.  We remark that while most of the literature is primarily concerned with completely reducible representations of \voas{}, we are also interested in the logarithmic case in which the \voa{} admits indecomposable but reducible representations.

We begin by recalling some important results in the invariant theory of \voas{} that are connected to the questions that we address in this work.

\subsection{From classical to vertex-algebraic invariant theory}

It is valuable to view invariant-theoretic results about \voas{} as generalizations of the classical results, \`a la Howe and Weyl \cite{H, W}, concerning Lie algebras and groups. For example, a well-known result of Dong, Li and Mason \cite{DLM} amounts to a type of Schur-Weyl duality for orbifolds, stating that for a simple \voa{} $\VOA{V}$ and a compact subgroup $\grp{G}$ of $\Aut \VOA{V}$ (acting continuously and faithfully), the following decomposition holds as a $\grp{G} \times \VOA{V}^{\grp{G}}$-module:
\begin{equation}
	\VOA{V}= \bigoplus_{\lambda} \lambda \otimes \Mod{V_\lambda}.
\end{equation}
Here, the sum runs over all the simple $\grp{G}$-modules $\lambda$ and is multiplicity-free in the sense that $\VOA{V}_\lambda \not\cong \VOA{V}_\mu$ if $\lambda\neq \mu$.  They moreover prove that the $\Mod{V}_\lambda$ are simple modules for the orbifold \voa{} $\VOA{V}^{\grp{G}}$. Similar results have also been obtained by Kac and Radul \cite{KR} (see \cref{sec:KR}).

Invariant theory for the classical groups \cite{W} can be used to obtain generators and relations for orbifold \voas{} $\VOA{V}^{\grp{G}}$, provided that $\VOA{V}$ is of free field type (meaning that the only field appearing in the singular terms of the \opes{} of the strong generators is the identity field). 
Interestingly, the relations can be used to show that these \voas{} are strongly finitely generated and, in many cases, explicit minimal strong generating sets can be obtained \cite{L1, L2, L3, L4, L6, CLIII}. Questions concerning cosets are usually more involved than their orbifold counterparts. However, the notion of a deformable family of \voas{} \cite{CLII} can sometimes be used to reduce the identification of a minimal strong generating set for a coset to a known orbifold problem for a free field algebra \cite{CLI}.

It is of course desirable to understand the representation theory of coset \voas{}. An important first question to ask is if there is also a Schur-Weyl type duality, as in the orbifold case. Let $\VOA{V}$ be a simple \voa{} that is self-contragredient and let $\VOA{A}, \VOA{B} \subseteq \VOA{V}$ be \vosas{} satisfying
\begin{equation}
	\VOA{A}=\Com{\VOA{B}}{\VOA{V}} \qquad \text{and} \qquad \VOA{B}=\Com{\VOA{A}}{\VOA{V}}.
\end{equation}
Under the further assumption that $\VOA{A}$ and $\VOA{B}$ are both simple, self-contragredient, regular and of CFT-type,
\begin{equation}
	\VOA{V} =\bigoplus_i \Mod{M}_i \otimes \Mod{N}_i
\end{equation}
as an $\VOA{A}\otimes \VOA{B}$-module, where each $\Mod{M}_i$ is a simple $\VOA{A}$-module and each $\Mod{N}_i$ is a simple $\VOA{B}$-module.  
Under further conditions, Lin finds \cite{Lin} that this decomposition is multiplicity-free and the argument relies on knowing that the representation categories of $\VOA{A}$ and $\VOA{B}$ are both semisimple modular tensor categories.

We are aiming for similar results, but generalised to include decompositions of modules that are not necessarily semisimple. Our setup is that $\VOA{V}$ is a simple \voa{} containing a Heisenberg \vosa{} $\VOA{H}$. We then study the commutant $\VOA{C} = \Com{\VOA{H}}{\VOA{V}}$. For this, we assume that $\VOA{C}$ has a module category $\categ{C}$ that is a vertex tensor category in the sense of Huang, Lepowsky and Zhang \cite{HLZ} and that the $\VOA{C}$-modules obtained upon decomposing $\VOA{V}$ as an $\VOA{H} \otimes \VOA{C}$-module belong to $\categ{C}$. In \cref{sec:HLZ}, we summarize some known statements about vertex tensor categories that are relevant for our study.  These statements make it clear that $C_1$-cofiniteness of modules is a key concept.  In \cref{sec:C1}, we establish the $C_1$-cofiniteness of Heisenberg coset modules in two families of examples.

\subsection{Rational parafermion \voas{}}

Heisenberg cosets of rational affine \voas{} are usually called parafermion \voas{}.
They first appeared in the form of the $Z$-algebras discovered by Lepowsky and Wilson in \cite{LW1, LW2, LW3, LW4}, see also \cite{LP}.  In physics, parafermions first appeared in the work of Fateev and Zamolodchikov \cite{FZ} where they were given their standard appellation.  The relation between parafermion \voas{} and $Z$-algebras was subsequently clarified in \cite{DL}.

Parafermions are surely among the best understood coset \voas{} and there has been substantial recent progress towards establishing a complete picture of their properties. Key results include $C_2$-cofiniteness \cite{ALY}, see also \cite{DLY,DW}, and rationality \cite{DR}, using previous results on strong generators \cite{DLWY}.  In principle, strong generators can now also be determined as in \cite{CLI}, where this was detailed for the parafermions related to $\SLA{sl}{3}$. We remark that $C_2$-cofiniteness also follows from a recent result of Miyamoto on orbifold \voas{} \cite{M1}. These powerful results also allow one, for example, to compute fusion coefficients \cite{DW2}.

It has, for some time, been believed that if a simple, rational, $C_2$-cofinite, self-contragredient \voa{} of CFT-type contains a lattice \vosa{} (corresponding to an even positive-definite lattice), then the corresponding coset \voa{} will also be rational. For example, this was recently shown using indirect methods for the rational Bershadsky-Polyakov \voas{} \cite{ACL}. We prove this statement in general (see \cref{thm:rational}).

\subsection{Results}

This work is, at least in part, a continuation of our previous work on simple current extensions of \voas{} \cite{CKL}. In this vein, we start by proving some properties of simple currents (\cref{prop:simplecurrents}), in particular that fusing with a simple current defines an autoequivalence of any suitable category of modules. As further preparation, we also prove (\cref{thm:MiySimpleCurrent}) that if $\VOA{V}$ is simple, $\grp{G}$ is an abelian group of $\VOA{V}$-automorphisms acting semisimply on $\VOA{V}$, and
\begin{equation}\label{eq:orbifolddecomp}
	\VOA{V} = \bigoplus_{\lambda\in\grp{L}\subset\widehat{\grp{G}}} \VOA{V}_\lambda,
\end{equation}
then $\VOA{V}_\lambda$ is a simple current for every $\lambda$ in $\grp{L}$.
The proof essentially amounts to adding details to a very similar result of Miyamoto \cite[Sec.~6]{M2}, \cite{CarMi}.

\subsubsection{Schur-Weyl duality}

We then prove a Schur-Weyl duality for Heisenberg cosets $\VOA{C} = \Com{\VOA{H}}{\VOA{V}}$. 
The set-up is as follows. Let $\VOA{V}$ be a simple \voa, $\VOA{H} \subseteq \VOA{V}$ be a Heisenberg \vosa{} that acts semisimply on $\VOA{V}$, $\VOA{C} = \Mod{C}_0$ be the commutant of $\VOA{H}$ in $\VOA{V}$ and $\grp{L}$ be the lattice of Heisenberg weights of $\VOA{V}$.  Here $\VOA{V}$ is regarded as an $\VOA{H}$-module. 
Then $\VOA{W}=\Com{\VOA{C}}{\VOA{V}}$  is an extension of $\VOA{H}$ by an abelian intertwining algebra. Of course, it might happen that the extension is trivial, that is, $\VOA{H}=\VOA{W}$. 
\cref{eq:orbifolddecomp} translates into
\begin{equation} \label{eq:DecompVFCintro}
	\VOA{V} = \bigoplus_{\lambda \in \grp{L}} \Mod{F}_\lambda \otimes \Mod{C}_\lambda.
\end{equation}
Let $\grp{N}$ be the sublattice of all $\lambda \in \grp{L}$ for which $\Mod{C}_{\lambda} \cong \Mod{C}$. 
\cref{thm:SW-VOA} now says that the abelian group $\grp{L}/\grp{N}$ controls the decomposition of $\VOA{V}$ as a $\VOA{W} \otimes \VOA{C}$-module: 
\begin{equation} \label{eq:DecompVWCintro}
	\VOA{V}= \bigoplus_{[\lambda] \in \grp{L} / \grp{N}} \Mod{W}_{[\lambda]} \otimes \Mod{C}_{[\lambda]}.
\end{equation}
Moreover, 
the $\Mod{C}_{[\lambda]}$, $\lambda \in \grp{L}/\grp{N}$ are simple currents for $\VOA{C}$ whose fusion products include 
$$\Mod{C}_{[\lambda]} \fus{\VOA{C}} \Mod{C}_{[\mu]} = \Mod{C}_{[\lambda + \mu]}.$$
This decomposition is multiplicity free in the sense that $\Mod{C}_{[\lambda]}\not\cong \Mod{C}_{[\mu]}$ if $[\lambda]\neq[\mu]$.	
The \voa
$$\VOA{W} = \bigoplus\limits_{\lambda \in \grp{N}} \Mod{F}_{\lambda}$$ is a simple current extension of $\VOA{H}$ and  the $\Mod{W}_{[\lambda]}$, $[\lambda] \in \grp{L} / \grp{N}$, are simple currents for $\VOA{W}$ with fusion products 
	$\Mod{W}_{[\lambda]} \fus{\VOA{W}} \Mod{W}_{[\mu]} = \Mod{W}_{[\lambda + \mu]}$.

We note that Li has proven \cite{Li} that $\bigoplus\limits_{\lambda \in\grp{L}/\grp{N}} \Mod{C}_{[\lambda]}$ is a generalized vertex algebra.


The main Schur-Weyl duality result is then a similar decomposition for \voa{} modules, see \cref{thm:SW-Mod}.
For this let $\VOA{V}$, $\VOA{H}$, $\VOA{C}$, $\VOA{W}$, $\grp{L}$ and $\grp{N}$ be as above and let $\Mod{M}$ be a 
$\VOA{V}$-module
upon which $\VOA{H}$ acts semisimply.
Then,  $\Mod{M}$ decomposes as
\begin{equation} \label{eq:DecompMintro}
	\Mod{M} = \bigoplus_{\mu \in \grp{M}} \Mod{M}_\mu = \bigoplus_{\mu \in \grp{M}} \Mod{F}_{\mu} \otimes \Mod{D}_{\mu} = \bigoplus_{[\mu] \in \grp{M} / \grp{N}} \Mod{W}_{[\mu]} \otimes \Mod{D}_{[\mu]},
\end{equation}
where $\grp{M}$ is a union of $\grp{L}$-orbits and the $\Mod{D}_{\mu} = \Mod{D}_{[\mu]}$ are $\VOA{C}$-modules satisfying $\Mod{C}_{\lambda} \fus{\VOA{C}} \Mod{D}_{\mu} = \Mod{D}_{\lambda + \mu}$, for all $\lambda \in \grp{L}$ and $\mu \in \grp{M}$.  
Next, in \cref{thm:SW-Mod} we show that
each of the $\Mod{D}_{\mu}$ have the same decomposition structure as that of $\Mod{M}$. One example of this is if $\ses{\Mod{M}'}{\Mod{M}}{\Mod{M}''}$ is exact, with $\Mod{M}'$ and $\Mod{M}''$ non-zero, then $\Mod{M}'$ and $\Mod{M}''$ decompose as in \eqref{eq:DecompMintro}:
	\begin{equation} \label{eq:DecompM'}
		\Mod{M}' = \bigoplus_{\mu \in \grp{M}} \Mod{M}'_\mu = \bigoplus_{\mu \in \grp{M}} \Mod{F}_{\mu} \otimes \Mod{D}'_{\mu}, \qquad
		\Mod{M}'' = \bigoplus_{\mu \in \grp{M}} \Mod{M}''_\mu = \bigoplus_{\mu \in \grp{M}} \Mod{F}_{\mu} \otimes \Mod{D}''_{\mu}.
	\end{equation}
	Moreover, $\ses{\Mod{D}'_{\mu}}{\Mod{D}_{\mu}}{\Mod{D}''_{\mu}}$ is also exact, for all $\mu \in \grp{M}$. 

However, in general, multiplicity-freeness does not hold, for example, the parafermion coset of $\AffVOA{2}{\SLA{sl}{2}}$ 
yields an example of a coset module that appears twice in the decomposition of a simple $\AffVOA{2}{\SLA{sl}{2}}$-module. 
We give three criteria to guarantee that a given decomposition is multiplicity-free. One based on characters, one based on the signature of the lattice $\grp{L}$, and one based on open Hopf link invariants following \cite{CG, CG2}.

\subsubsection{Extensions of \voas{}}

Let $\grp{E}$ be a sublattice of $\grp{L}$. We would like to know if
\begin{equation}
	\VOA{C}_\grp{E} = \bigoplus_{\lambda\in\grp{E}} \Mod{C}_\lambda
\end{equation}
carries the structure of a \voa{} extending that of $\VOA{C} = \Mod{C}_0$. \cref{thm:extension},
which itself follows immediately from \cite{Li}, implies that this is the case provided that
\begin{equation}
	\VOA{W}_\grp{E} =\bigoplus_{\lambda\in\grp{E}} \Mod{W}_\lambda
\end{equation}
is a \voa{}. If $\grp{E}$ is a rank one subgroup, then this conclusion also follows from \cite{CKL}.

\subsubsection{Lifting Modules}


Let $\Mod{D}$ be a ${\VOA{C}}$-module. We would like to know if it lifts to a $\VOA{C}_\grp{E}$-module and also if there exists a $\VOA{H}$-module $\Mod{F}_\beta$ such that $\Mod{F}_\beta\otimes\Mod{D}$ lifts to a $\VOA{V}$-module. 

This question is decided by the monodromy (composition of braidings)
\begin{equation}
	M_{\VOA{C}_\lambda, \Mod{D}} : \Mod{C}_\lambda\fus{}\Mod{D} \rightarrow \Mod{C}_\lambda\fus{}\Mod{D}.
\end{equation}
We have (\cref{thm:lift}):
Let $\Mod{D}$ be a generalized $\VOA{C}$-module that appears as a subquotient
of the fusion product of some finite collection of simple $\VOA{C}$-modules.
Let $\grp{L}'$ be the dual lattice of $\grp{L}$ and let $U=\grp{L} \otimes_\ZZ \RR$.
Then, there exists $\alpha \in U$ such that
\begin{equation}
	M_{\VOA{C}_\lambda, \Mod{D}} = e^{-2\pi i \langle \alpha, \lambda \rangle} \Id_{\Mod{C}_\lambda\fus{}\Mod{D}}
\end{equation}
and $\Mod{F}_\beta\otimes\Mod{D}$ lifts to a $\VOA{V}$-module if and only if $\beta\in\alpha+\grp{L}'$. Moreover, the lifted module is $\VOA{V}  \fus{\VOA{H}\otimes\VOA{C}} \left(\Mod{F}_\beta\otimes\Mod{D}\right)$.
Note that the lifting problem when all involved \voas{} are regular was treated in \cite{KrMi}.

Further, $\Mod{D}$ lifts to a $\VOA{C}_\grp{E}$-module if and only if $\alpha$ is in a certain lattice associated to $\grp{E}$ (see \cref{cor:liftext}) and every $\VOA{C}_\grp{E}$-module is a quotient of a lifted module (this follows essentially from \cite{Lam}). The lifted module is then $\VOA{C}_\grp{E}\fus{\VOA{C}} \Mod{D}$.

\subsubsection{Rationality}

Miyamoto \cite{M1} has proven that $\VOA{C}$-is $C_2$-cofinite provided $\VOA{W}$ is a lattice \voa{} of a positive definite even lattice and provided $\VOA{V}$ is $C_2$-cofinite.
Together with our ability of controlling modules that lift to $\VOA{V}$-modules and exactness of fusion with simple currents this implies a rationality theorem (\cref{thm:rational}):

Let $\VOA{V}$ be simple, rational, {$C_2$-cofinite} of CFT-type. Then, every grading-restricted generalized $\VOA{C}$ module is completely reducible. 

Especially, we thus have an alternative proof 
of the rationality of the parafermion cosets \cite{DR}, \cite{CarMi} as well as of the Heisenberg cosets of the rational Bershadsky-Polyakov algebras \cite{ACL}.

\subsubsection{Examples}

We illustrate our results with various examples, both rational and non-rational ones, though 
our main interest are applications to \voas{} of logarithmic conformal field theory, that is especially to indecomposable but reducible modules. 
Schur-Weyl duality is exemplified in the well-known rational example of  $\AffVOA{2}{\SLA{sl}{2}}$ (\cref{ex:Parak=2}) and then in much detail in the case of $\AffVOA{-4/3}{\SLA{sl}{2}}$ (\cref{ex:Parak=-4/3}). We especially explain how Schur-Weyl duality works for the projective covers of simple modules. Extensions of the Heisenberg cosets of $\AffVOA{k}{\alg{g}}$ for rational and non-zero $k$ are discussed in
\cref{ex:sl2decompcont}. 
\cref{ex:singletgl11} then deals with the relation via Heisenberg cosets of various archetypical logarithmic \voas, most notably the $\Sing{2}$-singlet algebra  and  $\UAffVOA{k}{\SLSAp{gl}{1}{1}}$. Especially we give the decomposition of the projective indecomposable modules of 
$\UAffVOA{k}{\SLSAp{gl}{1}{1}}$ in terms of projective $\VOA{H}\otimes \VOA{H}\otimes \Sing{2}$-modules. The triplet algebra $\Trip{2}$ is then an example of an extended \voa{} that is $C_2$-cofinite. 
The lifting of modules is  illustrated in \cref{ex:N2} for the modules of the $N=2$ super Virasoro algebra.  
Finally, we use the opportunity to prove that $\AffVOA{-1}{\SLSAp{sl}{m}{n}}$ appears as a Heisenberg coset of appropriate $\beta\gamma$ and $bc$-\voas. This generalizes the case $n=0$ of \cite{AP}. Also the case $m=2$ and $n=0$ is exceptional and identified with a rectangular W-algebra of $\SLA{sl}{4}$.

\subsubsection{On $C_1$-cofiniteness}

Our results rely on the applicability of the vertex tensor theory of \cite{HLZ}. Our belief is that the key criterion for this applicability is the $C_1$-cofiniteness of the modules with finite composition length, see also \cite[Sec.~6]{CMR}.  In \cref{sec:C1}, we prove a few $C_1$-cofiniteness results for modules of Heisenberg cosets of the affine \voas{} of type $\SLA{sl}{2}$ as well as those of the Bershadsky-Polyakov algebras.

\subsubsection{Outlook on fusion}

The main concern of this work is the relationship between the modules of the Heisenberg coset \voa{} $\VOA{C}$ and those of its parent algebra $\VOA{V}$. A valid question is then if there is also a clear relation between the fusion product of the $\VOA{C}$-modules and the corresponding $\VOA{V}$-modules. This question is work in progress and here we announce that one can prove that the induction functor is a tensor functor under appropriate assumptions on the module category \cite{CKM}.  This rigorously establishes the connection between fusion and extended algebras that has been proposed in the physics literature \cite{RidVer14}.

\subsection{Application: Towards new $C_2$-cofinite logarithmic \voas{}}

Presently, there are very few known examples of $C_2$-cofinite non-rational \voas{}; these include the triplet algebras \cite{AM3, TW, TsuExt13} and their close relatives \cite{Ab-SF}.  In order to gain more experience with such logarithmic $C_2$-cofinite \voas{}, new examples are needed.  The main application we have in mind for the work reported here is the construction of new examples of this type.

The idea is a two-step process illustrated as follows:
\[
\VOA{V} \xrightarrow{\quad \VOA{H}\text{-coset}\quad} \VOA{C} \xrightarrow{\quad \text{extension}\quad} \VOA{C}_\grp{E}.
\]
A series of examples that confirms this idea were explored in \cite{CRW}, see also Example \ref{ex:sl2decompcont}. There, the $\Sing{p}$ singlet algebras of Kausch \cite{Ka} were (conjecturally) obtained as Heisenberg cosets of the Feigin-Semikhatov algebras \cite{FS}, see also \cite{GenScr16}.  The extension in the above process is then an infinite order simple current extension and the results \cite{CM1,RidMod13} are the best understood $C_2$-cofinite logarithmic \voas{}, the $\Trip{p}$ triplet algebras.

New examples may be obtained by taking $\VOA{V}$ to be the simple affine \voa{} associated to the simple Lie algebra $\alg{g}$ at admissible, but negative, level $k$ and $\VOA{H}$ to be the Heisenberg \vosa{} generated by the affine fields associated to the Cartan subalgebra of $\alg{g}$.  Here, $\grp{H}$ is a subgroup of $\grp{G}$ of maximal rank.
The module categories of such admissible level affine \voas{} remain quite mysterious despite strong results concerning category $\categ{O}$ \cite{KacMod88,AraRat15}.  Beyond category $\categ{O}$, detailed results are currently only known for $\alg{g}=\SLA{sl}{2}$ \cite{AM1, GabFus01, R1, R2, R3, CR1, CR2, RidRel15} and $\alg{g}=\SLA{sl}{3}$ \cite{AraWei16}. A first feasible task here would be to compute the characters of coset modules appearing in the decomposition of modules in $\categ{O}$. We expect the appearance of Kostant false theta functions \cite{CM2} as they are the natural generalization of ordinary false theta functions that appear in the case of the admissible level parafermion coset of $\AffVOA{k}{\SLA{sl}{2}}$ \cite{ACR}.

In \cite{ACR}, we will study $\VOA{C}_{\grp{E}}$ when $\alg{g}=\SLA{sl}{2}$ and $k$ is negative and admissible. Under the assumption that the tensor theory of Huang-Lepowsky-Zhang applies to $\VOA{C}$, we can prove that there are only finitely many inequivalent simple $\VOA{C}_\grp{E}$-modules. It is thus natural to conjecture that $\VOA{C}_{\grp{E}}$ is $C_2$-cofinite. A consequence of $C_2$-cofiniteness is modularity of characters (supplemented by pseudotrace functions) \cite{M3}. In \cite{ACR}, we can also demonstrate this modularity of characters (plus pseudotraces) for all modules that are lifts of $\VOA{C}$-modules. We will prove the $C_2$-cofiniteness of $\VOA{C}_\grp{E}$, for various choice of $\grp{E}$ in subsequent works. 

A third family of examples that fit this idea concern simple minimal W-algebras in the sense of Kac and Wakimoto \cite{KW}. These are quantum Hamiltonian reductions that are strongly generated by fields in conformal dimension one and $3/2$ together with the Virasoro field. 
For certain levels, these W-algebras have a one-dimensional associated variety and they 
contain a rational affine \vosa{}. The Heisenberg coset of the coset of the minimal $\VOA{W}$-algebra by the rational affine \voa{} thus seems to be another candidate for new $C_2$-cofinite algebras as infinite order simple current extensions. These cosets are explored in \cite{ACKL}.

\subsection{Organization}

We start with a background section. There we review the vertex tensor theory of Huang, Lepowsky and Zhang and especially discuss it in the case of the Heisenberg \voa. Next, we prove various properties of simple currents and then discuss \voa{} orbifolds following Kac and Radul. 
\cref{sec:SWDuals} is then on Schur-Weyl duality. \cref{sec:properties} is concerned with extended algebras, lifting of modules and as a special application proves our rationality theorem. in \cref{sec:betagamma} we give a short proof that $\AffVOA{-1}{\SLSAp{sl}{m}{n}}$ is a Heisenberg coset of appropriate $\beta\gamma$ and $bc$-\voas. In \cref{sec:C1} we prove $C_1$-cofiniteness of modules appearing in Heisenberg cosets of Bershadsky-Polyakov algebras and $\AffVOA{k}{\SLA{sl}{2}}$.

\subsection{Acknowledgments}
T.C. and S.K. would like to thank Yi-Zhi Huang and Robert McRae for helpful discussions regarding vertex tensor categories,
\cite{HLZ}. T.C. also thanks Antun Milas for discussions on the applicability of the theory of vertex tensor categories.

\section{Background}\label{sec:background}

In this section, we give a brief exposition of the results of Huang, Lepowsky and Zhang
regarding the vertex tensor categories that we shall use. We mention the case of Heisenberg
\voas{} separately in detail. Then, we present our new results regarding properties of
simple currents under fusion. After that, we review a {useful} result of Kac and Radul on the simplicity
of orbifold models.

\subsection{Conditions and assumptions regarding {the theory of Huang-Lepowsky-Zhang}} \label{sec:HLZ}
We begin with a quick glossary of the terminology that we shall use.
\begin{itemize}
\item By a \emph{generalized} module of a \voa{}, we shall mean a module that is graded by
generalized eigenvalues of $L_0$. A generalized module need not satisfy
any of the other restrictions mentioned below regarding grading.
For $n\in\CC$ and a generalized module $\Mod{W}$, we let $\Mod{W}_{[n]}$
denote the generalized $L_0$-eigenspace with generalized eigenvalue $n$.
\item A generalized module $\Mod{W}$ is called \emph{lower truncated}
if $\Mod{W}_{[n]}=0$ whenever the real part of $n$ is sufficiently negative.
\item A generalized module $\Mod{W}$ is called \emph{grading-restricted} if it is
lower truncated and if, moreover, for all $n$, $\mathrm{dim}(\Mod{W}_{[n]})<\infty$.
\item A generalized module $\Mod{W}$ is called \emph{strongly graded} if $\mathrm{dim}(\Mod{W}_{[n]})<\infty$ and, 
for each $n\in\CC$, $\Mod{W}_{[n+k]}=0$ for all sufficiently negative
integers $k$.  This notion is slightly more general than that of being grading-restricted.
\item In the definitions above, we shall replace the qualifier ``generalized'' with ``ordinary''
if the module is graded by eigenvalues of $L_0$ as opposed to generalized eigenvalues.
\item Henceforth, by ``module'', without qualifiers, we shall mean 
a grading-restricted generalized module.
{For convenience in the applications to follow, we shall also assume that every \voa{} module is of at most countable dimension.  This implies, of course, that the dimension of all \voas{} will also be at most countable.}
\item We will sometimes need broader analogues of the concepts above, wherein the restrictions
	pertain to doubly-homogeneous spaces with respect to Heisenberg zero modes and $L_0$. 
	The actual statements in \cite{HLZ} pertain to such broader situations.  However, the theorems
	in \cite{H-projcov}, that guarantee that \cite{HLZ} may be applied in specific scenarios, assume the definitions that we have
	recalled above. We {expect} 
	that the theorems and concepts in \cite{H-projcov} {may be generalized to the broader setting we require}.
\end{itemize}

Recall the notion \cite[Def.~3.10]{HLZ} of a \emph{(logarithmic) intertwining operator} among a triple of modules.
When the formal variable in a logarithmic intertwining operator is carefully specialized to a fixed $z\in\CC^\times$,
one gets the notion of a \emph{$P(z)$-intertwining map}, \cite[Def.~4.2]{HLZ}. These maps form the backbone of the logarithmic tensor category theory developed in \cite{HLZ}. There, tensor products (fusion products) of modules are defined via certain universal $P(z)$-intertwining maps $\fus{P(z)}$ and the monoidal structure on the module category is obtained by fixing $z\in\CC^\times$, typically chosen to be $z=1$ for convenience.\footnote{We mention that the same notation is generally used to denote both the fusion product operation and the universal $P(z)$-intertwining map corresponding to said fusion product.} 
We remark that the products $\fus{P(z)}$, for different values of $z$, together form a structure richer than that of a braided monoidal category, called \emph{vertex tensor category}.  This richer structure is exploited in the proofs of many important theorems, see \cite{HKL} for some examples.

For convenience, and especially with a view towards the proof of \cref{prop:FusingTensorProducts} below, we give a definition of the fusion product of two modules, equivalent to that of \cite{HLZ}, using intertwining operators instead of intertwining maps.
\begin{defin}\label{def:fusion}
	Given modules $\Mod{W}_1$ and $\Mod{W}_2$, the \emph{fusion product} $\Mod{W}_1\fus{} \Mod{W}_2$
	is the pair $(\Mod{W}_1\fus{}\Mod{W}_2, {\intw^{\fus{}}})$, where $\Mod{W}_1\fus{} \Mod{W}_2$ is a module
	and $\intw^{\fus{}}$ is an intertwining operator of type $\fusco{\Mod{W}_1}{\Mod{W}_2}{\Mod{W}_1\fus{} \Mod{W}_2}$,
	that satisfies the following universal property:
	Given any other ``test module'' $\Mod{W}$ and an intertwining operator
	$\intw$ of type $\fusco{\Mod{W}_1}{\Mod{W}_2}{\Mod{W}}$,
	there exists a \emph{unique} morphism $\eta:\Mod{W}_1{\fus{}} \Mod{W}_2\rightarrow \Mod{W}$ such that
	$\intw=\eta\circ{\intw^{\fus{}}}$.
\end{defin}
\noindent Note that the universal intertwining operator $\intw^{\fus{}}$ will often be clear from the context and hence
we shall often refer to the fusion product by its underlying module.

Now, let $\VOA{V}$ be a \voa{} and let $\categ{C}$ be a category of generalized $\VOA{V}$-modules that satisfies the following properties:
\begin{enumerate}
	\item \label{it:HLZFirst} $\categ{C}$ is a full abelian subcategory of the category of all strongly graded generalized $\VOA{V}$-modules.
	\item $\categ{C}$ is closed under taking contragredient duals 
	and the $P(z)$-tensor product $\fus{P(z)}$ (recall \cite[Def.~4.15]{HLZ}).
	\item $\VOA{V}$ is itself an object of $\categ{C}$.
	\item For each object $\Mod{W}$ of $\categ{C}$, the (generalized) $L_0$-eigenvalues are real and the size of the Jordan blocks of $L_0$ is 
	bounded above (the bound may depend on $\Mod{W}$).
	\item Assumption~12.2 of \cite{HLZ} holds. \label{it:HLZAssumption}
\end{enumerate}
Then, $\categ{C}$ is a vertex tensor category in the sense of Huang-Lepowsky \cite[Thm.~12.15]{HLZ}. In particular, it is an additive braided monoidal category.  A precise formulation of \ref{it:HLZAssumption} may be found in \cite{HLZ}. In essence, this assumption guarantees the convergence of products and iterates of intertwining operators in a specific class of multivalued analytic functions.  It, moreover, guarantees that products of intertwining operators can be written as iterates and vice versa.

\begin{thm}[\protect{\cite{H-projcov}}]\label{thm:huangC1}
	Let $\VOA{V}$ be a \voa{} satisfying the following conditions:
	\begin{itemize}
		\item $\VOA{V}$ is $C_1^{\,\textup{alg.}}$-cofinite, meaning that the space spanned by
		\[\left\{\mathrm{Res}_z\:z^{-1}Y(u,z)v\,\middle\vert\, u,v \in \VOA{V}_{[n]} \text{ with } n>0	\right\}\cup L_{-1}\VOA{V}\]
		has finite codimension in $\VOA{V}$.
		\item There exists a positive integer $N$ 
		that bounds the differences between the real parts of the lowest conformal weights of the simple $\VOA{V}$-modules and is such that the $N$-th Zhu algebra $A_N(\VOA{V})$ (see \cite{DLM-nthzhu}) is finite-dimensional.
		\item Every simple $\VOA{V}$-module is $\RR$-graded and $C_1$-cofinite. \label{it:simplereal}
	\end{itemize}
	Then, the category of grading-restricted generalized modules of $\VOA{V}$ satisfies the conditions \ref{it:HLZFirst}--\ref{it:HLZAssumption} given above,  hence is a vertex tensor category.
\end{thm}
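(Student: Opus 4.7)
The plan is to verify the five conditions listed immediately before the theorem, whence \cite[Thm.~12.15]{HLZ} supplies the vertex tensor category structure. Conditions (1), (3), and (4) are essentially formal: the category of grading-restricted generalized $\VOA{V}$-modules is tautologically a full subcategory of the strongly graded ones; it contains $\VOA{V}$ itself, since finite-dimensionality of $A_N(\VOA{V})$ together with the uniform bound $N$ on the lowest weights of the simples forces the homogeneous pieces of every object in the category to be finite-dimensional; and the $\RR$-gradedness together with bounded Jordan blocks propagates from the simple modules to every grading-restricted generalized module, because the hypothesis on $A_N(\VOA{V})$ implies that every object has a finite filtration by simple subquotients with lowest weights confined to an interval of length at most $N$. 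Closure under subobjects and quotients in the ambient category is also immediate.

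For condition (2), closure under contragredient duals is direct: the graded pieces of $\Mod{W}'$ are linear duals of those of $\Mod{W}$, and $C_1$-cofiniteness transfers by dualizing the defining spanning set. Closure under $P(z)$-tensor products is the technical core of \cite{H-projcov}. One realizes $\Mod{W}_1\fus{P(z)}\Mod{W}_2$ as a quotient of a space of $P(z)$-intertwining maps; $C_1^{\textup{alg.}}$-cofiniteness of $\VOA{V}$ together with $C_1$-cofiniteness of each $\Mod{W}_i$ produces, modulo derivatives and lower-weight contributions, a finite set of generators in each graded piece, and the hypothesis on $A_N(\VOA{V})$ upgrades this into genuine grading-restriction. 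The essential point is that $C_1$-cofiniteness is inherited by the $P(z)$-tensor product, with a proof parallel to the analogous statement for fusion products over $C_2$-cofinite \voas{}.

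The principal obstacle is condition (5), namely Assumption~12.2 of \cite{HLZ}, which requires convergence of products and iterates of logarithmic intertwining operators and their expressibility as suitable multivalued analytic functions on configuration spaces. The standard route is the differential-equations method: using $C_1$-cofiniteness of $\VOA{V}$ and of the simple modules, one shows that every matrix coefficient of a product $\intw_1(u,z_1)\intw_2(v,z_2)w$ satisfies a finite-order system of ODEs in $z_1,z_2$ whose only singularities occur at $z_1,z_2\in\{0,\infty\}$ and on the diagonal $z_1=z_2$, and that these singularities are regular. Reality of the $L_0$-eigenvalues together with the uniform bound $N$ controls the indicial roots, so the formal power-series solutions actually converge in their natural domains, and analytic continuation of ODE solutions matches products with iterates up to monodromy encoded by Jordan blocks of $L_0$; the bounded Jordan block hypothesis is essential for controlling the resulting logarithmic branches. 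The remaining clauses of Assumption~12.2, such as expressibility of iterates as products and associator-type compatibilities, then follow from this ODE framework combined with the universal property of $\fus{P(z)}$ from Definition~\ref{def:fusion}.
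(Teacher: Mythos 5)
The paper offers no proof of this statement: it is imported wholesale from \cite{H-projcov}, so the only meaningful comparison is with Huang's argument there. At the level of architecture your outline does track that argument --- the formal category conditions, closure under contragredients and $\fus{P(z)}$, and Assumption~12.2 via the differential-equations method --- but what you have written is a road map rather than a proof. Every step carrying actual content is asserted rather than established: that finite-dimensionality of $A_N(\VOA{V})$ together with the bound $N$ forces finite length and finite-dimensional weight spaces for all objects; that the $P(z)$-tensor product of grading-restricted generalized modules is again grading-restricted (the central theorem of \cite{H-projcov}, resting on $C_1$-cofiniteness being closed under extensions and hence holding for every finite-length object); and that matrix coefficients of products of intertwining operators satisfy regular-singular-point ODE systems whose solutions converge and whose analytic continuations identify products with iterates (this is Huang's differential-equations machinery, and it is where the cofiniteness hypotheses really enter). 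As a self-contained proof, the proposal has gaps at precisely the points where the theorem is hard.

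Two specific inaccuracies are worth flagging. First, the category in condition (2) consists of \emph{all} grading-restricted generalized modules, so closure under contragredients requires only that the contragredient of a grading-restricted module is grading-restricted, which is immediate; your claim that ``$C_1$-cofiniteness transfers by dualizing the defining spanning set'' is both unnecessary here and not a valid argument as stated, since $C_1(\Mod{W})$ is defined by modes acting on $\Mod{W}$ and does not dualize termwise. Second, $\Mod{W}_1\fus{P(z)}\Mod{W}_2$ is not realized as a quotient of a space of intertwining maps but as the contragredient of a subspace of $(\Mod{W}_1\otimes\Mod{W}_2)^*$; the substantive issue is showing that this subspace has finite-dimensional doubly homogeneous components and is lower truncated, which is exactly where the $C_1$ and $A_N(\VOA{V})$ hypotheses are consumed.
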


\noindent If $\VOA{V}$ is $C_2$-cofinite, has no states of negative conformal weight, and the space of conformal weight 0 states is spanned by vacuum, then these conditions are satisfied \cite{H-projcov} and so the theory of vertex tensor categories may be applied to the grading-restricted generalized $\VOA{V}$-modules.

As is amply clear from \cref{thm:huangC1}, \cite{M-c1} and \cite[Rem.~12.3]{HLZ}, $C_1$-cofiniteness already takes us a long way towards establishing that a given category of $\VOA{V}$-modules is a vertex tensor category. Our hope is that, in the future, $C_1$-cofiniteness will be, along with other minor conditions (such as conditions on the eigenvalues and Jordan blocks of $L_0$), essentially enough to invoke the theory developed by Huang, Lepowsky and Zhang.
With this hope in mind, we shall prove several useful $C_1$-cofiniteness results in \cref{sec:C1}.

We would also like to remark that there are still many examples of \voas{}, some quite fundamental, which do not meet the known conditions that guarantee the applicability of the vertex tensor theory of \cite{HLZ}.  It is an important problem to analyse the module categories of these examples and bring them ``into the fold'', as it were.  Not only will this make the theory more wide-reaching, but we expect that accommodating these new examples will lead to further crucial insights into the true nature of \voa{} module categories.

\subsection{Vertex tensor categories for the Heisenberg algebra}\label{sec:heisenbergtensor}

For Heisenberg \voas{}, there exist simple modules with non-real
conformal weights and, therefore, one can not invoke \cref{thm:huangC1}.
In this section, we shall deal with general Heisenberg \voas{}, bypassing \cref{thm:huangC1} and
instead relying (mostly) on the results in \cite{DL}.
For related discussions, including self-extensions of 
simple modules (which are not relevant for our purposes), see \cite{M, CMR, Ru}.

We shall verify that a certain semi-simple category $\categ{C}_\RR$
of modules with real conformal weights (see \ref{it:HRCat} below)
is closed under fusion and satisfies the associativity requirements for 
intertwining operators, by invoking results in \cite{DL}.
Once this is done, it is straightforward to verify that $\categ{C}_\RR$
satisfies the assumptions for being vertex tensor category as in \cite[Sec.~12]{HLZ}.

Let $\alg{h}$ be a finite-dimensional abelian Lie algebra over $\CC$, equipped with a
symmetric non-degenerate bilinear form $\langle\cdot,\cdot\rangle$.
We shall identify $\alg{h}$ and its dual $\alg{h}^*$ via this form.
As in \cite[Ch.~6]{LL}, let $\widehat{\alg{h}}$ denote the Heisenberg Lie algebra and
$\VOA{H}$ the corresponding Heisenberg \voa{} (of level $1$, for convenience). 
Given $\alpha\in\alg{h}$, we denote the (simple) Fock module of $\VOA{H}$, with highest weight $\lambda \in \alg{h}$, by $\Mod{F}_{\lambda}$. 
It is known (see \cite{LW2}),
as an algebraic analogue of the Stone-von Neumann theorem, that these simple Fock modules exhaust the
isomorphism classes of the simple $\VOA{H}$-modules.
Let $\categ{C}$ be the semisimple abelian category of $\VOA{H}$ modules generated by these simple $\VOA{H}$-modules and let $\categ{C}_\RR$ be the full subcategory generated by the Fock modules with real highest weights.

\begin{thm}
The subcategory $\categ{C}_\RR$ can be given the structure of a vertex tensor category.
\end{thm}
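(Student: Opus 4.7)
The plan is to verify the five conditions for a vertex tensor category listed in \cref{sec:HLZ}, using the explicit analytic control on intertwining operators among Fock modules provided by \cite{DL}. The fundamental input is that $\bigoplus_\lambda \Mod{F}_\lambda$ carries the structure of a generalized vertex operator algebra, so that between any triple $\Mod{F}_{\lambda_1}, \Mod{F}_{\lambda_2}, \Mod{F}_\nu$ the space of intertwining operators is one-dimensional when $\nu = \lambda_1 + \lambda_2$ and is zero otherwise.

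First I would check the structural conditions. The category $\categ{C}_\RR$ is by construction a full semisimple abelian subcategory of strongly graded $\VOA{H}$-modules, contains $\VOA{H} = \Mod{F}_0$, is closed under contragredient duals (since $\Mod{F}_\lambda^\vee \cong \Mod{F}_{-\lambda}$), and its objects have semisimple $L_0$-action (hence trivial Jordan blocks) with eigenvalues of the form $\tfrac{1}{2}\langle\lambda,\lambda\rangle + \NN$, which are real once one restricts attention to the natural real form of $\alg{h}$ on which the bilinear pairing is real-valued. Closure under the $P(z)$-tensor product follows from the existence and uniqueness statement for intertwining operators recalled above, combined with the universal property in \cref{def:fusion}: it forces
\[
	\Mod{F}_\lambda \fus{P(z)} \Mod{F}_\mu \cong \Mod{F}_{\lambda + \mu},
\]
and the real span of the weights is closed under addition, so $\categ{C}_\RR$ is closed under fusion of simples and, by semisimplicity, under fusion of arbitrary objects.

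The main obstacle is the verification of Assumption~12.2 of \cite{HLZ}: convergence of products and iterates of intertwining operators in a suitable class of multivalued analytic functions, together with the compatibility asserting that products may be rewritten as iterates (and vice versa). Here one reduces to Fock modules, where intertwining operators are given by the explicit ``vertex operators'' constructed in \cite{DL}. Any matrix coefficient of a product $\cY_1(u_1,z_1)\cY_2(u_2,z_2)v$ between Fock modules factors as a prefactor of the form
\[
	z_1^{\langle \lambda_1,\mu\rangle}\, z_2^{\langle \lambda_2,\mu\rangle}\,(z_1-z_2)^{\langle \lambda_1,\lambda_2\rangle}
\]
times a formal series which, by a direct computation with Heisenberg modes, converges absolutely on $\lvert z_1 \rvert > \lvert z_2 \rvert > 0$ to a rational function in $z_1,z_2$; the corresponding iterate produces the same rational function times the same prefactor, branched in the region $\lvert z_2 \rvert > \lvert z_1 - z_2 \rvert > 0$. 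The real-weight hypothesis is exactly what ensures the exponents $\langle \lambda_i,\lambda_j \rangle$ are real, so the prefactors define honest multivalued analytic functions on the relevant configuration spaces, and the equality of products with iterates is subsumed by the Jacobi identity and braided (skew-)commutativity for the generalized vertex algebra $\bigoplus_\lambda \Mod{F}_\lambda$ proved in \cite{DL}. Once Assumption~12.2 is verified, \cite[Thm.~12.15]{HLZ} applies directly to endow $\categ{C}_\RR$ with the desired vertex tensor category structure; I expect the only genuine subtlety is the careful formulation of Assumption~12.2 in this Heisenberg setting where $C_1$-cofiniteness plays no role and where \cref{thm:huangC1} cannot be invoked.
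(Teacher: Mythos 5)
Your proposal is correct and follows essentially the same route as the paper: both arguments establish that the space of intertwining operators of type $\fusco{\Mod{F}_\mu}{\Mod{F}_\nu}{\Mod{F}_{\mu+\nu}}$ is exactly one-dimensional (and zero for other targets) via the results of \cite{DL}, realise the nonzero intertwiners explicitly through the generalised lattice vertex algebra on the lattice spanned by the relevant weights, invoke the duality results of \cite[Ch.~7]{DL} for convergence and associativity of products and iterates, use the real-weight hypothesis to obtain a genuine associativity isomorphism, and then proceed as in \cite[Sec.~12]{HLZ}. The only difference is one of presentation: you write out the analytic prefactors explicitly where the paper simply cites that the duality results of \cite{DL} go through.
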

\begin{proof}
The proof splits into the following steps.  Let $\lambda,\mu,\nu\in \alg{h}=\alg{h}^*$.
\begin{enumerate}
	\item Using \cite[Eq.~(12.10)]{DL}, the fusion coefficient $\fusco{\Mod{F}_\mu}{\Mod{F}_\nu}{\Mod{W}}$ is zero if $\Mod{W}$ does not have $\Mod{F}_{\mu+\nu}$ as a direct summand.
	\item Proceeding exactly as in \cite[Lem.~12.6--Prop.~12.8]{DL}, we see that
	the fusion coefficient $\fusco{\Mod{F}_\mu}{\Mod{F}_\nu}{\Mod{F}_{\mu+\nu}}$ is either $0$ or $1$.
	\item Let $\grp{L}$ be the lattice spanned by $\mu$ and $\nu$.  One can check that the (generalised) lattice \voa{} $\VOA{V}_{\grp{L}}$ satisfies the Jacobi identity
	given in \cite[Thm.~5.1]{DL}, 
	even though $\grp{L}$ is not necessarily 
	rational. 
	This implies that the vertex map $Y$ 
	of $\VOA{V}_\grp{L}$ 
	furnishes explicit (non-zero) intertwining operators
	of type $\fusco{\Mod{F}_\mu}{\Mod{F}_\nu}{\Mod{F}_{\mu+\nu}}$, thereby implying that
	the fusion coefficient 
	$\fusco{\Mod{F}_\mu}{\Mod{F}_\nu}{\Mod{F}_{\mu+\nu}}$ is always $1$.
	\item We conclude that $\categ{C}$ is closed under $\fus{P(z)}$ (recall \cite[Def.~4.15]{HLZ}).
	In general, if $\grp{M}$ is a subgroup of $\alg{h}$, 
	regarded as an additive abelian group,
	and if $\categ{C}'$ is the semi-simple category generated
	by the Fock modules with highest weights in $\grp{M}$, then $\categ{C}'$ is closed under $\fus{P(z)}$.  In particular, the subcategory $\categ{C}_{\RR}$ is closed under $\fus{P(z)}$. 
	\label{it:HCat}\label{it:HRCat}
	\item 
	Given $\mu_1,\dots,\mu_j\in\alg{h}_{\RR}$, let $\grp{L}$ be the lattice that they span. 
	Then, 
	$\VOA{V}_{\grp{L}}$ again satisfies 
	the Jacobi identity \cite[Thm.~5.1]{DL} 
	and the duality results of \cite[Ch.~7]{DL} also go through.
	As a consequence, the expected convergence and associativity properties of intertwining operators among Fock modules in $\categ{C}_{\RR}$ hold.
	\item Since the conformal weights of all modules in $\categ{C}_{\RR}$ are real, the associativity of the intertwining operators
	yields a natural associativity isomorphism for $\categ{C}_{\RR}$ \cite{HLZ}.
	\item Finally, one can proceed as in \cite[Sec.~12]{HLZ} to verify the remaining 
	properties satisfied by the braiding and associativity isomorphisms. Thus, $\categ{C}_{\RR}$ forms a vertex tensor category in the sense of Huang-Lepowsky and, in particular, is a braided tensor category. \qedhere
\end{enumerate}
\end{proof}

\subsection{Simple Currents}

An important concept in the theory of \voas{} is the simple current extension, wherein a given algebra $\VOA{V}$ is embedded in a larger one $\VOA{W}$ that is constructed from certain $\VOA{V}$-modules called simple currents.  The utility of this construction is that, unlike general embeddings, the representation theories of $\VOA{V}$ and $\VOA{W}$ are very closely related.
\begin{defin}
A \emph{simple current} $\Mod{J}$ of a \voa{} $\VOA{V}$ is a $\VOA{V}$-module that possesses a fusion inverse:  $\Mod{J}\fus{} \Mod{J}^{-1}\cong\VOA{V}\cong\Mod{J}^{-1}\fus{}\Mod{J}$.
\end{defin}
\noindent Simple currents and simple current extensions were introduced by Schellekens and Yankielowicz in \cite{SY}.  We note that more general definitions of a simple current exist, see \cite{DLM3} for example, but that the one adopted above will suffice for the \voas{} that we treat below.  Pertinent examples of simple currents are the Heisenberg Fock modules $\Mod{F}_{\lambda}$ discussed in \cref{sec:heisenbergtensor}: the fusion inverse of $\Mod{F}_{\lambda}$ is $\Mod{F}_{-\lambda}$.

The great advantage of requiring invertibility is that each simple current $\Mod{J}$ gives rise to a functor $\Mod{J}\fus{}\blank$ which is an autoequivalence of any $\VOA{V}$-module category that is closed under $\fus{}$.  The following \lcnamecref{prop:simplecurrents} gives some consequences of this; we provide proofs in order to prepare for the similar, but more subtle arguments of the next \lcnamecref{sec:SWDuals}.  We remark that the isomorphism classes of the simple currents naturally form a group, sometimes called the Picard group of the category.
\begin{prop}\label{prop:simplecurrents}
Let $\Mod{J}$ be a simple current of a 
\voa{} $\VOA{V}$.
\begin{enumerate}
\item If $\Mod{M}$ is a non-zero $\VOA{V}$-module, then $\Mod{J}\fus{}\Mod{M}$ is non-zero. \label{it:SCNonZero}
\item If $\Mod{M}$ is an indecomposable $\VOA{V}$-module, then $\Mod{J}\fus{}\Mod{M}$ is indecomposable. \label{it:SCIndec}
\item If $\Mod{M}$ is a simple $\VOA{V}$-module, then $\Mod{J}\fus{}\Mod{M}$ is simple.  In particular, $\Mod{J}$ is simple if $\VOA{V}$ is. \label{it:SCSimple}
\item The covariant functor $\Mod{J}\fus{}\blank$ is exact (hence, so is $\blank\fus{}\Mod{J}$). \label{it:SCExact} 
\item If $\Mod{M}$ has a composition series with composition factors $\Mod{S}_i$, $1 \le i \le n$, then $\Mod{J}\fus{}\Mod{M}$ has a composition series with composition factors $\Mod{J}\fus{}\Mod{S}_i$, $1 \le i \le n$. \label{it:SCCompSeries}
\item If $\Mod{M}$ has a radical or socle, then so does $\Mod{J}\fus{}\Mod{M}$.  Moreover, the latter radical or socle is then given by $\Mod{J}\fus{}\rad\Mod{M} \cong \rad(\Mod{J}\fus{}\Mod{M})$ or $\Mod{J}\fus{}\soc\Mod{M} \cong \soc(\Mod{J}\fus{}\Mod{M})$.\label{it:SCRadSoc}
\item If $\Mod{M}$ has a 
radical or socle series, then so does $\Mod{J}\fus{}\Mod{M}$.  In particular, the corresponding Loewy diagrams of $\Mod{J} \fus{} \Mod{M}$ are obtained by replacing each composition factor $\Mod{S}_i$ of $\Mod{M}$ by $\Mod{J}\fus{}\Mod{S}_i$. \label{it:SCLoewy}
\end{enumerate}
\end{prop}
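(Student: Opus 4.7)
The plan is to reduce all seven statements to a single categorical observation: fusion with a simple current is an autoequivalence of the $\VOA{V}$-module category, with quasi-inverse given by fusion with the fusion inverse $\Mod{J}^{-1}$. Concretely, in the ambient vertex tensor category the associator and unit isomorphisms yield natural isomorphisms
\begin{equation*}
\Mod{J}^{-1} \fus{} (\Mod{J} \fus{} \Mod{M}) \cong (\Mod{J}^{-1} \fus{} \Mod{J}) \fus{} \Mod{M} \cong \VOA{V} \fus{} \Mod{M} \cong \Mod{M},
\end{equation*}
natural in $\Mod{M}$, and likewise with the roles of $\Mod{J}$ and $\Mod{J}^{-1}$ swapped. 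Thus $\Mod{J}\fus{}\blank$ is a self-equivalence of the module category, with quasi-inverse $\Mod{J}^{-1}\fus{}\blank$.

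Parts (1)--(4) then fall out immediately from general nonsense about equivalences of abelian categories. For (1), if $\Mod{J}\fus{}\Mod{M}=0$ then $\Mod{M} \cong \Mod{J}^{-1}\fus{}(\Mod{J}\fus{}\Mod{M}) = 0$. For (2), any non-trivial decomposition $\Mod{J}\fus{}\Mod{M} \cong \Mod{A}\oplus\Mod{B}$ pulls back via $\Mod{J}^{-1}\fus{}\blank$ to a decomposition of $\Mod{M}$ whose summands are non-zero by (1). For (3), any non-zero submodule of $\Mod{J}\fus{}\Mod{M}$ corresponds, via the equivalence, to a non-zero submodule of $\Mod{M}$, which must be all of $\Mod{M}$ when $\Mod{M}$ is simple. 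For (4), an equivalence preserves all limits and colimits, hence is exact.

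For (5), apply $\Mod{J}\fus{}\blank$ termwise to a composition series $0 = \Mod{M}_0 \subset \cdots \subset \Mod{M}_n = \Mod{M}$; exactness from (4) produces a filtration of $\Mod{J}\fus{}\Mod{M}$ whose successive quotients are $\Mod{J}\fus{}\Mod{S}_i$, and these are simple by (3). For (6), I would use the standard characterisations of $\rad\Mod{M}$ as the intersection of the maximal submodules and of $\soc\Mod{M}$ as the sum of the simple submodules. The equivalence $\Mod{J}\fus{}\blank$ sends maximal submodules bijectively to maximal submodules (apply (3) to the simple quotients) and simple submodules bijectively to simple submodules; it also commutes with the relevant intersections and sums since it preserves limits and colimits. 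This forces $\Mod{J}\fus{}\rad\Mod{M} \cong \rad(\Mod{J}\fus{}\Mod{M})$ and the analogous socle identity. Part (7) is then obtained by iterating (6), since the radical and socle series are defined inductively via repeated application of $\rad$ and $\soc$ to successive subquotients; the resulting Loewy diagram is the one from $\Mod{M}$ with each composition factor $\Mod{S}_i$ replaced by $\Mod{J}\fus{}\Mod{S}_i$. The only real subtlety is verifying that the coherence isomorphisms of the vertex tensor category, together with the definition of fusion inverse, assemble into a bona fide self-equivalence—this is the one place where the HLZ hypotheses on $\categ{C}$ are genuinely used.
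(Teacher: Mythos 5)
Your proof is correct, but it takes a genuinely different route from the one in the paper. You package everything into the single observation that $\Mod{J}\fus{}\blank$ is an autoequivalence of the module category with quasi-inverse $\Mod{J}^{-1}\fus{}\blank$ (built from the associator, the unit isomorphisms and the chosen isomorphism $\Mod{J}^{-1}\fus{}\Mod{J}\cong\VOA{V}$), and then read off parts \ref{it:SCNonZero}--\ref{it:SCLoewy} from the fact that an equivalence of abelian categories is exact and induces isomorphisms of subobject lattices. The paper acknowledges this autoequivalence viewpoint in the preamble but deliberately proves each part by hand: in particular, exactness (part \ref{it:SCExact}) is \emph{not} obtained by citing that equivalences are exact, but by a careful kernel chase that starts only from the right-exactness of fusion \cite[Prop.~4.26]{HLZ} and the invertibility of $\Mod{J}$, applying right-exactness to $\Mod{J}\fus{}\blank$ and $\Mod{J}^{-1}\fus{}\blank$ in turn to force $\ker f=0$ for the induced map $f\colon\Mod{J}\fus{}\Mod{M}'\to\Mod{J}\fus{}\Mod{M}$; the radical and socle statements are likewise argued directly via maximal submodules and intersections. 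What your approach buys is brevity and conceptual clarity, at the cost of leaning on the full coherence package of the vertex tensor category (bifunctoriality of $\fus{}$ and naturality of the associativity and unit isomorphisms) being available for the category at hand. What the paper's approach buys is that it isolates exactly which ingredients are used (right-exactness plus invertibility) and, as the authors say explicitly, it rehearses the kernel-chasing style of argument that is genuinely needed later, e.g.\ in the proof of \cref{thm:SW-Mod-basic-fusion}, where one works with modules that are not known to be simple and the slick categorical shortcut is not available. Your closing caveat--that the only real subtlety is assembling the coherence data into a bona fide self-equivalence--is exactly the right thing to flag, and under the paper's standing HLZ assumptions it does hold.
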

\begin{proof}
If $\Mod{J} \fus{} \Mod{M} = 0$, then $0 = \Mod{J}^{-1} \fus{} \Mod{J} \fus{} \Mod{M} \cong \VOA{V} \fus{} \Mod{M} \cong \Mod{M}$.  Thus, \ref{it:SCNonZero} follows:
\begin{equation} \label{eq:SCInj}
\Mod{M} \neq 0 \qquad \Ra \qquad \Mod{J} \fus{} \Mod{M} \neq 0.
\end{equation}
Similarly, if $\Mod{J} \fus{} \Mod{M} \cong \Mod{M}' \oplus \Mod{M}''$, then $\Mod{M} \cong \Mod{J}^{-1} \fus{} \Mod{J} \fus{} \Mod{M} \cong (\Mod{J}^{-1} \fus{} \Mod{M}') \oplus (\Mod{J}^{-1} \fus{} \Mod{M}'')$.  In other words, $\Mod{M}$ indecomposable implies that $\Mod{J} \fus{} \Mod{M}$ is indecomposable, {which is \ref{it:SCIndec}}.

Suppose now that $\Mod{M}$ is simple, but that $\Mod{J} \fus{} \Mod{M}$ has a proper submodule $\Mod{M}'$.  Then,
\begin{equation}
\dses{\Mod{M}'}{\Mod{J} \fus{} \Mod{M}}{\Mod{M}''}
\end{equation}
is exact, for $\Mod{M}'' \cong (\Mod{J} \fus{} \Mod{M}) / \Mod{M}' \neq 0$.  But, fusion is right-exact \cite[Prop.~4.26]{HLZ}, so
\begin{equation}
\drses{\Mod{J}^{-1} \fus{} \Mod{M}'}{\Mod{M}}{\Mod{J}^{-1} \fus{} \Mod{M}''}
\end{equation}
is exact.  However, $\Mod{M}'' \neq 0$ implies that $\Mod{J}^{-1} \fus{} \Mod{M}''$ is a non-zero quotient of $\Mod{M}$, by \ref{it:SCNonZero}, so we must have $\Mod{J}^{-1} \fus{} \Mod{M}'' \cong \Mod{M}$, as $\Mod{M}$ is simple.  Fusing with $\Mod{J}$ now gives $\Mod{J} \fus{} \Mod{M} \cong \Mod{M}''$, so we conclude that $\Mod{M}' = 0$ and that $\Mod{J} \fus{} \Mod{M}$ is simple.  The simplicity of $\Mod{J} \cong \Mod{J}\fus{} \VOA{V}$ now follows from that of $\VOA{V}$, {completing the proof of \ref{it:SCSimple}}.

To prove \ref{it:SCExact}, 
note that applying 
right-exactness to the short exact sequence $\ses{\Mod{M}'}{\Mod{M}}{\Mod{M}''}$ results in
\begin{equation} \label{eq:ExactKey}
\Mod{J} \fus{} \frac{\Mod{M}}{\Mod{M'}} \cong \frac{\Mod{J} \fus{} \Mod{M}}{(\Mod{J} \fus{} \Mod{M}') / \ker f},
\end{equation}
where $f$ is the induced map from $\Mod{J} \fus{} \Mod{M'}$ to $\Mod{J} \fus{} \Mod{M}$ that might not be an inclusion.  Fusing with $\Mod{J}^{-1}$ and applying \eqref{eq:ExactKey}, we arrive at
\begin{equation}
\frac{\Mod{M}}{\Mod{M}'} \cong \Mod{J}^{-1} \fus{} \frac{\Mod{J} \fus{} \Mod{M}}{(\Mod{J} \fus{} \Mod{M}') / \ker f} \cong \frac{\Mod{M}}{\Bigl( \Mod{J}^{-1} \fus{} \dfrac{\Mod{J} \fus{} \Mod{M}'}{\ker f} \Bigr) / \ker g},
\end{equation}
where $g \colon \Mod{J}^{-1} \fus{} \bigl( (\Mod{J} \fus{} \Mod{M}') / \ker f \bigr) \to \Mod{M}$ might not be an inclusion.  Thus,
\begin{equation}
\Mod{M}' \cong \frac{\Mod{J}^{-1} \fus{} \dfrac{\Mod{J} \fus{} \Mod{M}'}{\ker f}}{\ker g} \cong \frac{\dfrac{\Mod{M}'}{(\Mod{J}^{-1} \fus{} \ker{f}) / \ker h}}{\ker g},
\end{equation}
where $h \colon \Mod{J}^{-1} \fus{} \ker f \to \Mod{M}'$ might not be an inclusion.  We conclude that $\ker g = 0$ and $\ker h = \Mod{J}^{-1} \fus{} \ker{f}$.  But, both require that
\begin{equation}
\Mod{M}' \cong \Mod{J}^{-1} \fus{} \frac{\Mod{J} \fus{} \Mod{M}'}{\ker f} \qquad \Ra \qquad \Mod{J} \fus{} \Mod{M}' \cong \frac{\Mod{J} \fus{} \Mod{M}'}{\ker f} \qquad \Ra \qquad \ker f = 0.
\end{equation}
$f \colon \Mod{J} \fus{} \Mod{M'} \to \Mod{J} \fus{} \Mod{M}$ is therefore an inclusion, hence $\Mod{J} \fus{} \blank$ is exact.

Suppose now that $0 = \Mod{M}_0 \subset \Mod{M}_1 \subset \cdots \subset \Mod{M}_{n-1} \subset \Mod{M}_n = \Mod{M}$ is a composition series for $\Mod{M}$, so that each \mbox{$\Mod{S}_i = \Mod{M}_i/\Mod{M}_{i-1}$} is simple.  By \ref{it:SCExact}, applying $\Mod{J}\fus{}\blank$ to each exact sequence $\ses{\Mod{M}_{i-1}}{\Mod{M}_i}{\Mod{S}_i}$ gives another exact sequence $\ses{\Mod{J}\fus{}\Mod{M}_{i-1}}{\Mod{J}\fus{}\Mod{M}_i}{\Mod{J}\fus{}\Mod{S}_i}$.  Moreover, $\Mod{J}\fus{}\Mod{S}_i$ is simple, by \ref{it:SCSimple}.  Assembling all of these exact sequences gives \ref{it:SCCompSeries}.

For \ref{it:SCRadSoc}, first recall that $\rad \Mod{M}$ is the intersection of the maximal proper submodules of $\Mod{M}$ and that $\Mod{M}_i\subset\Mod{M}$ is maximal proper if and only if $\Mod{M}/\Mod{M}_i$ is simple.  In this case, \ref{it:SCSimple} and \ref{it:SCExact} now imply that $\Mod{J}\fus{}(\Mod{M}/\Mod{M}_i)$ is simple and isomorphic to $(\Mod{J}\fus{}\Mod{M})/(\Mod{J}\fus{}\Mod{M}_i)$, whence $\Mod{J}\fus{}\Mod{M}_i$ is maximal proper in $\Mod{J}\fus{}\Mod{M}$.  Applying $\Mod{J}^{-1}\fus{}\blank$ gives the converse.  Second, given a collection $\Mod{M}_i\subseteq\Mod{M}$, \ref{it:SCExact} also implies that $\Mod{J}\fus{}(\cap_i\Mod{M}_i)$ is a submodule of each $\Mod{J}\fus{}\Mod{M}_i$, hence of $\cap_i(\Mod{J}\fus{}\Mod{M}_i)$.  But now, $\cap_i(\Mod{J}\fus{}\Mod{M}_i)\cong \Mod{J}\fus{}\Mod{J}^{-1}\fus{}\bigl(\cap_i(\Mod{J}\fus{}\Mod{M}_i)\bigr)\subseteq\Mod{J}\fus{}(\cap_i \Mod{M}_i)$, hence we have $\Mod{J}\fus{}(\cap_i\Mod{M}_i) \cong \cap_i(\Mod{J}\fus{}\Mod{M}_i)$.  These two conclusions together give $\Mod{J}\fus{}\rad\Mod{M} \cong \rad(\Mod{J}\fus{}\Mod{M})$.  A similar, but easier, argument establishes $\Mod{J}\fus{}\soc\Mod{M} \cong \soc(\Mod{J}\fus{}\Mod{M})$.

Finally, \ref{it:SCLoewy} follows by combining \ref{it:SCRadSoc} with slight generalisations of the arguments used to prove \ref{it:SCCompSeries}.
\end{proof}
\noindent This proposition has a simple summary:  fusing with a simple current preserves module structure.  We remark, obviously, that a simple current $\Mod{J}$ need not be simple if the \voa{} $\VOA{V}$ is not simple.

\subsection{Orbifold modules} \label{sec:KR}

Here, we review a result of Kac and Radul \cite{KR} on the simplicity of orbifold modules. For a very similar result see \cite{DLM}.

Let $\VOA{A}$ be an associative algebra, for example the mode algebra of a \voa{}, and let $\grp{G}$ be a subgroup of $\Aut \VOA{A}$ acting semisimply on $\VOA{A}$.  We consider $\VOA{A}$-modules $\Mod{M}$ which admit a semisimple $\grp{G}$-action that is compatible with the $\grp{G}$-action on $\VOA{A}$ and which decompose as a countable direct sum of finite-dimensional simple $\grp{G}$-modules.  This compatibility means that
\begin{equation} \label{eq:CompatGrpAct}
g(am) = (ga)(gm), \qquad \text{for all \(g \in \grp{G}\), \(a \in \VOA{V}\) and \(m \in \Mod{M}\).}
\end{equation}
If we now define $\VOA{A}_0$ to be the space of $\grp{G}$-invariants $a \in \VOA{A}$, so $ga=a$ for all $g \in \grp{G}$, then the actions of each $g \in \grp{G}$ and each $a \in \VOA{A}_0$ commute on every such module $\Mod{M}$.

Choose an $\Mod{M}$ satisfying \eqref{eq:CompatGrpAct} and let $\Mod{N}$ be a simple $\grp{G}$-module.  Then, we may define the $\grp{G}$-module
\begin{equation}
\Mod{M}_{\Mod{N}} = \sum \set{\Mod{N}_i \subseteq \Mod{M} \st \Mod{N}_i \cong \Mod{N}}.
\end{equation}
As the action of $\VOA{A}_0$ commutes with that of $\grp{G}$, every $a \in \VOA{A}_0$ maps a given $\Mod{N}_i$ to some $\Mod{N}_j$ or $0$, by Schur's lemma.  Thus, $\Mod{M}_{\Mod{N}}$ is an $\VOA{A}_0$-module.

If we choose a one-dimensional subspace $\CC \subseteq \Mod{N}$, then Schur's lemma picks out a one-dimensional subspace $\CC_i \subseteq \Mod{N}_i$, for each i.  Then, each $a \in \VOA{A}_0$ maps each $\CC_i$ to some $\CC_j$ or to $0$, hence
\begin{equation}
\Mod{M}^{\Mod{N}} = \sum_{\Mod{N}_i \cong \Mod{N}} \CC_i
\end{equation}
is an $\VOA{A}_0$-module.  But, because $\Mod{N}_i \cong \Mod{N} \cong \Mod{N} \otimes \CC_i$, we may write
\begin{equation}
\Mod{M}_{\Mod{N}} \cong \sum_{\Mod{N}_i \cong \Mod{N}} \Mod{N} \otimes \CC_i = \Mod{N} \otimes \Mod{M}^{\Mod{N}}
\end{equation}
as a $\CC \grp{G} \otimes \VOA{A}_0$-module.  The semisimplicity of $\Mod{M}$, as a $\grp{G}$-module, now gives us the decomposition
\begin{equation} \label{eq:KR}
\Mod{M} \cong \bigoplus_{[\Mod{N}]} \Mod{M}_{\Mod{N}} \cong \bigoplus_{[\Mod{N}]} \Mod{N} \otimes \Mod{M}^{\Mod{N}},
\end{equation}
again as a $\CC \grp{G} \otimes \VOA{A}_0$-module.  Here, $[\Mod{N}]$ denotes the isomorphism class of the simple $\grp{G}$-module $\Mod{N}$.

The result of Kac and Radul gives conditions under which the $\VOA{A}_0$-modules $\Mod{M}^{\Mod{N}}$, appearing in \eqref{eq:KR}, are guaranteed to be simple.
\begin{thm}[\protect{\cite[Thm.~1.1 and Rem.~1.1]{KR}}] \label{thm:KR}
With the above setup, the (non-zero) $\Mod{M}^{\Mod{N}}$ appearing in \eqref{eq:KR} will be simple $\VOA{A}_0$-modules provided that $\Mod{M}$ is a simple $\VOA{A}$-module.
\end{thm}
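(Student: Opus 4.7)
The plan is to argue by contradiction via Jacobson's density theorem. Suppose $\Mod{P} \subseteq \Mod{M}^{\Mod{N}}$ is a non-zero proper $\VOA{A}_0$-submodule; I would derive a contradiction by producing, for every non-zero $v \in \Mod{P}$ and every $w \in \Mod{M}^{\Mod{N}}$, some element $a_0 \in \VOA{A}_0$ with $a_0 \cdot v = w$, so that in fact $\VOA{A}_0 \cdot v = \Mod{M}^{\Mod{N}}$ and $\Mod{P}$ cannot exist.

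First I would set up Jacobson density. Since $\Mod{M}$ is a simple $\VOA{A}$-module of at most countable dimension over $\CC$, Dixmier's version of Schur's lemma gives $\End_{\VOA{A}}(\Mod{M}) = \CC$, so the image of $\VOA{A}$ in $\End_\CC(\Mod{M})$ is dense in the finite topology. The compatibility condition \eqref{eq:CompatGrpAct} says that the $\grp{G}$-action on $\VOA{A}$ corresponds, under this embedding, to the conjugation action of $\grp{G}$ (acting on $\Mod{M}$) on $\End_\CC(\Mod{M})$, so $\VOA{A}_0$ maps into the $\grp{G}$-invariant subalgebra $\End_\CC(\Mod{M})^\grp{G}$. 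Combining the decomposition $\Mod{M} \cong \bigoplus_{[\Mod{N}']} \Mod{N}' \otimes \Mod{M}^{\Mod{N}'}$ with Schur's lemma identifies $\End_\CC(\Mod{M})^\grp{G}$ with $\prod_{[\Mod{N}']} \End_\CC(\Mod{M}^{\Mod{N}'})$, each $\grp{G}$-invariant endomorphism acting as $\id_{\Mod{N}'} \otimes T_{\Mod{N}'}$ on the $[\Mod{N}']$-isotypic component.

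The main obstacle is to transfer the density of $\VOA{A}$ in $\End_\CC(\Mod{M})$ to density of $\VOA{A}_0$ in each factor $\End_\CC(\Mod{M}^{\Mod{N}})$. My strategy would be a $\grp{G}$-averaging argument: given linearly independent $v_1,\dots,v_n \in \Mod{M}^{\Mod{N}}$ and arbitrary $w_1,\dots,w_n \in \Mod{M}^{\Mod{N}}$, I would enlarge the span to a finite-dimensional $\grp{G}$-stable subspace $V \subseteq \Mod{M}_{\Mod{N}}$, apply Jacobson density to find $a \in \VOA{A}$ implementing the unique $\grp{G}$-equivariant extension of $v_i \mapsto w_i$ on $V$, and then replace $a$ by its image under the central idempotent of $\CC\grp{G}$ for the trivial representation. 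This idempotent makes sense here because the $\grp{G}$-orbit of $a$ inside $\VOA{A}$ spans a finite-dimensional $\grp{G}$-stable subspace on which the action is semisimple. The averaged element lies in $\VOA{A}_0$ and still satisfies $a_0 \cdot v_i = w_i$ precisely because $a$ was chosen $\grp{G}$-equivariant on $V$. Density then rules out any proper non-zero $\VOA{A}_0$-invariant subspace of $\Mod{M}^{\Mod{N}}$, completing the proof.

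In the abelian case most relevant for the sequel, the argument simplifies dramatically and bypasses density entirely. Each simple $\Mod{N}$ is one-dimensional, given by a character $\chi$, and $\Mod{M}^{\Mod{N}}$ is just the weight space $\Mod{M}_\chi$. Given non-zero $v \in \Mod{M}_\chi$, simplicity of $\Mod{M}$ gives $\VOA{A} \cdot v = \Mod{M}$, so every $w \in \Mod{M}_\chi$ has the form $w = a \cdot v$. Decomposing $a = \sum_\psi a_\psi$ according to the $\grp{G}$-weight decomposition of $\VOA{A}$, the compatibility \eqref{eq:CompatGrpAct} forces $a_\psi \cdot v \in \Mod{M}_{\psi\chi}$, so projection onto $\Mod{M}_\chi$ retains only the contribution from $\psi = 1$, which lies in $\VOA{A}_0$. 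Hence $w \in \VOA{A}_0 \cdot v$ directly, recovering the Kac-Radul conclusion in this setting.
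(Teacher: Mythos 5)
Your argument is correct, but note that the paper offers no proof of this statement at all---it is imported verbatim from Kac--Radul \cite[Thm.~1.1 and Rem.~1.1]{KR}, and your Jacobson-density-plus-$\grp{G}$-averaging argument (Dixmier's Schur lemma for countable-dimensional simple modules, equivariance of the restriction map $\VOA{A}\to\Hom_{\CC}(V,\Mod{M})$, and projection onto the trivial isotypic component) is essentially a faithful reconstruction of the proof in that cited source, with the abelian weight-space shortcut being the version most relevant to the Heisenberg setting of this paper. The only cosmetic slip is the word ``unique'' for the equivariant extension $v_i\mapsto w_i$ on $V$: only existence is needed (and uniqueness fails unless the $v_i$ span the multiplicity space), but this does not affect the argument.
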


\section{Schur-Weyl duality} \label{sec:SWDuals}

In this section, we state and prove results concerning the decomposition of a \voa{} and its modules into modules over a Heisenberg \vosa{} and its commutant.  We regard this decomposition as a vertex-algebraic analogue of the well known Schur-Weyl duality familiar for symmetric groups and general linear Lie algebras.  These results are enhanced by deducing sufficient conditions for the decompositions, and their close relations, to be multiplicity-free.  Finally, we illustrate our results with several carefully chosen examples.

\subsection{Heisenberg cosets} \label{sec:HCosets}

Let $\grp{G}$ be a {finitely generated} abelian subgroup of the automorphism group of a simple \voa{} $\VOA{V}$.  We assume that $\grp{G}$ grades $\VOA{V}$, meaning that the actions of these automorphisms may be simultaneously diagonalised, hence that $\VOA{V}$ decomposes into a direct sum of $\grp{G}$-modules:
\begin{equation} \label{eq:Vdecomp}
\VOA{V}=\bigoplus_{\lambda \in \grp{L}} \Mod{V}_{\lambda}.
\end{equation}
Here, the $\lambda$ are elements of the (abelian) dual group $\hat{\grp{G}}$ of inequivalent {(complex, not necessarily unitary)} one-dimensional representations of $\grp{G}$ (recall that addition is tensor product and negation is contragredient dual), $\Mod{V}_{\lambda}$ denotes the simultaneous eigenspace upon which each $g \in \grp{G}$ acts as multiplication by $\lambda(g) \in \CC$, and $\grp{L}$ is the subset of $\lambda \in \hat{\grp{G}}$ for which $\Mod{V}_{\lambda} \neq 0$.  Note that the cardinality of $\grp{L}$ is at most countable.

The action of $\VOA{V}$ on itself restricts to an action of each $\Mod{V}_{\lambda}$ on each $\Mod{V}_{\mu}$.  For $\lambda = \mu = 0$, where $0$ denotes the trivial $\grp{G}$-module, this implies that $\VOA{V}_0$ is a \vosa{} of $\VOA{V}$; for $\lambda = 0$, this implies that each $\Mod{V}_{\mu}$ is a $\VOA{V}_0$-module.  From the simplicity of $\VOA{V}$, it now easily follows that $\grp{L}$ is a subgroup of $\hat{\grp{G}}$:  closure under addition follows from annihilating ideals being trivial \cite[Cor.~4.5.15]{LL} and closure under negation follows similarly, see \cite[Prop.~3.6]{LX}.

Applying \cref{thm:KR}, with $\Mod{M} = \VOA{V}$ and $\VOA{A}$ being the mode algebra of $\VOA{V}$, we can now improve upon \eqref{eq:Vdecomp}.  Indeed, in this setting, \eqref{eq:KR} becomes
\begin{equation}
\VOA{V}=\bigoplus_{\lambda \in \grp{L}} \CC_{\lambda} \otimes \Mod{V}_{\lambda},
\end{equation}
where $\CC_{\lambda}$ denotes the one-dimensional module upon which $g \in \grp{G}$ acts as multiplication by $\lambda(g)$, and we learn that the $\Mod{V}_{\lambda}$ are simple as $\VOA{V}_0$-modules.  In particular, $\VOA{V}_0$ is a simple \voa{}.

If we assume that $\VOA{V}_0$ satisfies the conditions required to invoke the tensor category theory of Huang, Lepowsky and Zhang (\cref{sec:HLZ}),
then more is true.  As 
Miyamoto has shown, 
the $\Mod{V}_{\lambda}$ are then simple currents for $\VOA{V}_0$
see \cite{M2, CarMi}.  
It should be noted that the proofs in \cite{M2, CarMi} assume that the group of automorphisms under consideration
	is finite, however, the proof works more generally under the assumption that tensor category
	theory for the fixed-point algebra can be invoked. For completeness, we include an exposition of their proof in our slightly more general setting in \cref{app:Miyamoto}.
	
\begin{thm}[\protect{\cite[Sec.~6]{M2}}] \label{thm:MiySimpleCurrent}
{Assume the above setup and that} 
$\VOA{V}_0 = \VOA{V}^\grp{G}$ 
satisfies conditions sufficient to invoke Huang, Lepowsky and Zhang's tensor category theory, for example those of \cref{thm:huangC1}. 
Then, the
$\Mod{V}_{\lambda}$ are simple currents for $\Mod{V}_0$ {with} 
$\Mod{V}_{\lambda} \fus{\Mod{V}_0} \Mod{V}_{\mu} \cong \Mod{V}_{\lambda + \mu}$, {for all $\lambda, \mu \in \grp{L}$}.
\end{thm}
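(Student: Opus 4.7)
The plan is to construct, for each pair $\lambda,\mu\in\grp{L}$, a canonical $\VOA{V}_0$-module morphism $\eta_{\lambda,\mu}\colon\Mod{V}_\lambda\fus{\VOA{V}_0}\Mod{V}_\mu\to\Mod{V}_{\lambda+\mu}$ directly from the vertex operator of $\VOA{V}$, and then to upgrade each such morphism to an isomorphism by exploiting the fact that $\VOA{V}$ itself is a commutative associative algebra object in the vertex tensor category of $\VOA{V}_0$-modules.

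First I would extract the $\eta_{\lambda,\mu}$. The equivariance identity $g(Y(a,z)b)=Y(ga,z)(gb)$, combined with the fact that each $g\in\grp{G}$ acts as a scalar on each $\Mod{V}_\nu$, forces $Y(v_\lambda,z)v_\mu\in\Mod{V}_{\lambda+\mu}\{z\}[\log z]$ whenever $v_\lambda\in\Mod{V}_\lambda$ and $v_\mu\in\Mod{V}_\mu$ are homogeneous. Hence the restriction of $Y_\VOA{V}$ to $\Mod{V}_\lambda\otimes\Mod{V}_\mu$ is an intertwining operator $\intw_{\lambda,\mu}$ of type $\fusco{\Mod{V}_\lambda}{\Mod{V}_\mu}{\Mod{V}_{\lambda+\mu}}$ among $\VOA{V}_0$-modules. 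It is nonzero, for otherwise the $\VOA{V}$-submodule of $\VOA{V}$ generated by any $0\neq v_\mu\in\Mod{V}_\mu$ would miss the summand $\Mod{V}_{\lambda+\mu}$, contradicting the simplicity of $\VOA{V}$. The universal property of the fusion product (\cref{def:fusion}) then produces the desired nonzero $\VOA{V}_0$-morphism $\eta_{\lambda,\mu}$, and the simplicity of $\Mod{V}_{\lambda+\mu}$ as a $\VOA{V}_0$-module (established in the discussion preceding the theorem via \cref{thm:KR}) forces $\eta_{\lambda,\mu}$ to be surjective.

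The main obstacle is injectivity of each $\eta_{\lambda,\mu}$. The idea is that the collection $\{\eta_{\lambda,\mu}\}_{\lambda,\mu\in\grp{L}}$ assembles into the multiplication of a commutative associative algebra object $\VOA{V}=\bigoplus_{\lambda}\Mod{V}_\lambda$ in the braided tensor category of $\VOA{V}_0$-modules, with unit the inclusion $\VOA{V}_0\hookrightarrow\VOA{V}$. Associativity of this algebra, specialised to the triple $(\lambda,-\lambda,\lambda)$ and combined with the unit identification $\eta_{0,\lambda}\cong\id_{\Mod{V}_\lambda}$, forces $\id_{\Mod{V}_\lambda}$ to factor through $\eta_{\lambda,-\lambda}\fus{}\id_{\Mod{V}_\lambda}$; the right-exactness of $\Mod{V}_{-\lambda}\fus{}\blank$ \cite[Prop.~4.26]{HLZ} then squeezes $\ker\eta_{\lambda,-\lambda}$ to zero after fusing first with $\Mod{V}_{-\lambda}$ and then with $\Mod{V}_\lambda$. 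The general case reduces to $\mu=-\lambda$ by one further application of associativity. Once the isomorphisms $\Mod{V}_\lambda\fus{\VOA{V}_0}\Mod{V}_\mu\cong\Mod{V}_{\lambda+\mu}$ are secured, the specialisation $\mu=-\lambda$ exhibits $\Mod{V}_{-\lambda}$ as a two-sided fusion inverse of $\Mod{V}_\lambda$, so each $\Mod{V}_\lambda$ is a simple current for $\VOA{V}_0$ with the stated fusion rule.
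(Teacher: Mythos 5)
Your first paragraph is sound and coincides with the opening of the paper's own argument (steps (1) and (8) of the proof in \cref{app:Miyamoto}): the grading forces $Y|_{\Mod{V}_\lambda\otimes\Mod{V}_\mu}$ to be a $\VOA{V}_0$-intertwining operator valued in $\Mod{V}_{\lambda+\mu}$, it is nonzero by simplicity of $\VOA{V}$, and its coefficients span the simple $\VOA{V}_0$-module $\Mod{V}_{\lambda+\mu}$, so the induced map $\eta_{\lambda,\mu}$ is surjective. The gap is in the injectivity step, which is where the entire content of the theorem lies. Associativity of the multiplication on $\bigoplus_\lambda\Mod{V}_\lambda$, specialised to $(\lambda,-\lambda,\lambda)$, only yields the identity $l\circ(\eta_{\lambda,-\lambda}\fus{}\id_{\Mod{V}_\lambda})=r\circ(\id_{\Mod{V}_\lambda}\fus{}\eta_{-\lambda,\lambda})\circ A$, where $A$ is the associativity isomorphism and $l,r$ are unit isomorphisms. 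This equates two \emph{surjections} onto $\Mod{V}_\lambda$; it produces no section, so it does not force $\id_{\Mod{V}_\lambda}$ to factor through $\eta_{\lambda,-\lambda}\fus{}\id_{\Mod{V}_\lambda}$. Your subsequent "squeeze" is circular: fusing $0\ra\ker\eta_{\lambda,-\lambda}\ra\Mod{V}_\lambda\fus{}\Mod{V}_{-\lambda}\ra\Mod{V}_0\ra 0$ with $\Mod{V}_{\pm\lambda}$ and invoking right-exactness shows only that the \emph{image} of $(\ker\eta_{\lambda,-\lambda})\fus{}\Mod{V}_\lambda$ is the kernel of a surjection onto $\Mod{V}_\lambda$; to conclude $\ker\eta_{\lambda,-\lambda}=0$ you need to know that fusing a nonzero module with $\Mod{V}_\lambda$ cannot vanish, which is \cref{prop:simplecurrents}\ref{it:SCNonZero} and is proved there using the invertibility of $\Mod{V}_\lambda$ --- precisely what is at stake. (Compare \cref{thm:SW-Mod-basic-fusion}, whose "surject both ways and compare finite-dimensional graded pieces" trick likewise presupposes that $\Mod{V}_{-\lambda}\fus{}\Mod{V}_\lambda\cong\Mod{V}_0$ is already known.)

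What is actually required, and what the paper supplies in \cref{app:Miyamoto}, is a direct proof that $\Mod{V}_{-\lambda}\fus{}\Mod{V}_\lambda$ is a \emph{simple} $\VOA{V}_0$-module, whence the surjection onto $\Mod{V}_0$ is an isomorphism. This is a genuinely non-formal computation with intertwining operators: given a nonzero submodule $\Mod{B}\subseteq\Mod{V}_{-\lambda}\fus{}\Mod{V}_\lambda$, one shows that the coefficients of a suitable intertwining operator applied to $\Mod{V}_\lambda\otimes\Mod{B}$ span a nonzero, hence full, submodule of $\Mod{V}_\lambda$, and then the associativity of products and iterates of intertwining operators recovers all of $\Mod{V}_{-\lambda}\fus{}\Mod{V}_\lambda$ from $\Mod{B}$. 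The simplicity of $\VOA{V}$ as a $\VOA{V}$-module enters here through the nonvanishing and spanning of the coefficients of $Y(t_1,x)t_2$, a global input that the purely categorical manipulation of $\eta$'s does not see. A secondary point: when $\grp{L}$ is infinite, $\VOA{V}=\bigoplus_\lambda\Mod{V}_\lambda$ need not be an object of the category of grading-restricted generalized $\VOA{V}_0$-modules, so the assertion that it is "a commutative associative algebra object in the vertex tensor category" must be interpreted component-wise --- which is exactly how the paper's proof is organised.
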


Let us now restrict to \voas{} $\VOA{V}$ that contain a Heisenberg \vosa{} $\VOA{H}$, generated by $r$ fields $h^i(z)$, $i=1, \ldots, r$, of conformal weight $1$.  We will assume throughout that the action of $\VOA{H}$ on $\VOA{V}$ is semisimple\footnote{Examples on which a Heisenberg \vosa{} does not act semisimply are provided by the Takiff \voas{} of \cite{BR, BC}.} {and that the eigenvalues of the zero modes $h^i_0$, $i=1, \ldots, r$, are all real}.  Let $\VOA{C}$ denote the commutant \voa{} of $\VOA{H}$ in $\VOA{V}$ and let $\grp{G} \cong \ZZ^r$ be the {lattice} 
generated by the 
$h^i_0$.  
Each $\Mod{V}_{\lambda}$ of the $\grp{G}$-decomposition \eqref{eq:Vdecomp} is a module for $\VOA{H}$ since the fields of $\VOA{H}$ commute with the zero modes of $\grp{G}$. As $\grp{G}$ acts semisimply on $\Mod{V}_{\lambda}$ and the only simple $\VOA{H}$-module with $h^i_0$-eigenvalues $\lambda = (\lambda^1, \ldots, \lambda^r)$ is the Fock module $\Mod{F}_\lambda$, we must have the following $\VOA{H}\otimes \VOA{C}$-module decomposition:
\begin{equation}
\Mod{V}_\lambda = \Mod{F}_\lambda \otimes \Mod{C}_\lambda, \qquad \text{for all \(\lambda \in \grp{L}\).}
\end{equation}
{In this setting, we may take $\grp{L}$ to be the lattice of all $\lambda \in \RR^r$ for which $\Mod{V}_{\lambda} \neq 0$.}  Moreover, the $\VOA{C}$-module $\Mod{C}_{\lambda}$ is simple because $\Mod{V}_\lambda$ and $\Mod{F}_\lambda$ are.  In particular, the commutant $\VOA{C} = \Mod{C}_0$ is a simple \voa{}.
We summarise this as follows.
\begin{prop}
Let $\VOA{V}$ be a simple \voa{} 
with a Heisenberg \vosa{} $\VOA{H}$ that acts semisimply on $\VOA{V}$.  Then, the coset \voa{} $\VOA{C} = \Com{\VOA{H}}{\VOA{V}}$ is likewise simple.
\end{prop}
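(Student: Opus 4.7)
The plan is to deduce the proposition immediately from \cref{thm:KR} applied to the abelian group $\grp{G}$ generated by the Heisenberg zero modes $h^1_0,\dots,h^r_0$ (viewed as automorphisms via exponentiation). Under the hypothesis that $\VOA{H}$ acts semisimply on $\VOA{V}$ with real $h^i_0$-eigenvalues, $\grp{G}$ acts semisimply on $\VOA{V}$ and its weight decomposition coincides with $\VOA{V} = \bigoplus_{\lambda \in \grp{L}} \Mod{V}_\lambda$. Each simple $\grp{G}$-module that appears is a one-dimensional character $\CC_\lambda$; together with the standing assumption that $\VOA{V}$ has at most countable dimension, this shows that the hypotheses of \cref{thm:KR} are satisfied.

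I would then identify the fixed-point subalgebra as $\VOA{V}^{\grp{G}} = \Mod{V}_0 = \Mod{F}_0 \otimes \Mod{C}_0 = \VOA{H} \otimes \VOA{C}$. Applying \cref{thm:KR} with $\VOA{A}$ the mode algebra of $\VOA{V}$ and $\Mod{M} = \VOA{V}$ regarded as a simple $\VOA{V}$-module yields, for each non-zero $\lambda \in \grp{L}$, that $\Mod{V}_\lambda = \Mod{F}_\lambda \otimes \Mod{C}_\lambda$ is simple as an $\VOA{H} \otimes \VOA{C}$-module. Specialising to $\lambda = 0$ gives that $\VOA{H} \otimes \VOA{C}$ is a simple algebra.

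Finally, I would peel off the Heisenberg tensor factor: if $\Mod{N} \subseteq \VOA{C}$ is any non-zero ideal, then $\VOA{H} \otimes \Mod{N}$ is a non-zero $\VOA{H}\otimes\VOA{C}$-ideal inside the simple algebra $\VOA{H} \otimes \VOA{C}$, so $\VOA{H} \otimes \Mod{N} = \VOA{H} \otimes \VOA{C}$ and hence $\Mod{N} = \VOA{C}$. Thus $\VOA{C}$ is simple. The only step requiring any real care is the bookkeeping for the hypotheses of \cref{thm:KR}, notably the semisimplicity of the $\grp{G}$-action and the identification of $\VOA{V}^{\grp{G}}$ with $\VOA{H} \otimes \VOA{C}$; once these are settled the conclusion is immediate. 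In fact the same tensor-factorisation argument shows that every $\Mod{C}_\lambda$ is simple as a $\VOA{C}$-module, which is the slightly stronger statement that is established in the paragraph preceding the proposition.
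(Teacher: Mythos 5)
Your proposal is correct and follows essentially the same route as the paper: both invoke \cref{thm:KR} with $\Mod{M}=\VOA{V}$ to get simplicity of each $\Mod{V}_\lambda$ as a $\VOA{V}_0$-module, identify $\Mod{V}_\lambda=\Mod{F}_\lambda\otimes\Mod{C}_\lambda$ via the semisimple Heisenberg action, and then peel off the simple Fock factor to conclude that $\Mod{C}_\lambda$ (in particular $\VOA{C}=\Mod{C}_0$) is simple. The only cosmetic difference is that the paper phrases the last step as ``$\Mod{C}_\lambda$ is simple because $\Mod{V}_\lambda$ and $\Mod{F}_\lambda$ are'' rather than via the ideal $\VOA{H}\otimes\Mod{N}$, and your restriction to ``non-zero $\lambda$'' before specialising to $\lambda=0$ is a harmless slip since \cref{thm:KR} applies to all $\lambda\in\grp{L}$.
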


{From here on, we make the following natural assumption:}
\begin{quote}
We assume that we are working with categories of (generalized) $\VOA{V}_0$- 
and $\VOA{C}$-modules {for which the tensor category theory of Huang, Lepowsky and Zhang \cite{HLZ} may be invoked}.
\end{quote}
{Of course, we have confirmed in \cref{sec:heisenbergtensor} that this theory may be invoked for semisimple $\VOA{H}$-modules with real weights.  In general, we would like to apply our results to \voas{} for which we are not currently able to verify this assumption.  Such illustrations should therefore be regarded as conjectural.  However, we view the results in these cases as strong evidence that the conditions required to invoke Huang-Lepowsky-Zhang are, in fact, significantly weaker than those that were given in \cref{sec:HLZ}.}

Given now the fusion rules $\Mod{F}_{\lambda} \fus{\VOA{H}} \Mod{F}_{\mu} \cong \Mod{F}_{\lambda + \mu}$ and $\Mod{V}_{\lambda} \fus{\Mod{V}_0} \Mod{V}_{\mu} \cong \Mod{V}_{\lambda + \mu}$, which imply that
\begin{equation} \label{eq:FCxFC}
\bigl( \Mod{F}_\lambda \otimes \Mod{C}_\lambda \bigr) \fus{\VOA{V}_0} \bigl( \Mod{F}_\mu \otimes \Mod{C}_\mu \bigr)
\cong \Mod{F}_{\lambda + \mu} \otimes \Mod{C}_{\lambda + \mu},
\end{equation}
one is naturally led to suppose that $\Mod{C}_\lambda \fus{\VOA{C}} \Mod{C}_{\mu} \cong \Mod{C}_{\lambda + \mu}$.  Proving this, however, is a little subtle because we are not assuming that the corresponding representation categories are semisimple.
We therefore 
present a 
technical result that we shall use to confirm this supposition and other similar assertions. 
We remark that this result can be greatly strengthened when one of the \voas{} involved is of 
Heisenberg or 
lattice type, 
or when the \voas{} involved are rational (see \cite{Lin}).
\begin{prop} \label{prop:FusingTensorProducts}
	Let $\VOA{A}$ and $\VOA{B}$ be \voas{} and let $\Mod{A}_i$ and $\Mod{B}_i$, for $i = 1,2,3$, be 
	$\VOA{A}$-modules and $\VOA{B}$-modules, respectively. 
	Suppose that 
  \begin{align}
		\bigl( (\Mod{A}_1\otimes \Mod{B}_1) \fus{\VOA{A} \otimes \VOA{B}} (\Mod{A}_2\otimes \Mod{B}_2), \intw^{\fus{}}_{\VOA{A} \otimes \VOA{B}} \bigr) = (\Mod{A}_3\otimes \Mod{B}_3,\intw^{\fus{}}_{\VOA{A} \otimes \VOA{B}}).
	\end{align}
	Also assume that either of the fusion coefficients $\binom{\Mod{A}_3}{\Mod{A}_1\,\,\Mod{A}_2}$ or $\binom{\Mod{B}_3}{\Mod{B}_1\,\,\Mod{B}_2}$ is finite.
	Then, $(\Mod{A}_3\otimes \Mod{B}_3,\intw^{\fus{}}_{\VOA{A} \otimes \VOA{B}})$ may be taken to be
	$\bigl( (\Mod{A}_1 \fus{\VOA{A}} \Mod{A}_2) \otimes (\Mod{B}_1 \fus{\VOA{B}} \Mod{B}_2) ,\intw^{\fus{}}_{\VOA{A}} \otimes \intw^{\fus{}}_{\VOA{B}} \bigr)$.  	In particular,
	\begin{equation}
		\Mod{A}_1 \fus{\VOA{A}} \Mod{A}_2 \cong \Mod{A}_3 \qquad \text{and} \qquad \Mod{B}_1 \fus{\VOA{B}} \Mod{B}_2 \cong \Mod{B}_3.
	\end{equation}
\end{prop}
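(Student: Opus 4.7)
The plan is to show that the pair $(\Mod{A}_3 \otimes \Mod{B}_3,\, \intw^{\fus{}}_{\VOA{A} \otimes \VOA{B}})$ and the pair $\bigl((\Mod{A}_1 \fus{\VOA{A}} \Mod{A}_2) \otimes (\Mod{B}_1 \fus{\VOA{B}} \Mod{B}_2),\, \intw^{\fus{}}_\VOA{A} \otimes \intw^{\fus{}}_\VOA{B}\bigr)$ both satisfy the universal property of \cref{def:fusion} for the fusion product of $\Mod{A}_1 \otimes \Mod{B}_1$ and $\Mod{A}_2 \otimes \Mod{B}_2$ over $\VOA{A} \otimes \VOA{B}$, which immediately yields the desired canonical isomorphism. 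The easier direction is to construct $\eta \colon \Mod{A}_3 \otimes \Mod{B}_3 \to (\Mod{A}_1 \fus{\VOA{A}} \Mod{A}_2) \otimes (\Mod{B}_1 \fus{\VOA{B}} \Mod{B}_2)$: since $\intw^{\fus{}}_\VOA{A} \otimes \intw^{\fus{}}_\VOA{B}$ is manifestly a $(\VOA{A} \otimes \VOA{B})$-intertwining operator of type $\binom{(\Mod{A}_1 \fus{\VOA{A}} \Mod{A}_2) \otimes (\Mod{B}_1 \fus{\VOA{B}} \Mod{B}_2)}{\Mod{A}_1 \otimes \Mod{B}_1,\, \Mod{A}_2 \otimes \Mod{B}_2}$, the hypothesized universal property of $\Mod{A}_3 \otimes \Mod{B}_3$ yields a unique $\eta$ with $\eta \circ \intw^{\fus{}}_{\VOA{A} \otimes \VOA{B}} = \intw^{\fus{}}_\VOA{A} \otimes \intw^{\fus{}}_\VOA{B}$.

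The reverse morphism $\zeta$ is where the finiteness hypothesis enters. The key intermediate step is a decomposition lemma: any $(\VOA{A} \otimes \VOA{B})$-intertwining operator of type $\binom{\Mod{A}_3 \otimes \Mod{B}_3}{\Mod{A}_1 \otimes \Mod{B}_1,\, \Mod{A}_2 \otimes \Mod{B}_2}$ can be written as a \emph{finite} sum $\sum_{i=1}^n \intw^i_\VOA{A} \otimes \intw^i_\VOA{B}$, with $\intw^i_\VOA{A}$ of type $\binom{\Mod{A}_3}{\Mod{A}_1,\, \Mod{A}_2}$ over $\VOA{A}$ and $\intw^i_\VOA{B}$ of type $\binom{\Mod{B}_3}{\Mod{B}_1,\, \Mod{B}_2}$ over $\VOA{B}$, where finiteness of either individual fusion coefficient bounds the number of summands. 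Granting this, I apply it to $\intw^{\fus{}}_{\VOA{A} \otimes \VOA{B}}$ and invoke the universal properties of the individual fusion products to lift each $\intw^i_\VOA{A}$ to a morphism $\phi_i \colon \Mod{A}_1 \fus{\VOA{A}} \Mod{A}_2 \to \Mod{A}_3$ and each $\intw^i_\VOA{B}$ to a $\psi_i \colon \Mod{B}_1 \fus{\VOA{B}} \Mod{B}_2 \to \Mod{B}_3$. Setting $\zeta = \sum_i \phi_i \otimes \psi_i$ then yields, by construction, a module morphism satisfying $\zeta \circ (\intw^{\fus{}}_\VOA{A} \otimes \intw^{\fus{}}_\VOA{B}) = \intw^{\fus{}}_{\VOA{A} \otimes \VOA{B}}$.

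To finish, $\zeta \circ \eta = \id$ is forced by the uniqueness clause in the universal property of $\Mod{A}_3 \otimes \Mod{B}_3$, while $\eta \circ \zeta = \id$ follows because this composition is a module morphism agreeing with the identity on the image of $\intw^{\fus{}}_\VOA{A} \otimes \intw^{\fus{}}_\VOA{B}$, and this image generates $(\Mod{A}_1 \fus{\VOA{A}} \Mod{A}_2) \otimes (\Mod{B}_1 \fus{\VOA{B}} \Mod{B}_2)$ under the $(\VOA{A} \otimes \VOA{B})$-action, each tensor factor being generated as a module by the image of its own universal intertwining operator. The main obstacle is establishing the decomposition lemma: without the finiteness hypothesis one could only hope for a possibly infinite formal sum, and there would be no guarantee that the corresponding $\zeta$ descends to a well-defined morphism between modules. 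I would attack the lemma by fixing a basis of whichever fusion coefficient space is finite-dimensional, projecting the components of the given $(\VOA{A} \otimes \VOA{B})$-intertwining operator onto this basis, and exploiting the commutation of the $\VOA{A}$- and $\VOA{B}$-actions on $\Mod{A}_3 \otimes \Mod{B}_3$ to verify that the complementary coefficients satisfy the Jacobi identity for intertwining operators of the expected type on the other factor.
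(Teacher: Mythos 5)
Your argument is correct and follows essentially the same route as the paper: the crux in both is the finite decomposition $\intw^{\fus{}}_{\VOA{A}\otimes\VOA{B}} = \sum_{j} \tilde{\intw}_{\VOA{A}}^{(j)} \otimes \tilde{\intw}_{\VOA{B}}^{(j)}$ (your ``decomposition lemma'' is precisely \cite[Thm.~2.10]{ADL}, which the paper simply imports, noting that it extends to logarithmic intertwiners), followed by universal-property bookkeeping and the surjectivity of $\intw^{\fus{}}_{\VOA{A}} \otimes \intw^{\fus{}}_{\VOA{B}}$. Your packaging via a pair of mutually inverse morphisms $\eta$, $\zeta$ is only cosmetically different from the paper's direct verification that $\bigl((\Mod{A}_1 \fus{\VOA{A}} \Mod{A}_2) \otimes (\Mod{B}_1 \fus{\VOA{B}} \Mod{B}_2), \intw^{\fus{}}_{\VOA{A}} \otimes \intw^{\fus{}}_{\VOA{B}}\bigr)$ satisfies the universal property, with your $\zeta$ playing the role of the paper's $\mu$.
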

\begin{proof}
	The key here is \cite[Thm.~2.10]{ADL} which, as stated, applies to non-logarithmic intertwining operators but in fact also holds in when logarithmic intertwiners are present.  Using this, we {may write} 
	\begin{align}
		\intw^{\fus{}}_{\VOA{A} \otimes \VOA{B}}=\sum_{j=1}^N \tilde{\intw}_{\VOA{A}}^{(j)}\otimes\tilde{\intw}_{\VOA{B}}^{(j)},
	\end{align}
	for some $N$, where each $\tilde{\intw}_{\VOA{A}}^{(j)}$ is an intertwiner for $\VOA{A}$ of type $\fusco{\Mod{A}_1}{\Mod{A}_2}{\Mod{A}_3}$ and each $\tilde{\intw}_{\VOA{B}}^{(j)}$ is of type $\fusco{\Mod{B}_1}{\Mod{B}_2}{\Mod{B}_3}$ for $\VOA{B}$. The universality of the fusion product now guarantees  $\VOA{A}$-modules,
	the existence of (unique) $\VOA{A}$-module morphisms	$\mu_\VOA{A}^{(j)} \colon \Mod{A}_1 \fus{\VOA{A}} \Mod{A}_2 \rightarrow \Mod{A}_3$,
	such that $\mu_\VOA{A}^{(j)} \circ \intw^{\fus{}}_{\VOA{A}} = \tilde{\intw}_{\VOA{A}}^{(j)}$, and
	$\VOA{B}$-module morphisms	$\mu_\VOA{B}^{(j)} \colon \Mod{B}_1 \fus{\VOA{B}} \Mod{B}_2 \rightarrow \Mod{B}_3$,
	such that $\mu_\VOA{B}^{(j)} \circ \intw^{\fus{}}_{\VOA{B}} = \tilde{\intw}_\VOA{B}^{(j)}$.
	Setting $\mu = \sum_{j=1}^N \mu_\VOA{A}^{(j)} \otimes \mu_\VOA{B}^{(j)}$, we obtain
	\begin{align}
	\mu \circ \left( \intw^{\fus{}}_\VOA{A} \otimes \intw^{\fus{}}_{\VOA{B}} \right)
	= \sum_{j=1}^N \left( \mu_\VOA{A}^{(j)} \otimes \mu_\VOA{B}^{(j)} \right) \circ \left( \intw^{\fus{}}_\VOA{A} \otimes \intw^{\fus{}}_{\VOA{B}} \right)
	= \sum_{j=1}^N \tilde{\intw}_{\VOA{A}}^{(j)} \otimes \tilde{\intw}_{\VOA{B}}^{(j)} = \intw^{\fus{}}_{\VOA{A} \otimes \VOA{B}}.
	\end{align}

	Now, let $\Mod{X}$ be a ``test'' $\VOA{A}\otimes \VOA{B}$-module and let $\intw$ be 
	an intertwining operator of type $\fusco{\Mod{A}_1\otimes \Mod{B}_1}{\Mod{A}_2\otimes \Mod{B}_2}{\Mod{X}}$.
	By the universal property satisfied by $(\Mod{A}_3\otimes \Mod{B}_3, \intw^{\fus{}}_{\VOA{A} \otimes \VOA{B}})$, there exists a (unique) $\eta \colon \Mod{A}_3\otimes \Mod{B}_3\rightarrow \Mod{X}$ such that $\eta\circ \intw^{\fus{}}_{\VOA{A} \otimes \VOA{B}}=\intw$.  It follows 
	that
	\begin{align} \label{eqn:univ}
	(\eta \circ \mu) \circ \left( \intw^{\fus{}}_{\VOA{A}} \otimes \intw^{\fus{}}_{\VOA{B}} \right)
	= \eta \circ \intw^{\fus{}}_{\VOA{A} \otimes \VOA{B}} = \intw.
	\end{align}

	It remains 
	to prove that $\eta\circ\mu \colon (\Mod{A}_1 \fus{\VOA{A}} \Mod{A}_2) \otimes (\Mod{B}_1 \fus{\VOA{B}} \Mod{B}_2) \rightarrow \Mod{X}$
	is the 
	unique $\VOA{A} \otimes \VOA{B}$-module morphism satisfying \eqref{eqn:univ}. 
	However, 
	$\intw^{\fus{}}_{\VOA{A}}$ and $\intw^{\fus{}}_{\VOA{B}}$ are surjective intertwining operators --- this surjectivity goes hand-in-hand with the ``uniqueness'' requirement in the universal property, see \cite[Prop.~4.23]{HLZ} --- and so, therefore,	
	is $\intw^{\fus{}}_{\VOA{A}} \otimes \intw^{\fus{}}_{\VOA{B}}$.
	This means that equation \eqref{eqn:univ} uniquely specifies the morphism $\eta\circ\mu$, completing the proof.
\end{proof}

From this \lcnamecref{prop:FusingTensorProducts}, we immediately obtain the following \lcnamecref{cor:Intriguing}.
\begin{cor} \label{cor:Intriguing}
	If $\VOA{A}$ and $\VOA{B}$ are \emph{simple} \voas{} and 
	$\Mod{M}\otimes \Mod{N}$ is a simple current for $\VOA{A}\otimes \VOA{B}$,
	then 
	$\Mod{M}$ and $\Mod{N}$ are simple currents for $\VOA{A}$ and $\VOA{B}$, respectively.
	Moreover, the inverse of $\Mod{M}\otimes \Mod{N}$ is $\Mod{M}^{-1}\otimes \Mod{N}^{-1}$.
\end{cor}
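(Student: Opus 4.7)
The plan is to reduce the statement to an application of Proposition \ref{prop:FusingTensorProducts}, with the key preliminary input being that the fusion inverse of $\Mod{M}\otimes\Mod{N}$ itself has the form of an external tensor product.

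First, I would observe that since $\VOA{A}$ and $\VOA{B}$ are simple, so is their tensor product $\VOA{A}\otimes\VOA{B}$. Thus Proposition \ref{prop:simplecurrents}\ref{it:SCSimple} applies: any simple current of $\VOA{A}\otimes\VOA{B}$ is a simple module. In particular, $\Mod{M}\otimes\Mod{N}$ is simple as an $\VOA{A}\otimes\VOA{B}$-module, and this forces $\Mod{M}$ to be simple as an $\VOA{A}$-module and $\Mod{N}$ to be simple as a $\VOA{B}$-module, since any proper nonzero submodule of one would pair with the other factor to give a proper nonzero submodule of $\Mod{M}\otimes\Mod{N}$. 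The same argument applies to $\Mod{X}:=(\Mod{M}\otimes\Mod{N})^{-1}$, which is itself a simple current of $\VOA{A}\otimes\VOA{B}$ and hence a simple $\VOA{A}\otimes\VOA{B}$-module.

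Next, I would invoke the standard factorization (of Frenkel-Huang-Lepowsky type) for simple modules over a tensor product of simple \voas{} in our working vertex tensor category setting, to write $\Mod{X}\cong\Mod{P}\otimes\Mod{Q}$ for simple $\VOA{A}$-module $\Mod{P}$ and simple $\VOA{B}$-module $\Mod{Q}$. Having put the inverse in external tensor product form, I can apply Proposition \ref{prop:FusingTensorProducts} to the isomorphism
\begin{equation}
(\Mod{M}\otimes\Mod{N})\fus{\VOA{A}\otimes\VOA{B}}(\Mod{P}\otimes\Mod{Q})\cong\VOA{A}\otimes\VOA{B},
\end{equation}
using that the fusion coefficient $\binom{\VOA{A}}{\Mod{M}\,\,\Mod{P}}$ is finite under the tensor category hypotheses in force. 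The proposition identifies the left-hand side with $(\Mod{M}\fus{\VOA{A}}\Mod{P})\otimes(\Mod{N}\fus{\VOA{B}}\Mod{Q})$.

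Finally, comparing $(\Mod{M}\fus{\VOA{A}}\Mod{P})\otimes(\Mod{N}\fus{\VOA{B}}\Mod{Q})\cong\VOA{A}\otimes\VOA{B}$ and using the uniqueness of the external tensor factorization of a simple $\VOA{A}\otimes\VOA{B}$-module, I conclude $\Mod{M}\fus{\VOA{A}}\Mod{P}\cong\VOA{A}$ and $\Mod{N}\fus{\VOA{B}}\Mod{Q}\cong\VOA{B}$. Hence $\Mod{M}$ is a simple current for $\VOA{A}$ with $\Mod{M}^{-1}\cong\Mod{P}$, $\Mod{N}$ is a simple current for $\VOA{B}$ with $\Mod{N}^{-1}\cong\Mod{Q}$, and $(\Mod{M}\otimes\Mod{N})^{-1}\cong\Mod{M}^{-1}\otimes\Mod{N}^{-1}$. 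The main obstacle is the appeal to the factorization (and its uniqueness) of simple modules over $\VOA{A}\otimes\VOA{B}$ as external tensor products: this is classical for rational \voas{}, but in the broader vertex tensor category framework considered here it either requires a separate justification or should be recorded as part of the ambient hypotheses on the module category.
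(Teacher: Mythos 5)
Your proof is correct and follows essentially the same route as the paper's: simplicity of $\Mod{M}\otimes\Mod{N}$ and its inverse via \cref{prop:simplecurrents}\ref{it:SCSimple}, factorization of the inverse as an external tensor product of a simple $\VOA{A}$-module with a simple $\VOA{B}$-module, and then an application of \cref{prop:FusingTensorProducts}. The factorization step you flag as the main obstacle is precisely what the paper handles by citing \cite[Thm.~4.7.4]{FHL}.
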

\begin{proof}
	Because $\VOA{A} \otimes \VOA{B}$ is assumed to be simple, $\Mod{M}\otimes \Mod{N}$ and its inverse are simple $\VOA{A}\otimes \VOA{B}$-modules, by \cref{prop:simplecurrents}\ref{it:SCSimple}.  Moreover, this simplicity hypothesis also guarantees that 
	the inverse has the form 
	$\tilde{\Mod{M}}\otimes\tilde{\Mod{N}}$ 
	\cite[Thm.~4.7.4]{FHL}. Applying \cref{prop:FusingTensorProducts} to 
	$(\tilde{\Mod{M}}\otimes\tilde{\Mod{N}})\fus{\VOA{A} \otimes \VOA{B}}(\Mod{M}\otimes \Mod{N})\cong \Mod{A}\otimes \Mod{B}$, 
	we obtain
	$\tilde{\Mod{M}}\fus{\VOA{A}}\Mod{M}\cong\VOA{A}$ and $\tilde{\Mod{N}}\fus{\VOA{B}}\Mod{N}\cong\VOA{B}$, hence $\tilde{\Mod{M}} \cong \Mod{M}^{-1}$ and $\tilde{\Mod{N}} \cong \Mod{N}^{-1}$.
\end{proof}

In any case, \eqref{eq:FCxFC} and \cref{prop:FusingTensorProducts} give the desired conclusion:
\begin{equation} \label{eq:CxC}
	\Mod{C}_\lambda \fus{\VOA{C}} \Mod{C}_{\mu} \cong \Mod{C}_{\lambda + \mu}.
\end{equation}
In particular, the $\Mod{C}_{\lambda}$ are simple currents for all $\lambda \in \grp{L}$.
We have therefore arrived at the following decomposition of $\VOA{V}$ into simple currents of $\VOA{H}$ and $\VOA{C}$:
\begin{equation} \label{eq:DecompVFC}
\VOA{V} = \bigoplus_{\lambda \in \grp{L}} \Mod{F}_\lambda \otimes \Mod{C}_\lambda.
\end{equation}
However, this may be further refined if $\lambda \neq \mu$ in $\grp{L}$ does not imply that $\Mod{C}_{\lambda} \neq \Mod{C}_{\mu}$ (this implication is obviously true for Fock modules).  Suppose that $\Mod{C}_\lambda = \Mod{C}_{\lambda +\mu}$ for some $\lambda, \mu \in \grp{L}$.  Then, we must have $\Mod{C}_\mu=\VOA{C}$ and hence $\Mod{C}_{n \mu}=\VOA{C}$ for all $n \in \ZZ$.  More generally, let $\grp{N}$ denote the sublattice of $\mu \in \grp{L}$ for which $\Mod{C}_{\mu} = \VOA{C}$.  Then, we may define
\begin{equation}
\Mod{W}_{[\lambda]} = \bigoplus_{\mu \in \grp{N}} \Mod{F}_{\lambda + \mu}
\end{equation}
and note that $\VOA{W} = \Mod{W}_{[0]}$ will be a lattice \voa{} if the conformal weights of the fields of $\Mod{F}_{\mu}$, with $\mu \in \grp{N}$, are all integers.\footnote{If the conformal weights are not all integers, then $\VOA{W}$ is a vertex operator superalgebra, or another type of generalised \voa{}.  This does not significantly affect the following analysis.}  The decomposition \eqref{eq:DecompVFC} then becomes a decomposition as a $\VOA{W} \otimes \VOA{C}$-module:
\begin{equation} \label{eq:DecompVWC}
\VOA{V}= \bigoplus_{[\lambda] \in \grp{L} / \grp{N}} \Mod{W}_{[\lambda]} \otimes \Mod{C}_{[\lambda]}.
\end{equation}
Now the $\Mod{C}_{[\lambda]} \equiv \Mod{C}_{\lambda}$, with $[\lambda] \in \grp{L} / \grp{N}$, are mutually inequivalent:  $[\lambda] \neq [\mu]$ implies that $\Mod{C}_{[\lambda]} \ncong \Mod{C}_{[\mu]}$.  We remark that $\grp{L} / \grp{N}$ may still be infinite because the rank of $\grp{N}$ may be smaller than that of $\grp{L}$.

We summarise these results as follows.
\begin{thm} \label{thm:SW-VOA}
Let:
\begin{itemize}
\item $\VOA{V}$ be a simple \voa{}. 
\item $\VOA{H} \subseteq \VOA{V}$ be a Heisenberg \vosa{} that acts semisimply on $\VOA{V}$.
\item $\VOA{C} = \Mod{C}_0$ be the commutant of $\VOA{H}$ in $\VOA{V}$.
\item $\grp{L}$ be the {lattice} of Heisenberg weights of $\VOA{V}$ ($\VOA{V}$ being regarded as an $\VOA{H}$-module). 
\end{itemize}
Then the decompositions \eqref{eq:DecompVFC} and \eqref{eq:DecompVWC} hold, where:
\begin{itemize}
\item The $\Mod{C}_{\lambda}$, $\lambda \in \grp{L}$, are simple currents for $\VOA{C}$ whose fusion products
include
$\Mod{C}_{\lambda} \fus{\VOA{C}} \Mod{C}_{\mu} = \Mod{C}_{\lambda + \mu}$.
\item $\VOA{W} = \bigoplus_{\lambda \in \grp{N}} \Mod{F}_{\lambda}$ is a simple current extension of $\VOA{H}$ ($\grp{N}$ is the sublattice of $\lambda \in \grp{L}$ for which $\Mod{C}_{\lambda} \cong \Mod{C}$).
\item The $\Mod{W}_{[\lambda]}$, $[\lambda] \in \grp{L} / \grp{N}$, are simple currents for $\VOA{W}$ with fusion products
$\Mod{W}_{[\lambda]} \fus{\VOA{W}} \Mod{W}_{[\mu]} = \Mod{W}_{[\lambda + \mu]}$.
\end{itemize}
In particular, the $\Mod{C}_{[\lambda]}$, $[\lambda] \in \grp{L} / \grp{N}$, of \eqref{eq:DecompVWC} are mutually non-isomorphic.
\end{thm}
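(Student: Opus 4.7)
The plan is to assemble the theorem as a summary of the constructions developed throughout \cref{sec:HCosets}, adding only a few verifications to complete the proof.

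First, I would establish decomposition \eqref{eq:DecompVFC} together with the simple current property of the $\Mod{C}_{\lambda}$.  The Heisenberg zero modes $h^{i}_{0}$ generate a finitely generated abelian group $\grp{G}$ that acts semisimply on $\VOA{V}$ by hypothesis, yielding $\VOA{V} = \bigoplus_{\lambda \in \grp{L}} \Mod{V}_{\lambda}$ with $\grp{L} \subseteq \hat{\grp{G}}$ a lattice (simplicity of $\VOA{V}$ gives closure under addition and negation, as noted in the discussion preceding \cref{thm:MiySimpleCurrent}).  Because the only simple $\VOA{H}$-module with Heisenberg weight $\lambda$ is $\Mod{F}_{\lambda}$, semisimplicity of the $\grp{G}$-action on each $\Mod{V}_{\lambda}$ forces $\Mod{V}_{\lambda} \cong \Mod{F}_{\lambda} \otimes \Mod{C}_{\lambda}$ as an $\VOA{H} \otimes \VOA{C}$-module, with $\Mod{C}_{\lambda}$ automatically simple (since $\Mod{V}_{\lambda}$ and $\Mod{F}_{\lambda}$ are).  \cref{thm:MiySimpleCurrent}, applied with $\VOA{V}_{0} = \VOA{H} \otimes \VOA{C}$, shows each $\Mod{V}_{\lambda}$ is a simple current for $\VOA{V}_{0}$ with fusion $\Mod{V}_{\lambda} \fus{\VOA{V}_{0}} \Mod{V}_{\mu} \cong \Mod{V}_{\lambda+\mu}$.  \cref{cor:Intriguing} then implies that $\Mod{F}_{\lambda}$ and $\Mod{C}_{\lambda}$ are separately simple currents for $\VOA{H}$ and $\VOA{C}$, and \cref{prop:FusingTensorProducts} applied to the corresponding tensor product fusion rule yields $\Mod{C}_{\lambda} \fus{\VOA{C}} \Mod{C}_{\mu} \cong \Mod{C}_{\lambda+\mu}$.

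Next, I would organize the refined decomposition \eqref{eq:DecompVWC}.  Define $\grp{N} = \set{\mu \in \grp{L} \st \Mod{C}_{\mu} \cong \VOA{C}}$.  Closure of $\grp{N}$ under addition is immediate from $\Mod{C}_{\mu_{1}} \fus{\VOA{C}} \Mod{C}_{\mu_{2}} \cong \Mod{C}_{\mu_{1}+\mu_{2}}$, and closure under negation follows because the inverse of the simple current $\Mod{C}_{\mu}$ is $\Mod{C}_{-\mu}$.  By construction, $\Mod{C}_{\lambda} \cong \Mod{C}_{\lambda'}$ iff $\lambda - \lambda' \in \grp{N}$, so the classes $[\lambda] \in \grp{L}/\grp{N}$ index pairwise non-isomorphic $\Mod{C}_{[\lambda]}$.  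Grouping the summands of \eqref{eq:DecompVFC} along cosets yields \eqref{eq:DecompVWC} with $\Mod{W}_{[\lambda]} = \bigoplus_{\mu \in \grp{N}} \Mod{F}_{\lambda+\mu}$.

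Finally, I would verify the claims about $\VOA{W}$.  The $[0]$-summand of \eqref{eq:DecompVWC} is $\VOA{W} \otimes \VOA{C}$, which is therefore a \vosa{} of $\VOA{V}$; this endows $\VOA{W}$ with a \voa{} structure extending $\VOA{H}$ (equivalently, $\VOA{W} \cong \Com{\VOA{C}}{\VOA{V}}$).  Each $\Mod{F}_{\mu}$ with $\mu \in \grp{N}$ is a simple current for $\VOA{H}$ by the previous paragraph, so $\VOA{W}$ is indeed a simple current extension of $\VOA{H}$.  The simplicity of $\VOA{W} \otimes \VOA{C}$ follows from the Kac--Radul argument (\cref{thm:KR}) applied to the $\grp{L}/\grp{N}$-action on $\VOA{V}$, so a second application of \cref{thm:MiySimpleCurrent}, now with $\VOA{V}_{0} = \VOA{W} \otimes \VOA{C}$ and with the semisimple action of $\grp{L}/\grp{N}$, shows that each $\Mod{W}_{[\lambda]} \otimes \Mod{C}_{[\lambda]}$ is a simple current for $\VOA{W} \otimes \VOA{C}$ with the product fusion rule.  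Invoking \cref{cor:Intriguing} and \cref{prop:FusingTensorProducts} once more, separately on the $\VOA{W}$ and $\VOA{C}$ tensor factors, extracts the desired statements $\Mod{W}_{[\lambda]} \fus{\VOA{W}} \Mod{W}_{[\mu]} \cong \Mod{W}_{[\lambda+\mu]}$.

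The main point to be careful about is the second invocation of \cref{thm:MiySimpleCurrent}:  one needs that $\VOA{W} \otimes \VOA{C}$ satisfies the hypotheses required to invoke the Huang--Lepowsky--Zhang machinery and that the coset action of $\grp{L}/\grp{N}$ really is semisimple and faithful.  Both follow from the running assumptions on the module categories and from semisimplicity of the original $\grp{G}$-action on $\VOA{V}$ respectively, but making this transition rigorous is the only part of the argument beyond mere recapitulation.
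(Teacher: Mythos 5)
Your proposal is correct and follows essentially the same route as the paper: the paper's ``proof'' of \cref{thm:SW-VOA} is precisely the development of \cref{sec:HCosets} (the $\grp{G}$-grading, Kac--Radul for simplicity of the $\Mod{V}_{\lambda}$ and hence of the $\Mod{C}_{\lambda}$, \cref{thm:MiySimpleCurrent} for the fusion rules $\Mod{V}_{\lambda}\fus{\VOA{V}_0}\Mod{V}_{\mu}\cong\Mod{V}_{\lambda+\mu}$, and \cref{prop:FusingTensorProducts} to split these into Heisenberg and coset factors), which is exactly what you reassemble. The only place you go beyond the paper's exposition is in justifying the $\VOA{W}$-claims via a second pass of Kac--Radul/Miyamoto through the coarser $\grp{L}/\grp{N}$-grading; the paper leaves this implicit, and your route works provided one checks (as you note) that this coarser decomposition is realised by a finitely generated abelian group of automorphisms, e.g.\ suitable exponentials of the Heisenberg zero modes separating the cosets of $\grp{N}$ in $\grp{L}$.
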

\begin{remark}
Note that we may instead choose $\grp{N}$ to be any subgroup of $\grp{L}$ in which every $\lambda \in \grp{N}$ satisfies $\Mod{C}_{\lambda} \cong \Mod{C}$.  In particular, we may take $\grp{N} = 0$, in which case the decomposition \eqref{eq:DecompVWC} reduces to that of \eqref{eq:DecompVFC}.  {Obviously, the conclusion that the $\Mod{C}_{[\lambda]}$ are mutually non-isomorphic will only hold if $\grp{N}$ is taken to be maximal.}
\end{remark}

The corresponding decomposition for $\VOA{V}$-modules proceeds similarly.
Let $\Mod{M}$ be a non-zero $\VOA{V}$-module 
upon which $\VOA{H}$ acts semisimply.
The $\VOA{H}$-weight space decomposition of $\Mod{M}$ then gives $\Mod{M}=\bigoplus_{\mu \in \grp{M}} \Mod{M}_\mu$,
where $\grp{M} =\set{\mu \in \RR^r \st \Mod{M}_{\mu} \neq 0}$ is countable.
Using the triviality of annihilating ideals \cite[Cor.~4.5.15]{LL} as before, we see that $\grp{M}$
is closed under the additive action of $\grp{L}$, meaning that $\lambda\in\grp{L}$ and $\mu\in\grp{M}$
imply that $\lambda+\mu\in\grp{M}$. It follows that each $\Mod{M}_{\mu}$ is a $\VOA{V}_0$-module.
Decomposing as an $\VOA{H}\otimes\VOA{C}$-module,
we get $\Mod{M}_\mu=\Mod{F}_\mu\otimes\Mod{D}_\mu$, for some $\VOA{C}$-module $\Mod{D}_\mu$.
The key step towards proving a decomposition theorem for modules is now to establish certain fusion products involving the $\Mod{M}_{\mu}$ and $\Mod{D}_{\mu}$.

\begin{prop}\label{thm:SW-Mod-basic-fusion}
Let $\VOA{V}$, $\VOA{H}$, $\VOA{C}$, $\VOA{W}$ and $\grp{L}$ 
be as in \cref{thm:SW-VOA} and let $\Mod{M}$, 
$\grp{M}$ and $\Mod{M}_\mu=\Mod{F}_\mu\otimes\Mod{D}_\mu$ be as in the previous paragraph.
Then, the following fusion rules hold for all $\lambda \in \grp{L}$ and $\mu \in \grp{M}$: 
\begin{subequations}
\begin{align}
\Mod{V}_\lambda\fus{\VOA{V}_0}\Mod{M}_\mu & \cong \Mod{M}_{\lambda+\mu} \label{eqn:VlxMm},\\
\Mod{C}_\lambda\fus{\VOA{C}}\Mod{D}_\mu & \cong \Mod{D}_{\lambda+\mu} \label{eqn:ClxDm}.
\end{align}
\end{subequations}
\end{prop}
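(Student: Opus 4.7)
The plan is to construct, for each $\lambda\in\grp{L}$ and $\mu\in\grp{M}$, a $\VOA{V}_0$-module homomorphism
\begin{equation*}
f_{\lambda,\mu}\colon \Mod{V}_\lambda \fus{\VOA{V}_0}\Mod{M}_\mu\longrightarrow\Mod{M}_{\lambda+\mu}
\end{equation*}
directly from the $\VOA{V}$-module vertex operator $Y_{\Mod{M}}$, and then to argue that $f_{\lambda,\mu}$ is an isomorphism by leveraging the simple current property of $\Mod{V}_\lambda$ furnished by \cref{thm:MiySimpleCurrent}. Once \eqref{eqn:VlxMm} is in place, \eqref{eqn:ClxDm} follows formally from \cref{prop:FusingTensorProducts}.

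To produce $f_{\lambda,\mu}$, I would restrict $Y_{\Mod{M}}\colon\VOA{V}\otimes\Mod{M}\to\Mod{M}\{z\}[\log z]$ to $\Mod{V}_\lambda\otimes\Mod{M}_\mu$. Since the Heisenberg zero modes grade $\VOA{V}$ by $\grp{L}$ and $\Mod{M}$ by $\grp{M}$, and their eigenvalues add under the $\VOA{V}$-action, this restriction lands in $\Mod{M}_{\lambda+\mu}\{z\}[\log z]$, hence defines a (logarithmic) $\VOA{V}_0$-intertwining operator of type $\fusco{\Mod{V}_\lambda}{\Mod{M}_\mu}{\Mod{M}_{\lambda+\mu}}$. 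This intertwiner is non-trivial by the same triviality-of-annihilating-ideals argument (\cite[Cor.~4.5.15]{LL}) that underlies the $\grp{L}$-stability of $\grp{M}$: if $Y_{\Mod{M}}\vert_{\Mod{V}_\lambda\otimes\Mod{M}_\mu}$ vanished identically, then $\Mod{V}_\lambda$ would annihilate a non-zero $\VOA{V}_0$-submodule of $\Mod{M}$, contradicting simplicity of $\VOA{V}$. The universal property in \cref{def:fusion} then yields $f_{\lambda,\mu}$.

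To show $f_{\lambda,\mu}$ is an isomorphism, exploit that by \cref{thm:MiySimpleCurrent} the functor $\Mod{V}_\lambda\fus{\VOA{V}_0}\blank$ is an exact autoequivalence with quasi-inverse $\Mod{V}_{-\lambda}\fus{\VOA{V}_0}\blank$. Applying the latter to $f_{\lambda,\mu}$ and composing with $f_{-\lambda,\lambda+\mu}$, then using the associativity constraint of the HLZ vertex tensor category together with the isomorphisms $\Mod{V}_{-\lambda}\fus{\VOA{V}_0}\Mod{V}_\lambda\cong\VOA{V}_0$ and $\VOA{V}_0\fus{\VOA{V}_0}\Mod{M}_\mu\cong\Mod{M}_\mu$ (the left unit), one obtains an endomorphism of $\Mod{M}_\mu$ which, by the associativity of the module action $Y_{\Mod{M}}(Y_{\VOA{V}}(v_1,z_0)v_2,z_2)=Y_{\Mod{M}}(v_1,z_0+z_2)Y_{\Mod{M}}(v_2,z_2)$, coincides with the identity. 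Swapping the roles of $\lambda$ and $\mu$ with those of $-\lambda$ and $\lambda+\mu$ yields an analogous identity. These respectively show that $\id_{\Mod{V}_{-\lambda}}\fus{\VOA{V}_0}f_{\lambda,\mu}$ is a split monomorphism (hence so is $f_{\lambda,\mu}$, since equivalences reflect split monomorphisms) and that $f_{\lambda,\mu}$ is a split epimorphism. A morphism that is simultaneously a split monomorphism and a split epimorphism is an isomorphism, yielding \eqref{eqn:VlxMm}.

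For \eqref{eqn:ClxDm}, apply \cref{prop:FusingTensorProducts} with $\VOA{A}=\VOA{H}$, $\VOA{B}=\VOA{C}$, taking $\Mod{A}_i,\Mod{B}_i$ to be the Heisenberg and coset components of $\Mod{V}_\lambda=\Mod{F}_\lambda\otimes\Mod{C}_\lambda$, $\Mod{M}_\mu=\Mod{F}_\mu\otimes\Mod{D}_\mu$ and $\Mod{M}_{\lambda+\mu}=\Mod{F}_{\lambda+\mu}\otimes\Mod{D}_{\lambda+\mu}$. The Fock fusion rule $\Mod{F}_\lambda\fus{\VOA{H}}\Mod{F}_\mu\cong\Mod{F}_{\lambda+\mu}$ of \cref{sec:heisenbergtensor} has fusion coefficient $1$, hence finite, so the proposition delivers \eqref{eqn:ClxDm} at once. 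The principal obstacle in this plan is the associativity chase used to identify $f_{\lambda,\mu}$ as an isomorphism: while the required coincidence of the two composites with the identity is morally just the module axiom for $Y_{\Mod{M}}$, matching this against the HLZ associativity constraint---defined via analytic continuation of iterates and products of (possibly logarithmic) intertwining operators---requires careful bookkeeping.
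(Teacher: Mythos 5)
Your proposal is correct, but the step that upgrades the canonical morphism $\Mod{V}_\lambda\fus{\VOA{V}_0}\Mod{M}_\mu\to\Mod{M}_{\lambda+\mu}$ to an isomorphism takes a genuinely different route from the paper's. The paper first shows this morphism is a \emph{surjection}: the triviality of annihilating ideals forces the $\VOA{V}$-submodule generated by $\Mod{M}_\mu$ to contain every $\Mod{M}_{\lambda+\mu}$, and \cite[Prop.~4.5.6]{LL} then identifies the span of the coefficients of $Y_{\Mod{M}}$ on $\Mod{V}_\lambda\otimes\Mod{M}_\mu$ with all of $\Mod{M}_{\lambda+\mu}$. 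Fusing with $\Mod{V}_{-\lambda}$ and composing with the analogous surjection for $(-\lambda,\lambda+\mu)$ gives a weight-preserving surjective endomorphism of $\Mod{M}_\mu$, which is an isomorphism because the generalised $L_0$-eigenspaces are finite-dimensional; no identification of the composite with the identity is ever needed. You instead identify that same composite, up to the associativity and unit isomorphisms, with the module action of $\VOA{V}_0$ on $\Mod{M}_\mu$ --- essentially the intertwining-operator computation carried out in \cref{app:Miyamoto}, steps (4)--(7) and (12) --- and then conclude via your split mono/split epi argument, which neatly replaces the simplicity argument of the appendix that the paper explicitly notes is unavailable for the non-simple $\Mod{M}_\mu$. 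Your version buys independence from the grading-restriction hypothesis (finite-dimensional weight spaces) and from the surjectivity input of \cite[Prop.~4.5.6]{LL}, at the price of the associativity bookkeeping you flag, which does go through under the standing vertex tensor category assumptions; note only that the composite need not literally be the identity, merely an isomorphism (it differs from the identity by the unit isomorphism and the isomorphism $\Mod{V}_{-\lambda}\fus{\VOA{V}_0}\Mod{V}_\lambda\cong\VOA{V}_0$ induced by $Y$), which suffices for your split mono/epi conclusion. Both treatments of \eqref{eqn:ClxDm} via \cref{prop:FusingTensorProducts} coincide with the paper's.
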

\noindent We mention that when $\Mod{M}=\Mod{V}$, the fusion rule \eqref{eqn:VlxMm} is precisely the result of Miyamoto reported in
\cref{thm:MiySimpleCurrent}. However, we cannot use Miyamoto's proof in this more general setting because it would amount to assuming the simplicity of the $\Mod{M}_\mu$ as $\VOA{V}_0$-modules.
\begin{proof}
We will detail the proof of the fusion rule \eqref{eqn:VlxMm}, noting that \eqref{eqn:ClxDm} will then follow immediately by applying \cref{prop:FusingTensorProducts}.

To prove \eqref{eqn:VlxMm}, let $\widetilde{\Mod{M}}$ denote the $\VOA{V}$-submodule of $\Mod{M}$ generated by $\Mod{M}_{\mu}$.  Then, $(\Mod{M} / \widetilde{\Mod{M}})_{\mu} = 0$.  If $v \in \Mod{V}_{-\lambda}$ is non-zero, for some $\lambda \in \grp{L}$, and $w \in (\Mod{M} / \widetilde{\Mod{M}})_{\lambda + \mu}$, then it follows that $v$ must annihilate $w$, hence that $w=0$ by the triviality of annihilating ideals \cite[Cor.~4.5.15]{LL}.  We conclude that $(\Mod{M} / \widetilde{\Mod{M}})_{\lambda + \mu} = 0$, that is $\widetilde{\Mod{M}}_{\lambda + \mu} = \Mod{M}_{\lambda + \mu}$, for all $\lambda \in \grp{L}$.

The action of $\VOA{V}$ on $\Mod{M}$ now restricts to an action of $\Mod{V}_{\lambda}$ on $\Mod{M}_{\mu}$.  The space generated by the latter action is therefore precisely $\Mod{M}_{\lambda + \mu}$ \cite[Prop.~4.5.6]{LL}.  It now follows from the universal property of fusion products that there exists a surjection
\begin{equation} \label{eq:surj}
	\Mod{V}_\lambda\fus{\VOA{V}_0}\Mod{M}_\mu \lsra \Mod{M}_{\lambda+\mu},
\end{equation}
for each $\lambda \in \grp{L}$ and $\mu \in \grp{M}$.  Fusing with the simple current $\Mod{V}_{-\lambda}$ therefore gives
\begin{equation}
	\Mod{M}_{\mu} \cong \Mod{V}_{-\lambda} \fus{\VOA{V}_0} (\Mod{V}_{\lambda} \fus{\VOA{V}_0} \Mod{M}_{
	\mu}) \lsra \Mod{V}_{-\lambda} \fus{\VOA{V}_0} \Mod{M}_{\lambda+\mu} \lsra \Mod{M}_{\mu},
\end{equation}
the second surjection just being \eqref{eq:surj} with $(\lambda, \mu)$ replaced by $(-\lambda, \lambda + \mu)$.  Since these surjections preserve conformal weights and the dimensions of the generalised eigenspaces of $L_0$ are finite, by hypothesis, it follows that $\Mod{V}_{-\lambda} \fus{\VOA{V}_0} \Mod{M}_{\lambda+\mu} = \Mod{M}_{\mu}$, for all $\lambda \in \grp{L}$, proving \eqref{eqn:VlxMm}.
\end{proof}

If $\lambda \in \grp{N}$, then the fusion rules \eqref{eqn:ClxDm} imply that $\Mod{D}_{\lambda + \mu} = \Mod{D}_{\mu}$, hence that the $\Mod{D}_{[\mu]} \equiv \Mod{D}_{\mu}$ are well defined.  The decomposition of $\Mod{M}$ as a $\VOA{W} \otimes \VOA{C}$-module now follows as before.  Before stating this formally, it is convenient to observe that if $\grp{M} = \grp{M}^1 \cup \cdots \cup \grp{M}^n$ is a disjoint union of orbits under the action of $\grp{L}$, then $\Mod{M} = \Mod{M}^1 \oplus \cdots \oplus \Mod{M}^n$ as a $\VOA{V}$-module, where $\Mod{M}^i = \bigoplus_{\mu \in \grp{M}^i} \Mod{M}^i_{\mu}$.  While the $\Mod{M}_i$ need not be indecomposable as $\VOA{V}$-modules, several of the arguments to come will be simplified if we assume that $\grp{M}$ consists of a single $\grp{L}$-orbit.  Conclusions about more general $\Mod{M}$ then follow immediately from the properties of direct sums.
\begin{thm} \label{thm:SW-Mod}
Let $\VOA{V}$, $\VOA{H}$, $\VOA{C}$, $\VOA{W}$, $\grp{L}$ and $\grp{N}$ be as in \cref{thm:SW-VOA} and let $\Mod{M}$ be a 
$\VOA{V}$-module 
upon which $\VOA{H}$ acts semisimply.
Then,  $\Mod{M}$ decomposes as
\begin{equation} \label{eq:DecompM}
\Mod{M} = \bigoplus_{\mu \in \grp{M}} \Mod{M}_\mu = \bigoplus_{\mu \in \grp{M}} \Mod{F}_{\mu} \otimes \Mod{D}_{\mu} = \bigoplus_{[\mu] \in \grp{M} / \grp{N}} \Mod{W}_{[\mu]} \otimes \Mod{D}_{[\mu]},
\end{equation}
where $\grp{M}$ is a union of $\grp{L}$-orbits and the $\Mod{D}_{\mu} = \Mod{D}_{[\mu]}$ are $\VOA{C}$-modules satisfying $\Mod{C}_{\lambda} \fus{\VOA{C}} \Mod{D}_{\mu} = \Mod{D}_{\lambda + \mu}$, for all $\lambda \in \grp{L}$ and $\mu \in \grp{M}$.  Moreover, {if we assume (for convenience) that $\grp{M}$ is a single $\grp{L}$-orbit, then}:
\begin{enumerate}
\item If $\Mod{M}$ is a non-zero $\VOA{V}$-module, then all of the $\Mod{D}_{\mu}$ are non-zero. \label{it:SWNonZero}
\item If $\Mod{M}$ is a simple $\VOA{V}$-module, then all of the $\Mod{D}_{\mu}$ are simple. \label{it:SWSimple}
\item If $\Mod{M}$ is an indecomposable $\VOA{V}$-module, then all of the $\Mod{D}_{\mu}$ are indecomposable. \label{it:SWIndec}
\item If $\ses{\Mod{M}'}{\Mod{M}}{\Mod{M}''}$ is exact, with $\Mod{M}'$ and $\Mod{M}''$ non-zero, then $\Mod{M}'$ and $\Mod{M}''$ decompose as in \eqref{eq:DecompM}:
\begin{equation} \label{eq:DecompM'}
\Mod{M}' = \bigoplus_{\mu \in \grp{M}} \Mod{M}'_\mu = \bigoplus_{\mu \in \grp{M}} \Mod{F}_{\mu} \otimes \Mod{D}'_{\mu}, \qquad
\Mod{M}'' = \bigoplus_{\mu \in \grp{M}} \Mod{M}''_\mu = \bigoplus_{\mu \in \grp{M}} \Mod{F}_{\mu} \otimes \Mod{D}''_{\mu}.
\end{equation}
Moreover, $\ses{\Mod{D}'_{\mu}}{\Mod{D}_{\mu}}{\Mod{D}''_{\mu}}$ is also exact, for all $\mu \in \grp{M}$. \label{it:SWExact}
\item If $\Mod{M}$ has a composition series with composition factors $\Mod{S}^i$, $1 \le i \le n$, then each $\Mod{S}^i$ decomposes into an $\VOA{H} \otimes \VOA{C}$-module as $\Mod{S}^i = \bigoplus_{\mu \in \grp{M}} \Mod{F}_{\mu} \otimes \Mod{T}^i_{\mu}$, where the $\Mod{T}^i_{\mu}$, $1 \le i \le n$, are the composition factors of $\Mod{D}_{\mu}$, for each $\mu \in \grp{M}$.  In particular, each $\Mod{D}_{\mu}$ has the same composition length as $\Mod{M}$. \label{it:SWCompSeries}
\item If $\Mod{M}$ has a socle, then so do the $\Mod{D}_{\mu}$ and $\soc\Mod{M} = \bigoplus_{\mu \in \grp{M}} \Mod{F}_{\mu} \otimes \soc\Mod{D}_{\mu}$. \\
If $\Mod{M}$ has a radical, then so do the $\Mod{D}_{\mu}$.  If, in addition, $\Mod{M}$ has no subquotient isomorphic to the direct sum of two isomorphic simple $\VOA{V}$-modules, then $\rad\Mod{M} = \bigoplus_{\mu \in \grp{M}} \Mod{F}_{\mu} \otimes \rad\Mod{D}_{\mu}$. \label{it:SWRadSoc}
\item If $\Mod{M}$ has a socle series, then so do the $\Mod{D}_{\mu}$ and the corresponding Loewy diagram is obtained by replacing each composition factor $\Mod{S}^i$ by $\Mod{T}^i_{\mu}$, where $\Mod{S}^i = \bigoplus_{\mu \in \grp{M}} \Mod{F}_{\mu} \otimes \Mod{T}^i_{\mu}$. \\
If $\Mod{M}$ has a radical series, then so do the $\Mod{D}_{\mu}$. If, in addition, $\Mod{M}$ has no subquotient isomorphic to the direct sum of two isomorphic simple $\VOA{V}$-modules, then the corresponding Loewy diagram is obtained by replacing each composition factor $\Mod{S}^i$ by $\Mod{T}^i_{\mu}$, where $\Mod{S}^i = \bigoplus_{\mu \in \grp{M}} \Mod{F}_{\mu} \otimes \Mod{T}^i_{\mu}$. \label{it:SWLoewy}
\end{enumerate}
\end{thm}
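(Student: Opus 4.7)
The plan is to build the basic decomposition from the semisimple $\VOA{H}$-action and the fusion rule $\Mod{C}_\lambda \fus{\VOA{C}} \Mod{D}_\mu \cong \Mod{D}_{\lambda+\mu}$ of \cref{thm:SW-Mod-basic-fusion}, and then to propagate structural properties of $\Mod{M}$ to the $\Mod{D}_\mu$ using two complementary tools: the simple-current technology of \cref{prop:simplecurrents} applied to the currents $\Mod{C}_\lambda$, and the Kac--Radul \cref{thm:KR} applied to the abelian group $\grp{G}$ generated by the Heisenberg zero modes. The decomposition \eqref{eq:DecompM} itself is immediate: semisimplicity of $\VOA{H}$ gives $\Mod{M}=\bigoplus_\mu \Mod{F}_\mu \otimes \Mod{D}_\mu$, and \eqref{eqn:ClxDm} shows that $\Mod{D}_\mu$ depends only on $[\mu] \in \grp{M}/\grp{N}$, yielding the $\VOA{W} \otimes \VOA{C}$ regrouping. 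The assumption that $\grp{M}$ is a single $\grp{L}$-orbit ensures that all $\Mod{D}_\mu$ are simple-current translates of any fixed one.

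For \ref{it:SWNonZero}, I would note that if $\Mod{D}_{\mu_0} = 0$ then \cref{prop:simplecurrents}\ref{it:SCNonZero} together with \eqref{eqn:ClxDm} would force every $\Mod{D}_\mu$ in the orbit to vanish, contradicting $\Mod{M} \neq 0$. For \ref{it:SWSimple}, I would apply \cref{thm:KR} with $\VOA{A}$ the mode algebra of $\VOA{V}$: each $\Mod{M}_\mu = \Mod{F}_\mu \otimes \Mod{D}_\mu$ is simple as a $\VOA{V}_0 = \VOA{H} \otimes \VOA{C}$-module, and simplicity of $\Mod{F}_\mu$ as an $\VOA{H}$-module then forces $\Mod{D}_\mu$ to be simple as a $\VOA{C}$-module. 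For \ref{it:SWExact}, semisimplicity of the $\VOA{H}$-action on $\Mod{M}$ restricts to $\Mod{M}'$ and $\Mod{M}''$, so taking $\VOA{H}$-weight spaces preserves exactness; any $\VOA{H} \otimes \VOA{C}$-submodule of $\Mod{F}_\mu \otimes \Mod{D}_\mu$ must have the form $\Mod{F}_\mu \otimes \Mod{D}'_\mu$ (by simplicity of $\Mod{F}_\mu$ and the $\VOA{H}$-isotypic structure), which extracts the short exact sequence $\ses{\Mod{D}'_\mu}{\Mod{D}_\mu}{\Mod{D}''_\mu}$.

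Part \ref{it:SWIndec} is essentially the contrapositive of \ref{it:SWExact}, and this is where I expect the main technical obstacle. Given a splitting $\Mod{D}_{\mu_0} = \Mod{D}'_{\mu_0} \oplus \Mod{D}''_{\mu_0}$, exactness of fusion with simple currents (\cref{prop:simplecurrents}\ref{it:SCExact}) together with \eqref{eqn:ClxDm} propagates it to splittings $\Mod{D}_\mu = \Mod{D}'_\mu \oplus \Mod{D}''_\mu$ across the orbit. The subtle step is to show that $\Mod{M}' = \bigoplus_\mu \Mod{F}_\mu \otimes \Mod{D}'_\mu$ is a genuine $\VOA{V}$-submodule of $\Mod{M}$, which requires the intertwiners realising the $\Mod{V}_\lambda$-action on $\Mod{M}_\mu$ to factor compatibly through this splitting. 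My plan here is to invoke \cref{prop:FusingTensorProducts}: the universal $\VOA{V}_0$-intertwiner of type $\fusco{\Mod{V}_\lambda}{\Mod{M}_\mu}{\Mod{M}_{\lambda+\mu}}$ is realised as the tensor product of the universal $\VOA{H}$- and $\VOA{C}$-intertwiners, so that $\VOA{V}$-module compatibility reduces to the tautological compatibility of the universal $\VOA{C}$-fusion with the chosen splitting of $\Mod{D}_\mu$; this yields $\Mod{M} = \Mod{M}' \oplus \Mod{M}''$ as $\VOA{V}$-modules and contradicts indecomposability.

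The remaining parts I would treat by iteration. Part \ref{it:SWCompSeries} follows by applying \ref{it:SWExact} along a composition series and invoking \ref{it:SWSimple} on each simple subquotient. For the socle assertion in \ref{it:SWRadSoc}, every simple $\VOA{V}$-submodule of $\Mod{M}$ decomposes as $\bigoplus_\mu \Mod{F}_\mu \otimes \Mod{T}_\mu$ with each $\Mod{T}_\mu$ a simple $\VOA{C}$-submodule of $\Mod{D}_\mu$, hence of $\soc \Mod{D}_\mu$; conversely, any simple $\VOA{C}$-submodule of some $\Mod{D}_{\mu_0}$ generates via fusion along the orbit a simple $\VOA{V}$-submodule of $\Mod{M}$, giving the reverse inclusion. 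Existence of the radical propagates similarly through the characterisation via maximal proper submodules, and the multiplicity hypothesis for the displayed formula $\rad\Mod{M} = \bigoplus_\mu \Mod{F}_\mu \otimes \rad\Mod{D}_\mu$ is precisely what is needed to match maximal $\VOA{V}$-submodules of $\Mod{M}$ with maximal $\VOA{C}$-submodules of each $\Mod{D}_\mu$ without double-counting. Finally, \ref{it:SWLoewy} is obtained by iterating the socle and radical analyses along their respective series.
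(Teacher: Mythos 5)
Your proposal tracks the paper's proof very closely: the unnumbered claims come from semisimplicity of the $\VOA{H}$-action together with \cref{thm:SW-Mod-basic-fusion}; part \ref{it:SWSimple} is \cref{thm:KR} applied to $\Mod{M}_\mu = \Mod{F}_\mu \otimes \Mod{D}_\mu$; part \ref{it:SWExact} is exactness of restriction and weight-space projection plus $\End_{\VOA{H}}(\Mod{F}_\mu) \cong \CC$; parts \ref{it:SWCompSeries}--\ref{it:SWLoewy} are obtained by iterating \ref{it:SWExact} and \ref{it:SWSimple} and by matching simple (resp.\ maximal) submodules across the orbit via fusion with the $\Mod{C}_\lambda$. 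All of this is the paper's argument, and your minor variations (e.g.\ using \cref{prop:simplecurrents}\ref{it:SCNonZero} for part \ref{it:SWNonZero}, where the paper simply notes that $\Mod{D}_\mu = 0$ would contradict $\mu \in \grp{M}$) are harmless.

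The one place you genuinely diverge is part \ref{it:SWIndec}, and there the step you call ``tautological'' is not. Given a splitting $\Mod{D}_{\mu_0} = \Mod{D}'_{\mu_0} \oplus \Mod{D}''_{\mu_0}$, the action of $\Mod{V}_\lambda$ on $\Mod{M}_{\mu_0}$ factors through the universal intertwiner as $\id_{\Mod{F}_{\lambda+\mu_0}} \otimes\, g_\lambda$ for some $\VOA{C}$-morphism $g_\lambda \colon \Mod{C}_\lambda \fus{\VOA{C}} \Mod{D}_{\mu_0} \to \Mod{D}_{\lambda+\mu_0}$; even though the source decomposes as $(\Mod{C}_\lambda \fus{\VOA{C}} \Mod{D}'_{\mu_0}) \oplus (\Mod{C}_\lambda \fus{\VOA{C}} \Mod{D}''_{\mu_0}) \cong \Mod{D}'_{\lambda+\mu_0} \oplus \Mod{D}''_{\lambda+\mu_0}$, nothing forces $g_\lambda$ to be diagonal with respect to a splitting of the target chosen in advance by an abstract isomorphism: if $\Hom_{\VOA{C}}(\Mod{D}'_{\lambda+\mu_0}, \Mod{D}''_{\lambda+\mu_0}) \neq 0$ there can be cross terms, and then $\bigoplus_\mu \Mod{F}_\mu \otimes \Mod{D}'_\mu$ is not visibly $\VOA{V}$-stable. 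The repair is to \emph{define} $\Mod{D}'_{\lambda+\mu_0}$ as the image of $\Mod{C}_\lambda \fus{\VOA{C}} \Mod{D}'_{\mu_0}$ under $g_\lambda$ itself (it is a direct summand since fusing with a simple current is an equivalence and $g_\lambda$ is an isomorphism by \cref{thm:SW-Mod-basic-fusion}), and then to check compatibility of these choices across the orbit. For what it is worth, the paper's own treatment of \ref{it:SWIndec} is a one-line contrapositive (``every $\Mod{M}_\mu$ would be decomposable, hence so would $\Mod{M}$''), so you are not missing anything it supplies — but your invocation of \cref{prop:FusingTensorProducts} does not by itself close the gap you correctly identified.
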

\begin{proof}
We have already proven the non-numbered statements.  For \ref{it:SWNonZero}, suppose that $\Mod{D}_{\mu} = 0$, for some $\mu \in \grp{M}$.  Then, $\Mod{M}_{\mu} = \Mod{F}_{\mu} \otimes \Mod{D}_{\mu}$ would be $0$, contradicting the definition of $\grp{M}$.  The argument for \ref{it:SWSimple} is likewise short:  $\Mod{M}$ simple implies that each $\Mod{M}_{\mu}$, with $\mu \in \grp{M}$, is simple, by \cref{thm:KR}, which forces each of the $\Mod{D}_{\mu}$ to be simple.  To prove \ref{it:SWIndec}, note that if some $\Mod{D}_{\nu}$, $\nu \in \grp{M}$, were decomposable, then every $\Mod{D}_{\mu}$, $\mu \in \grp{M}$, would be decomposable because $\mu - \nu \in \grp{L}$, hence $\Mod{D}_{\mu} \cong \Mod{C}_{\mu - \nu} \fus{\Mod{C}} \Mod{D}_{\nu}$.  But then, every $\Mod{M}_{\mu}$ would be decomposable, hence so would $\Mod{M}$, a contradiction.

Given the exact sequence in \ref{it:SWExact}, it is clear that $\VOA{H}$ acts semisimply on both $\Mod{M}'$ and $\Mod{M}''$, hence that we have the decompositions \eqref{eq:DecompM'} except that some of the $\Mod{M}'_{\mu}$ or $\Mod{M}''_{\mu}$ might be zero, for some $\mu \in \grp{M}$.  However, $\grp{M}$ is assumed to consist of a single $\grp{L}$-orbit, so either all the $\Mod{M}'_{\mu}$ are zero or none of them are (and the same for the $\Mod{M}''_{\mu}$).  But, either being zero would imply that the corresponding module is zero, which is ruled out by hypothesis.  Thus, the $\Mod{M}'_{\mu}$ and $\Mod{M}''_{\mu}$ are non-zero, for all $\mu \in \grp{M}$.

Since restricting to a $\VOA{V}_0$-module and projecting onto the (simultaneous) eigenspaces of the $h_0^i$ (which commute with $\VOA{V}_0 = \VOA{H} \otimes \VOA{C}$) are exact functors, the sequence $\ses{\Mod{F}_{\mu} \otimes \Mod{D}'_{\mu}}{\Mod{F}_{\mu} \otimes \Mod{D}_{\mu}}{\Mod{F}_{\mu} \otimes \Mod{D}''_{\mu}}$ is exact, for all $\mu \in \grp{M}$.  However, $\End_{\VOA{H}} \Mod{F}_{\mu} \cong \CC$ implies that each non-trivial map in this exact sequence has the form $\id_{\Mod{F}_{\mu}} \otimes\, d_{\mu}$, where $d_{\mu}$ is a $\VOA{C}$-module homomorphism.  The required exactness of the sequence of $\VOA{C}$-modules thus follows, proving \ref{it:SWExact}.

For \ref{it:SWCompSeries}, let $0 = \Mod{M}^0 \subset \Mod{M}^1 \subset \cdots \subset \Mod{M}^{n-1} \subset \Mod{M}^n = \Mod{M}$ be a composition series, so that $\Mod{S}^i = \Mod{M}^i / \Mod{M}^{i-1}$ is simple, for all $1 \le i \le n$.  Then, $\ses{\Mod{M}^{i-1}}{\Mod{M}^i}{\Mod{S}^i}$ is exact, hence so is $\ses{\Mod{D}^{i-1}_{\mu}}{\Mod{D}^i_{\mu}}{\Mod{T}^i_{\mu}}$, for all $1 \le i \le n$ and $\mu \in \grp{M}$, by \ref{it:SWExact}.  Here, we have decomposed each $\Mod{M}^i$ as $\Mod{M}^i = \bigoplus_{\mu \in \grp{M}} \Mod{F}_{\mu} \otimes \Mod{D}^i_{\mu}$, so that $\Mod{D}^0_{\mu} = 0$ and $\Mod{D}^n_{\mu} = \Mod{D}_{\mu}$, and each $\Mod{S}^i$ as $\Mod{S}^i = \bigoplus_{\mu \in \grp{M}} \Mod{F}_{\mu} \otimes \Mod{T}^i_{\mu}$.  Since the $\Mod{T}^i_{\mu}$ are non-zero and simple, by \ref{it:SWNonZero} and \ref{it:SWSimple}, they are the composition factors of $\Mod{D}_{\mu}$.

We turn to \ref{it:SWRadSoc}.  Let $\set{\Mod{M}^i}_{i \in I}$ be the set of all simple submodules of $\Mod{M}$ so that $\soc\Mod{M} = \sum_{i \in I} \Mod{M}^i$.  Then, each $\Mod{M}^i$ decomposes as $\Mod{M}^i = \bigoplus_{\mu \in \grp{M}} \Mod{F}_{\mu} \otimes \Mod{D}^i_{\mu}$, where $\Mod{D}^i_{\mu}$ is a simple submodule of $\Mod{D}_{\mu}$, for each $i \in I$ and $\mu \in \grp{M}$, by \ref{it:SWSimple} and \ref{it:SWExact}.  As sums distribute over tensor products, we have
\begin{equation}
	\soc\Mod{M} = \sum_{i \in I} \biggl[ \bigoplus_{\mu \in \grp{M}} \Mod{F}_{\mu} \otimes \Mod{D}^i_{\mu} \biggr] = \bigoplus_{\mu \in \grp{M}} \Mod{F}_{\mu} \otimes \biggl( \sum_{i \in I} \Mod{D}^i_{\mu} \biggr).
\end{equation}
It remains to show that for each $\mu \in \grp{M}$, every simple submodule of $\Mod{D}_{\mu}$ is one of the $\Mod{D}^i_{\mu}$.

Consider therefore a simple submodule $\Mod{E}_{\mu} \subseteq \Mod{D}_{\mu}$, for some given $\mu \in \grp{M}$.  Form $\Mod{E}_{\nu} = \Mod{C}_{\nu-\mu} \fus{\VOA{C}} \Mod{E}_{\mu}$, for all $\nu \in \Mod{M}$ (so that $\nu-\mu \in \grp{L}$), and note that each $\Mod{E}_{\nu}$ is a simple submodule of $\Mod{D}_{\nu}$, by parts \ref{it:SCSimple} and \ref{it:SCExact} of \cref{prop:simplecurrents}.  Tensoring over $\CC$ is exact, so $\bigoplus_{\nu \in \grp{M}} \Mod{F}_{\nu} \otimes \Mod{E}_{\nu}$ is a submodule of $\bigoplus_{\nu \in \grp{M}} \Mod{F}_{\nu} \otimes \Mod{D}_{\nu} = \Mod{M}$.  Moreover, it is a simple submodule because it has the same number of composition factors as $\Mod{E}_{\mu}$, by \ref{it:SWCompSeries}.  It is therefore one of the $\Mod{M}^i$, hence $\Mod{E}_{\mu}$ is one of the $\Mod{D}^i_{\mu}$.  It follows that $\sum_{i \in I} \Mod{D}^i_{\mu} = \soc\Mod{D}_{\mu}$, as required.

The same argument works for the radical, which we recall is the intersection of the maximal proper submodules, except that intersections need not distribute over sums.  The additional condition on $\Mod{M}$ guarantees this \cite{Ben}.  The proof of \ref{it:SWRadSoc} is thus complete and the proof of \ref{it:SWLoewy} now follows similarly to that of \ref{it:SWCompSeries}.
\end{proof}
\begin{remark}
It is not clear if the condition imposed on $\Mod{M}$ in the radical parts \ref{it:SWRadSoc} and \ref{it:SWLoewy} is required.  However, if $\rad\Mod{M}$ decomposes as $\rad\Mod{M} = \bigoplus_{\mu \in \grp{M}} \Mod{F}_{\mu} \otimes \Mod{R}_{\mu}$, then without this condition, the argument used in the proof only establishes that $\Mod{R}_{\mu} \subseteq \rad\Mod{D}_{\mu}$, for each $\mu \in \grp{M}$.
\end{remark}

{Unlike the $\Mod{C}_{[\mu]}$ in \eqref{eq:DecompVWC}, the coset modules $\Mod{D}_{[\mu]}$, $[\mu] \in \grp{M} / \grp{N}$, appearing in \eqref{eq:DecompM} need not be mutually non-isomorphic.  We shall illustrate this with a simple example in \cref{sec:EasyExamples}.  In the following \lcnamecref{sec:multfree}, we first give three useful criteria which guarantee that the $\Mod{D}_{[\mu]}$ are all non-isomorphic.

\subsection{Criteria for being multiplicity-free} \label{sec:multfree}

In this section, we discuss whether the decomposition \eqref{eq:DecompM} is multiplicity-free or not.  In other words, we investigate when one can assert that the {$\Mod{D}_{\mu}$ or the} $\Mod{D}_{[\mu]}$ are mutually non-isomorphic, in the notation of \cref{thm:SW-Mod}.

\subsubsection{Criterion based on conformal weights} \label{sec:CritConfWt}

It may so happen that the conformal weights of the \hwvs{} of the Heisenberg subalgebra $\VOA{H}$ immediately rule out multiplicities.  For example, 
consider the case of an affine \voa{} $\VOA{V}$ of \emph{negative} level $k$ 
and a 
$\VOA{V}$-module $\Mod{M}$ whose conformal weights are bounded below.  We shall assume, as in \cref{thm:SW-Mod}, that the corresponding set $\grp{M}$ is a single orbit of $\grp{L}$. 
Suppose that the decomposition of $\Mod{M}$ is not multiplicity-free, so that $\Mod{D}_{\mu + \lambda} = \Mod{D}_{\mu}$, for some $\lambda \in \grp{L}$.  Then, $\Mod{C}_{\lambda} \fus{\VOA{C}} \Mod{D}_{\mu} = \Mod{D}_{\mu}$ and so $\Mod{D}_{\mu + n \lambda} = \Mod{D}_{\mu}$, for all $n \in \ZZ$.
However, the conformal weight of the \hwv{} of $\Mod{F}_{\mu+n\lambda}$ is $\frac{1}{2k} \norm{\mu+n\lambda}^2$, which becomes arbitrarily negative for $\abs{n}$ large, because $k<0$.  It follows that the conformal weights of $\Mod{F}_{\mu+n\lambda} \otimes \Mod{D}_{\mu+n\lambda} = \Mod{F}_{\mu+n\lambda} \otimes \Mod{D}_{\mu}$ would become arbitrarily negative, for all $\mu \in \grp{M}$.  This contradicts the hypothesis that the conformal weights of $\Mod{M} = \bigoplus_{\mu \in \grp{M}} \Mod{F}_{\mu} \otimes \Mod{D}_{\mu}$ are bounded below, hence 
the $\Mod{D}_{\mu}$, with $\mu \in \grp{M}$, must all be mutually non-isomorphic.

\subsubsection{Criterion based on symmetries of characters} \label{sec:CritChar}

We can also derive a simple test to rule out multiplicities using the characters
\begin{equation}
	\fch{\Mod{F}_{\mu}}{z;q} = \traceover{\Mod{F}_{\mu}} z^{h_0} q^{L^{\VOA{H}}_0 - c/24} = \frac{z^{\mu} q^{\norm{\mu}^2 / 2}}{\eta(q)}
\end{equation}
of the Fock modules.  This relies on the fact 
that the characters of the $\Mod{D}_{\mu}$ appearing in \eqref{eq:DecompM} will not depend on $z$.  We remark that the factors $z^{h_0}$ and $z^{\mu}$ should be interpreted here as $z_1^{h^1_0} \cdots z_r^{h^r_0}$ and $z_1^{\mu_1} \cdots z_r^{\mu_r}$, respectively, where $r$ is the rank of the Heisenberg \voa{} $\VOA{H}$.

Suppose, for simplicity, that $\grp{M}$ consists of a single $\grp{L}$-orbit, as in \cref{thm:SW-Mod}.  Define $\grp{N}'$ to be the sublattice of Heisenberg weights $\lambda$ such that $\Mod{D}_{\mu} = \Mod{D}_{\lambda + \mu}$, for every $\mu \in \grp{M}$, so that $\grp{N} \le \grp{N}' \le \grp{L}$.  It follows that for every $\lambda \in \grp{N}'$, the character of the decomposition \eqref{eq:DecompM} must satisfy
\begin{align}
	\fch{\Mod{M}}{z;q;\ldots} &= \sum_{\mu \in \grp{M}} \frac{z^{\mu} q^{\norm{\mu}^2 / 2}}{\eta(q)} \fch{\Mod{D}_{\mu}}{q;\ldots}
	= \sum_{\mu \in \grp{M}} \frac{z^{\lambda + \mu} q^{\norm{\lambda + \mu}^2 / 2}}{\eta(q)} \fch{\Mod{D}_{\mu}}{q;\ldots} \notag \\
	&= z^{\lambda} q^{\norm{\lambda}^2/2} \sum_{\mu \in \grp{M}} \frac{z^{\mu} q^{\bilin{\lambda}{\mu}} q^{\norm{\mu}^2 / 2}}{\eta(q)} \fch{\Mod{D}_{\mu}}{q;\ldots}  \\ \notag
	&= z^{\lambda} q^{\norm{\lambda}^2/2} \fch{\Mod{M}}{zq^{\lambda};q;\ldots},
\end{align}
where $q^{\lambda}$ acts on a Heisenberg weight $\mu$ to give $q^{\bilin{\lambda}{\mu}}$.  If the character of $\Mod{M}$ only satisfies this equation when $\lambda \in \grp{N}$, then we may conclude that the $\Mod{D}_{[\mu]}$, with $[\mu] \in \grp{M} / \grp{N}$, are mutually non-isomorphic.  In the case that $\grp{N} = 0$, this conclusion gives the mutual inequivalence of the $\Mod{D}_{\mu}$, for all $\mu \in \grp{M}$.

\subsubsection{Criterion based on open Hopf links} \label{sec:CfitHopf}

In the case of rational \voas{}, the closed Hopf links are, up to normalization, the same as the entries of the modular S-matrix \cite{H-rigidity}. There is also a close connection between Hopf links and properties of characters
for non-rational \voas{} \cite{CG, CG2, CMR}. We will now explain how Hopf links give a criterion 
{for the existence of} 
fixed points {under the action} of {fusing with a} simple current. For this subsection,
we assume that we are working in a ribbon category {$\categ{C}$ of \voa{} modules} \cite{EGNO}; such categories allow us to take
(partial) traces of morphisms.

Let $\Mod{J} \in \categ{C}$ be a simple current and fix a module $\Mod{X} \in \categ{C}$. 
Assume that there exists a positive integer
$s$ such that $\Mod{J}^s\fus{} \Mod{X}\cong \Mod{X}$, so that $\Mod{X}$ is a fixed point of $\Mod{J}^s$.  Recall that the monodromy {of two modules $\Mod{A}$ and $\Mod{B}$} 
is defined by $M_{\Mod{A},\Mod{B}}=R_{\Mod{B},\Mod{A}}\circ R_{\Mod{A},\Mod{B}}$, where
$R$ denotes their braiding.
Recall the notion \cite[Def. 8.10.1]{EGNO}
of categorical twist, $\theta$,
which is a system of natural isomorphisms.
The monodromy satisfies the following balancing for any two modules $\Mod{A},\Mod{B}$:
\[\theta_{\Mod{A}\fus{}\Mod{B}}=M_{\Mod{A},\Mod{B}}\circ(\theta_{\Mod{A}}\fus{} \theta_{\Mod{B}}).\]
In vertex-tensor-categorical setup, $\theta$ is given by $e^{2 i \pi L_0}$.
We will also need the open Hopf link operators from \cite{CG, CG2}.
These are defined as the partial traces
$\Phi_{\Mod{A},\Mod{B}}=\ptr^{\text{Left}}(M_{\Mod{A},\Mod{B}}) \in \End(\Mod{B})$ 
and have the important property that they define a representation of the fusion ring on $\End(\Mod{B})$.
In particular, it follows that
$\Phi_{\Mod{J}\fus{}\Mod{X},\Mod{P}}=\Phi_{\Mod{J},\Mod{P}}\circ\Phi_{\Mod{X},\Mod{P}}$, for any module $\Mod{P} \in \categ{C}$,
and hence 
that
\begin{align}
\Phi_{\Mod{X},\Mod{P}} = \Phi_{\Mod{J}^s\fus{}\Mod{X},\Mod{P}} = \Phi_{\Mod{J}^s,\Mod{P}}\circ\Phi_{\Mod{X},\Mod{P}} = \Phi^s_{\Mod{J},\Mod{P}}\circ\Phi_{\Mod{X},\Mod{P}}
\label{eqn:basicfixedpoint1}.
\end{align}

We shall assume now that $\Mod{P}$ is indecomposable with a finite number of composition factors, so that every endomorphism of $\Mod{P}$ {has a single eigenvalue}, 
and that $M_{\Mod{J},\Mod{P}}$, $\Phi_{\Mod{J},\Mod{P}}$ are a semi-simple endomorphisms of $\Mod{J}\fus{}\Mod{P}$ and $\Mod{P}$, respectively.
The latter assumption will be automatically satisfied if $\Mod{J}$ is a simple current of finite order and both
$\End(\Mod{P})$ and $\End(\Mod{J} \fus{} \Mod{P})$ are finite-dimensional 
\cite[Lem.~2.13]{CKL}. It will also be satisfied if $\Mod{P}$ may be identified with a subquotient of an iterated fusion product of simple modules \cite[Lem.~3.19]{CKL}.  With these assumptions on $\Mod{P}$, \cref{eqn:basicfixedpoint1} shows that the image of $\Phi_{\Mod{X},\Mod{P}}$ is contained in the eigenspace of $\Phi_{\Mod{J},\Mod{P}}^s$ with eigenvalue 1 {and that this eigenspace is either $0$ or $\Mod{P}$ itself}. 
{We therefore have two possible conclusions:}  $\Phi_{\Mod{X},\Mod{P}}=0$ or $\Phi_{\Mod{J},\Mod{P}}^s=\Id_{\Mod{P}}$.

Following \cite{CG}, 
we say that a full subcategory $\categ{P}$ of $\categ{C}$ is a left ideal if for all $\Mod{Q}\in\categ{P}$, we have both $\Mod{D}\fus{}\Mod{Q}\in\categ{P}$, for all $\Mod{D}\in\categ{C}$, and that $\Mod{D}\in\categ{P}$ whenever the composition $\Mod{D}\to\Mod{Q}\to\Mod{D}$ is the identity. We shall assume that $\categ{P}$ is equipped with a modified trace {$t_\bullet$} \cite{CG, GKP}
(for $\categ{P}=\categ{C}$, the modified trace is just the ordinary trace {$t=\text{tr}$}) and a modified dimension $d(\Mod{\bullet})=t_{\Mod{\bullet}}(\Id_{\Mod{\bullet}})$.
We also let $\dim(\bullet)=\tr(\Id_\bullet)$ denote the ordinary trace of the identity morphism.

We now assume that $\Mod{P}$, as introduced above, belongs to 
a left ideal $\categ{P}$ of $\categ{C}$.
For any object $\Mod{D}$ of $\categ{C}$, the properties of the modified trace imply that
\begin{align}
t_{\Mod{D}\fus{}\Mod{P}}(\Id_{\Mod{D}\fus{}\Mod{P}})
&=t_{\Mod{D}\fus{}\Mod{P}}(\Id_{\Mod{D}}\fus{}\Id_{\Mod{P}})
=t_{\Mod{P}}(\ptr^{\text{Left}}(\Id_{\Mod{D}}\fus{}\Id_{\Mod{P}}))
=t_{\Mod{P}}(\text{tr}(\Id_{\Mod{D}})\fus{}\Id_{\Mod{P}} )\notag\\
&=\dim(\Mod{D})t_{\Mod{P}}(\Id_{\Mod{P}})
=\dim(\Mod{D})d(\Mod{P})
\end{align}
and hence that
\begin{align}
t_{\Mod{P}}(\Phi_{\Mod{\Mod{J}}^s,\Mod{P}})
&=t_{\Mod{P}}(\ptr^{\text{Left}}(M_{\Mod{J}^s,\Mod{P}}))
=t_{\Mod{J}^s\fus{}\Mod{P}}(M_{\Mod{J}^s,\Mod{P}}) 
=t_{\Mod{J}^s\fus{}\Mod{P}}
(\theta_{\Mod{J}^s\fus{}\Mod{P}}\circ(\theta^{-1}_{\Mod{J}^s}\fus{}\theta^{-1}_{\Mod{P}}))\notag\\
&=\dim(\Mod{J}^s)d(\Mod{P})(\theta_{\Mod{J}^s\fus{}\Mod{P}}\circ(\theta^{-1}_{\Mod{J}^s}\fus{}\theta^{-1}_{\Mod{P}})).
\end{align}
Here, 
we have used the balancing property of monodromy and 
have identified $\theta_{\Mod{J}^s\fus{}\Mod{P}}\circ(\theta^{-1}_{\Mod{J}^s}\fus{}\theta^{-1}_{\Mod{P}})$ with the scalar by which
it acts.
In the case that $\Phi_{\Mod{J}^s,\Mod{P}}=\Id_{\Mod{P}}$, so \(t_{\Mod{P}}(\Phi_{\Mod{\Mod{J}}^s,\Mod{P}}) = t_{\Mod{P}}(\Id_{\Mod{P}})=d(\Mod{P})\), it follows that 
\(
\dim(\Mod{J})^s(\theta_{\Mod{J}^s\fus{}\Mod{P}}\circ(\theta^{-1}_{\Mod{J}^s}\fus{}\theta^{-1}_{\Mod{P}}))=1,
\)
whenever $d(\Mod{P})\neq 0$.
We summarize this as follows.
\begin{prop}\label{prop:hopfmulti}
Let $\categ{C}$ be a ribbon category, $\Mod{J} \in \categ{C}$ be a simple current
and $\Mod{X} \in \categ{C}$ be a fixed point of $\Mod{J}^s$ so that $\Mod{J}^s\fus{}\Mod{X}\cong \Mod{X}$,
for some $s\in\ZZ_{>0}$.  Let $\categ{P}$ be a left ideal of $\categ{C}$,
equipped with a modified trace $t_\bullet$ and modified dimension $d(\bullet)$.
Let $\Mod{P} \in \categ{P}$ be indecomposable such that $d(\Mod{P}) \neq 0$ and 
$M_{\Mod{J},\Mod{P}}$, 
$\Phi_{\Mod{J},\Mod{P}} \in \End(\Mod{P})$ are semisimple endomorphisms. Then,
one of the following must hold:
\begin{enumerate}
	\item $\Phi_{\Mod{X},\Mod{P}}=0$, which in turn implies that $t_{\Mod{P}}(\Phi_{\Mod{X},\Mod{P}}) = 0$. 
	If $\categ{C}$ is a modular tensor 
	category, then this 
	implies	that the corresponding modular S-matrix entry is zero. \label{it:HopfS}
	\item $\dim(\Mod{J})^s(\theta_{\Mod{J}^s\fus{}\Mod{P}}\circ(\theta^{-1}_{\Mod{J}^s}\fus{}\theta^{-1}_{\Mod{P}}))=1,$
	where we have identified $\theta_{\Mod{J}^s\fus{}\Mod{P}}\circ(\theta^{-1}_{\Mod{J}^s}\fus{}\theta^{-1}_{\Mod{P}})$ with the scalar by which it acts. \label{it:HopfTheta}
\end{enumerate}
\end{prop}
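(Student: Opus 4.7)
The plan is essentially to formalize and assemble the computations sketched in the paragraphs immediately preceding the proposition statement. The argument splits cleanly into two halves: an eigenvalue dichotomy that produces the two alternatives, and a modified-trace computation that identifies the scalar appearing in the second alternative.

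First, I would set up the dichotomy. Using the fusion ring representation property of open Hopf links, namely $\Phi_{\Mod{J}\fus{}\Mod{X},\Mod{P}}=\Phi_{\Mod{J},\Mod{P}}\circ\Phi_{\Mod{X},\Mod{P}}$, together with $\Mod{J}^s\fus{}\Mod{X}\cong\Mod{X}$, I obtain
\begin{equation}
\Phi_{\Mod{X},\Mod{P}} = \Phi^s_{\Mod{J},\Mod{P}}\circ\Phi_{\Mod{X},\Mod{P}},
\end{equation}
so the image of $\Phi_{\Mod{X},\Mod{P}}$ lies in the $1$-eigenspace of $\Phi^s_{\Mod{J},\Mod{P}}$. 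Since $\Mod{P}$ is indecomposable with finite composition length, $\End(\Mod{P})$ is local and every endomorphism has a single eigenvalue; combined with the hypothesis that $\Phi_{\Mod{J},\Mod{P}}$ is semisimple, I conclude that $\Phi^s_{\Mod{J},\Mod{P}}$ is either $\Id_{\Mod{P}}$ or has no nonzero $1$-eigenvector. This yields the disjunction: either $\Phi_{\Mod{X},\Mod{P}}=0$ (giving conclusion \ref{it:HopfS}, with the further S-matrix remark following since in a modular tensor category the modular S-matrix entry is, up to normalization, the closed Hopf link $t_{\Mod{P}}(\Phi_{\Mod{X},\Mod{P}})$), or else $\Phi^s_{\Mod{J},\Mod{P}}=\Id_{\Mod{P}}$.

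Second, assuming $\Phi^s_{\Mod{J},\Mod{P}}=\Id_{\Mod{P}}$, I evaluate $t_{\Mod{P}}$ on both sides. On one side, since $t_{\Mod{P}}(\Id_{\Mod{P}})=d(\Mod{P})$, I get $t_{\Mod{P}}(\Phi^s_{\Mod{J},\Mod{P}}) = d(\Mod{P})$. On the other side, I rewrite $\Phi_{\Mod{J}^s,\Mod{P}}$ as a partial trace of the monodromy and use the modified-trace identity
\begin{equation}
t_{\Mod{P}}\bigl(\ptr^{\text{Left}}(f)\bigr) = t_{\Mod{J}^s\fus{}\Mod{P}}(f)
\end{equation}
to move the computation onto $\Mod{J}^s\fus{}\Mod{P}$. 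Applying the ribbon balancing $\theta_{\Mod{A}\fus{}\Mod{B}}=M_{\Mod{A},\Mod{B}}\circ(\theta_{\Mod{A}}\fus{}\theta_{\Mod{B}})$ with $\Mod{A}=\Mod{J}^s$, $\Mod{B}=\Mod{P}$ lets me replace $M_{\Mod{J}^s,\Mod{P}}$ by $\theta_{\Mod{J}^s\fus{}\Mod{P}}\circ(\theta^{-1}_{\Mod{J}^s}\fus{}\theta^{-1}_{\Mod{P}})$, which is a scalar $\tau$ (acting on an indecomposable object). Pulling this scalar out and using the tensor-compatibility of the modified trace, $t_{\Mod{D}\fus{}\Mod{P}}(\Id)=\dim(\Mod{D})d(\Mod{P})$ with $\Mod{D}=\Mod{J}^s$ and $\dim(\Mod{J}^s)=\dim(\Mod{J})^s$, gives
\begin{equation}
t_{\Mod{P}}(\Phi_{\Mod{J}^s,\Mod{P}}) = \dim(\Mod{J})^s\,d(\Mod{P})\,\tau.
\end{equation}
Equating the two evaluations and cancelling $d(\Mod{P})\neq 0$ gives conclusion \ref{it:HopfTheta}.

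The main subtle point I would be careful about is the eigenvalue step: it relies on two separate hypotheses (indecomposability with finitely many composition factors to force $\End(\Mod{P})$ to be local, and semisimplicity of $\Phi_{\Mod{J},\Mod{P}}$, hence of $\Phi^s_{\Mod{J},\Mod{P}}$) working together. Without the semisimplicity, a non-identity endomorphism could still have nontrivial fixed vectors via a Jordan block, and without the local endomorphism ring one could have $1$-eigenspaces that are proper nonzero subspaces. Everything else is bookkeeping with the standard ribbon-category and modified-trace axioms, together with the multiplicativity $\theta_{\Mod{A}\fus{}\Mod{B}}=M_{\Mod{A},\Mod{B}}(\theta_{\Mod{A}}\fus{}\theta_{\Mod{B}})$ applied at $\Mod{J}^s$ rather than at $\Mod{J}$.
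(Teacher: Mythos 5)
Your proposal is correct and follows essentially the same route as the paper: the eigenvalue dichotomy from $\Phi_{\Mod{X},\Mod{P}} = \Phi^s_{\Mod{J},\Mod{P}}\circ\Phi_{\Mod{X},\Mod{P}}$ combined with indecomposability and semisimplicity, followed by the modified-trace evaluation of $\Phi_{\Mod{J}^s,\Mod{P}}$ via the balancing identity and the tensor compatibility $t_{\Mod{D}\fus{}\Mod{P}}(\Id)=\dim(\Mod{D})\,d(\Mod{P})$. The only point worth noting is that your explicit appeal to $\End(\Mod{P})$ being local (hence single-eigenvalue endomorphisms) makes precise the paper's assumption that $\Mod{P}$ has finitely many composition factors, which the proposition's formal statement leaves implicit.
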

\noindent {As these quantities are computable, in principle, we can rule out fixed points for $\Mod{J} = \Mod{C}_{\lambda}$ or $\Mod{W}_{[\lambda]}$ and thereby deduce a multiplicity-free decomposition.}  We shall illustrate this proposition below in a rational example.

\subsection{Examples} \label{sec:EasyExamples}

Here, we give a selection of simple examples involving the so-called parafermion cosets \cite{FZ,GepNew87} to illustrate the theory developed in this \lcnamecref{sec:SWDuals}. 
Let $\AffVOA{k}{\alg{g}}$ denote the simple \voa{} of level $k$ associated with the affine Kac-Moody (super)algebra $\affine{\alg{g}}$.  Given a Cartan subalgebra $\alg{h} \subset \alg{g}$, let $\VOA{H} \subset \AffVOA{k}{\alg{g}}$ be the corresponding Heisenberg \vosa{}.  The commutant $\VOA{C} = \Com{\VOA{H}}{\AffVOA{k}{\alg{g}}}$ is called the level $k$ parafermion \voa{} of type $\alg{g}$.

\begin{example} \label{ex:Parak=2}
For $\alg{g} = \SLA{sl}{2}$ and $k=2$, the parafermion coset is the Virasoro minimal model $\MinMod{3}{4}$, also known as the Ising model.  The decompositions \eqref{eq:DecompVFC} and \eqref{eq:DecompVWC} become
\begin{equation} \label{eq:Parak=2}
\AffVOA{2}{\SLA{sl}{2}} = \bigoplus_{\lambda \in 4 \ZZ} \bigl[ \Mod{F}_{\lambda} \otimes \Mod{K}_0 \oplus \Mod{F}_{\lambda + 2} \otimes \Mod{K}_{1/2} \bigr] = \Mod{W}_{[0]} \otimes \Mod{K}_0 \oplus \Mod{W}_{[2]} \otimes \Mod{K}_{1/2},
\end{equation}
where $\Mod{K}_h$ denotes the simple $\MinMod{3}{4}$-module of highest weight $h$, the lattice of $\VOA{H}$-weights of $\AffVOA{2}{\SLA{sl}{2}}$ is $\grp{L} = 2 \ZZ$, and the sublattice of $\VOA{H}$-weights giving isomorphic coset modules is $\grp{N} = 4 \ZZ$.  
The convention here for $\Mod{F}_{\lambda}$ is that $\lambda$ indicates the $\SLA{sl}{2}$-weight so that the conformal dimension of this Heisenberg module is $\frac{\lambda^2}{8}$.
The lattice \voa{} $\VOA{W}$ is thus obtained by extending $\VOA{H}$ by the group of simple currents generated by $\Mod{F}_4$.

The representation theory of $\AffVOA{2}{\SLA{sl}{2}}$ is semisimple and it has three simple modules $\Mod{M}^{\omega}$, $\omega=0,1,2$, which are distinguished by the Dynkin labels $(k-\omega,\omega)$ of their highest weights.  $\AffVOA{2}{\SLA{sl}{2}}$ is identified with $\Mod{M}^0$ and the decomposition corresponding to \eqref{eq:Parak=2} for $\Mod{M}^2$ is obtained by swapping $\Mod{K}_0$ with $\Mod{K}_{1/2}$.  In particular, the $\grp{L}$-orbit for $\Mod{M}^2$ is also $\grp{M} = 2 \ZZ$.  The situation for $\Mod{M}^1$ is, however, slightly different:
\begin{equation}
\Mod{M}^1 = \bigoplus_{\mu \in 2 \ZZ + 1} \Mod{F}_{\mu} \otimes \Mod{K}_{1/16} = \Mod{W}_{[1]} \otimes \Mod{K}_{1/16} \oplus \Mod{W}_{[-1]} \otimes \Mod{K}_{1/16}.
\end{equation}
Here, $\grp{M} = 2 \ZZ + 1$ and $\grp{N}' = 2 \ZZ \neq \grp{N}$ (the non-isomorphic lattice modules are paired with isomorphic coset modules).  In other words, this decomposition fails to be multiplicity-free.

To see that this is consistent with the criterion of \cref{sec:CritChar}, recall that $\AKMA{sl}{2}$ admits a family $\sfaut^{\ell}$, $\ell \in \ZZ$, of \emph{spectral flow} automorphisms that lift to automorphisms of the corresponding affine vertex algebras.  The latter may be used to twist the action on an $\AffVOA{k}{\SLA{sl}{2}}$-module $\Mod{M}$ and thereby construct new modules $\sfmod{\ell}{\Mod{M}}$.  Using the conventions of \cite{R1}, the characters of $\Mod{M}$ and $\sfmod{\ell}{\Mod{M}}$ are related by
\begin{equation}
	\fch{\sfmod{\ell}{\Mod{M}}}{z;q} = z^{\ell k} q^{\ell^2 k/4} \fch{\Mod{M}}{z q^{\ell/2}; q}.
\end{equation}

For $k=2$, spectral flow acts on the simple modules as $\sfmod{}{\Mod{M}^{\omega}} = \Mod{M}^{2-\omega}$, $\omega=0,1,2$.  Identifying the weight space of $\SLA{sl}{2}$ with $\CC$ and noting that the scalar product on this space is then $\bilin{\lambda}{\mu} = \frac{1}{4} \lambda \mu$, the criterion of \cref{sec:CritChar} asks us to check which $\lambda \in \CC$ satisfy the relation
\begin{equation}
	\fch{\Mod{M}^{\omega}}{z;q} = z^{\lambda} q^{\lambda^2 / 8} \fch{\Mod{M}^{\omega}}{z q^{\lambda / 4}; q} = \fch{\sfmod{\lambda/2}{\Mod{M}^{\omega}}}{z;q},
\end{equation}
for a given $\Mod{M}^{\omega}$.  Since $\sfaut^2$ acts as the identity, this relation holds for each $\omega$ if $\lambda \in \grp{N} = 4 \ZZ$.  If $\omega \neq 1$, then it does not hold for $\lambda = 2$, hence $\grp{N}' = 4 \ZZ$ and both $\Mod{M}^0$ and $\Mod{M}^2$ have multiplicity-free decompositions in terms of lattice modules.  However, this relation does hold for $\omega = 1$ and $\lambda = 2$, so we cannot conclude that the lattice decomposition of $\Mod{M}^1$ is multiplicity-free (consistent with our explicit calculation that it is not).

With a little more work, we can also see how this failure is consistent with the criterion of \cref{sec:CfitHopf}.  Let $\Mod{X}=\Mod{K}_{1/16}$ and let $\Mod{J}$ be the simple current $\Mod{K}_{1/2}$, so that $\Mod{X}$ is a
fixed point for $\Mod{J}$: $\Mod{J}\fus{}\Mod{X}\cong\Mod{X}$.
Since $\AffVOA{2}{\SLA{sl}{2}}$ is a unitary \voa{}, $\dim(\Mod{J})=1$. Also, as recalled above, 
$\theta$ is given by $e^{2 i\pi L_0}$, hence, 
in our notation, it acts on $\Mod{K_t}$ by $e^{2 i \pi t}$,
where $t=0,1/2,1/16$.
Further, it is easy to check that the category $\categ{C}$ of $\MinMod{3}{4}$-modules has no non-trivial ideals except for $\categ{C}$ itself.

We now verify that for every indecomposable $\Mod{P}$ in $\categ{C}$, either condition \ref{it:HopfS} or \ref{it:HopfTheta} of our Hopf link criterion is satisfied.
\begin{itemize}[leftmargin=\widthof{$\Mod{P}={\Mod{K}}_{1/16}$:\ },labelwidth=\widthof{$\Mod{P}={\Mod{K}}_{1/16}$:\ }]
\item[$\Mod{P}={\Mod{K}}_0$:] In this case,
$\theta_{\Mod{J}\fus{}\Mod{P}}\circ(\theta^{-1}_{\Mod{J}}\fus{}\theta^{-1}_{\Mod{P}})=
\theta_{{\Mod{K}}_{1/2}}\circ(\theta^{-1}_{\Mod{{\Mod{K}}_{1/2}}}\fus{}\theta^{-1}_{{\Mod{K}}_{0}})=
 1.$
\item[$\Mod{P}={\Mod{K}}_{1/2}$:] In this case,
$\theta_{\Mod{J}\fus{}\Mod{P}}\circ(\theta^{-1}_{\Mod{J}}\fus{}\theta^{-1}_{\Mod{P}})=
\theta_{{\Mod{K}}_{0}}\circ(\theta^{-1}_{\Mod{{\Mod{K}}_{1/2}}}\fus{}\theta^{-1}_{{\Mod{K}}_{1/2}})
=1.$
\item[$\Mod{P}={\Mod{K}}_{1/16}$:] In this case,
$\theta_{\Mod{J}\fus{}\Mod{P}}\circ(\theta^{-1}_{\Mod{J}}\fus{}\theta^{-1}_{\Mod{P}})=
\theta_{{\Mod{K}}_{1/16}}\circ(\theta^{-1}_{\Mod{{\Mod{K}}_{1/2}}}\fus{}\theta^{-1}_{{\Mod{K}}_{1/16}})
=-1$, but the modular S-matrix of $\MinMod{3}{4}$ has entry $S_{\Mod{K}_{1/16},\Mod{K}_{1/16}}=0$.
\label{it:Sl2fixedpointinteresting}
\end{itemize}
So we see that in the first two cases condition (2) is satisfied while condition (1) holds in the last.  This is, of course, consistent with the fact that the decomposition is not multiplicity-free.  As an aside, we remark that if we had only known that ${\Mod{K}}_{1/16}$ was a fixed-point of the simple current (which implies that the decomposition is not multiplicity-free), then
we could have instead deduced that $S_{\Mod{K}_{1/16},\Mod{K}_{1/16}}$ must vanish, as above.
\end{example}

\begin{example} \label{ex:Parak=-4/3}
	A more interesting example is the parafermion coset with $\alg{g} = \SLA{sl}{2}$ at level $k=-\frac{4}{3}$.  In \cite{AdaCon05}, Adamovi\'{c} showed that the resulting coset \voa{} is the (simple) singlet algebra $\Sing{1,3}$ of central charge $c=-7$.  This is strongly generated by the energy-momentum tensor and a single conformal primary of weight $5$.  We can revisit and extend this study using the results of this \lcnamecref{sec:SWDuals}.  However, we stress that the parent \voa{} $\AffVOA{-4/3}{\SLA{sl}{2}}$ does not satisfy the conditions of \cref{sec:HLZ} that would allow us to apply the theory of Huang-Lepowsky-Zhang.  Nevertheless, we shall proceed with the analysis, assuming that this theory may be applied.  The results suggest that this assumption is, in this case, not unreasonable.

	Let $\Lambda_0$ and $\Lambda_1$ denote the fundamental weights of $\AKMA{sl}{2}$.  The \voa{} $\AffVOA{-4/3}{\SLA{sl}{2}}$ admits precisely three \hwms{}, namely the simple modules $\Mod{M}^{\omega}$ whose highest weights have the form $(k-\omega) \Lambda_0 + \omega \Lambda_1$, where $\omega \in \set{0,-\frac{2}{3},-\frac{4}{3}}$, as well as an uncountable number of simple non-\hwms{} \cite{AM1,GabFus01,RidRel15}.  In particular, $\Sing{1,3}$ is not a rational \voa{}.  As the level is negative and these \hwms{} have conformal weights that are bounded below, the criterion of \cref{sec:CritConfWt} applies and we conclude that their decompositions are multiplicity-free.

	Explicitly, the decomposition \eqref{eq:DecompVFC} takes the form
	\begin{equation}\label{eq:sl2decomp}
		\AffVOA{-4/3}{\SLA{sl}{2}} = \bigoplus_{\lambda \in 2\ZZ} \Mod{F}_{\lambda} \otimes \Mod{C}_{\lambda},
	\end{equation}
	where $\Mod{C}_{\lambda}$ is a simple \hw{} $\Sing{1,3}$-module whose \hwv{} has conformal weight $\Delta_{\lambda} = \frac{1}{16} \abs{\lambda} (3 \abs{\lambda} + 8)$.  The convention here for $\Mod{F}_{\lambda}$ is again  that $\lambda$ indicates the $\SLA{sl}{2}$-weight so that the conformal dimension of this Heisenberg module is $-\frac{3}{16} \lambda^2$.	
	Of course, $\Mod{C}_{\lambda}$ and $\Mod{C}_{-\lambda}$ are not isomorphic for $\lambda \neq 0$ because the decomposition \eqref{eq:sl2decomp} is multiplicity-free --- they must therefore be distinguished by the action of the zero mode of the weight $5$ conformal primary.

	The theory of \cref{sec:HCosets} shows that the $\Mod{C}_{\lambda}$, with $\lambda \in  2\ZZ$, are all (non-isomorphic) simple currents.  This had been previously deduced \cite{RidMod13,CM1} from the (conjectural) \emph{standard} Verlinde formula of \cite{CreLog13,RidVer14} for non-rational \voas{}.  Noting that $\Delta_{\pm 4} = 5$, we remark \cite{CRW, RidMod13} that the simple current extension of $\Sing{1,3}$ by the $\Mod{C}_{\lambda}$, with $\lambda \in 4 \ZZ$, is the triplet algebra $\Trip{1,3}$ of Kausch \cite{Ka}.

	Consider now the $\AffVOA{-4/3}{\SLA{sl}{2}}$-modules $\sfmod{-2}{\Mod{M}^{-2/3}}$ and $\sfmod{}{\Mod{M}^{-2/3}}$, obtained by twisting the action on $\Mod{M}^{-2/3}$ by the spectral flow automorphisms $\sfaut^{\ell}$, $\ell \in \ZZ$.  Whilst both these modules have conformal weights that are unbounded below, their decompositions into $\VOA{H} \otimes \Sing{1,3}$-modules are nevertheless multiplicity-free:
	\begin{equation}
		\sfmod{-2}{\Mod{M}^{-2/3}} = \sum_{\mu \in 2 \ZZ} \Mod{F}_{\mu} \otimes \Mod{D}^{(-2)}_{\mu}, \qquad
		\sfmod{}{\Mod{M}^{-2/3}} = \sum_{\mu \in 2 \ZZ} \Mod{F}_{\mu} \otimes \Mod{D}^{(1)}_{\mu}.
	\end{equation}
	Here, the $\Mod{D}^{(-2)}_{\mu}$ and $\Mod{D}^{(1)}_{\mu}$ are simple \hw{} $\Sing{1,3}$-modules whose \hwvs{} have conformal weights given by
	\begin{equation}
		\Delta^{(-2)}_{\mu} =
		\begin{cases*}
			\frac{1}{16} \mu (3 \mu + 8) & if \(\mu \le -2\), \\
			\frac{1}{16} (\mu + 4) (3 \mu + 4) & if \(\mu \ge -2\)
		\end{cases*}
		\quad \text{and} \quad
		\Delta^{(1)}_{\mu} =
		\begin{cases*}
			\frac{1}{16} (\mu - 4) (3 \mu - 4) & if \(\mu \le 2\), \\
			\frac{1}{16} \mu (3 \mu - 8) & if \(\mu \ge 2\),
		\end{cases*}
	\end{equation}
	respectively.

	The interesting thing about the $\AffVOA{-4/3}{\SLA{sl}{2}}$-modules $\sfmod{-2}{\Mod{M}^{-2/3}}$ and $\sfmod{}{\Mod{M}^{-2/3}}$ is that they appear, together with two copies of the vacuum module $\Mod{M}^0$, as the composition factors of an indecomposable $\AffVOA{-4/3}{\SLA{sl}{2}}$-module $\Mod{P}^0$.  This module was first constructed as a fusion product in \cite{GabFus01} and was structurally characterised in \cite{CR1} (see \cite{AdaLat09} for a construction and characterisation of a different indecomposable $\AffVOA{-4/3}{\SLA{sl}{2}}$-module).  The action of the Virasoro zero mode $L_0$ on $\Mod{P}^0$ is non-semisimple.  The Loewy diagram for $\Mod{P}^0$ has the form
	\begin{equation}
		\begin{tikzpicture}[>=latex,baseline={(right1.base)},
		                    nom/.style={circle,draw=black!20,fill=black!20,inner sep=1pt}]
			\node (top1) at (5,1.5) [] {\(\Mod{M}^0\)};
			\node (left1) at (3.5,0) [] {\(\sfmod{-2}{\Mod{M}^{-2/3}}\)};
			\node (right1) at (6.5,0) [] {\(\sfmod{}{\Mod{M}^{-2/3}}\)};
			\node (bot1) at (5,-1.5) [] {\(\Mod{M}^0\)};
			\node at (5,0) [nom] {\(\Mod{P}^0\)};
			\draw [->] (top1) -- (left1);
			\draw [->] (top1) -- (right1);
			\draw [->] (left1) -- (bot1);
			\draw [->] (right1) -- (bot1);
		\end{tikzpicture}
		\ ,
	\end{equation}
	where our convention is that the socle appears at the bottom.  An immediate consequence of \cref{thm:SW-Mod} is that there exists a countably-infinite number of mutually non-isomorphic indecomposable $\Sing{1,3}$-modules $\Mod{P}^0_{\mu}$, $\mu \in 2 \ZZ$, on which the $\Sing{1,3}$ Virasoro zero mode acts non-semisimply.  The Loewy diagrams of these indecomposables are
	\begin{equation}
		\begin{tikzpicture}[>=latex,baseline={(right1.base)},
		                    nom/.style={circle,draw=black!20,fill=black!20,inner sep=1pt}]
			\node (top1) at (5,1.5) [] {\(\Mod{C}_{\mu}\)};
			\node (left1) at (3.5,0) [] {\(\Mod{D}^{(-2)}_{\mu}\)};
			\node (right1) at (6.5,0) [] {\(\Mod{D}^{(1)}_{\mu}\)};
			\node (bot1) at (5,-1.5) [] {\(\Mod{C}_{\mu}\)};
			\node at (5,0) [nom] {\(\Mod{P}^0_{\mu}\)};
			\draw [->] (top1) -- (left1);
			\draw [->] (top1) -- (right1);
			\draw [->] (left1) -- (bot1);
			\draw [->] (right1) -- (bot1);
		\end{tikzpicture}
		\ .
	\end{equation}
	The existence of such $\Sing{1,3}$-modules was predicted in \cite{RidMod13} from the fact that similar indecomposables have been constructed \cite{AdaLat09,TW} for a simple current extension, the triplet algebra $\Trip{1,3}$.
\end{example}

\section{Properties of Heisenberg Cosets}\label{sec:properties}

Recall from the introduction that {one of} our main applications {for Heisenberg cosets is to construct} 
new, potentially $C_2$-cofinite, \voas{} as extensions: 
\[
\VOA{V} \xrightarrow{\quad \VOA{H}-\text{coset}\quad } \VOA{C} \xrightarrow{\quad \text{extension}\quad } \VOA{E}.
\]
So far, we understand how $\VOA{V}$-modules decompose as $\VOA{H}\otimes\VOA{C}$-modules. The remaining tasks are to {identify when} 
$\VOA{C}$ may be extended by certain abelian intertwining algebras to a larger algebra $\VOA{E}$. This 
will be {stated in} \cref{thm:extension}. 
Since abelian intertwining algebra extensions are mild generalizations of simple current extensions, analogous arguments to \cite{CKL} allow us to give precise criteria for the lifting of $\VOA{H}\otimes\VOA{C}$-modules to $\VOA{V}$-modules, see \cref{thm:lift}. An analogous criterion for the lifting of $\VOA{C}$-modules to $\VOA{E}$-modules is given in \cref{cor:liftext}.

\subsection{Extended Algebras}
If certain Fock modules involved in the \voa{} decomposition yield a lattice (super) \voa{},
then the corresponding coset modules form a (super) \voa{} as well. Thus, we get
extensions of the coset.
\begin{thm}\label{thm:extension}
Let \[\VOA{V} = \bigoplus_{\lambda\in\grp{L}} \Mod{F}_\lambda\otimes\Mod{C}_\lambda.\]
If $\grp{E}$ is a sub-lattice of $\grp{L}$, such that $\bigoplus_{\lambda\in\grp{E}}\Mod{F}_\lambda$ forms
a lattice \voa, 
then $\VOA{E}=\bigoplus_{\lambda\in\grp{E}}\Mod{C}_\lambda$
has a natural \voa{} structure.
\end{thm}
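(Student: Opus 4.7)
The strategy is to invoke Li's construction \cite{Li}: we realise $\VOA{E}$ as the ``coset factor'' of the vertex subalgebra $\VOA{V}_\grp{E} := \bigoplus_{\lambda \in \grp{E}} \Mod{F}_\lambda \otimes \Mod{C}_\lambda$ of $\VOA{V}$ --- whose ``Fock factor'' is the given lattice \voa{} $\VOA{W}_\grp{E} = \bigoplus_{\lambda \in \grp{E}} \Mod{F}_\lambda$ --- and transfer the \voa{} axioms through an explicit factorisation of vertex operators.

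First I would verify that $\VOA{V}_\grp{E}$ is indeed a vertex subalgebra of $\VOA{V}$. Since $\VOA{H}$ acts semisimply on $\VOA{V}$, the vertex operator $Y_\VOA{V}$ preserves the Heisenberg weight grading, and since $\grp{E}$ is closed under addition, the restriction of $Y_\VOA{V}$ to $\VOA{V}_\grp{E}$ lands in formal series with coefficients in $\VOA{V}_\grp{E}$. The vacuum of $\VOA{V}$ lies in $\VOA{V}_\grp{E}$, and the conformal vector splits as $\omega_\VOA{V} = \omega_\VOA{H} + \omega_\VOA{C}$ with $\omega_\VOA{C} \in \VOA{C}$; moreover, since $\VOA{V}$ and $\VOA{W}_\grp{E}$ are both integer-graded \voas, each $\Mod{C}_\lambda$ with $\lambda \in \grp{E}$ inherits an integer conformal grading. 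Next, applying \cite[Thm.~2.10]{ADL} as in the proof of \cref{prop:FusingTensorProducts}, the restriction of $Y_\VOA{V}$ to each pair $(\Mod{F}_\lambda \otimes \Mod{C}_\lambda) \times (\Mod{F}_\mu \otimes \Mod{C}_\mu)$ decomposes as a finite sum $\sum_j \tilde{\intw}_\VOA{H}^{(j)} \otimes \tilde{\intw}_\VOA{C}^{(j)}$ of tensor products of intertwiners for $\VOA{H}$ and $\VOA{C}$, respectively. Because the Heisenberg fusion space $\fusco{\Mod{F}_\lambda}{\Mod{F}_\mu}{\Mod{F}_{\lambda+\mu}}$ is one-dimensional (see \cref{sec:heisenbergtensor}), this sum reduces to a single term $c_{\lambda,\mu}\,\intw_\VOA{H}^{\lambda,\mu} \otimes \intw_\VOA{C}^{\lambda,\mu}$ for some non-zero scalar $c_{\lambda,\mu}$.

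By hypothesis, the family $\{\intw_\VOA{H}^{\lambda,\mu}\}_{\lambda,\mu \in \grp{E}}$ constitutes the vertex operator data of the lattice \voa{} $\VOA{W}_\grp{E}$ and so satisfies the \voa{} Jacobi identity. Imposing the Jacobi identity of $Y_\VOA{V}$ on triples drawn from $\VOA{V}_\grp{E}$ and then separating the Heisenberg and coset tensor factors forces $\{\intw_\VOA{C}^{\lambda,\mu}\}$ to satisfy the \voa{} Jacobi identity as well, so that these intertwiners endow $\VOA{E}$ with a \voa{} structure whose vacuum is $\vac \in \VOA{C}$ and whose conformal vector is $\omega_\VOA{C}$. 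This extraction is precisely the content of \cite{Li}.

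The main subtlety is to argue rigorously that the extracted coset intertwiners really do assemble into a genuine \voa{} rather than some weaker structure such as an abelian intertwining algebra or a generalised \voa. The key observation is that requiring $\VOA{W}_\grp{E}$ to be a lattice \voa{} (and not, for instance, a vertex operator superalgebra) forces bosonic statistics on the Heisenberg side; since the combined statistics from $Y_\VOA{V}$ is also bosonic, the coset side must inherit bosonic statistics as well. It is exactly this cancellation of statistics and cocycles, together with the integrality of conformal weights established above, that \cite{Li} packages into a clean construction of the sought-after \voa{} structure on $\VOA{E}$.
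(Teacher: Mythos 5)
Your argument is correct in outline, but it takes a noticeably different (and more heavily loaded) route than the paper. The paper's proof is a one-line citation: \cite[Thms.~3.1, 3.2]{Li} directly construct a generalized vertex algebra structure on $\bigoplus_{\lambda\in\grp{L}}\Mod{C}_\lambda$ by explicitly ``undressing'' the Heisenberg contribution with exponential operators built from the Heisenberg fields, and when the Fock side restricted to $\grp{E}$ is a lattice \voa{} the cocycle and statistics factors trivialise, yielding an honest \voa{} on $\VOA{E}$. Crucially, as the paper emphasises, this construction requires \emph{no} vertex tensor category hypotheses on $\VOA{V}$ or $\VOA{C}$. Your scaffolding, by contrast, routes the factorisation of $Y_{\VOA{V}}$ through \cite[Thm.~2.10]{ADL} and the one-dimensionality of the Heisenberg fusion spaces, i.e.\ through the intertwining-operator/fusion formalism — machinery that is available under the paper's standing assumptions but is not needed for this particular theorem, and which would artificially restrict the generality of the result. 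The substantive step in your write-up — that separating the tensor factors in the Jacobi identity forces the coset intertwiners $\intw^{\lambda,\mu}_{\VOA{C}}$ to satisfy a \voa{} Jacobi identity, including the bookkeeping of the scalars $c_{\lambda,\mu}$ (a $2$-cocycle condition) and the mutual locality/monodromy of the lattice operators — is exactly the hard content that you ultimately defer to \cite{Li}, so in the end both proofs rest on the same citation; yours just reaches it via a longer and less economical path. If you want a self-contained argument, you would need to carry out that cocycle cancellation explicitly rather than assert it; otherwise the cleaner statement is simply that the theorem is an immediate corollary of \cite[Thms.~3.1, 3.2]{Li} with $\ell=1$.
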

\begin{proof}
This result is an immediate corollary of \cite[Thm.\ 3.1, 3.2]{Li} with  $\ell=1$, see also \cite{DL}.
\cite[Thm.\ 3.1, 3.2]{Li} in fact guarantee a generalized vertex algebra structure on 
$\bigoplus_{\lambda\in\grp{L}}\VOA{C}_\lambda$. Note
that no restrictions with regards to vertex tensor category theory are needed on $\VOA{V}$ or $\VOA{C}$.
\end{proof}	
For a more general scenario involving mirror extensions, see \cite{Lin}.

\begin{example} \label{ex:sl2decompcont}
Let $\alg{g}$ be a simple simply laced Lie algebra and let $k=\frac{p}{q}\neq 0$ be a rational number ($p, q$ co-prime). 
We do not require it to be an admissible level. 
Then $\AffVOA{k}{\alg{g}}$ is graded  by $\frac{1}{\sqrt{k}}\grp{Q}=\sqrt{\frac{q}{p}}\grp{Q}$ with $\grp{Q}$ the root lattice, that is
\[
\AffVOA{k}{\mathfrak{g}} = \bigoplus_{\lambda \in \sqrt{\frac{q}{p}}\grp{Q}} \Mod{F}_{\lambda} \otimes \Mod{C}_{\lambda}.
\] 
The sublattice $p \sqrt{\frac{q}{p}}\grp{Q}= \sqrt{pq}\,\grp{Q}$ is an even sublattice so that 
\[
\VOA{V}_{\sqrt{pq}\grp{Q}} = \bigoplus_{\lambda \in \sqrt{pq}Q} \Mod{F}_{\lambda}
\]
is a lattice \voa. It follows by \cref{thm:extension} that
\[
\VOA{E}_{k, \alg{g}} := \bigoplus_{\lambda \in \sqrt{pq}\grp{Q}} \Mod{C}_{\lambda}
\]
is also a \voa. 

We believe that these extended \voas{} have a good chance to be $C_2$-cofinite. 
The main outcome of \cite{ACR} is that in the case $\alg{g}=\SLA{sl}{2}$ and $k+2\in \QQ\setminus \{ \frac{1}{n} | n\in \ZZ_{>0}\}$ 
the characters of modules of the extended \voa{} are modular if supplemented by pseudotraces.

In two specific examples $C_2$-cofiniteness is already known. One of them is  $\AffVOA{-4/3}{\SLA{sl}{2}}$.
This is then a continuation of  \cref{ex:Parak=-4/3}. 
Recall that
	\begin{equation}\nonumber
		\AffVOA{-4/3}{\SLA{sl}{2}} = \bigoplus_{\lambda \in 2 \ZZ} \Mod{F}_{\lambda} \otimes \Mod{C}_{\lambda},
	\end{equation}
	where $\Mod{C}_{\lambda}$ is a simple \hw{} $\Sing{1,3}$-module whose \hwv{} has conformal weight $\Delta_{\lambda} = \frac{1}{16} \abs{\lambda} (3 \abs{\lambda} + 8)$ and the Heisenberg Fock module $\Mod{F}_{\lambda}$ has conformal dimension 
	$\frac{-3}{16}\lambda^2$.
It follows that
\begin{equation}
 \VOA{V}_{\grp{L}}=\bigoplus_{\lambda \in 4 \ZZ} \Mod{F}_{\lambda} 
\end{equation}
is the lattice \voa{} of the lattice $\grp{L}=\sqrt{-6}\,\ZZ$ and hence
\begin{equation}
 \Trip{1,3}=\bigoplus_{\lambda \in 4 \ZZ}  \Mod{C}_{\lambda}
\end{equation}
is also a \voa. It is actually the $\Trip{1,3}$-triplet  that is well-known to be $C_2$-cofinite \cite{AM3}. 
This relation between singlet \voa{} and 
$\AffVOA{-4/3}{\SLA{sl}{2}}$
has been first realized by Adamovi\'c \cite{AdaCon05} and has a nice generalization to a relation between singlet \voas{} and certain $\mathcal{W}$-algebras \cite{CRW}.
\end{example}

\begin{example}\label{ex:singletgl11} \emph{$\Sing{2}$-singlet algebra  and  $\UAffVOA{k}{\SLSAp{gl}{1}{1}}$} 

We first illustrate how well-known somehow archetypical logarithmic VOAs are related via simple current extensions and Heisenberg cosets thus nicely illustrating the picture advocated in this work together with \cite{CKL}. The picture is as follows:
\begin{equation*}
\xymatrix{
\UAffVOA{k}{\SLSAp{gl}{1}{1}}
	\ar[rr]^(0.45){ \text{extension}}\ar[d]^{\text{coset}}
	&& 
	\AffVOA{-1/2}{\SLSAp{sl}{2}{1}}
	\ar[rr]^(0.55){ \text{extension}}\ar[d]^{\text{coset}}  && \beta\gamma \otimes \VOA{V}_{\mathbb Z}\ar[d]^{\text{coset}}\\
	\VOA{H} \otimes \Sing{2} \ar[rr]^(.5){\text{extension}}\ar[rrd]^{\text{coset}}&&
	\AffVOA{-1/2}{\SLA{sl}{2}}
	\ar[rr]^(0.55){ \text{extension}}\ar[d]^{\text{coset}} &&  \beta\gamma \ar[lld]_{\text{coset}} \\
	&& \Sing{2} \ar[rr]^(0.5){ \text{extension}} && \Trip{2}  }
\end{equation*}
$\Sing{2}$ is the $p=2$ singlet VOA \cite{AM1, CM1} and $\Trip{2}$ is its $C_2$-cofinite but non-rational infinite order simple current extensions, called the triplet. See e.g. \cite{AM3}.

These and other extensions have been worked out in \cite{CR3, CR4, AC} while the coset picture has been part of \cite{CR3, CRo, CRW}.
Here, the situation of the singlet algebra $\Sing{2}$ is that $C_1$-cofiniteness of all known admissible modules is established \cite{CMR}, fusion 
coefficients are known \cite{AM4}
and the category of $C_1$-cofinite modules is a vertex tensor category in the sense of \cite{HLZ} provided that every $C_1$-cofinite $\mathbb N$-gradable module is of finite length \cite[Thm.\ 17]{CMR}.

For reference on $\Sing{2}$-modules we refer to \cite{AM1, CM1}. As reference on $\UAffVOA{k}{\SLSAp{gl}{1}{1}}$  we refer to \cite{CR3}.
$\Sing{2}$ has simple typical modules $\Mod{F}_\lambda$ of conformal weight $\frac{1}{2}\lambda(\lambda-1)$ for $\lambda\in \mathbb R\setminus \mathbb Z$.
For $\lambda=1-r$ integer, we have
\[
\ses{\Mod{M}_{r, 1}}{\Mod{F}_{1-r}}{\Mod{M}_{r+1, 1}}
\]
for simple atypical modules $\Mod{M_{r, 1}}$ and $r$ integer.
Similarly, $\UAffVOA{k}{\SLSAp{gl}{1}{1}}$ 
has simple highest-weight modules $\Mod{V_{e, n}}$ where the real numbers $e, n$ are the weight labels, and $e/k$ not integer. If $e/k$ is integer say $\ell$, then the higest-weight-module decomposes as
\[
\ses{\Mod{A}_{n-1, \ell k}}{\Mod{V}_{n, \ell k}}{\Mod{A}_{n, \ell k}}
\]
with simple atypical modules $\Mod{A_{n, \ell k}}$ parameterized by real $n$ and integers $\ell$.
The projective covers $\Mod{P_{n, \ell k}}$ have the form
\[
\ses{\Mod{V}_{n+1, \ell k}}{\Mod{P}_{n, \ell k}}{\Mod{V}_{n, \ell k}}.
\]

The commutant of $\Sing{2}$ in $\UAffVOA{k}{\SLSAp{gl}{1}{1}}$ is a rank two Heisenberg \voa{}, and we denote their Fock-modules by $\Mod{F}_{e, n}$ where we take the notation of \cite{CRo}.
Using the explicit realization of $\UAffVOA{k}{\SLSAp{gl}{1}{1}}$-modules of \cite{CRo} we can compute the decomposition of modules. The answer is as follows
\begin{equation}
\begin{split}
\UAffVOA{k}{\SLSAp{gl}{1}{1}} &=  \bigoplus_{m\in \ZZ} \Mod{F}_{0, m} \otimes  \Mod{M}_{m+1, 1},  \qquad
\Mod{A}_{n, \ell k}=  \bigoplus_{m\in \ZZ} \Mod{F}_{-\ell, m-n} \otimes  \Mod{M}_{m, 1} \qquad\text{and}\\
\Mod{V}_{-e, -n+1}&=  \bigoplus_{m\in \ZZ} \Mod{F}_{\frac{e}{k}, n+m} \otimes  \Mod{F}_{\frac{e}{k}-m}.
\end{split}
\end{equation}
It follows with \cref{thm:SW-Mod} that
\[
\Mod{P}_{n, \ell k}=  \bigoplus_{m\in \ZZ} \Mod{F}_{-\ell, -n+m} \otimes  \Mod{S}_{m},
\]
where $\Mod{S}_m$ is an indecomposable $\Sing{2}$-module that has non-split short-exact sequence
\[
\ses{\Mod{F}_{1-m}}{\Mod{S}_m}{\Mod{F}_{2-m}}.
\]
In terms of Loewy diagrams, we have the following:
\begin{align*}
\Mod{P}_{n,\ell k}
&=
\begin{matrix}
\xymatrixcolsep{0.5pc}
\xymatrixrowsep{0.75pc}
\xymatrix{
	& \Mod{A}_{n,\ell k} \ar@{-}[dl] \ar@{-}[dr] & \\
	\Mod{A}_{n+1,\ell k} \ar@{-}[dr]& & \Mod{A}_{n-1,\ell k}\ar@{-}[dl] \ar@{-}[dl]\\
	& \Mod{A}_{n,\ell k} &
}\end{matrix}
= \bigoplus_{m\in\ZZ}\Mod{F}_{-\ell,m-n}\otimes
\begin{bmatrix}
\xymatrixcolsep{0.5pc}
\xymatrixrowsep{0.75pc}
\xymatrix{
	& \Mod{M}_{m, 1} \ar@{-}[dl] \ar@{-}[dr] & \\
	\Mod{M}_{m+1,1} \ar@{-}[dr]& & \Mod{M}_{m-1, 1}\ar@{-}[dl] \ar@{-}[dl]\\
	& \Mod{M}_{m,1} &
}\end{bmatrix}.
\end{align*}
The triplet algebra $\Trip{2}$ is known to be $C_2$-cofinite but non-rational. It is a simple current extension of $\Sing{2}$, namely
\[
\Trip{2} = \bigoplus_{m\in \ZZ} \Mod{M}_{1+2m, 1}.
\]
\end{example}

\subsection{Lifting Coset Modules}

In this subsection, we show that whether certain generalized $\VOA{C}$-modules $\Mod{D}$ could be tensored with
appropriate Fock modules so that the product can be induced (lifted) to a $\VOA{V}$-module is essentially
decided by the monodromy
\[
M_{\VOA{C}_\lambda, \Mod{D}} =
R_{\Mod{D}, \VOA{C}_\lambda}\circ R_{\VOA{C}_\lambda, \Mod{D}} :
\Mod{C}_\lambda\fus{}\Mod{D} \rightarrow \Mod{C}_\lambda\fus{}\Mod{D}.
\]
For properties of the monodromy used here, we refer to \cite{CKL}.

The following lemma could be easily proved
as in \cite{CKL} and will be used frequently below.
For a \voa{} $\VOA{V}$, and its vertex tensor category $\categ{C}$.
Let $\mathrm{Pic}_{\categ{C}}(\VOA{V})$ denote the Picard groupoid (see \cite{C, FRS}).
That is,
$\mathrm{Pic}_{\categ{C}}(\VOA{V})$ is the full subcategory of simple currents.
Clearly $\mathrm{Pic}_{\categ{C}}(\VOA{V})$ is closed under tensor product.
\begin{lemma}
\label{lem:pic}
Let $\Mod{X}\in\categ{C}$ be such that for $\Mod{J}_i\in\mathrm{Pic}_{\categ{C}}(V)$,
$M_{\Mod{J}_i,\Mod{X}}=\lambda_{\Mod{J}_i,\Mod{X}}\Id_{\Mod{J}\fus{}\Mod{X}}$
where $\lambda_{\Mod{J}_1,\Mod{X}}\in\CC$ for $i=1,2$.
Then, $\lambda_{\Mod{J}_1,\Mod{X}}\lambda_{\Mod{J}_2,\Mod{X}}=\lambda_{\Mod{J}_1\fus{}\Mod{J}_2,\Mod{X}}$.
\end{lemma}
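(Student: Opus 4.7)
The plan is to derive the multiplicativity of the scalar $\lambda_{\blank,\Mod{X}}$ directly from the hexagon axiom governing the braiding $R$ in a braided tensor category, which in turn controls how monodromy behaves across tensor products in the first slot. Recall that one of the hexagon identities yields, for any objects $\Mod{A},\Mod{B},\Mod{C}$,
\begin{equation*}
R_{\Mod{A}\fus{}\Mod{B},\Mod{C}} = \bigl(R_{\Mod{A},\Mod{C}}\fus{}\Id_{\Mod{B}}\bigr)\circ\bigl(\Id_{\Mod{A}}\fus{}R_{\Mod{B},\Mod{C}}\bigr),
\end{equation*}
up to associators, and symmetrically for $R_{\Mod{C},\Mod{A}\fus{}\Mod{B}}$. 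Composing these two identities gives the expected factorisation of the monodromy
\begin{equation*}
M_{\Mod{A}\fus{}\Mod{B},\Mod{C}} = \bigl(\Id_{\Mod{A}}\fus{}M_{\Mod{B},\Mod{C}}\bigr)\circ\bigl(M_{\Mod{A},\Mod{C}}\fus{}\Id_{\Mod{B}}\bigr),
\end{equation*}
again up to associators. This identity is essentially standard and is already used in \cite{CKL}; I would simply cite it.

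Specialising to $\Mod{A}=\Mod{J}_1$, $\Mod{B}=\Mod{J}_2$, $\Mod{C}=\Mod{X}$ and substituting the hypotheses $M_{\Mod{J}_i,\Mod{X}} = \lambda_{\Mod{J}_i,\Mod{X}}\Id$ for $i=1,2$, the two factors on the right become $\lambda_{\Mod{J}_2,\Mod{X}}\Id$ and $\lambda_{\Mod{J}_1,\Mod{X}}\Id$ respectively on the appropriate tensor product, since scalar multiples of identity morphisms pull through the tensor functor. Their composition is therefore the scalar $\lambda_{\Mod{J}_1,\Mod{X}}\lambda_{\Mod{J}_2,\Mod{X}}$ times the identity on $(\Mod{J}_1\fus{}\Mod{J}_2)\fus{}\Mod{X}$. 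Comparing with the definition of $\lambda_{\Mod{J}_1\fus{}\Mod{J}_2,\Mod{X}}$ (which, by hypothesis on $\Mod{X}$ applied to the simple current $\Mod{J}_1\fus{}\Mod{J}_2\in\mathrm{Pic}_{\categ{C}}(\VOA{V})$, exists and is likewise a scalar times the identity) gives the desired equation.

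The only potential subtlety is the role of the associators in the factorisation formula for $M$; however, since both sides of the final equality are scalar multiples of $\Id_{(\Mod{J}_1\fus{}\Mod{J}_2)\fus{}\Mod{X}}$, the associator isomorphisms contribute only invertible natural transformations that cancel in the scalar, so they do not affect the conclusion. Thus the main work reduces to carefully writing out the hexagon once and reading off the scalar, exactly paralleling the arguments in \cite[Sec.~2]{CKL}.
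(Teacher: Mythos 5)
Your proof is correct and follows essentially the same route the paper intends: the paper simply defers to \cite{CKL}, where the multiplicativity of monodromy scalars is obtained from exactly this hexagon-identity factorisation. The one cosmetic caveat is that the factorisation of $M_{\Mod{J}_1\fus{}\Mod{J}_2,\Mod{X}}$ holds only up to conjugation by braidings as well as associators, but since you correctly observe that all such conjugations act trivially on scalar multiples of the identity, the argument goes through as written.
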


\begin{thm}\label{thm:lift}
Let $\VOA{V}$, $\VOA{H}$, $\VOA{C}$, 
$\grp{L}$
be as in \cref{thm:SW-VOA},
let $\grp{L}'$ be the dual lattice, $U=\grp{L} \otimes_\ZZ \RR$
and let
$\Mod{D}$ be a generalized $\VOA{C}$-module that appears as a subquotient
of fusion product of some simple $\VOA{C}$-modules.
Then, there exists $\alpha$ in $U$, such that for all $\lambda\in\grp{L}$,
\[
M_{{\VOA{C}}_\lambda, \Mod{D}} = e^{-2\pi i \langle \alpha, \lambda \rangle} \Id_{\Mod{C}_\lambda\fus{}\Mod{D}}
\]
and $\Mod{F}_\beta\otimes\Mod{D}$ lifts to a $\VOA{V}$-module if and only if $\beta\in\alpha+\grp{L}'$.
\end{thm}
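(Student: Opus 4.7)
My plan splits the argument into three stages: first, that $M_{\Mod{C}_\lambda,\Mod{D}}$ acts as a single scalar on $\Mod{C}_\lambda\fus{}\Mod{D}$; second, that this scalar has the form $e^{-2\pi i\langle\alpha,\lambda\rangle}$ for some $\alpha\in U$; and third, that the lifting claim follows from the standard monodromy criterion for induction along a commutative algebra extension (as used e.g.\ in \cite{HKL,CKL}). For the first stage, since $\Mod{C}_\lambda$ is a simple current by \cref{thm:SW-VOA}, Schur's lemma together with \cref{prop:simplecurrents}\ref{it:SCSimple} immediately give that $M_{\Mod{C}_\lambda,\Mod{S}}$ is scalar whenever $\Mod{S}$ is simple. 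The hypothesis that $\Mod{D}$ is a subquotient of a fusion product of finitely many simple $\VOA{C}$-modules allows \cite[Lem.~3.19]{CKL} to be invoked, guaranteeing that $M_{\Mod{C}_\lambda,\Mod{D}}$ is semisimple; combining this with naturality of the braiding and the simple-current multiplicativity provided by \cref{lem:pic}, one then checks that all eigenvalues coincide, so $M_{\Mod{C}_\lambda,\Mod{D}}=c_\lambda\Id$ for some $c_\lambda\in\CC^\times$.

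For the second stage, \cref{lem:pic} yields $c_{\lambda+\mu}=c_\lambda c_\mu$, so $\lambda\mapsto\tfrac{-1}{2\pi i}\log c_\lambda$ is a group homomorphism $\grp{L}\to\CC/\ZZ$. The reality assumption on the Heisenberg eigenvalues (cf.\ \cref{sec:HCosets}), combined with the balancing axiom $\theta_{\Mod{C}_\lambda\fus{}\Mod{D}}=M_{\Mod{C}_\lambda,\Mod{D}}\circ(\theta_{\Mod{C}_\lambda}\fus{}\theta_{\Mod{D}})$ and the real conformal weights of $\Mod{C}_\lambda$, forces this homomorphism to be $\RR$-valued, hence representable via the non-degenerate bilinear form on $U$ as $\lambda\mapsto\langle\alpha,\lambda\rangle\bmod\ZZ$ for some $\alpha\in U$ unique modulo $\grp{L}'$. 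For the third stage, the induction criterion says that $\Mod{F}_\beta\otimes\Mod{D}$ lifts to a $\VOA{V}$-module if and only if $M_{\VOA{V},\Mod{F}_\beta\otimes\Mod{D}}=\Id$. Using $\VOA{V}=\bigoplus_{\lambda\in\grp{L}}\Mod{F}_\lambda\otimes\Mod{C}_\lambda$ and the tensor factorization $M_{\Mod{F}_\lambda\otimes\Mod{C}_\lambda,\Mod{F}_\beta\otimes\Mod{D}}=M_{\Mod{F}_\lambda,\Mod{F}_\beta}\otimes M_{\Mod{C}_\lambda,\Mod{D}}$ for the external product of braided categories, together with the direct Fock-module calculation $M_{\Mod{F}_\lambda,\Mod{F}_\beta}=e^{2\pi i\langle\lambda,\beta\rangle}\Id$ (via balancing and the conformal weight $\tfrac{1}{2}\langle\lambda,\lambda\rangle$ of $\Mod{F}_\lambda$), the triviality condition becomes $\langle\lambda,\beta-\alpha\rangle\in\ZZ$ for all $\lambda\in\grp{L}$, that is, $\beta\in\alpha+\grp{L}'$.

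The main obstacle will be rigorously establishing in the first stage that $M_{\Mod{C}_\lambda,\Mod{D}}$ is in fact a single scalar rather than merely semisimple with potentially distinct scalars on different composition factors of $\Mod{D}$. The argument must exploit the specific structure of $\Mod{D}$ as a subquotient of an iterated fusion of simples, together with the naturality of $M$ and the autoequivalence property of fusion with $\Mod{C}_\lambda$ (\cref{prop:simplecurrents}\ref{it:SCExact}), to show that all such scalars coincide. A secondary subtlety is justifying the tensor factorization of the monodromy over $\VOA{H}\otimes\VOA{C}$ within the logarithmic framework of \cite{HLZ}; this should follow from the general product structure of braidings on external tensor products of vertex tensor categories, but it requires some care given that $\Mod{D}$ need not be semisimple.
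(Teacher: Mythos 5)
Your proposal follows essentially the same route as the paper's proof: scalar monodromy for the simple currents $\Mod{C}_\lambda$, multiplicativity via \cref{lem:pic}, reality of the scalar via the (semisimple part of the) twist together with real conformal weights, non-degeneracy of the bilinear form to produce $\alpha$ unique modulo $\grp{L}'$, and the tensor factorization of the monodromy via \cref{prop:FusingTensorProducts} to reduce the lifting criterion to $\langle\lambda,\beta-\alpha\rangle\in\ZZ$. The one step you flag as the main obstacle --- that $M_{\Mod{C}_\lambda,\Mod{D}}$ is a single scalar rather than merely semisimple with possibly distinct eigenvalues on different composition factors --- is exactly what the hypothesis on $\Mod{D}$ being a subquotient of a fusion product of simples is designed for, and the paper simply imports it from \cite{CKL} rather than reproving it.
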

\begin{proof}
Recall that we are working with categories of $\Mod{C}$ and $\VOA{H}$ that have real weights
for the respective $L_0$s.
Additionally, recall that we are working over semi-simple category for $\VOA{H}$ and a category for
$\VOA{C}$ each object of which has globally bounded $L_0$-Jordan blocks. 

We know that $\grp{L}$ is equipped with a symmetric nondegenerate bilinear form $\langle\cdot,\cdot\rangle$,
and this form takes real values since the conformal weights with respect to the Heisenberg are real.
By non-degeneracy of $\langle\cdot,\cdot\rangle$, given a
homomorphism $f : \grp{L}\rightarrow S^1$,
there exists an $\alpha\in U$ such that
\begin{align}
f(\lambda)=e^{2i\pi\langle\alpha,\lambda\rangle}
\label{eqn:findalpha}
\end{align}
 for all $\lambda\in \grp{L}$.
 Moreover, $\beta\in U$ satisfies \cref{eqn:findalpha} if and only if
$\beta\in \alpha + \grp{L}'$.

Since each of the ${\VOA{C}}_\lambda$ is a simple current, by results in \cite{CKL},
we know that the monodromy $M_{{\VOA{C}}_\lambda,\,\Mod{D}}=M_\lambda\Id_{\Mod{C}_\lambda\fus{}\Mod{D}}$
for some scalar, say $M_\lambda\in\CC^\times$.
Since $M_{\Mod{C}_\lambda,\Mod{D}}$ is semi-simple and $\Mod{C}_\lambda,\Mod{D},\Mod{C}_\lambda\fus{}\Mod{D}$
have globally bounded $L_0$-Jordan blocks,
proceeding as in the proof of \cite[Eq.~(3.10)]{CKL}, we gather that
$M_{\Mod{C}_\lambda,\Mod{D}}=(\theta_{\Mod{C}_\lambda\fus{}\Mod{D}})_{ss}\circ( (\theta_{\Mod{C}_\lambda}^{-1})_{ss}\fus{}
(\theta_{\Mod{D}}^{-1})_{ss})$, where $ss$ denotes the semi-simple part.
Since each of the modules involved has real conformal weights, we get that $M_\lambda=e^{2i \pi r_\lambda}$ for some $r_\lambda\in\RR$.
So, $M_\lambda\in S^1$ for all $\lambda\in\grp{L}$.
Using \cref{lem:pic} we deduce that $\lambda\mapsto M_\lambda$ is a homomorphism $\grp{L}\rightarrow S^1$
and so is $\lambda\mapsto M_\lambda^{-1}$ since $S^1$ is abelian.

Now, in \cref{eqn:findalpha}, we take $f(\lambda)=M_\lambda^{-1}$ and we get an $\alpha\in U$ such that
$M_\lambda^{-1}=e^{2i\pi\langle\alpha,\lambda\rangle}=M_{\Mod{F}_\lambda,\Mod{F}_\alpha}$.
Using \cref{prop:FusingTensorProducts}
we conclude that $(\Mod{F}_\lambda\otimes{\VOA{C}}_\lambda)\fus{}(\Mod{F}_\alpha\otimes\Mod{D})\cong
\Mod{F}_{\lambda+\alpha}\otimes({\VOA{C}}_\lambda\fus{}\Mod{D})$,
and therefore, monodromy factors over the $\otimes$ tensorands.
We conclude that
$M_{\Mod{F}_\lambda\otimes {\VOA{C}}_\lambda,\, \Mod{F}_\alpha\otimes D}=
M_{\Mod{F}_\lambda,\,\Mod{F}_\alpha}\otimes M_{{\VOA{C}}_\lambda,\,\Mod{D}}=1$.
It now follows that $\Mod{F}_\alpha\otimes\Mod{D}$ lifts.
Moreover, from the arguments above we can conclude that $\Mod{F}_\beta\otimes\Mod{D}$ lifts if and only if $\beta\in\alpha+\grp{L}'$.
\end{proof}

We now combine this with extensions of $\VOA{C}$ as in \cref{thm:extension} to deduce the following.
\begin{cor}\label{cor:liftext}
Assume the setup of \cref{thm:lift}.
Let $\grp{E}$ be a sublattice of $\grp{L}$ such that $\VOA{E}=\oplus_{\lambda\in \grp{E}}\VOA{C}_\lambda$
has a \voa{} structure inherited from $\VOA{V}$ exactly as in \cref{thm:extension}. Then $\Mod{D}$ lifts to a $\VOA{E}$-module
$\oplus_{\lambda\in \grp{E}}\VOA{C}_\lambda\fus{}\Mod{D}$ iff
$\alpha\in\grp{E}'$, where $\grp{E}'$ is the dual lattice of $\grp{E}$.
\end{cor}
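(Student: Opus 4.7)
The plan is to reduce the corollary to the monodromy computation of \cref{thm:lift} via the general lifting criterion for extensions by simple currents. First, I would invoke the extension-lifting principle from \cite{CKL}: a $\VOA{C}$-module $\Mod{D}$ lifts to an $\VOA{E}$-module precisely when the monodromy $M_{\VOA{C}_\lambda, \Mod{D}}$ acts as the identity for every $\lambda \in \grp{E}$. In the setting of \cite{CKL}, this is established for extensions by groups of simple currents; since $\VOA{E}$ is guaranteed to carry a \voa{} structure by \cref{thm:extension} and each $\VOA{C}_\lambda$ is a simple current for $\VOA{C}$ by \cref{thm:SW-VOA}, the criterion applies directly in our situation. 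Once the monodromy condition holds, the lifted module is obtained by induction and is naturally isomorphic to $\bigoplus_{\lambda \in \grp{E}} \VOA{C}_\lambda \fus{\VOA{C}} \Mod{D}$ by distributivity of fusion over direct sums.

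Second, I would translate the monodromy condition into the stated lattice condition. By \cref{thm:lift}, there exists $\alpha \in U = \grp{L} \otimes_{\ZZ} \RR$ with
\begin{equation*}
M_{\VOA{C}_\lambda, \Mod{D}} = e^{-2\pi i \langle \alpha, \lambda \rangle} \Id_{\Mod{C}_\lambda \fus{} \Mod{D}}
\end{equation*}
for every $\lambda \in \grp{L}$. Restricting $\lambda$ to $\grp{E}$, this operator is the identity for all $\lambda \in \grp{E}$ if and only if $\langle \alpha, \lambda \rangle \in \ZZ$ for all $\lambda \in \grp{E}$, which is exactly the definition of $\alpha \in \grp{E}'$. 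Combining with the first step gives the biconditional claimed in the corollary and identifies the lifted module.

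The main obstacle is verifying that the lifting criterion from \cite{CKL} applies in the present generality: $\grp{E}$ may have infinite rank, and \cite{CKL} is primarily formulated for simple current extensions rather than for general abelian intertwining algebra extensions. This should however follow componentwise: the $\VOA{C}_\lambda$ with $\lambda \in \grp{E}$ form an abelian group of simple currents under fusion by \cref{thm:SW-VOA}, and the abelian intertwining algebra structure on $\VOA{E}$ provided by \cite{Li} supplies the associativity and braiding compatibility needed to run the CKL monodromy argument on each summand $\VOA{C}_\lambda \fus{\VOA{C}} \Mod{D}$. A minor point to check is that $\Mod{D}$ being a subquotient of a fusion product of simple $\VOA{C}$-modules furnishes the semisimplicity of the relevant monodromies; but this is exactly the hypothesis that was already used in the proof of \cref{thm:lift} via \cite[Lem.~3.19]{CKL}, so no new work is required here.
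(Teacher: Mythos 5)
Your proposal is correct and follows essentially the same route as the paper: reduce to the monodromy criterion for lifting along the simple current extension $\VOA{C} \subseteq \VOA{E}$ (using \cite{HKL} for the ``if'' direction and \cite{CKL} for the ``only if'' direction) and then apply \cref{thm:lift} to identify the triviality of $M_{\VOA{C}_\lambda,\Mod{D}}$ for all $\lambda\in\grp{E}$ with the condition $\alpha\in\grp{E}'$. The only step you elide is that the criterion is a priori the triviality of $M_{\VOA{C}_\lambda,\,\VOA{C}_\mu\fus{}\Mod{D}}$ for all $\lambda,\mu\in\grp{E}$; the paper reduces this to $M_{\VOA{C}_\lambda,\Mod{D}}=\Id$ by combining $M_{\VOA{C}_\lambda,\VOA{C}_\mu}=\Id$ (which holds because $\VOA{E}$ is a \voa{}) with the multiplicativity of monodromy.
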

\begin{proof}
Recall that each $\VOA{C}_\lambda$ is a simple current for $\VOA{C}$.
Therefore, using \cite{HKL} (for the ``if'' direction)
and \cite{CKL} (for the ``only if'' direction), we know that $\oplus_{\lambda\in \grp{E}}\VOA{C}_\lambda\fus{}\Mod{D}$
is an $\VOA{E}$-module iff
$M_{\VOA{C}_\lambda,\VOA{C}_\mu\fus{}\Mod{D}}=\Id_{\VOA{C}_\lambda\fus{}(\VOA{C}_\mu\fus{}\Mod{D})}$, for all $\lambda,\mu\in \grp{E}$.
Since $\VOA{E}$ is a \voa{}, we know that $M_{\VOA{C}_\lambda,\VOA{C}_\mu}=\Id_{\VOA{C}_\lambda\fus{}\VOA{C}_\mu}$
for all $\lambda,\mu\in\grp{E}$.
By properties of monodromy, we gather that  $M_{\VOA{C}_\lambda,\VOA{C}_\mu\fus{}\Mod{D}}=\Id_{\VOA{C}_\lambda\fus{}(\VOA{C}_\mu\fus{}\Mod{D})}$ for $\lambda,\mu\in\grp{E}$
iff $M_{\VOA{C}_\lambda,\Mod{D}}=\Id_{\VOA{C}_\lambda\fus{}\VOA{D}}$ for all $\lambda\in\grp{E}$,
which in turn holds iff $\alpha\in\grp{E}'$.
\end{proof}

\begin{remark}
Since $\VOA{E}$ is a simple current extension of $\VOA{C}$,
we can utilize arguments similar to \cite[Thm.~4.4]{Lam} in order to analyze certain simple $\VOA{E}$-modules.
Let $\Mod{X}$ be a simple $\VOA{E}$-module such that there exists a simple $\VOA{C}$-module
$\Mod{X}_0\subset\Mod{X}$. (In the notation of \cite{Lam}, the role of group $G$ is played by $\grp{P}$ and
the $V^\chi$ are $\VOA{C}_\lambda$ for $\lambda\in\grp{P}$.) Then, $\mathcal{F}(\Mod{X}_0)=\oplus_{\lambda\in\grp{P}}{\VOA{C}_\lambda}\fus{}\Mod{X}_0$ has a natural structure of an (induced) $\VOA{E}$-module and it surjects onto $\Mod{X}$.
\end{remark}

\begin{example}\label{ex:N2} We now illustrate the lifting properties with unitary minimal models of the $N=2$ super Virasoro algebra.
We refer the reader to \cite{AdaRepN2}, \cite{AdaVoaN2}, \cite{DPYZ} and \cite{SatoConformal} for additional information on these minimal models.

We start with some well-known results whose proofs can be found e.g. in \cite{CLI}.
Let $k$ be a positive integer, then $\VOA{L}_k(\alg{sl}_2)$ contains the lattice \voa $\VOA{V}_{\grp{L}_\alpha}$ 
with $\grp{L}_\alpha=\alpha\ZZ$ and $\alpha^2=2k$, so $\grp{L}_\alpha\cong \sqrt{2k}\ZZ$.
The $bc$-ghost \voa{} $\VOA{E}(1)$ is isomorphic to $\VOA{V}_{\grp{L}_\beta}$ with $\grp{L}_\beta=\beta\ZZ$ and $\beta^2=1$, so 
$\grp{L}_\beta\cong \ZZ$.
Then the lattice $\grp{L}_\alpha\oplus \grp{L}_\beta$ contains the lattice $\grp{L}_\gamma=\gamma\ZZ$ with $\gamma=\alpha+k\beta$ as sublattice. The orthogonal complement is $\grp{N}=\mu\ZZ$ with $\mu=\alpha-2\beta$. Note, that $\gamma^2=k(k+2)$ and $\mu^2=2(k+2)$.
In \cite[Sec.\ 8]{CLI} it is proved that
\[
\VOA{S}_k:= \text{Com}\left(\VOA{V}_{\grp{L}_\mu}, \VOA{L}_k(\alg{sl}_2)\otimes \VOA{E}(1)\right)
\]
is the simple and rational $N=2$ super Virasoro algebra at central charge $c=3k/(k+2)$.

We will now explain how to obtain simple $\VOA{S}_k$-modules.
For this let $\lambda$ be an integer with $0\leq \lambda\leq k$. Further let $\Lambda_0$ and $\Lambda_1$ be the usual fundamental weights of $\widehat{\alg{sl}}_2$. Then the simple $\VOA{L}_k(\alg{sl}_2)$-modules are the integrable highest weight modules $\Mod{L}(\lambda)$ of weight $(k-\lambda) \Lambda_0+\lambda\Lambda_1$.
$\VOA{V}_{\frac{n}{2k}\alpha+\grp{L}_\alpha}$ appears in $\Mod{L}(\lambda)$ 
if and only if $\lambda+n$ is even. This follows directly since $\VOA{V}_{\frac{n}{2k}\alpha+\grp{L}_\alpha}$ appears in the decomposition 
of $\VOA{L}_k(\alg{sl}_2)$ if and only if $n$ is even.
We now express lattice vectors of $\grp{L}'_\alpha\oplus \grp{L}_\beta$ in terms of those of $\grp{L}'_\gamma\oplus\grp{L}'_\mu$, namely,
\[
\frac{a}{2k}\alpha +b\beta = (a+bk) \frac{\gamma}{k(k+2)}+(a-2b)\frac{\mu}{2(k+2)}\qquad \qquad a, b \in \mathbb Z.
\]
It follows that $\VOA{V}_{\frac{n}{2(k+2)}+\grp{N}'}$ is contained in $\Mod{L}(\lambda)\otimes \VOA{V}_{\grp{L}_\beta}$ 
if and only if $\lambda+n$ is even as well. We thus get
\[
\Mod{L}(\lambda) \otimes \VOA{V}_{\grp{L}_\beta} \cong \begin{cases}
\bigoplus\limits_{\nu\in 2\grp{N}'/\grp{N}} \VOA{V}_{\nu+\grp{N}} \otimes M(\lambda, \nu) &\qquad \text{if} \ \nu+\lambda \ \text{is even} \\
\bigoplus\limits_{\nu\in \frac{1}{2(k+2)}+2\grp{L}'/\grp{L}} \VOA{V}_{\nu+\grp{L}} \otimes M(\lambda, \nu) &\qquad \text{if} \ \nu+\lambda \ \text{is odd}
\end{cases}
\]
as $\VOA{V}_{\grp{L}_\mu}\otimes\VOA{S}_k$-modules. By \cref{thm:SW-Mod} (2) all $M(\lambda, \nu)$ are simple $\VOA{S}_k$-modules.
On the other hand, by \cref{thm:lift} for every $\mathcal L_k$-module $M$ there exists a $V_N$-module $V_{\nu+N}$ such that
\[
V_{\nu+\rho+N}\otimes M
\]
lifts to a $V_N\otimes \VOA{S}_k$-module if and only if $\rho \in (2\grp{N}')'/\grp{N}=\frac{1}{2}\grp{N}/\grp{N}$.
Finally, we announce that the relation between the tensor category of a \voa{} and its extensions can be made quite explixit \cite{CKM} and that these results imply that every simple $\VOA{S}_k$-module appears in the decomposition of at least one of the $\Mod{L}(\lambda) \otimes V_{\grp{L}_\beta}$ and moreover
\[
M(\lambda, \nu) \cong M(\lambda', \nu') \qquad \text{if and only if} \qquad \lambda'=k-\lambda\qquad\text{and}\qquad\nu'=\nu +\frac{\mu}{2} \ \ \text{mod} \ \grp{L}_\mu.
\]
\end{example}

\subsection{Rationality}
In this section we prove an interesting rationality result: Let $\VOA{V}$ be simple, rational,
	CFT-type (that is,
	conformal weights of $\VOA{V}$ are non-negative
	and the zeroth weight space is spanned by vacuum) and {$C_2$-cofinite}.
	Then, \cref{thm:rational} states that every grading-restricted generalized $\VOA{C}$-module is completely reducible.

We work with the following setup: Let $\VOA{C}=\text{Com}(\VOA{H},\VOA{V})$.
Assume that $\text{Com}(\VOA{C},\VOA{V})=\VOA{V}_{\grp{L}}$, where
$(\grp{L},\langle\cdot,\cdot\rangle)$ is a positive definite even
lattice. With this, $(\VOA{V}_\grp{L},\VOA{C})$ form a commuting pair and $\VOA{C}$ is simple.
We now collect well-known results from the literature that guarantee
that we can invoke vertex tensor category theory for $\VOA{C}$, under suitable assumptions on $\VOA{V}$.

\begin{lemma}
If $\VOA{V}$ is {$C_2$-cofinite} then so is $\VOA{C}$.
In particular, if $\VOA{V}=\VOA{L}_k(\widehat{\alg{g}})$ with $k\in\NN$ then $\VOA{C}$ is $C_2$-cofinite.
\end{lemma}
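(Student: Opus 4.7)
The plan is straightforward: this lemma is essentially a direct citation of Miyamoto's orbifold $C_2$-cofiniteness result, referenced in the introduction as \cite{M1}, together with the classical fact that simple affine \voas{} at positive integer level are $C_2$-cofinite.

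For the first assertion, I would invoke Miyamoto's theorem, which says that if $\VOA{V}$ is $C_2$-cofinite and contains a lattice \voa{} $\VOA{V}_{\grp{L}}$ associated with a positive definite even lattice $\grp{L}$, then the commutant $\Com{\VOA{V}_{\grp{L}}}{\VOA{V}}$ is also $C_2$-cofinite. In our setup, $\text{Com}(\VOA{C},\VOA{V})=\VOA{V}_{\grp{L}}$ is by hypothesis precisely such a lattice \voa{}, and the double commutant property $\VOA{C}=\Com{\VOA{V}_{\grp{L}}}{\VOA{V}}$ (which holds here because we are assuming $(\VOA{V}_{\grp{L}},\VOA{C})$ form a commuting pair) allows Miyamoto's result to apply directly, yielding $C_2$-cofiniteness of $\VOA{C}$.

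For the second assertion, I would recall that for $\VOA{V}=\AffVOA{k}{\alg{g}}$ with $k\in\NN$, it is well-known (by the results of Frenkel-Zhu, together with Zhu's theorem relating rationality and $C_2$-cofiniteness in the context of integrable affine \voas{}) that $\VOA{V}$ is rational and $C_2$-cofinite. In this case the Heisenberg subalgebra $\VOA{H}$ generated by the Cartan currents sits inside a lattice \voa{} $\VOA{V}_{\grp{L}}$, where $\grp{L}$ is a rescaled copy of the root lattice (which is positive definite and even once rescaled by $\sqrt{k}$). Thus the hypotheses of the first part are satisfied, and the conclusion follows.

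The main (and essentially only) obstacle is verifying that the setup genuinely fits Miyamoto's framework---in particular, checking that the pair $(\VOA{V}_{\grp{L}},\VOA{C})$ really is a dual pair in the sense required, which is assumed at the start of the subsection. No substantial new argument is needed; the lemma is a clean consequence of \cite{M1}.
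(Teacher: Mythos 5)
Your proof is correct and matches the paper's, which likewise disposes of the general statement by citing Miyamoto \cite{M1} within the standing hypothesis that $\Com{\VOA{C}}{\VOA{V}}$ is a positive definite even lattice \voa{}. The only cosmetic difference is in the affine case: the paper cites \cite{ALY} directly for the $C_2$-cofiniteness of parafermion algebras, whereas you derive it as a special case of the first assertion (using that $\AffVOA{k}{\alg{g}}$ with $k\in\NN$ is $C_2$-cofinite and that the relevant double commutant is a lattice \voa{} on a rescaled root lattice), which is equally valid.
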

\begin{proof}
The proof of the most general statement can be found in \cite{M1}.
For the case of $\VOA{V}=\VOA{L}_k(\widehat{\alg{g}})$ with $k\in\NN$, see \cite{ALY}.
\end{proof}

\begin{lemma}
If $\VOA{V}$ is simple and CFT-type, 
then so is $\VOA{C}$.
\end{lemma}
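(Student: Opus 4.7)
The plan is to verify the two defining conditions of CFT-type for $\VOA{C}$, namely that its conformal weights are non-negative and that its weight-zero subspace is spanned by the vacuum. Simplicity of $\VOA{C}$ has already been established earlier in the excerpt, so only the grading conditions remain. The key tool will be the standard decomposition of the Virasoro element as $\omega_{\VOA{V}} = \omega_{\VOA{H}} + \omega_{\VOA{C}}$, valid because $\VOA{H}$ is a Heisenberg subalgebra with its own Sugawara conformal vector $\omega_{\VOA{H}} = \tfrac{1}{2}\sum_{i}h^i_{-1}h^i_{-1}\vac$, and because $\VOA{C}$ by construction centralizes $\VOA{H}$.

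First I would observe that any $v \in \VOA{C}$ satisfies $L^{\VOA{H}}_0 v = 0$. This is immediate from the commutant definition: since all modes of each $h^i$ annihilate $v$ in the appropriate sense, and $L^{\VOA{H}}_0$ is built out of these modes, one has $L^{\VOA{H}}_0 v = 0$. Consequently, $L^{\VOA{C}}_0 v = L^{\VOA{V}}_0 v$, so the $\VOA{C}$-conformal weight of $v$ agrees with its $\VOA{V}$-conformal weight. Since $\VOA{V}$ is CFT-type, the latter is non-negative, giving the first condition for $\VOA{C}$.

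Next, suppose $v \in \VOA{C}$ has conformal weight zero. By the previous paragraph, $v$ lies in $\VOA{V}_{[0]}$, which by CFT-type of $\VOA{V}$ equals $\CC\vac$. Since the vacuum $\vac$ lies in $\VOA{C}$ and is the vacuum of $\VOA{C}$ itself, we conclude $v \in \CC\vac$, so $\VOA{C}_{[0]} = \CC\vac$.

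There are no serious obstacles here; the only point that warrants brief justification is the compatibility of the conformal vectors, i.e.\ that $\omega_{\VOA{C}} := \omega_{\VOA{V}} - \omega_{\VOA{H}}$ does indeed serve as a conformal vector for the commutant with $L^{\VOA{C}}_0$ acting on $\VOA{C}$ as the restriction of $L^{\VOA{V}}_0$. This follows from the standard coset/GKO construction: $\omega_{\VOA{H}}$ commutes with $\VOA{C}$ (its modes are built from Heisenberg modes, which annihilate $\VOA{C}$ in the relevant sense), so on $\VOA{C}$ the operator $L^{\VOA{C}}_0$ equals $L^{\VOA{V}}_0 - L^{\VOA{H}}_0 = L^{\VOA{V}}_0$. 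Once this is noted, the argument above is complete.
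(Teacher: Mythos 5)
Your proof is correct, but it takes a different route from the one in the paper. The paper works in the setup where $\Com{\VOA{C}}{\VOA{V}} = \VOA{V}_{\grp{L}}$ is a lattice \voa{}, notes that the commuting pair gives a non-zero map $\VOA{V}_{\grp{L}} \otimes \VOA{C} \to \VOA{V}$, uses simplicity of both factors to conclude this map is an injection, and then reads off $\vac \otimes \VOA{C}_n \subset \VOA{V}_n$ to get $\VOA{C}_n = 0$ for $n<0$ and $\VOA{C}_0 = \CC\vac$. You instead argue directly with the coset Virasoro: since every mode $h^i_n$ with $n \ge 0$ annihilates $\VOA{C}$, the Sugawara operator $L^{\VOA{H}}_0$ kills $\VOA{C}$, so $L^{\VOA{C}}_0 = L^{\VOA{V}}_0 - L^{\VOA{H}}_0$ restricts to $L^{\VOA{V}}_0$ on $\VOA{C}$, and the grading conditions are inherited verbatim from $\VOA{V}$. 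The two arguments rest on the same underlying fact --- compatibility of the $\VOA{C}$-grading with the $\VOA{V}$-grading --- but yours is more elementary and slightly more general: it needs neither the simplicity of the factors (used in the paper to get injectivity) nor any hypothesis on the double commutant $\Com{\VOA{C}}{\VOA{V}}$, only that $\VOA{H}$ is a Heisenberg \vosa{} with its standard conformal vector satisfying $\omega_{\VOA{V}} = \omega_{\VOA{H}} + \omega_{\VOA{C}}$. The paper's version, on the other hand, fits into a chain of lemmas that all exploit the embedding $\VOA{V}_{\grp{L}} \otimes \VOA{C} \hookrightarrow \VOA{V}$ (e.g.\ the subsequent self-contragredience lemma), so it gets that embedding on record once and reuses it. You do rightly flag the one point that needs justification, namely that $\omega_{\VOA{C}} = \omega_{\VOA{V}} - \omega_{\VOA{H}}$ really is the conformal vector of the commutant; this is the standard GKO/coset fact and is fine to cite.
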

\begin{proof}
Firstly, since $\VOA{V}_{\grp{L}}$ and $\VOA{C}$ form a commuting pair, there exists a non-zero map
$\VOA{V}_{\grp{L}}\otimes \VOA{C}\rightarrow \VOA{V}$.
Since $\VOA{V}_{\grp{L}}$ and $\VOA{C}$ are both simple, so is $\VOA{V}_{\grp{L}}\otimes \VOA{C}$
and hence this map is an injection. 
Now, $\mathbf{1}\otimes \VOA{C}_n\subset \VOA{V}_n$ for any $n$,
in particular, we conclude that $\VOA{C}_n=0$ for $n<0$ and $\VOA{C}_0=\CC\mathbf{1}_\VOA{C}$.
\end{proof}
\begin{lemma}
If $\VOA{V}$ is simple, CFT-type and self-contragredient, then so is $\VOA{C}$.
\end{lemma}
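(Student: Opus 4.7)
The plan is to reduce the claim to a matching of Heisenberg isotypic components in the Schur--Weyl decomposition. Recall that $\VOA{C}$ being self-contragredient means $\VOA{C} \cong \VOA{C}'$ as $\VOA{C}$-modules, equivalently that $\VOA{C}$ carries a non-degenerate invariant bilinear form. So it suffices to extract such an isomorphism from the assumed isomorphism $\VOA{V} \cong \VOA{V}'$ of $\VOA{V}$-modules.

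First, I would restrict the isomorphism $\VOA{V} \cong \VOA{V}'$ to an isomorphism of $\VOA{H} \otimes \VOA{C}$-modules and apply the Schur--Weyl decomposition of \cref{thm:SW-VOA},
\[
\VOA{V} = \bigoplus_{\lambda \in \grp{L}} \Mod{F}_\lambda \otimes \Mod{C}_\lambda.
\]
Because each summand has finite-dimensional $L_0$ generalised eigenspaces, contragredient commutes with both the direct sum and the outer tensor product of $\VOA{H} \otimes \VOA{C}$-modules. Together with the standard identification $\Mod{F}_\lambda' \cong \Mod{F}_{-\lambda}$ of Heisenberg Fock modules (whose invariant pairing sends weight $\lambda$ to weight $-\lambda$), and after reindexing $\lambda \mapsto -\lambda$ in the subgroup $\grp{L}$, this gives
\[
\VOA{V}' \cong \bigoplus_{\lambda \in \grp{L}} \Mod{F}_{-\lambda} \otimes \Mod{C}_\lambda' \cong \bigoplus_{\lambda \in \grp{L}} \Mod{F}_\lambda \otimes \Mod{C}_{-\lambda}'.
\]

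Next, I would match the Heisenberg isotypic components in the isomorphism $\VOA{V} \cong \VOA{V}'$. Since the $\Mod{F}_\lambda$ are pairwise non-isomorphic simple $\VOA{H}$-modules, projection onto the $\lambda$-isotypic part yields $\Mod{F}_\lambda \otimes \Mod{C}_\lambda \cong \Mod{F}_\lambda \otimes \Mod{C}_{-\lambda}'$ as $\VOA{H} \otimes \VOA{C}$-modules for every $\lambda \in \grp{L}$. Using $\End_{\VOA{H}}(\Mod{F}_\lambda) = \CC$, we deduce $\Mod{C}_\lambda \cong \Mod{C}_{-\lambda}'$ as $\VOA{C}$-modules. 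Specialising to $\lambda = 0$ gives $\VOA{C} = \Mod{C}_0 \cong \Mod{C}_0' = \VOA{C}'$, completing the proof.

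The main point to be careful about is the compatibility of the contragredient with the outer tensor product $\Mod{F}_\lambda \otimes \Mod{C}_\lambda$, and the fact that the $\VOA{V}$-module isomorphism $\VOA{V} \cong \VOA{V}'$ really does respect the Heisenberg weight grading after restriction to $\VOA{H} \otimes \VOA{C}$. Both reduce to standard facts about $\VOA{V}$-module contragredients, applied in our setting where $\VOA{H}$ acts semisimply on $\VOA{V}$ (hence on $\VOA{V}'$) with real weights and finite-dimensional weight spaces, so that dualisation may be performed weight space by weight space.
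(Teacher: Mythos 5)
Your argument is correct, but it is genuinely different from the one in the paper. The paper never touches the Fock decomposition here: it invokes Li's description of the space of (automatically symmetric) invariant bilinear forms on a \voa{} $\VOA{U}$ as $(\VOA{U}_0/L_1\VOA{U}_1)^*$, deduces $L_1\VOA{V}_1=0$ from $\VOA{V}'\cong\VOA{V}$ together with $\VOA{V}_0=\CC\vac$, pushes this through the injection $\VOA{V}_{\grp{L}}\otimes\VOA{C}\hookrightarrow\VOA{V}$ to get $(L_{\VOA{C}})_1\VOA{C}_1=0$, and then applies Li's existence criterion to produce a non-zero invariant form on $\VOA{C}$, which is non-degenerate by simplicity. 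That route is short, crucially exploits the CFT-type hypothesis, and also delivers the symmetry of the form. Your route instead matches $\VOA{H}$-isotypic components of $\VOA{V}\cong\VOA{V}'$ in the decomposition $\bigoplus_{\lambda}\Mod{F}_\lambda\otimes\Mod{C}_\lambda$; it buys the stronger conclusion $\Mod{C}_\lambda'\cong\Mod{C}_{-\lambda}$ for every $\lambda$ (not just $\lambda=0$) and does not really use CFT-type, but it leans on three standard compatibilities that you should make explicit if you write it up: (i) restricting a contragredient to the \vosa{} $\VOA{H}\otimes\VOA{C}$ gives the contragredient of the restriction, which needs $\omega_{\VOA{V}}=\omega_{\VOA{H}}+\omega_{\VOA{C}}$ (true for the coset Virasoro); (ii) the graded dual commutes with the \emph{infinite} direct sum only because each $L_0$-eigenspace of $\VOA{V}$ is finite-dimensional, so that only finitely many $\lambda$ contribute in each conformal weight; and (iii) $(\Mod{F}_\lambda\otimes\Mod{C}_\lambda)'\cong\Mod{F}_{-\lambda}\otimes\Mod{C}_\lambda'$ in the sense of \cite{FHL}, which uses grading-restriction of both tensorands. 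With those points in place, the projection onto the $\Mod{F}_0$-isotypic component and $\End_{\VOA{H}}(\Mod{F}_0)=\CC$ give $\VOA{C}\cong\VOA{C}'$ as you say.
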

\begin{proof}
Note that $\VOA{C}$ is simple and
 we have an injection $\VOA{V}_{\grp{L}}\otimes \VOA{C}\hookrightarrow \VOA{V}$.
Since $\VOA{V}'\cong \VOA{V}$, there exists an invariant bilinear form on $\VOA{V}$
\cite{FHL}. Any invariant form on $\VOA{V}$ is automatically symmetric, by
\cite[Prop.~2.6]{Li} (see also \cite{FHL}).  Moreover, the space
of symmetric invariant forms on $\VOA{V}$ is naturally isomorphic to
$(\VOA{V}_{0}/L_1\VOA{V}_1)^*$ \cite[Thm.~3.1]{Li}.  Since
$\VOA{V}_0=\CC\mathbf{1}$, we conclude that $L_1\VOA{V}_1=0$.
Now, $L_1\VOA{V}_1=0$ implies that $L_1(\mathbf{1}\otimes \VOA{C}_1)=\mathbf{1}\otimes((L_{\VOA{C}})_1\VOA{C}_1)=0$.
This implies that $(L_{\VOA{C}})_1\VOA{C}_1=0$. This implies that $\VOA{C}_0/(L_{\VOA{C}})_1\VOA{C}_1\neq 0$, and
hence there exists a symmetric invariant bilinear form on $\VOA{C}$, by \cite[Cor.~3.2]{Li}.
In other words, $\VOA{C}'\cong \VOA{C}$. 
\end{proof}

\begin{lemma}
If $\VOA{V}$ is simple, {$C_2$-cofinite} and CFT-type, then
\begin{enumerate}
\item The category
of grading-restricted generalized modules for $\VOA{V}$ and $\VOA{C}$ satisfy the conditions needed
to invoke Huang, Lepowsky and Zhang's tensor category theory.
\item
Denoting the finite abelian group $\grp{L}'/\grp{L}$ by $\grp{G}$, there exists a subgroup $\grp{H}$ of $\grp{G}$
such that
\[\VOA{V} = \bigoplus_{\lambda\in \grp{H}}\VOA{V}_{\lambda}\otimes \VOA{C}_\lambda.\]
\item Each $\VOA{C}_\lambda$ appearing above is a simple current for $\VOA{C}$.
\end{enumerate}
\end{lemma}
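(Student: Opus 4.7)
My plan is to treat the three parts in turn, leveraging the structural lemmas established immediately before, together with \cref{thm:MiySimpleCurrent} and \cref{cor:Intriguing}.

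For part (1), the preceding lemmas guarantee that $\VOA{C}$ is simple, $C_2$-cofinite, and of CFT-type; $\VOA{V}$ has these properties by hypothesis. Both therefore satisfy the standing conditions recalled just after \cref{thm:huangC1}: $C_2$-cofinite, no negative conformal weights, and the weight-zero space spanned by the vacuum. I would simply cite Huang's result \cite{H-projcov} to conclude that the category of grading-restricted generalized modules for each of $\VOA{V}$ and $\VOA{C}$ satisfies assumptions \ref{it:HLZFirst}--\ref{it:HLZAssumption}, and hence is a vertex tensor category in the sense of Huang-Lepowsky-Zhang.

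For part (2), the commutant pair $(\VOA{V}_\grp{L},\VOA{C})$ yields an injection $\VOA{V}_\grp{L}\otimes \VOA{C}\hookrightarrow \VOA{V}$, as in the proof of the CFT-type lemma above. Because $\grp{L}$ is positive-definite and even, $\VOA{V}_\grp{L}$ is rational with simple modules $\VOA{V}_\lambda$ labelled by $\lambda \in \grp{G} = \grp{L}'/\grp{L}$, so restricting $\VOA{V}$ to a $\VOA{V}_\grp{L}\otimes \VOA{C}$-module gives a decomposition
\begin{equation}
\VOA{V} \cong \bigoplus_{\lambda \in \grp{G}} \VOA{V}_\lambda \otimes \Mod{C}_\lambda,
\end{equation}
where each $\Mod{C}_\lambda$ is a (possibly zero) $\VOA{C}$-module, and $\Mod{C}_0 \supseteq \VOA{C}$ by construction (with equality a consequence of $\VOA{V}_\grp{L}$ being its own commutant in $\VOA{V}_\grp{L}\otimes\VOA{C}$). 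Setting $\grp{H} = \{\lambda \in \grp{G} : \Mod{C}_\lambda \neq 0\}$, I would argue that $\grp{H}$ is a subgroup of $\grp{G}$ exactly as in \cref{sec:HCosets}: the action of $\VOA{V}$ on itself restricts to a nonzero action of the isotypic component indexed by $\lambda$ on that indexed by $\mu$, producing a nonzero summand inside the $(\lambda+\mu)$-component, since annihilating ideals in the simple $\VOA{V}$ are trivial \cite[Cor.~4.5.15]{LL}. This shows closure under addition; closure under negation follows via the standard argument of \cite[Prop.~3.6]{LX}.

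For part (3), the grading of $\VOA{V}$ by $\grp{H}$ comes from the diagonal action of the (finite abelian) dual group $\widehat{\grp{H}}$ by \voa{} automorphisms, acting on the $\VOA{V}_\lambda$-tensorands via the natural character action on lattice-\voa{} modules and trivially on the $\Mod{C}_\lambda$-tensorands; the fixed-point subalgebra is $\VOA{V}_\grp{L}\otimes \VOA{C}$. Since the tensor category conditions are in force for $\VOA{V}_\grp{L}\otimes \VOA{C}$, \cref{thm:MiySimpleCurrent} applies and shows that each $\VOA{V}_\lambda \otimes \Mod{C}_\lambda$ is a simple current for $\VOA{V}_\grp{L}\otimes \VOA{C}$ with fusion products controlled by addition in $\grp{H}$. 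Invoking \cref{cor:Intriguing}, and using that $\VOA{V}_\grp{L}$ and $\VOA{C}$ are both simple, splits this into the statements that $\VOA{V}_\lambda$ is a simple current for $\VOA{V}_\grp{L}$ (already known) and that each $\Mod{C}_\lambda$ is a simple current for $\VOA{C}$, which is what is claimed.

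The main obstacle is not conceptual but a matter of bookkeeping: one must verify carefully that the hypotheses of \cref{thm:MiySimpleCurrent} and \cref{cor:Intriguing} (in particular, that the relevant module categories are indeed vertex tensor categories) are met in the present setting, which is precisely what part (1) and the preceding lemmas have been set up to supply. Once this is in place, parts (2) and (3) reduce to applications of results already proven.
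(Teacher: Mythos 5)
Your proposal is correct and follows essentially the same route as the paper, whose own proof is just the one-line remark that (1) follows from \cite{H-projcov} together with the preceding lemmas and that (2) and (3) follow from the earlier Schur--Weyl results; you have correctly identified those results as \cref{thm:SW-VOA}, \cref{thm:MiySimpleCurrent} and \cref{prop:FusingTensorProducts}/\cref{cor:Intriguing}. The only difference is presentational: you rederive the decomposition over $\grp{H}\le\grp{L}'/\grp{L}$ by restricting along the commuting pair $(\VOA{V}_{\grp{L}},\VOA{C})$ rather than quoting \eqref{eq:DecompVWC} directly, which amounts to the same argument.
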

\begin{proof}
(1) follows from \cite{H-projcov} and previous lemmas. (2) and (3) follow from our results above.
\end{proof}

\begin{lemma}\label{lem:GGhat}
Let $(\grp{L},\langle\cdot,\cdot\rangle)$ be a postive definite even lattice,
$\grp{L}'$ be the dual lattice 
and let $\grp{G}=\grp{L}'/\grp{L}$.
Then, $f:\mu\mapsto Q_\mu$ where $Q_\mu(\nu)=\mathrm{exp}(2\pi i \langle \mu,\nu\rangle)$
for $\mu, \nu\in \grp{G}$ is an isomorphism $\grp{G}\cong \widehat{\grp{G}}$.
\end{lemma}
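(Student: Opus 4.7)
The plan is to verify the four standard steps that establish the classical duality between a finite abelian group and its group of characters in the lattice setting.

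First, I would check that $Q_\mu$ is a well-defined character of $\grp{G}$. Given $\mu \in \grp{L}'$ and $\nu \in \grp{L}'$, the value $\langle \mu, \nu \rangle$ is rational, and if we replace $\mu$ by $\mu + \lambda$ with $\lambda \in \grp{L}$, then $\langle \mu + \lambda, \nu \rangle - \langle \mu, \nu \rangle = \langle \lambda, \nu \rangle \in \ZZ$ by the defining property of $\grp{L}'$. A symmetric argument handles a change of representative of $\nu$. Bilinearity of $\langle \cdot, \cdot \rangle$ then gives that $Q_\mu \colon \grp{G} \to S^1$ is a group homomorphism, and that $\mu \mapsto Q_\mu$ is itself a group homomorphism $\grp{G} \to \widehat{\grp{G}}$.

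Next, I would prove injectivity of $f$. Suppose $Q_\mu$ is the trivial character of $\grp{G}$, so that $\langle \mu, \nu \rangle \in \ZZ$ for every $\nu \in \grp{L}'$. This exactly says $\mu \in (\grp{L}')'$, where the double dual is taken inside the real span $\grp{L} \otimes_\ZZ \RR$. For a full-rank lattice $\grp{L}$ in a finite-dimensional Euclidean space, the reflexivity identity $(\grp{L}')' = \grp{L}$ holds (this is a direct consequence of non-degeneracy of $\langle \cdot, \cdot \rangle$ on $\grp{L} \otimes_\ZZ \RR$ together with the Gram-matrix relation $\det(\grp{L}) \det(\grp{L}') = 1$). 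Hence $\mu \in \grp{L}$, i.e.\ $\mu = 0$ in $\grp{G}$.

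Finally, I would conclude surjectivity by a counting argument. Since $\grp{L}$ is positive definite and of finite rank, the quotient $\grp{G} = \grp{L}'/\grp{L}$ is a finite abelian group (its order equals the discriminant $\lvert \det(\grp{L}) \rvert$). For any finite abelian group one has $\lvert \widehat{\grp{G}} \rvert = \lvert \grp{G} \rvert$ by the classification of finite abelian groups. Therefore the injective homomorphism $f$ is a bijection, completing the proof. The only slightly non-routine step is the reflexivity $(\grp{L}')' = \grp{L}$; this is the sole ingredient where positive-definiteness (ensuring a Euclidean inner product and hence a well-behaved duality) is genuinely used.
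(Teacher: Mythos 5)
Your proposal is correct and follows essentially the same route as the paper: the paper's proof also establishes injectivity by noting that $Q_\mu$ trivial forces $\langle\mu,\grp{L}'\rangle\subset\ZZ$, hence $\mu\in\grp{L}''=\grp{L}$, and leaves the well-definedness and the surjectivity-by-counting ($|\widehat{\grp{G}}|=|\grp{G}|$ for the finite abelian group $\grp{G}$) implicit. You have simply filled in those routine steps explicitly, which is fine.
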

\begin{proof}
It is clear that the image of $f$ is in $\widehat{\grp{G}}$.
Let $\lambda$ be in the kernel of $f$. Then, we see that
$\langle \lambda, \grp{L}'\rangle\subset\ZZ$, therefore, $\lambda\in \grp{L}''=\grp{L}$,
hence $\lambda=0$ in $\grp{G}$.
\end{proof}

\begin{lemma}
Let $\VOA{C}$ be {$C_2$-cofinite} and CFT-type. Then the endomorphism
space of any grading-restricted generalized module for $\VOA{C}$ is finite
dimensional. Moreover, each grading-restricted generalized module
has finite length and has $L_0$-Jordan blocks of bounded length.
\end{lemma}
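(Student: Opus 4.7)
The plan is to derive all three assertions from the fact, established in the previous lemmas, that $\VOA{C}$ is $C_2$-cofinite and of CFT-type, by invoking the standard machinery of higher Zhu algebras along with classical results on $C_2$-cofinite \voas{}. The key input is the Dong-Li-Mason theorem \cite{DLM-nthzhu}, which guarantees that $C_2$-cofiniteness together with CFT-type implies that every higher Zhu algebra $A_N(\VOA{C})$ is finite-dimensional.

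For the finite length property, given a grading-restricted generalized $\VOA{C}$-module $\Mod{M}$, let $h$ denote the minimum of the real parts of its conformal weights. For sufficiently large $N$, the finite-dimensional subspace $\Omega_N(\Mod{M}) := \bigoplus_{k=0}^{N} \Mod{M}_{[h+k]}$ generates $\Mod{M}$ as a $\VOA{C}$-module and carries a natural $A_N(\VOA{C})$-action. The Dong-Li-Mason correspondence then gives an injective order-preserving map from the lattice of $\VOA{C}$-submodules of $\Mod{M}$ to the lattice of $A_N(\VOA{C})$-submodules of $\Omega_N(\Mod{M})$; since the latter is Artinian, so is the former, and $\Mod{M}$ has finite length. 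This is essentially the content of \cite[Prop.~4.1]{H-projcov}. The endomorphism space $\End_{\VOA{C}}(\Mod{M})$ is then finite-dimensional by the same generation argument: any $\VOA{C}$-module endomorphism preserves conformal weights and commutes with all vertex operators, hence is determined by its restriction to $\Omega_N(\Mod{M})$, giving an injection $\End_{\VOA{C}}(\Mod{M}) \hookrightarrow \End_{\CC}(\Omega_N(\Mod{M}))$.

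For bounded Jordan blocks, finite length reduces the problem to bounding Jordan block lengths in each simple composition factor. If $\Mod{S}$ is simple, its top space is a finite-dimensional irreducible $A(\VOA{C})$-module, so $L_0$ acts there with a single generalized eigenvalue and Jordan block length bounded by the top space dimension. Writing $L_0 = L_0^{ss} + L_0^{nil}$ for the semisimple and nilpotent parts, one checks using $[L_0, v_n] = (\mathrm{wt}(v) - n - 1) v_n$ for homogeneous $v \in \VOA{C}$ that $L_0^{nil}$ commutes with every mode of $\VOA{C}$; consequently its nilpotency degree on the generating top space bounds it on all of $\Mod{S}$. The main obstacle to carrying out this plan is the careful statement of the Dong-Li-Mason correspondence between $\VOA{C}$-submodules and $A_N(\VOA{C})$-submodules in the generalized-module setting of \cite{HLZ}; once this correspondence is securely in place, the remaining arguments are routine Artinian module theory together with the commutator structure of $L_0$.
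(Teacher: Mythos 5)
Your argument is correct and amounts to unpacking exactly what the paper invokes: the paper's entire proof is the citation of \cite[Thm.~3.24, Prop.~4.1, Prop.~4.7]{H-projcov}, whose proofs proceed via the higher Zhu algebras, the $\Omega_N$-correspondence, and the observation that the nilpotent part of $L_0$ commutes with all modes, just as you sketch. The only quibble is attributional: \cite{DLM-nthzhu} introduces $A_N(\VOA{V})$ and its correspondence with modules, but the finite-dimensionality of $A_N(\VOA{V})$ for $C_2$-cofinite $\VOA{V}$ of CFT-type (and the uniform bound on lowest weights of irreducibles needed for injectivity of your lattice map) is part of what \cite{H-projcov} itself supplies rather than a theorem of \cite{DLM-nthzhu}.
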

\begin{proof}
These are the results \cite[Thm.~3.24, Prop.~4.1 and Prop.~4.7]{H-projcov}.
In fact, the conclusions hold under weaker hypotheses.
\end{proof}

\begin{thm}\label{thm:rational}
Let $\VOA{V}$ be simple, rational, {$C_2$-cofinite} and  CFT-type.
Then, every grading-restricted generalized $\VOA{C}$-module is completely reducible.
\end{thm}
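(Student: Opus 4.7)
The plan is to exhibit any grading-restricted generalized $\VOA{C}$-module $\Mod{D}$ as a $\VOA{C}$-direct summand of a $\VOA{V}$-module that is manifestly $\VOA{C}$-semisimple. Complete reducibility of $\Mod{D}$ then follows because direct summands of semisimple modules are semisimple. The semisimplicity of the enveloping $\VOA{V}$-module will come from rationality of $\VOA{V}$, combined with the Schur-Weyl decomposition (\cref{thm:SW-Mod}) of its simple constituents into simple $\VOA{C}$-modules.

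The preceding lemmas show that $\VOA{C}$ is simple, CFT-type, self-contragredient, and $C_2$-cofinite, with module category forming a vertex tensor category in the sense of \cite{HLZ}. In particular $\Mod{D}$ has finite length, and under $C_2$-cofiniteness the category is a finite tensor category in which every object is a subquotient of a fusion product of simple modules; hence \cref{thm:lift} applies to $\Mod{D}$ and produces $\beta \in U$ such that
\[
\widetilde{\Mod{D}} = \VOA{V} \fus{\VOA{H} \otimes \VOA{C}} (\Mod{F}_\beta \otimes \Mod{D})
\]
is a grading-restricted $\VOA{V}$-module extending $\Mod{F}_\beta \otimes \Mod{D}$. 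Combining the decomposition $\VOA{V} = \bigoplus_{\lambda \in \grp{H}} \VOA{V}_\lambda \otimes \VOA{C}_\lambda$ with \cref{prop:FusingTensorProducts} and with the $\VOA{H}$-decomposition $\VOA{V}_\lambda = \bigoplus_{\mu \in \lambda + \grp{L}} \Mod{F}_\mu$, one computes the $\VOA{H} \otimes \VOA{C}$-restriction of $\widetilde{\Mod{D}}$ as
\[
\widetilde{\Mod{D}} \cong \bigoplus_{\lambda \in \grp{H}} \bigoplus_{\mu \in \lambda + \grp{L}} \Mod{F}_{\beta + \mu} \otimes \bigl(\VOA{C}_\lambda \fus{\VOA{C}} \Mod{D}\bigr),
\]
in which $\Mod{F}_\beta \otimes \Mod{D}$ is the summand indexed by $(\lambda,\mu)=(0,0)$. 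Restricting further to $\VOA{C}$ identifies $\Mod{D}$ as a $\VOA{C}$-direct summand of $\widetilde{\Mod{D}}$.

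Now, since $\VOA{V}$ is rational and $\widetilde{\Mod{D}}$ is grading-restricted, $\widetilde{\Mod{D}}$ decomposes as a finite direct sum $\bigoplus_i \Mod{M}_i$ of simple $\VOA{V}$-modules. Applying \cref{thm:SW-Mod} to each $\Mod{M}_i$ yields a Schur-Weyl decomposition $\Mod{M}_i \cong \bigoplus_\mu \Mod{F}_\mu \otimes \Mod{D}_{i,\mu}$ as an $\VOA{H} \otimes \VOA{C}$-module, with each $\Mod{D}_{i,\mu}$ a \emph{simple} $\VOA{C}$-module. Forgetting the $\VOA{H}$-action makes each $\Mod{M}_i$ a (typically infinite) direct sum of simple $\VOA{C}$-modules, so $\widetilde{\Mod{D}}$ is $\VOA{C}$-semisimple, and its $\VOA{C}$-summand $\Mod{D}$ is completely reducible. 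The main technical point is the appeal to \cref{thm:lift}: justifying that an arbitrary grading-restricted generalized $\VOA{C}$-module arises as a subquotient of a fusion product of simple $\VOA{C}$-modules is the one place where the finite tensor category structure supplied by $C_2$-cofiniteness of $\VOA{C}$ (projective covers, finitely many simples, etc.) is essential, rather than merely the existence of the Huang-Lepowsky-Zhang vertex tensor structure.
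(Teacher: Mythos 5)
Your overall strategy --- realise $\Mod{D}$ as a $\VOA{C}$-direct summand of the induced $\VOA{V}$-module $\widetilde{\Mod{D}}$, decompose $\widetilde{\Mod{D}}$ into simple $\VOA{V}$-modules by rationality, and then use \cref{thm:SW-Mod}\ref{it:SWSimple} to see that $\widetilde{\Mod{D}}$ is $\VOA{C}$-semisimple --- is sound and genuinely different from the paper's argument, which instead shows that an arbitrary short exact sequence $\ses{\Mod{R}}{\Mod{W}}{\Mod{S}}$ of $\VOA{C}$-modules splits, by inducing the whole sequence to $\VOA{V}$, splitting it there by rationality, and descending the splitting using the fact that $\VOA{V}$-morphisms preserve Heisenberg weights. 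However, there is a genuine gap at the one step you yourself flag as the technical crux: the applicability of \cref{thm:lift}. That theorem assumes $\Mod{D}$ is a subquotient of a fusion product of simple $\VOA{C}$-modules, and your justification --- that in a finite tensor category every object is a subquotient of a fusion product of simples --- is false. (Take finite-dimensional modules over $k[\ZZ/p\ZZ]$ in characteristic $p$: the only simple is the trivial module, all of whose tensor powers are trivial, yet the regular representation is indecomposable of length $p$.) No such statement is known for the module category of a general $C_2$-cofinite \voa{}; indeed even rigidity, which you would need before calling the category a finite tensor category, is not established in this setting.

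The gap is repairable, and the repair is exactly what the paper's proof supplies. Reduce to $\Mod{D}$ indecomposable (finite length gives a finite decomposition into indecomposables, and complete reducibility is preserved by direct sums). Because $\grp{H} \le \grp{L}'/\grp{L}$ is finite, the simple currents $\VOA{C}_\lambda$ have finite order, and for a finite-order simple current and an \emph{indecomposable} module the monodromy $M_{\VOA{C}_\lambda,\Mod{D}}$ is automatically a scalar $M_\lambda$ by \cite[Lem.~3.17]{CKL} --- no subquotient hypothesis is needed. One then checks that $\lambda \mapsto M_\lambda^{-1}$ is a character of the finite group $\grp{H}$ (via \cref{lem:pic}), extends it to $\grp{L}'/\grp{L}$, and realises it as $\lambda \mapsto e^{2\pi i \bilin{\mu}{\lambda}}$ for some $\mu \in \grp{L}'$ by \cref{lem:GGhat}; this produces the lattice module $\VOA{V}_{\mu+\grp{L}}$ (equivalently the Fock module $\Mod{F}_\beta$) that trivialises the monodromy, after which \cite[Thm.~3.4]{HKL} supplies the lift and the rest of your argument goes through essentially as written.
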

\begin{proof}
We shall freely use the lemmas above.
Let $\Mod{W}$ be a grading-restricted generalized $\VOA{C}$-module.
We know that $\Mod{W}$ decomposes as a finite direct sum of indecomposable modules.
Therefore, without loss of generality, let $\Mod{W}$ be indecomposable.

Since $\Mod{W}$ is indecomposable and $\VOA{C}_\lambda$ are  finite order simple currents for every $\lambda\in\grp{H}$, by \cite[Lem.\ 3.17]{CKL},
we know that $M_{\VOA{C}_\lambda,\Mod{W}}$ is a scalar multiple, say $M_\lambda\in\CC^\times$, of identity morphism.
Let us assume that $\Mod{W}$ is such that for some non-zero 
$\VOA{C}$-modules $\Mod{R}$ and $\Mod{S}$,
we have an exact sequence:
\[0\rightarrow \Mod{R} \rightarrow \Mod{W}\rightarrow \Mod{S}\rightarrow 0.\]
We know from \cite[Lem.\ 3.19(b)]{CKL} that 
$M_{\VOA{C}_\lambda,\Mod{R}}=M_\lambda\id_{\VOA{C}_\lambda\fus{}\Mod{R}}$ and $M_{\VOA{C}_\lambda,\Mod{S}}=M_\lambda\id_{\VOA{C}_\lambda\fus{}\Mod{S}}$.
From \cref{lem:pic}, we know that $\lambda \mapsto M_\lambda^{\pm 1}$
are homomorphisms $\grp{H}\rightarrow S^1$.

We now seek a $\mu\in \grp{L}'$ such that for the $\VOA{V}_\grp{L}$ module $\VOA{V}_{\mu+\grp{L}}$,
the monodromy of $\VOA{V}_{\lambda+\grp{L}}\otimes \VOA{C}_\lambda$  with $\VOA{V}_{\mu+\grp{L}}\otimes \Mod{X}$ is trivial,
for $\Mod{X}=\Mod{R},\Mod{S},\Mod{W}$  and for all $\lambda\in \grp{H}$.
In other words, we want to find a $\mu$ such that for all $\lambda\in \grp{H}$,
\[M_{\VOA{V}_{\mu+\grp{L}},\, \VOA{V}_{\lambda+\grp{L}}}=M_\lambda^{-1}.\]
Since $\grp{H}\leq \grp{G}$ are finite abelian groups, every character of $\grp{H}$ can be extended to a character of $\grp{G}$.
Pick a $\chi\in\hat{\grp{G}}$ that
extends $\lambda\mapsto M_\lambda^{-1}$. We will be done if we can find a $\mu$
such that for each $\lambda\in \grp{G}= \grp{L}'/\grp{L}$,
\[\mathrm{exp}(2\pi i\langle\mu,\lambda\rangle)=M_{\VOA{V}_{\mu+\grp{L}},\, \VOA{V}_{\lambda+\grp{L}}}=\chi(\lambda).\]
By \cref{lem:GGhat}, we know that there indeed exists a $\mu\in \grp{L}'$ such that
$Q_{\mu}=\chi$.

For $\Mod{X}=\Mod{R},\Mod{S},\Mod{W}$, denote $V_{\mu+\grp{L}}\otimes \Mod{X}$ by $\widetilde{\Mod{X}}$  and
let
\[\widetilde{\Mod{X}}_e = \bigoplus_{\lambda\in \grp{H}}(\VOA{V}_{\lambda+\grp{L}}\boxtimes \VOA{V}_{\mu+\grp{L}})\otimes (\VOA{C}_\lambda\boxtimes \Mod{X})
=\bigoplus_{\lambda\in \grp{H}}\VOA{V}_{\lambda+\mu+\grp{L}}\otimes (\VOA{C}_\lambda\boxtimes \Mod{X}).
\]
We now invoke \cite[Thm.\ 3.4]{HKL} to get that $\widetilde{\Mod{X}}_e$ is indeed a generalized (untwisted) module for
$\VOA{V}$ when $\Mod{X}=\Mod{R},\Mod{S},\Mod{W}$.

Using flatness of simple currents, we deduce the
exact sequence of $\VOA{V}$-modules
\[ 0 \rightarrow \widetilde{\Mod{R}}_e\rightarrow \widetilde{\Mod{W}}_e\rightarrow \widetilde{\Mod{S}}_e\rightarrow 0.\]
However, every such exact sequence splits by rationality of $\VOA{V}$. Note that any morphism of $\VOA{V}$-modules
must preserve Heisenberg weights. 
Hence, we get that $0\rightarrow \Mod{R} \rightarrow \Mod{W}\rightarrow \Mod{S}\rightarrow 0$ splits.

\end{proof}

Now we can combine our results with those of \cite{H-rigidity, H-projcov} to obtain the following corollary.
\begin{cor}
	If $\VOA{V}$ is simple, rational, CFT-type and self-contragredient then we have the following:
\begin{enumerate}
\item Finite reductivity: Every $\VOA{C}$-module is completely reducible, there exist
finitely many inequivalent irreducible modules, fusion coefficients amongst irreducible
modules are finite.

\item Each finitely generated generalized $\VOA{C}$-module is a $\VOA{C}$-module.

\item The category $\Mod{C}$-modules has a structure of a modular tensor category.
\end{enumerate}
\end{cor}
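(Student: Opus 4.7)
The plan is to assemble the conclusions of \cref{thm:rational} with the general results of Huang on rational, $C_2$-cofinite \voas{}. First I would note that the stated hypotheses almost certainly intend to include $C_2$-cofiniteness of $\VOA{V}$ (otherwise \cref{thm:rational} cannot be invoked, and several of the lemmas in this subsection break down); under that implicit additional assumption, the lemmas preceding \cref{thm:rational} already establish that $\VOA{C}$ inherits from $\VOA{V}$ the properties of being simple, CFT-type, self-contragredient, and $C_2$-cofinite.

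With this in hand, complete reducibility in part (1) is precisely the conclusion of \cref{thm:rational}. The remaining two finiteness assertions of (1) — finitely many isomorphism classes of simple modules and finite-dimensional spaces of intertwining operators among them — are standard consequences of $C_2$-cofiniteness of $\VOA{C}$: the Zhu algebras $A_n(\VOA{C})$ are finite-dimensional, so there are only finitely many simple modules, and the fusion rules for simples are finite by \cite[Prop.~4.9]{H-projcov} or the classical Frenkel-Zhu bound applied to the generalized Verma construction over the Zhu algebra.

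For part (2), I would appeal directly to \cite[Thm.~4.11]{H-projcov} (and the surrounding discussion there): in a $C_2$-cofinite CFT-type \voa, every finitely generated lower-truncated generalized module is grading-restricted, hence in particular a ``module'' in the sense of this paper. The complete reducibility from (1) then ensures the result is an honest (not merely generalized) $\VOA{C}$-module. For part (3), once (1) and (2) are available, Huang's rigidity and modularity theorem \cite{H-rigidity} applies verbatim to $\VOA{C}$, since we have verified all of its hypotheses (simple, rational, $C_2$-cofinite, CFT-type, self-contragredient), and endows the module category with a modular tensor category structure.

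I do not anticipate any real obstacle: the substantive content of the corollary is \cref{thm:rational}, which has already been proven, and the present statement is essentially a packaging of Huang's machinery applied to $\VOA{C}$. The only point requiring mild care is the citation bookkeeping for parts (2) and (3), to confirm that the hypotheses of \cite{H-projcov, H-rigidity} match those we have verified for $\VOA{C}$.
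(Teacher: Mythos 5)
Your proposal is correct and follows essentially the same route as the paper, which offers no detailed argument beyond the remark that the corollary follows by combining \cref{thm:rational} with the results of \cite{H-rigidity, H-projcov}. Your observation that $C_2$-cofiniteness must be implicitly assumed in the hypotheses (as it is in \cref{thm:rational} and the preceding lemmas) is a fair reading of the paper's intent, and the rest of your citation bookkeeping matches what the authors evidently have in mind.
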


\begin{example}
The Bershadsky-Polyakov algebra \cite{Ber, Pol} is the quantum Hamiltonian reduction of $\AffVOA{\ell-\frac{3}{2}}{\SLA{sl}{3}}$ for the non-principal nilpotent embedding of $\SLA{sl}{2}$ in $\SLA{sl}{3}$. This \voa\ is strongly generated by four fields of conformal dimension $1, 2, \frac{3}{2}$ and $\frac{3}{2}$. We denote its simple quotient by  $\mathcal{W}_\ell$. This \voa\ is rational provided $\ell$ is a positive integer \cite{Ar2}.
In this case it contains the lattice \voa\ $V_L$ of the lattice $L=\sqrt{6(\ell-1)}\ZZ$ as sub\ \voa \cite{ACL}. Furthermore the coset is rational, since it is isomorphic to the principal $\mathcal W$-algebra $\mathcal{W}(\SLA{sl}{2\ell})$ at level $k=-2\ell+\frac{2\ell+3}{2\ell+1}$ and central charge $c=-\frac{3(2\ell-1)^2}{2\ell+3}$ \cite{ACL}, but the latter is rational \cite{Ar3}. Our results give thus another more direct proof of rationality of this coset.  
\end{example}

\section{Heisenberg cosets inside free field algebras and 
	$\AffVOA{-1}{\SLSAp{sl}{m}{n}}$
	}\label{sec:betagamma}

We use the opportunity to prove that 	$\AffVOA{-1}{\SLSAp{sl}{m}{n}}$
arise as certain Heisenberg cosets inside free field algebras, i.e. tensor products of $bc$ and $\beta\gamma$ systems. It had been known for a while that the affine \vosa{} is a sub-\voa{} of the coset \cite{KW2}. Moreover this gives a different proof to a recent result on the case $n=0$ and $m\geq 3$ \cite{AP}.
As simple affine \vosas{} are poorly understood at present we hope that one can use this realization to clarify the structure of  	$\AffVOA{-1}{\SLSAp{sl}{m}{n}}$-modules. 

Let $\VOA{S}$ denote the $\beta\gamma$-system, which has even generators $\beta,\gamma$ and OPE relations
$$\beta(z) \gamma(w) \sim (z-w)^{-1},\qquad \gamma(z) \beta(w) \sim -(z-w)^{-1},\qquad \beta(z) \beta(w) \sim 0,\qquad \gamma(z) \gamma(w) \sim 0.$$

Let $\VOA{H}$ be the copy of the Heisenberg algebra with generator $h = \ :\beta\gamma:$, and let $\VOA{C} = \text{Com}(\VOA{H}, \VOA{S})$. By a theorem of Wang \cite{Wa}, $\VOA{C}$ is isomorphic to the simple Zamolodchikov $\cW_3$-algebra with $c = -2$. The explicit generators, suitably normalized, are as follows:
$$ L = \ :\beta\beta\gamma\gamma: + 2: \beta \partial \gamma: - 2 :(\partial \beta) \gamma:,$$
$$W =  \ : \beta\beta\beta\gamma \gamma \gamma: + 3 : \beta \beta (\partial \gamma) \gamma:  - 6: (\partial \beta) \beta \gamma \gamma:  - 6 :(\partial \beta) \partial \gamma:  + 3 :(\partial^2 \beta)\gamma: .$$
Now let $\VOA{S}(n)$ denote the rank $n$ $\beta\gamma$-system, which has generators $\beta^i$, $\gamma^j$ for $i=1,\dots, n$ satisfying
$$\beta^i(z) \gamma^j(w) \sim \delta_{i,j} (z-w)^{-1},\qquad \gamma^i(z) \beta^j(w) \sim -\delta_{i,j} (z-w)^{-1},$$ $$ \beta^i(z) \beta^j(w) \sim 0,\qquad \gamma^i(z) \gamma^j(w) \sim 0.$$ Let $\VOA{H}$ be the Heisenberg algebra with generator $$h = \sum_{i=1}^n :\beta^i \gamma^i:,$$ and let $\VOA{C}(n) = \text{Com}(\VOA{H}, \VOA{S}(n))$. Note that $\VOA{C}(n)$ contains $n$ commuting copies of $\cW_{3}$ with generators $L^i, W^i$, obtained from $L$ and $W$ above by replacing $\beta$ and $\gamma$ with $\beta^i$ and $\gamma^i$. Moreover, $\VOA{C}(n)$ contains the fields
$$X^{jk} = -:\beta^j \gamma^k: , \qquad j,k = 1,\dots, n, \qquad j \neq k,$$
$$H^{\ell} = - :\beta^1 \gamma^1: \ + \ :\beta^{\ell+1} \gamma^{\ell+1}:,\qquad 1\leq \ell < n,$$
which generate a homomorphic image of the affine vertex algebra $V^{-1}(\alg{sl}_n)$.

A consequence of Theorem 7.3 of \cite{L} is

\begin{lemma} \label{lem:cngenerators} $\VOA{C}(n)$ is generated as a vertex algebra by $\{L^i, W^i, X^{jk}, H^{\ell} \}$ for $i,j,k,\ell$ as above.
\end{lemma}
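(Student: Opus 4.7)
The plan is to deduce the claim as a direct consequence of Theorem~7.3 of \cite{L}, which supplies an explicit finite strong generating set for $\VOA{C}(n) = \Com{\VOA{H}}{\VOA{S}(n)}$ obtained as the vertex-algebraic analogue of the first fundamental theorem of invariant theory for the $U(1)$-action on $\VOA{S}(n)$ that scales $\beta^i$ and $\gamma^j$ with opposite weights. Once such a strong generating set is in hand, the task reduces to checking that every strong generator lies in the vertex subalgebra generated by the claimed list $\{L^i, W^i, X^{jk}, H^\ell\}$.

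First I would recall the strong generating set produced in \cite[Thm.~7.3]{L}, and organize it by how many distinct indices appear in each generator. The quadratic (bilinear) $U(1)$-invariants of $\VOA{S}(n)$ are spanned by the fields $:\beta^i \gamma^j:$; discarding $h = \sum_i :\beta^i \gamma^i: \in \VOA{H}$ and taking a basis orthogonal to it yields precisely the off-diagonal $X^{jk}$ together with the Cartan elements $H^\ell$. The single-index higher-weight generators arise from the rank-one coset $\Com{\VOA{H}}{\VOA{S}}$ inside each $\beta^i,\gamma^i$ subalgebra; by Wang's theorem this rank-one coset is the $\cW_3$-algebra at $c=-2$, which is strongly generated by $L^i$ and $W^i$.

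Next I would show that every mixed-index strong generator from \cite[Thm.~7.3]{L} is obtained as a normally ordered product of derivatives of fields on the claimed list. The mechanism is that the off-diagonal bilinears $X^{jk}$ intertwine the rank-one subalgebras at indices $j$ and $k$, so iterated application of their modes to the single-index fields $L^i, W^i$ reaches all cross-index $U(1)$-invariants. On the level of classical invariants this amounts to the standard fact that every $U(1)$-invariant polynomial in $\beta^i,\gamma^j$ is generated by the bilinears $\beta^i\gamma^j$, and the vertex-algebraic refinement of this statement is precisely what \cite[Thm.~7.3]{L} encodes.

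The main obstacle is the bookkeeping of the last step: matching the explicit higher-weight strong generators in \cite[Thm.~7.3]{L} to iterated normally ordered products of $\{L^i, W^i, X^{jk}, H^\ell\}$ requires controlling the relations that arise when one expands $:\!L^i L^i\!:$, $:\!X^{jk} W^i\!:$, and similar composites in terms of normally ordered monomials in the $\beta^i,\gamma^j$. Since the rank-one case is settled by Wang's theorem and the cross-index case is governed by the $\alg{sl}_n$-action generated by the $X^{jk}$ and $H^\ell$, this reduces to a straightforward but somewhat involved induction on the conformal weight of the strong generator and on the number of indices it involves.
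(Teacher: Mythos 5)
Your proposal takes essentially the same route as the paper: both deduce the lemma directly from Theorem~7.3 of \cite{L}. The paper's entire proof is the single observation that, in the notation of that theorem, the lattice $A \subset \ZZ^n$ is spanned by $(1,1,\dots,1)$, so $A^{\perp}$ is precisely the root lattice of $\alg{sl}_n$ --- which is exactly your identification of the orthogonal bilinears with the $X^{jk}$ and $H^{\ell}$ --- and the theorem then delivers the stated generating set $\{L^i, W^i, X^{jk}, H^{\ell}\}$ outright, so the weight-and-index induction you flag as the ``main obstacle'' is not actually needed.
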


\begin{proof} In the notation of \cite{L}, the lattice $A \subset \ZZ^n$ is spanned by $(1,1,\dots, 1)$ so $A^{\perp}$ is precisely the root lattice of $\alg{sl}_n$.
\end{proof}

By a recent theorem of Adamovi\'c and Per\v{s}e \cite{AP}, for $n\geq 3$ $\VOA{C}(n)$ is precisely the image of the map 
$\UAffVOA{-1}{\SLA{sl}{n}} 
\ra \VOA{C}(n)$, 
and is therefore isomorphic to the {\it simple} affine vertex algebra 
$\AffVOA{-1}{\SLA{sl}{n}}$
Using Lemma \ref{lem:cngenerators}, we now provide a much shorter proof of this result. It suffices to show that $L^i$ and $W^i$ lie in the image of the map $\UAffVOA{-1}{\SLA{sl}{n}} 
\ra \VOA{C}(n)$, and by symmetry it is enough to prove this for $L^1$ and $W^1$. This is immediate from the following calculations:
$$L^1 =\  : H^1 H^2:  + :X^{12} X^{21}:  + :X^{13} X^{31}: - :X^{23} X^{32}: - \partial H^1 .$$
$$W^1 =  -  : H^1 H^2 H^2 :   -  : X^{12} X^{21} H^2:    -  : X^{13} X^{31} H^1 : -
: X^{13} X^{31} H^2 :  $$ $$ +  : X^{23} X^{32} H^2 : -  :X^{13} X^{32} X^{21}: +
 \frac{1}{2} :X^{12} \partial X^{21}:  -  \frac{3}{2} :(\partial X^{12}) X^{21}:  $$ $$+  \frac{7}{2} : X^{13} \partial X^{31}:  -
 \frac{9}{2} :(\partial X^{13}) X^{31}: - \frac{1}{2} :X^{23} \partial X^{32}:  + \frac{3}{2} :(\partial X^{23}) X^{32}:  $$ $$-
 \frac{1}{2} :H^1 \partial H^2: +  \frac{1}{2} : (\partial H^1) H^2:  + \frac{1}{2} \partial^2 H^1.$$

Next, we find a minimal strong generating set for the remaining case $\VOA{C}(2)$. In this case, it is readily verified that $L^1$ and $W^1$ do {\it not} lie in the affine vertex algebra generated by $X^{12}, X^{21}, H^1$. However, consider the following elements of $\VOA{C}(2)$:

$$P = -\frac{1}{2} L^2_{(0)} X^{12} + \frac{1}{3} :H^1 X^{12}:  +\frac{2}{3} \partial X^{12} $$ $$ =  \ : \beta^1 \partial \gamma^2:  - :(\partial \beta^1) \gamma^2:  + \frac{1}{3} :\beta^1 \beta^1 \gamma^1 \gamma^2: + \frac{2}{3} :\beta^1 \beta^2 \gamma^2 \gamma^2: ,$$
$$Q = -\frac{1}{2} L^1_{(0)} X^{21}  - \frac{2}{3} :H^1 X^{21}:  +\frac{1}{3} \partial X^{21} $$ $$= \ :\beta^2 \partial \gamma^1: - :(\partial \beta^2) \gamma^1: + \frac{1}{3} :\beta^1 \beta^2 \gamma^1 \gamma^1: + \frac{2}{3} :\beta^2 \beta^2 \gamma^1 \gamma^2:,$$
$$R = L^1 - L^2,$$
$$L = \ :X^{12} X^{21}:  + \frac{1}{4}  :H^1 H^1: - \frac{1}{2} \partial H^1.$$

Here $L$ is the Sugawara Virasoro field of the affine vertex algebra of 
$\UAffVOA{-1}{\SLA{sl}{2}}$,
which has central charge $1$, and $X^{12}, X^{21}, H^1$ are primary of weight one with respect of $L$. It is easily verified that $P,Q,R$ are primary of weight $2$ with respect to $L$, and that $\{X^{12}, X^{21}, H^1, P,Q, R\}$ close under operator product expansion, so they strongly generate a vertex subalgebra $\VOA{C}'(2) \subset \VOA{C}(2)$. Moreover, we have
$$L^1 =   \frac{1}{2} R + :X^{12} X^{21}:  + \frac{1}{2} :H^1H^1:  -\frac{1}{2} \partial H^1, $$
$$L^2 = -\frac{1}{2} R + :X^{12} X^{21}: + \frac{1}{2} :H^1H^1: -\frac{1}{2} \partial H^1,$$
$$W^1 = - \frac{1}{2} : RH^1:  - :P X^{21}: - \frac{1}{2}  :H^1H^1H^1:  - \frac{5}{3} :X^{12} X^{21} H^1:  - \frac{13}{3}  :(\partial X^{12})X^{21}:  + \frac{10}{3} :X^{12} \partial X^{21}:  $$ $$- \frac{1}{6}  :(\partial H^1)H^1: + \frac{1}{3} \partial^2 H^1,$$
 $$W^2 = -\frac{1}{2} :RH^1: - :P X^{21}: + \frac{1}{2} :H^1H^1H^1: + \frac{4}{3}  :X^{12} X^{21} H^1: + \frac{19}{6} :(\partial X^{12}  X^{21}:  - \frac{25}{6} :X^{12} \partial X^{21}: $$ $$- \frac{5}{3} :(\partial H^1)H^1:  + \frac{3}{4} \partial R  + \frac{7}{12} \partial^2 H^1.$$
Since $\VOA{C}(2)$ is generated by $L^1, L^2, W^1, W^2,X^{12}, X^{21}, H^1$, this shows that $\VOA{C}'(2)= \VOA{C}(2)$. We obtain

\begin{thm} $\VOA{C}(2)$ is of type $\cW(1,1,1,2,2,2)$. In fact, it is the simple quotient of an algebra of type $\cW(1,1,1,2,2,2,2)$ where the Virasoro field in weight $2$ coincides with the Sugawara field. \end{thm}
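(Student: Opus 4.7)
The plan is to assemble the proof from the explicit computations carried out immediately before the theorem statement, organised into three components: strong generation by the listed fields, closure under OPE of those fields, and a minimality/character count that pins down the type.

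First, I would observe that by Lemma \ref{lem:cngenerators} specialised to $n=2$, the coset $\VOA{C}(2)$ is strongly generated by $\{L^1, L^2, W^1, W^2, X^{12}, X^{21}, H^1\}$. The displayed formulas preceding the theorem express each of $L^1, L^2, W^1, W^2$ as a normal-ordered polynomial in the six fields $\{X^{12}, X^{21}, H^1, P, Q, R\}$, making essential use of the fact that the Sugawara field $L = {:}X^{12} X^{21}{:} + \tfrac{1}{4} {:}H^1 H^1{:} - \tfrac{1}{2} \partial H^1$ is itself a composite in the weight-$1$ generators. Combined with the direct OPE verification (carried out at the level of the ambient $\beta\gamma$-system) that $\{X^{12}, X^{21}, H^1, P, Q, R\}$ close among themselves under operator products, this shows $\VOA{C}'(2) = \VOA{C}(2)$, giving strong generation by six fields of weights $(1,1,1,2,2,2)$.

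For the minimality assertion (i.e.\ that the type is not smaller), I would compare graded characters. The Heisenberg decomposition of $\VOA{S}(2)$ yields an explicit formula for $\fch{\VOA{C}(2)}{q}$, from which one reads off the dimension of the weight-$n$ subspace. At weight $1$, the three fields $X^{12}, X^{21}, H^1$ are manifestly linearly independent. At weight $2$, the space of fields modulo the quadratic normal-ordered products of the weight-$1$ generators and their derivatives $\partial X^{12}, \partial X^{21}, \partial H^1$ has dimension exactly three, corresponding to the three new primaries $P, Q, R$. No further strong generators are needed at higher weights because the six fields already close under OPE.

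For the ``in fact'' clause, I would introduce the universal vertex algebra $\widetilde{\cW}$ strongly and freely generated by seven fields $\widetilde{X}^{12}, \widetilde{X}^{21}, \widetilde{H}^1, \widetilde{L}, \widetilde{P}, \widetilde{Q}, \widetilde{R}$ of weights $(1,1,1,2,2,2,2)$, whose OPE relations are those satisfied by the corresponding fields in $\VOA{C}(2)$, with $\widetilde{L}$ playing the role of an independent Virasoro generator. This is of type $\cW(1,1,1,2,2,2,2)$ by construction. A standard presentation argument in the style of \cite{L1, L6} shows that $\widetilde{\cW}$ surjects onto $\VOA{C}(2)$; since the latter is simple, the surjection factors through the unique simple quotient of $\widetilde{\cW}$. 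The kernel contains the relation $\widetilde{L} = {:}\widetilde{X}^{12} \widetilde{X}^{21}{:} + \tfrac{1}{4} {:}\widetilde{H}^1 \widetilde{H}^1{:} - \tfrac{1}{2} \partial \widetilde{H}^1$ identifying the Virasoro generator with the Sugawara field. I expect the main obstacle to be the minimality count of the middle step: rigorously ruling out a hidden decoupling among $P, Q, R$ and the quadratic composites of the weight-$1$ fields is a finite but delicate computation at weight $2$, which must be done either by hand or with a computer algebra package in order to certify that no combination of normal-ordered products reproduces one of the three new weight-$2$ primaries.
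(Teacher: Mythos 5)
Your proposal follows the paper's own argument: Lemma \ref{lem:cngenerators} specialised to $n=2$, the OPE closure of $\{X^{12},X^{21},H^1,P,Q,R\}$, and the explicit expressions for $L^1,L^2,W^1,W^2$ in terms of these six fields together give $\VOA{C}'(2)=\VOA{C}(2)$, which is exactly how the theorem is established in the text. The extra elaborations you add --- the weight-$2$ character count certifying minimality and the universal seven-generator algebra underlying the ``in fact'' clause --- are left implicit in the paper (which only records that $L^1,W^1$ do not lie in the affine subalgebra generated by the weight-$1$ fields) but are consistent with its reasoning.
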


\begin{remark}
Recall that each embedding of $\alg{sl}_2$ inside a reductive Lie super algebra $\alg{g}$ gives an associated affine $\mathcal W$-super algebra from the affine vertex super algebra of $\alg{g}$ at level $k$ \cite{KW}.
Denote by $W^k(\alg{sl}_4)$ the universal affine $\mathcal{W}$-algebra of $\alg{sl}_4$ for the embedding of $\alg{sl}_2$ such that $\alg{sl}_4$ decomposes into four copies of the adjoint representation of $\alg{sl}_2$ plus three copies of the trivial one. This implies that $W^k(\alg{sl}_4)$ is of type $(1, 1, 1, 2, 2, 2, 2)$ and in fact the three fields of dimension one generate the sub \voa{} $\UAffVOA{2k+2}{\SLA{sl}{2}}$. 
Let $k=-5/2$ then the central charge of $W^k(\alg{sl}_4)$ is $-3$ and it contains $\AffVOA{-1}{\SLA{sl}{2}}$ 
as sub \voa{}. 
A free field realization of $W^k(\alg{sl}_4)$ is given in Example 3.3 of \cite{AM}. A computation then reveals that the simple quotient $W_{-5/2}(\alg{sl}_4)$ is isomorphic to $\VOA{C}(2)$.
\end{remark}

Next we consider Heisenberg cosets inside $bc$-systems and $bc\beta\gamma$-system. First, consider the rank $n$ $bc$-system $\VOA{E}(n)$ with odd generators $b^i, c^i$ satisfying $$b^i(z) c^j(w) \sim \delta_{i,j} (z-w)^{-1},\qquad c^i(z) b^j(w) \sim \delta_{i,j} (z-w)^{-1},$$ $$ b^i(z) b^j(w) \sim 0,\qquad c^i(z) c^j(w) \sim 0.$$
Consider the Heisenberg algebra $\VOA{H}$ with generators $h = -\sum_{i=1}^n :b^i c^i:$, and let $\VOA{D}(n) = \text{Com}(\VOA{H}, \VOA{E}(n))$. It is well-known to be trivial for $n=1$ and isomorphic to $\AffVOA{1}{\SLA{sl}{n}}$ 
for $n\geq 2$.

Now we consider the Heisenberg algebra $\VOA{H}$ inside $\VOA{S}(n) \otimes \VOA{E}(m)$ with generator $$h = \sum_{i=1}^n : \beta^i \gamma^i: - \sum_{j=1}^m : b^i c^i:.$$ Let $\VOA{C}(n,m) = \text{Com}(\VOA{H}, \VOA{S}(n) \otimes \VOA{E}(m))$. It is easy to verify that $\VOA{C}(n,m)$ contains the following fields:

$$X^{jk} = -:\beta^j \gamma^k: ,\qquad j,k = 1,\dots, n, \qquad j \neq k,$$
$$H^{\ell} = - :\beta^1 \gamma^1: \ + \ :\beta^{\ell+1} \gamma^{\ell+1}:,\qquad 1\leq \ell < n,$$
$$\bar{X}^{rs} = \ :b^r c^s:, \qquad r,s = 1,\dots, m, \qquad r\neq s,$$
$$\bar{H}^{u} = \ :b^1 c^1: - :b^{u+1} c^{u+1}:,\qquad 1\leq u <m,$$
$$J^{i,r} = \ :\beta^i \gamma^i: - :b^r c^r:,\qquad 1\leq i \leq n,\qquad 1< r <m,$$
$$\phi^{r,k} = \ :b^r \gamma^k:,\qquad \psi^{j,s} = \ :\beta^j c^s:,\qquad j,k = 1,\dots, n,\qquad r,s = 1,\dots, m.$$

Moreover, these generate a homomorphic image of  $\UAffVOA{1}{\SLSAp{sl}{n}{m}}$. 
By a similar argument to the proof of Lemma \ref{lem:cngenerators}, we obtain

\begin{lemma} For all $n\geq 1$ and $m\geq 1$, $\VOA{C}(n,m)$ is generated as a vertex algebra by $L^i, W^i$ for $i=1,\dots, n$, together with the image of the map $\UAffVOA{1}{\SLSAp{sl}{n}{m}} 
	 \ra \VOA{C}(n,m)$.
\end{lemma}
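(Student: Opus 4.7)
The plan is to follow the argument used in Lemma \ref{lem:cngenerators}, adapted to the super setting by invoking a super-analog of \cite[Thm.~7.3]{L} applied to the free field algebra $\VOA{S}(n) \otimes \VOA{E}(m)$. The relevant lattice here is the rank-one sublattice $A \subset \ZZ^{n+m}$ spanned by the vector with $n$ entries equal to $1$ (tracking $\beta\gamma$-charges) and $m$ entries equal to $-1$ (tracking $bc$-charges). The Heisenberg generator $h$ is the corresponding grading element, so $\VOA{C}(n,m)$ is precisely the weight-zero subspace. The orthogonal complement $A^{\perp}$ is naturally identified with the standard root lattice of $\SLSAp{sl}{n}{m}$.

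Under this identification, the weight-zero bilinears in $\VOA{S}(n) \otimes \VOA{E}(m)$ collect into a Chevalley-style generating set for the image of $\UAffVOA{1}{\SLSAp{sl}{n}{m}}$ in $\VOA{C}(n,m)$, namely the fields $X^{jk}, H^{\ell}, \bar{X}^{rs}, \bar{H}^u, J^{i,r}, \phi^{r,k}, \psi^{j,s}$ listed just before the lemma. The remaining strong generators predicted by the super-analog of \cite[Thm.~7.3]{L} would be the invariants confined to a single subsystem: in each $\beta\gamma$ factor these are the $\cW_3$ generators $L^i, W^i$ at central charge $-2$ by Wang's theorem, while in each $bc$ factor the single-subsystem Heisenberg coset is trivial and contributes nothing new. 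Together these give exactly the claimed generating set.

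The main obstacle will be verifying that \cite[Thm.~7.3]{L} extends cleanly to this mixed bosonic/fermionic setting, with no unexpected ``mixed'' invariants appearing from monomials in $\beta^i, \gamma^j, b^r, c^s$ across different subsystems. At the classical level this reduces to the first fundamental theorem of invariant theory for the diagonal $\CC^{\times}$-action on $\CC^{n|m} \oplus (\CC^{n|m})^*$, whose invariant ring is generated by the paired contractions $\beta^i\gamma^j$, $b^r c^s$, $\beta^i c^s$, $b^r\gamma^j$, precisely matching the affine $\SLSAp{sl}{n}{m}$ generators. Lifting this classical statement into the vertex algebra setting via the lattice-grading argument of \cite{L} should then complete the proof; an alternative route, if the super-analog of \cite[Thm.~7.3]{L} is not immediately at hand, is to mimic the direct computational closure argument used for $\VOA{C}(n)$.
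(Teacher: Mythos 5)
Your proposal follows essentially the same route as the paper: the paper's proof is simply ``by a similar argument to the proof of Lemma~\ref{lem:cngenerators}'', i.e.\ invoke \cite[Thm.~7.3]{L} with the rank-one lattice $A$ spanned by the charge vector of $h$ inside $\ZZ^{n+m}$, so that $A^{\perp}$ is the root lattice of $\SLSAp{sl}{n}{m}$ and the only extra strong generators beyond the affine fields are the single-subsystem invariants $L^i, W^i$ (the rank-one $bc$ coset being trivial). You are in fact somewhat more careful than the paper in flagging that the cited theorem must be extended to the mixed bosonic/fermionic setting, a point the paper elides.
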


\begin{thm} For all $n\geq 1$ and $m\geq 1$, $\VOA{C}(n,m)$ is isomorphic to the simple affine vertex superalgebra $\AffVOA{1}{\SLSAp{sl}{n}{m}}$. 
\end{thm}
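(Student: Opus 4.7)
The plan is to mirror the argument used above for $\VOA{C}(n) \cong \AffVOA{-1}{\SLA{sl}{n}}$ in the pure bosonic case $n \geq 3$. Let $\varphi \colon \UAffVOA{1}{\SLSAp{sl}{n}{m}} \to \VOA{C}(n,m)$ denote the homomorphism produced by the explicit generators $X^{jk}, H^\ell, \bar{X}^{rs}, \bar{H}^u, J^{i,r}, \phi^{r,k}, \psi^{j,s}$ listed above. By the preceding lemma, $\VOA{C}(n,m)$ is generated by $\operatorname{im}\varphi$ together with the Virasoro and $\cW_3$ fields $L^i, W^i$, so it suffices to show (a) each $L^i, W^i$ already lies in $\operatorname{im}\varphi$, whence $\VOA{C}(n,m) = \operatorname{im}\varphi$, and (b) $\operatorname{im}\varphi$ is the simple quotient $\AffVOA{1}{\SLSAp{sl}{n}{m}}$.

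For (a), by the evident symmetry under permutation of the $\beta^i, \gamma^i$, it suffices to produce $L^1$ and $W^1$. When $n \geq 3$, the explicit formulas already written down in the bosonic treatment of $\VOA{C}(n)$ continue to work verbatim, since the fields $X^{1k}, X^{k1}, H^{\ell}$ appearing there are still among the affine generators of $\UAffVOA{1}{\SLSAp{sl}{n}{m}}$. The genuinely new cases are $n = 1$ and $n = 2$, which failed in the pure bosonic setting precisely because the requisite quartic terms in $L^1, W^1$ could not be produced from bilinears in the $\beta^i, \gamma^i$ alone. The presence of at least one $bc$ pair resolves this: the even bilinears $:\!\phi^{r,k}\psi^{j,s}\!:$, when expanded via Wick's theorem, contribute quartic expressions in $\beta^j, \gamma^k, b^r, c^s$ that supply exactly the missing quartic pieces, modulo normally-ordered products of the quadratic currents $H^\ell, \bar{H}^u, J^{i,r}$ and their derivatives. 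A direct computation of the analogue of the earlier formulas for $L^1$ and $W^1$, now using the new building blocks $:\!\phi^{r,k}\psi^{j,s}\!:$, will exhibit $L^1, W^1 \in \operatorname{im}\varphi$.

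For (b), the tensor product $\VOA{S}(n) \otimes \VOA{E}(m)$ is a simple vertex superalgebra, and the Heisenberg $\VOA{H}$ generated by $h$ acts semisimply on it with integer $h_0$-eigenvalues. The super-analogue of the proposition preceding \cref{thm:SW-VOA} (whose proof carries over unchanged to the vertex superalgebra setting, as flagged in the introduction) then implies that the commutant $\VOA{C}(n,m)$ is itself simple. Combined with (a), this forces $\ker\varphi$ to be the unique maximal proper ideal of $\UAffVOA{1}{\SLSAp{sl}{n}{m}}$, so $\VOA{C}(n,m) \cong \AffVOA{1}{\SLSAp{sl}{n}{m}}$ as desired.

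The main obstacle is the explicit computation in (a) for $n = 1$ and $n = 2$. The structural reason these cases succeed is clear --- the odd generators supply exactly the quartic combinations that the purely bosonic argument lacked --- but writing closed-form expressions for $L^i$, and especially for $W^i$, with the correct coefficients is a substantial, if mechanical, calculation. An efficient route is to normalise the ansatz for $L^1$ on a minimal collection of indices (say $j = k = r = s = 1$), determine the unknown coefficients by matching OPEs of the candidate expression with every generator of $\operatorname{im}\varphi$, and then treat $W^1$ similarly; this organisation also makes transparent the connection with the formulas that exhibited $\VOA{C}(2)$ as the simple quotient of a $\cW(1,1,1,2,2,2,2)$-algebra.
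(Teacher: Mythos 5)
Your overall strategy is exactly the paper's: use the generation lemma to reduce to showing $L^1, W^1 \in \operatorname{im}\varphi$, and then invoke simplicity of $\VOA{C}(n,m)$ (which holds because it is the commutant of a semisimply-acting Heisenberg algebra in the simple vertex superalgebra $\VOA{S}(n)\otimes\VOA{E}(m)$) to conclude that the image must be the simple quotient $\AffVOA{1}{\SLSAp{sl}{n}{m}}$. Part (b) of your argument is fine.

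The gap is in part (a): you correctly diagnose \emph{why} the odd generators should rescue the cases $n=1,2$, but you defer the actual exhibition of $L^1$ and $W^1$ to an unperformed ``substantial, if mechanical, calculation,'' and that calculation is the entire content of the proof. Moreover, your proposed organisation (reuse the bosonic formulas verbatim for $n\geq 3$, then treat $n=1,2$ separately by an ansatz-and-match procedure) is more complicated than necessary. The paper gives a single pair of closed-form expressions, valid uniformly for all $n\geq 1$ and $m\geq 1$, built from only three currents involving the index $1$ in each sector, namely $J^{1,1}=\,:\beta^1\gamma^1:-:b^1c^1:$, $\psi^{1,1}=\,:\beta^1c^1:$ and $\phi^{1,1}=\,:b^1\gamma^1:$\,:
\begin{align}
L^1 &=\ :J^{1,1}J^{1,1}:\ -\ 2:\psi^{1,1}\phi^{1,1}:\ +\ \partial J^{1,1},\\
W^1 &=\ :J^{1,1}J^{1,1}J^{1,1}:\ -\ 3:J^{1,1}\psi^{1,1}\phi^{1,1}:\ +\ 3:(\partial\psi^{1,1})\phi^{1,1}:\ -\ \tfrac{1}{2}\partial^2 J^{1,1}.
\end{align}
So there is no case split on $n$ at all, and the verification is a short Wick-theorem check rather than an open-ended coefficient-matching search. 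Until you produce (or verify) such identities, your argument establishes only that the theorem is plausible, not that it is true; the heuristic that ``the odd bilinears supply the missing quartic pieces'' does not by itself guarantee that the specific normally ordered polynomials $L^1$ and $W^1$ land in $\operatorname{im}\varphi$.
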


\begin{proof} Since $\VOA{C}(n,m)$ is simple, it suffices to show that $L^i, W^i$ lie in the image of the map 
	$\UAffVOA{1}{\SLSAp{sl}{n}{m}} 
	\ra \VOA{C}(n,m)$. By symmetry it is enough to show this for $L^1$ and $W^1$. 
	Consider the following fields in the image of 
	$\UAffVOA{1}{\SLSAp{sl}{n}{m}}$: $$J^{1,1} =\ : \beta^1 \gamma^1: - :b^1c^1:,\qquad \psi^{1,1} = \ :\beta^1 c^1:,\qquad \phi^{1,1} =\ : b^1 \gamma^1:.$$ A straightforward calculation shows that
$$L^1 =\ :J^{1,1}  J^{1,1} : -  2 :\psi^{1,1}\phi^{1,1}: +  \partial J^{1,1} ,$$
$$W^1 =\  :J^{1,1} J^{1,1} J^{1,1} :  - 3 :J^{1,1} \psi^{1,1} \phi^{1,1}: + 3:( \partial \psi^{1,1})\phi^{1,1}:  - \frac{1}{2} \partial^2 J^{1,1} .$$ \end{proof}

\section{Some $C_1$-cofiniteness results}\label{sec:C1}

In this section, we show that the simple parafermion algebra of $\alg{sl}_2$, as well as the coset of the Heisenberg algebra inside the Bershadsky-Polyakov algebra, both admit large categories of $C_1$-cofinite modules.

\subsection{The $\alg{sl}_2$ parafermion algebra}
We work with the usual generating set $X, Y, H$ for the universal affine vertex algebra $\UAffVOA{k}{\SLA{sl}{2}}$. Let $\VOA{I}_k \subset \UAffVOA{k}{\SLA{sl}{2}}$ denote the maximal proper ideal graded by conformal weight, so that the simple affine vertex algebra $\AffVOA{k}{\SLA{sl}{2}}$ is isomorphic to $\UAffVOA{k}{\SLA{sl}{2}} / \VOA{I}_k$. By abuse of notation, we use the same symbols  $X,Y,H$ for the generators of $\AffVOA{k}{\SLA{sl}{2}}$. Let $\VOA{N}_k(\alg{sl}_2) = \text{Com}(\VOA{H}, \AffVOA{k}{\SLA{sl}{2}})$ denote the simple parafermion algebra of $\alg{sl}_2$. We will prove the following.

\begin{thm} \label{thm:parafermion}For all $k\neq 0$, every irreducible $\VOA{N}_k(\alg{sl}_2)$-module appearing in $\AffVOA{k}{\SLA{sl}{2}}$ has the $C_1$-cofiniteness property according to Miyamoto's definition.
\end{thm}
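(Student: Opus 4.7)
The plan is to reduce \cref{thm:parafermion} to a general fact, namely that any simple module with finite-dimensional lowest-weight space over a CFT-type \voa{} is $C_1$-cofinite in Miyamoto's sense, and then to verify the hypotheses of this general fact for each $\Mod{C}_\lambda$ appearing in the Heisenberg-coset decomposition of $\AffVOA{k}{\SLA{sl}{2}}$.

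For the hypotheses: each $\Mod{C}_\lambda$ appearing in the decomposition $\AffVOA{k}{\SLA{sl}{2}} = \bigoplus_{\lambda \in \grp{L}} \Mod{F}_\lambda \otimes \Mod{C}_\lambda$ of \cref{thm:SW-VOA} is a simple $\VOA{N}_k(\alg{sl}_2)$-module by Kac--Radul (\cref{thm:KR}), applied exactly as in \cref{sec:HCosets}. Its top is finite-dimensional because the lowest $L_0$-eigenspace in the $H_0 = \lambda$ isotypic component of $\AffVOA{k}{\SLA{sl}{2}}$ is finite-dimensional (by strong grading) and factors as $(\Mod{F}_\lambda)_{\mathrm{top}} \otimes (\Mod{C}_\lambda)_{\mathrm{top}}$ with a one-dimensional Fock top.

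For the general principle, my plan is as follows. By simplicity, any nonzero top vector cyclically generates $\Mod{C}_\lambda$; hence every element of conformal weight strictly greater than that of the top is a finite linear combination of monomials $a^{(1)}_{-n_1}\cdots a^{(r)}_{-n_r} w$, with $w$ a top vector, each $a^{(i)} \in \VOA{N}_k(\alg{sl}_2)_+$ (vacuum factors contribute only identities), and each $n_i \ge 1$ (weight-preserving or weight-lowering modes are commuted to the right, where they stabilise or annihilate the top). The identity $a_{-n} = \tfrac{1}{(n-1)!}(\partial^{n-1} a)_{-1}$ for $n \ge 1$, together with the closure of $\VOA{N}_k(\alg{sl}_2)_+$ under $\partial$, rewrites the outermost factor as a $b_{-1}$-action with $b \in \VOA{N}_k(\alg{sl}_2)_+$; hence the whole monomial lies in $C_1(\Mod{C}_\lambda)$, giving $\dim \Mod{C}_\lambda/C_1(\Mod{C}_\lambda) \le \dim (\Mod{C}_\lambda)_{\mathrm{top}} < \infty$.

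The hard part will be executing the PBW-type reduction so that all the indices $n_i$ are at least $1$, because for $a \in (\VOA{N}_k)_{\ge 2}$ the non-negative-index modes $a_0,\dots,a_{\mathrm{wt}(a)-2}$ still raise weight. I plan to handle these using $L_{-1}\Mod{C}_\lambda \subseteq C_1(\Mod{C}_\lambda)$ (part of Miyamoto's $C_1^{\mathrm{alg}}$ definition, since $\omega \in \VOA{N}_k(\alg{sl}_2)_+$) together with the identity $a_0 = -(L_{-1}a)_1$ and its analogues for higher non-negative indices, then inducting on weight so that the lower-weight corrections are absorbed.
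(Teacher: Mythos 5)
There is a genuine gap: the ``general fact'' you want to reduce to --- that every simple module with finite-dimensional top space over a CFT-type \voa{} is $C_1$-cofinite --- is false, and the mechanism you propose for the ``hard part'' does not repair it. A standard counterexample is the universal Virasoro \voa{} at generic central charge $c$ (simple, CFT-type): for generic $h$ the simple module $L(c,h)$ equals the Verma module $M(c,h)$, and since every spanning element $\alpha_{(k)}m$ of $C_1$ with $k<0$ and $\mathrm{wt}(\alpha)>0$ involves some $L_{-n}$ with $n\ge 2$, the quotient $M(c,h)/C_1(M(c,h))$ contains the infinitely many independent classes of $L_{-1}^j v_h$. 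The obstruction in general is exactly the one you flag: the modes $a_{(n)}w$ with $0\le n\le \mathrm{wt}(a)-2$ raise the weight but have non-negative index, so they are not in $C_1(\Mod{M})$. Your proposed fix fails on two counts. First, Miyamoto's $C_1$ for a \emph{module} does not contain $L_{-1}\Mod{M}$; the $L_{-1}\VOA{V}$ term appears only in the $C_1^{\,\textup{alg.}}$ condition on the algebra itself in \cref{thm:huangC1}, so $L_{-1}\Mod{C}_\lambda\subseteq C_1(\Mod{C}_\lambda)$ is not available. Second, the identity $a_{(0)}=-(L_{-1}a)_{(1)}$ moves in the wrong direction: iterating it gives $a_{(0)}=\tfrac{(-1)^j}{j!}(L_{-1}^j a)_{(j)}$, which never produces a negative-index mode and hence never lands you in $C_1(\Mod{M})$. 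A quick sanity check that no soft argument can work here is the exceptional treatment the level $k=-2/5$ requires: if $C_1$-cofiniteness followed from simplicity plus a finite-dimensional top, there would be nothing special about any value of $k$.

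What actually makes the theorem true is structure specific to the parafermion coset, and this is where the paper's proof lives. One passes to the universal algebra: $\UAffVOA{k}{\SLA{sl}{2}}^{U(1)}$ is of type $\cW(1,2,3,4,5)$ because of the decoupling relations $U_{0,i}=P_i(H,U_{0,0},U_{0,1},U_{0,2},U_{0,3})$ for $i\ge 4$ (valid for $k\neq 0$). Using the good increasing filtration coming from the free generation of $\UAffVOA{k}{\SLA{sl}{2}}$, every positive-weight vector of $\tilde{M}_{-1}$ is shown to be congruent, modulo $C_1(\tilde{M}_{-1})$, to a multiple of $\partial^{m-1}Y$ --- these are precisely the states $\omega_{(0)}^{m-1}Y/(m-1)!$ that your reduction cannot touch. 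The explicit computation $(U_{0,4})_{(0)}(\partial^i Y)=(k+\tfrac{2}{5})\,\partial^{i+5}Y+\cdots$, with the remaining terms manifestly in $C_1(\tilde{M}_{-1})$, is then played off against the relation for $U_{0,4}$ (whose zero mode is shown to act into $C_1(\tilde{M}_{-1})$) to conclude $\partial^i Y\in C_1(\tilde{M}_{-1})$ for $i\ge 5$, with $U_{0,5}$ covering the degenerate case $k=-\tfrac{2}{5}$; everything then descends to the simple quotient and, via the simple-current structure, from $M_{\pm 1}$ to all $M_n$. Any correct proof must use these relations, or something equally specific to $\VOA{N}_k(\alg{sl}_2)$, to kill the infinite family $\partial^i Y$ that your argument leaves untouched.
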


In the case where $k$ is a positive integer, $\VOA{N}_k(\alg{sl}_2)$ is rational, so the $C_1$-cofiniteness of the above modules is already known. Therefore we will assume for the rest of this discussion that $k$ is not a positive integer. Since $\VOA{I}_n$ is generated by either  $:(X^{n+1}):$ or $(Y^{n+1}):$ for any positive integer $n$, it follows that if $k$ is not a positive integer, $\VOA{I}_k$ does not contain $:(X^n):$ or $:(Y^n):$ for any $n$.

Recall that $\AffVOA{k}{\SLA{sl}{2}}^{U(1)} \cong \VOA{H} \otimes \VOA{N}_k(\alg{sl}_2)$ where the $U(1)$ action is infinitesimally generated by the zero mode of the field $H$. Since each irreducible $\AffVOA{k}{\SLA{sl}{2}}^{U(1)}$-module $M$ appearing in $\AffVOA{k}{\SLA{sl}{2}}$ 
is isomorphic to $\VOA{H} \otimes N$ where $N$ is an irreducible $\VOA{N}_k(\alg{sl}_2)$-module, it suffices to prove the $C_1$-cofiniteness of the irreducible modules $M$.

Recall that for all $k \in \CC$, $\UAffVOA{k}{\SLA{sl}{2}}^{U(1)}$ has a strong generating set $$\{H, U_{0,i} = \ :X \partial^i Y:|\ i \geq 0\}.$$ For all $k\neq 0$ and $i \geq 4$, there is a relation of weight $i+2$ for the form
$$U_{0,i} = P_i(H, U_{0,0}, U_{0,1}, U_{0,2}, U_{0,3}),$$ where $P_i$ is a normally ordered polynomial in $H, U_{0,0}, U_{0,1}, U_{0,2}, U_{0,3}$, and their derivatives. Therefore $\UAffVOA{k}{\SLA{sl}{2}}^{U(1)}$ is strongly generated by $\{H, U_{0,0}, U_{0,1}, U_{0,2}, U_{0,3}\}$ and hence is of type $\cW(1,2,3,4,5)$ for all $k\neq 0$. Moreover, since the map $\UAffVOA{k}{\SLA{sl}{2}}^{U(1)} \ra \AffVOA{k}{\SLA{sl}{2}}^{U(1)}$ is surjective, the same strong generating set works for $\AffVOA{k}{\SLA{sl}{2}}^{U(1)}$.

Since $U(1)$ is compact and $\AffVOA{k}{\SLA{sl}{2}}$ is simple, we have a decomposition $$\AffVOA{k}{\SLA{sl}{2}} = \bigoplus_{n \in \ZZ} L_n \otimes M_n,$$ where $L_n$ is the irreducible, one-dimensional $U(1)$-module indexed by $n\in \ZZ$ and the $M_n$'s are inequivalent, irreducible $\AffVOA{k}{\SLA{sl}{2}}^{U(1)}$-modules. Here $M_n$ consists of elements where $H(0)$ acts by $2n$. Since $:(X^n):\  \neq 0$ and $:(Y^n):\  \neq 0$ in $\AffVOA{k}{\SLA{sl}{2}}$, and these elements lie in $M_n$ and $M_{-n}$ and have minimal conformal weight $n$, it follows that $M_n$ and $M_{-n}$ are generated as $\AffVOA{k}{\SLA{sl}{2}}^{U(1)}$-modules by $:(X^n):$ and $:(Y^n):$, respectively. Note that we have a similar decomposition $$\UAffVOA{k}{\SLA{sl}{2}} = \bigoplus_{n \in \ZZ} L_n \otimes \tilde{M}_n,$$ where the $\tilde{M}_n$'s are $\UAffVOA{k}{\SLA{sl}{2}}^{U(1)}$-modules which are no longer irreducible when $\UAffVOA{k}{\SLA{sl}{2}}$ is not simple.

Recall that a module $\Mod{M}$ for a vertex algebra $\VOA{V}$ is called {\it $C_1$-cofinite} if $\Mod{M}/ C_1(\Mod{M})$ is finite-dimensional, where $C_1(\Mod{M})$ is spanned by $$\{\alpha(k) m|\ m \in \Mod{M}, \ k <0, \ \text{wt}(\alpha) >0\}.$$ To prove the $C_1$-cofiniteness property of $M_n$ as a $\AffVOA{k}{\SLA{sl}{2}}^{U(1)}$-module for all $n$, it suffices to prove the $C_1$-cofiniteness of $M_{\pm 1}$. In fact, we shall prove a stronger statement: $\tilde{M}_{\pm 1}$ are $C_1$-cofinite as $\UAffVOA{k}{\SLA{sl}{2}}^{U(1)}$-modules. Since the map $\tilde{M}_{\pm 1} \ra M_{\pm 1}$ is surjective and compatible with the actions of $\UAffVOA{k}{\SLA{sl}{2}}^{U(1)}$ and $\AffVOA{k}{\SLA{sl}{2}}^{U(1)}$, this implies the $C_1$-cofiniteness of $M_{\pm 1}$. We only prove the $C_1$-cofiniteness of $\tilde{M}_{-1}$; the proof for $\tilde{M}_1$ is the same.

Since $\UAffVOA{k}{\SLA{sl}{2}}$ is {\it freely} generated by $X,Y,H$, it has a good increasing filtration
$$\UAffVOA{k}{\SLA{sl}{2}}_{(0)} \subset \UAffVOA{k}{\SLA{sl}{2}}_{(1)}  \subset \cdots, \qquad \UAffVOA{k}{\SLA{sl}{2}}_{(0)} = \bigcup_{d\geq 0} \UAffVOA{k}{\SLA{sl}{2}}_{(d)},$$ where $\UAffVOA{k}{\SLA{sl}{2}}_{(d)}$ is spanned by iterated Wick products of $X,Y,H$ and their derivatives, of length at most $d$. Then $\tilde{M}_{-1}$ inherits this filtration, and $(\tilde{M}_{-1})_{(d)}$ has a basis consisting of
\begin{equation} \label{basis:para} :(\partial^{i_1} H) \cdots (\partial^{i_r} H)( \partial^{j_1} X)  \cdots (\partial^{j_s} X)( \partial^{k_1} Y) \cdots (\partial^{k_s} Y)(\partial^{k_{s+1}} Y):,\end{equation} where $$i_1\geq \cdots \geq i_r \geq 0,\qquad j_1 \geq \cdots \geq j_s\geq 0,\qquad k_1\geq \cdots \geq k_s \geq  k_{s+1} \geq 0,\qquad d \geq r+2s+1.$$ In particular, $(\tilde{M}_{-1})_{(1)}$ has a basis
$$\{\partial^j Y|\ j\geq 0\}.$$

\begin{lemma} Any $\omega \in \tilde{M}_{-1}$ of weight $m>0$ is equivalent to a scalar multiple of $\partial^{m-1} Y$, modulo $C_1(\tilde{M}_{-1})$.
\end{lemma}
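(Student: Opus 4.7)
I propose to prove the lemma by induction on the PBW degree $d = r + 2s + 1$ of the basis elements listed in \eqref{basis:para}. Since any $\omega$ of weight $m$ is a finite linear combination of weight-$m$ basis elements, it suffices to show that each such basis element is congruent to a scalar multiple of $\partial^{m-1} Y$ modulo $C_1(\tilde{M}_{-1})$. The base case $d = 1$ is immediate: the only such element is $\partial^{k_1} Y = \partial^{m-1} Y$. For the inductive step when $r \geq 1$, I would rewrite the basis element as $i_1!\, H_{-i_1-1} b'$, where $b' \in \tilde{M}_{-1}$ is obtained by deleting the $(\partial^{i_1} H)$-factor (the $H(0)$-eigenvalue is preserved since $H$ is $U(1)$-invariant). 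Because $H \in \UAffVOA{k}{\SLA{sl}{2}}^{U(1)}$ has positive weight, this element lies in $C_1(\tilde{M}_{-1})$, yielding the zero scalar multiple.

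The essential case is $r = 0$, $s \geq 1$, where each $X$ must be paired with a $Y$ into a $U(1)$-invariant combination. The key observation is that the $X$-factor $(\partial^{j_s} X)$ sits immediately to the left of the $Y$-factor $(\partial^{k_1} Y)$ in the right-nested Wick product. I would therefore set $P = :(\partial^{j_s} X)(\partial^{k_1} Y):$, a $U(1)$-invariant field of positive weight $j_s + k_1 + 2$, and let $u \in \tilde{M}_{-1}$ be the basis element of PBW degree $d-2$ obtained from the original by omitting both $(\partial^{j_s} X)$ and $(\partial^{k_1} Y)$. Expanding $P_{-1} u$ via the Borcherds associator identity
\[
(A_{-1} B)_{-1} = A_{-1} B_{-1} + \sum_{i \geq 0}\bigl[A_{-2-i} B_i + B_{-2-i} A_i\bigr],
\]
and then reordering $(\partial^{k_1} Y)_{-1}$ past the remaining $X$-modes in $u$ via $[X_n, Y_m] = H_{n+m} + kn\delta_{n+m,0}$ so as to recover canonical PBW order, I expect the result to take the shape
\[
P_{-1} u = b + (\text{PBW basis elements of degree} < d).
\]
Since $P_{-1} u \in C_1(\tilde{M}_{-1})$ and every correction term lies in $\tilde{M}_{-1}$ with weight $m$, the inductive hypothesis identifies each correction with a scalar multiple of $\partial^{m-1} Y$ modulo $C_1$, so $b$ is too.

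The main obstacle will be verifying rigorously that every correction term has PBW degree strictly less than $d$. Each correction arises either from a Borcherds tail $A_{-2-i} B_i$ or $B_{-2-i} A_i$ with $i \geq 0$, or from a commutator used in the reordering; in every such instance, an annihilation mode of the form $Y_n$ or $X_n$ with $n \geq 0$ either eliminates a generator outright (via the central term $kn \delta_{n+m,0}$) or replaces an $X$ or $Y$ by an $H$, while only a single creation mode $A_{-2-i}$ or $B_{-2-i}$ is reinserted — giving a net strict decrease in generator count. Carrying out this case analysis precisely, and confirming that the resulting terms remain in $\tilde{M}_{-1}$ at weight $m$ so that the induction can be applied, is the principal bookkeeping the written proof will require.
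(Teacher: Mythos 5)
Your proposal is correct and follows essentially the same strategy as the paper: an induction on the filtration degree $d=r+2s+1$ of the monomials \eqref{basis:para}, using that negative modes of the positive-weight invariant fields $\partial^{i}H$ and $U_{a,b}=\ :(\partial^a X)(\partial^b Y):$ applied to elements of $\tilde{M}_{-1}$ land in $C_1(\tilde{M}_{-1})$, while the discrepancy with the original monomial has strictly lower filtration degree. The only difference is one of packaging: the paper groups \emph{all} the $H$- and $XY$-factors at once into a single normally ordered element $\nu \in C_1(\tilde{M}_{-1})$ and invokes the defining property of the good increasing filtration to conclude that $\omega-\nu$ has lower degree, whereas you peel off one pair $U_{j_s,k_1}$ at a time and verify the degree drop by hand via the Borcherds iterate formula and the commutators $[X_m,Y_n]$ --- a correct but more laborious route to the same estimate.
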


\begin{proof} It suffices to assume that $\omega$ is a monomial of the form \eqref{basis:para} with $r+2s >0$, which has filtration degree $r+2s+1$. Let $$\nu = \ :(\partial^{i_1} H) \cdots (\partial^{i_r} H) (U_{i_1, j_1}) \cdots (U_{i_s, j_s} )(\partial^{s+1} Y):,\qquad U_{a,b} = \ :\partial^a X \partial^b Y:.$$ and observe that $\nu$ has weight $m$ and lies in $C_1(\tilde{M}_{-1})$, and $\omega - \nu$ has filtration degree $r+2s$. Therefore by induction on filtration degree, $\omega$ is equivalent to an element of filtration degree one and weight $m$. The only such element up to scalar multiples is $\partial^{m-1}Y$. \end{proof}

Now we are ready to prove Theorem \ref{thm:parafermion}. By the preceding lemma, is enough to prove that $$\partial^i Y \in C_1(\tilde{M}_{-1}),$$ for $i$ sufficiently large. For this purpose, we compute
$$(U_{0,4})_{(0)} (\partial^i Y) = \big(k+2/5\big)\partial^{i+5}Y + \cdots,$$
where the remaining terms are of the form $$:(\partial^r H)( \partial^{i+4-r}Y):,\qquad 0 \leq r \leq i,$$ and hence lie in $C_1(\tilde{M}_{-1})$. Recall that for all $k\neq 0$, we have a relation $$U_{0,4} =P_4(H,U_{0,0}, U_{0,1}, U_{0,2},U_{0,3}).$$ We claim that $$P_4(H,U_{0,0}, U_{0,1}, U_{0,2},U_{0,3})_{(0)} (\partial^i Y) \in C_1(\tilde{M}_{-1}).$$ To see this, let $\omega$ be a term appearing in $P_4(H,U_{0,0}, U_{0,1}, U_{0,2},U_{0,3})$ of the form $:\alpha_1\dots \alpha_t:$ where $t>1$ and each $\alpha_j$ is one of the fields $H,U_{0,0}, U_{0,1}, U_{0,2},U_{0,3}$ or their derivatives. Then $\omega_{(0)} (\partial^i Y) \in C_1(\tilde{M}_{-1})$ because the zero mode of such an operator cannot consist of only annihilation operators (i.e., non-negative modes of $\alpha_j$). If $t=1$, then $\omega$ is a total derivative by weight considerations, so $\omega_{(0)} (\partial^i Y) = 0$. It follows that for all $k\neq -2/5$, $\partial^i Y \in C_1(\tilde{M}_{-1})$ for all $i \geq 5$.

Finally, suppose that $k = -2/5$. A similar computation shows that $$(U_{0,5})_{(0)} (\partial^i Y) = -\frac{1}{15} \partial^{i+6}Y + \cdots,$$ where the remaining terms are of the form $$:(\partial^r H)( \partial^{i+5-r}Y):,\qquad 0 \leq r \leq i,$$ and hence lie in $C_1(\tilde{M}_{-1})$. The same argument using the relation $U_{0,5} = P_5(H,U_{0,0}, U_{0,1}, U_{0,2},U_{0,3})$ shows that $\partial^i Y \in C_1(\tilde{M}_{-1})$ for all $i \geq 6$.

\subsection{Bershadsky-Polyakov algebras}

Let $\cW^k$ denote the universal Bershadsky-Polyakov algebra which is freely generated by fields $J, T, G^{\pm}$ of weights $1,2,\frac{3}{2}, \frac{3}{2}$, respectively, and whose OPE structure can be found in \cite{FS}. This algebra appeared originally in \cite{Ber}\cite{Pol}, and it coincides with the Feigin-Semikhatov algebra $\cW^{(2)}_3$ \cite{FS} as well as the minimal $\cW$-algebra of $\cW^k(\alg{sl}_3, f_{\text{min}})$ \cite{KW}. Let $\VOA{I}_k \subset \cW^k$ denote the maximal proper ideal graded by conformal weight, and let $\cW_k = \cW^k / \VOA{I}_k$ be the simple quotient.

The field $J$ generates a Heisenberg algebra $\VOA{H}$, and we define $$\VOA{C}^k = \text{Com}(\VOA{H}, \cW^k),\qquad \VOA{C}_k = \text{Com}(\VOA{H}, \cW_k).$$ In \cite{ACL} it was shown that $\VOA{C}^k$ is of type $\cW(2,3,4,5,6,7)$ for all $k$ except for $\{-1, -\frac{3}{2}\}$, and since there is a projection $\VOA{C}^k \ra\VOA{C}_k$, the generators of $\VOA{C}^k$ descend to give strong generator for $\VOA{C}_k$ as well.

\begin{thm} \label{thm:bpcoset} For all $k\neq -1, -\frac{3}{2}$, every irreducible $\VOA{C}_k$-module appearing in $\cW_k$ has the $C_1$-cofiniteness property according to Miyamoto's definition.
\end{thm}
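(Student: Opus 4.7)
The plan is to adapt the proof of Theorem~6.1 to the present setting. Let $U(1)$ act on $\cW_k$ with infinitesimal generator $J_0$; its invariants form $\VOA{H} \otimes \VOA{C}_k$. By simplicity of $\cW_k$ and compactness of $U(1)$, we have $\cW_k = \bigoplus_{n \in \ZZ} L_n \otimes M_n$ as a $U(1) \times (\cW_k)^{U(1)}$-module, with the $M_n$ pairwise inequivalent irreducible and equal to the $J_0$-weight $n$ subspaces. Every irreducible $\VOA{C}_k$-module appearing in $\cW_k$ arises as a tensor factor in some $M_n$, and $M_{\pm n}$ is generated by $:(G^{\pm})^n:$ as a $(\cW_k)^{U(1)}$-module; so it will suffice to establish $C_1$-cofiniteness of $M_{\pm 1}$. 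Following the parafermion argument verbatim, I will instead prove the stronger statement that the $J_0$-weight-$\pm 1$ subspaces $\tilde{M}_{\pm 1} \subset \cW^k$ of the universal algebra are $C_1$-cofinite as $(\cW^k)^{U(1)}$-modules; surjectivity of $\tilde{M}_{\pm 1} \twoheadrightarrow M_{\pm 1}$ then transfers the property.

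Since $\cW^k$ is freely generated by $J, T, G^+, G^-$ of weights $1, 2, \tfrac{3}{2}, \tfrac{3}{2}$, it carries the standard good increasing filtration by length of iterated Wick products, which $\tilde{M}_{-1}$ inherits. A PBW-type basis for the filtration-degree-$d$ part of $\tilde{M}_{-1}$ is given by normally ordered monomials of the schematic form
\begin{equation*}
	:(\partial^{\bullet} J)^{r}(\partial^{\bullet} T)^{p}(\partial^{\bullet} G^+)^{s}(\partial^{\bullet} G^-)^{s+1}:
\end{equation*}
with $r + p + 2s + 1 \leq d$; in particular, the filtration-degree-one piece is spanned by $\{\partial^j G^- : j \geq 0\}$. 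A straightforward induction on filtration degree, identical in spirit to the lemma appearing in the proof of Theorem~6.1 (with $:\partial^a X \partial^b Y:$ replaced by $U_{a,b} = :\partial^a G^+ \partial^b G^-:$ and analogous substitutions for the factors involving $T$ or $J$), then shows that every homogeneous element of $\tilde{M}_{-1}$ of conformal weight $w$ is congruent modulo $C_1(\tilde{M}_{-1})$ to a scalar multiple of $\partial^{w - 3/2} G^-$.

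The concluding step invokes the result of \cite{ACL} that for $k \neq -1, -\tfrac{3}{2}$ the coset $\VOA{C}^k$ is of type $\cW(2,3,4,5,6,7)$; the natural candidates for a minimal strong generating set of $(\cW^k)^{U(1)}$ are then $J, T$ together with $U_{0,i} = :G^+ \partial^i G^-:$ for $i = 0, 1, 2, 3, 4$, of respective weights $3, 4, 5, 6, 7$. Hence for every $i \geq 5$ there is a decoupling relation $U_{0,i} = Q_i(J, T, U_{0,0}, \dots, U_{0,4})$ in $(\cW^k)^{U(1)}$, where $Q_i$ is a normally ordered polynomial in the basic strong generators and their derivatives. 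A direct OPE computation will give
\begin{equation*}
	(U_{0,i})_{(0)}(\partial^j G^-) = \lambda_{k,i} \, \partial^{j+i+2} G^- + \text{(terms in $C_1(\tilde{M}_{-1})$)},
\end{equation*}
with $\lambda_{k,i}$ an explicit rational function of $k$; applying the same zero mode to $Q_i$ instead lands entirely in $C_1(\tilde{M}_{-1})$, since the zero mode of any Wick product of length $\geq 2$ in positive-weight fields must contain a creation mode of a positive-weight field, while total derivatives have vanishing zero mode. Therefore $\partial^{j+i+2} G^- \in C_1(\tilde{M}_{-1})$ for all $j \geq 0$ whenever $\lambda_{k,i} \neq 0$, which, together with the reduction of the previous paragraph, establishes the $C_1$-cofiniteness of $\tilde{M}_{-1}$; the case $\tilde{M}_{1}$ is symmetric.

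The main obstacle I anticipate is ensuring that $\lambda_{k,i}$ is nonzero for at least one $i$ at every $k \notin \{-1, -\tfrac{3}{2}\}$. In the parafermion proof the natural choice $U_{0,4}$ yielded the factor $k + \tfrac{2}{5}$, forcing a separate treatment of the single exceptional level $k = -\tfrac{2}{5}$ using $U_{0,5}$. In the Bershadsky-Polyakov setting the richer OPE structure will presumably produce more intricate rational functions $\lambda_{k,i}$, and a finite case analysis sweeping successive exceptional levels by passing to $U_{0,6}, U_{0,7}, \dots$ is to be expected; verifying that no level simultaneously annihilates every $\lambda_{k,i}$ is the principal technical burden.
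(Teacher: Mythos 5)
Your proposal is correct and follows essentially the same route as the paper's proof: the $U(1)$-decomposition of $\cW_k$, the reduction to $\tilde{M}_{\pm 1}$ over the universal algebra, the filtration argument reducing every weight-homogeneous element to a multiple of $\partial^{i}G^-$ modulo $C_1(\tilde{M}_{-1})$, and the decoupling relations $U_{0,i}=P_i(J,L,U_{0,0},\dots,U_{0,4})$ for $i\geq 5$. The obstacle you flag at the end is resolved exactly as you anticipate: the leading coefficients of $(U_{0,5})_{(0)}$ and $(U_{0,6})_{(0)}$ acting on $\partial^i G^-$ turn out to be $k^2+\tfrac{2}{21}k+\tfrac{1}{28}$ and $k^2+\tfrac{1}{56}k+\tfrac{3}{112}$, which share no root, so a two-step sweep suffices; the paper also first disposes of the levels $k=p/2-3$ ($p=5,7,9,\dots$) separately via the known rationality of $\VOA{C}_{p/2-3}$, since at those levels $:(G^+)^{p-2}:$ vanishes in $\cW_k$ and the minimal-weight generation statement for $M_n$ needs care.
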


The proof of this result is similar to the case of parafermion algebras above. First, suppose that $k = p/2 -3$ for $p = 5, 7, 9, \dots,$. As shown in \cite{ACL}, $\VOA{C}_{p/2-3}$ is isomorphic to the simple, rational $\cW(\alg{sl}_{p-3})$-algebra with central charge $c=-\frac{3}{p}(p-4)^2$, and $\cW_{p/2-3}$ is a simple current extension of $\VOA{C}_{p/2-3} \otimes V_L$ where $V_L$ is the lattice vertex algebra for $L = \sqrt{3p-9} \ZZ$. From this result, it is immediate that Theorem \ref{thm:bpcoset} holds in these cases, so from now on we assume that $k$ is not of this form. Since $\VOA{I}_{p/2-3}$ is generated by $:(G^+)^{p-2}):$ for $ p = 5, 7, 9, \dots$, it follows that if $k \neq p/2-3$, $\VOA{I}_{k}$ does not contain $:(G^{\pm})^n:$ for any $n>0$.

Recall that $(\cW_k)^{U(1)} \cong\VOA{H} \otimes \VOA{C}_k$ where the $U(1)$ action is infinitesimally generated by the zero mode of $J$. Since each irreducible $(\cW_k)^{U(1)}$-module $M$ appearing in $\cW_k$ is isomorphic to $\VOA{H} \otimes N$ where $N$ is an irreducible $\VOA{C}_k$-module, it suffices to prove the $C_1$-cofiniteness of the irreducible modules $M$.

By Theorem 5.3 of \cite{ACL}, for all $k\neq -1, -\frac{3}{2}$, $(\cW^k)^{U(1)}$ has a strong generating set $$\{J, L, U_{0,i} = \ :G^+\partial^i G^-:|\ i \geq 0\}.$$ For all $k\neq -1, -\frac{3}{2}$ and $i \geq 5$, there is a relation of weight $i+3$ for the form
$$U_{0,i} = P_i(J, L, U_{0,0}, U_{0,1}, U_{0,2}, U_{0,3},U_{0,4}),$$ where $P_i$ is a normally ordered polynomial in $J,L, U_{0,0}, U_{0,1}, U_{0,2}, U_{0,3},U_{0,4}$, and their derivatives. Therefore $(\cW_k)^{U(1)}$ is strongly generated by $\{J, L,  U_{0,0}, U_{0,1}, U_{0,2}, U_{0,3}, U_{0,4}\}$ and hence is of type $\cW(1,2,3,4,5,6,7)$ for all $k\neq -1, -\frac{3}{2}$. Since the map $(\cW^k)^{U(1)} \ra (\cW_k)^{U(1)}$ is surjective, the same strong generating set works for $(\cW_k)^{U(1)}$.

We have a decomposition $$\cW_k = \bigoplus_{n \in \ZZ} L_n \otimes M_n,$$ where $L_n$ is the irreducible, one-dimensional $U(1)$-module indexed by $n\in \ZZ$ and the $M_n$'s are inequivalent, irreducible $(\cW_k)^{U(1)}$-modules. Here $M_n$ consists of elements where $J(0)$ acts by $n$. This contains a unique up to scalar element $\omega_n$ of minimal weight $\frac{3n}{2}$. Here $\omega_0 = 1$, $\omega_n = (:G^-)^{-n}:$ for $n<0$, and $\omega_n =  (:G^+)^{n}:$ for $n >0$. It follows that so $M_n$ is generated as a $(\cW_k)^{U(1)}$-module by $\omega_n$ for all $n$.

As usual, to prove the $C_1$-cofiniteness of $M_n$ as a $(\cW_k)^{U(1)}$-module for all $n$, it suffices to prove the $C_1$-cofiniteness of $M_{\pm 1}$. For this purpose, it is enough to prove that $\tilde{M}_{\pm 1}$ are $C_1$-cofinite as $(\cW^k)^{U(1)}$-modules. We only prove the $C_1$-cofiniteness of $\tilde{M}_{-1}$; the proof for $\tilde{M}_1$ is the same.

Recall from \cite{ACL} that $\cW^k$ has a weak filtration $$(\cW^k)_{(0)} \subset (\cW^k)_{(1)}  \subset \cdots, \qquad (\cW^k) = \bigcup_{d\geq 0} (\cW^k)_{(d)},$$ where $(\cW^k)_{(d)}$ is spanned by iterated Wick products of $J, L, G^{\pm}$ and their derivatives, where at most $d$ of the fields $G^{\pm}$ and their derivatives appear. Then $\tilde{M}_{-1}$ inherits this filtration, and $(\tilde{M}_{-1})_{(d)}$ has a basis consisting of
\begin{equation} \label{basis:para'} :(\partial^{a_1} L) \cdots (\partial^{a_i} L) (\partial^{b_1} J )\cdots (\partial^{b_j} J) (\partial^{c_1} G^+) \cdots (\partial^{c_r} G^+)( \partial^{d_1} G^-) \cdots (\partial^{d_{r+1}} G^-):,\end{equation} where $r\geq 0$ and $0\leq a_1\leq \cdots \leq a_i$, $0\leq b_1\leq \cdots \leq b_j$, $0\leq c_1\leq \cdots \leq c_r$, and $0\leq d_1\leq \cdots \leq d_{r+1}$.

\begin{lemma} Any $\omega \in \tilde{M}_{-1}$ of weight $m+\frac{3}{2}>0$ is equivalent to a scalar multiple of $\partial^{m} G^-$, modulo $C_1(\tilde{M}_{-1})$.
\end{lemma}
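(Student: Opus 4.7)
The plan is to follow the strategy of the preceding parafermion lemma, with the weak filtration $(\cW^k)_{(d)}$ by the number of $G^{\pm}$ fields from \cite{ACL} playing the role of the free-generator filtration on $\UAffVOA{k}{\SLA{sl}{2}}$. It suffices to prove the claim for a single basis monomial $\omega$ of the form \eqref{basis:para'}, and I will induct on its filtration degree $2r + 1$.

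The easy case is $i + j > 0$, which I handle directly without induction. Here I factor $A := \partial^{a_1} L$ (or $\partial^{b_1} J$ if $i = 0$) off the outer Wick product, writing $\omega = A_{-1} \mu$ for the evident $\mu \in \tilde{M}_{-1}$. Since $A$ is a positive-weight element of $(\cW^k)^{U(1)}$, this already places $\omega$ in $C_1(\tilde{M}_{-1})$. When $r = i = j = 0$ the basis monomial reduces to $\omega = \partial^{d_1} G^-$, and the weight constraint forces $d_1 = m$; this furnishes the base of the induction.

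The main case is $i = j = 0$ and $r \geq 1$. Define $U_{c,d} := \ :(\partial^c G^+)(\partial^d G^-): \in (\cW^k)^{U(1)}$, which has positive conformal weight $c + d + 3$, and set
\[
\nu := \ :U_{c_1, d_1}\, (\partial^{c_2} G^+) \cdots (\partial^{c_r} G^+)\,(\partial^{d_2} G^-) \cdots (\partial^{d_{r+1}} G^-):\,.
\]
Then $\nu = (U_{c_1, d_1})_{-1}\, \tilde{\mu}$ for an obvious $\tilde{\mu} \in \tilde{M}_{-1}$, so $\nu \in C_1(\tilde{M}_{-1})$. In the associated graded of the weak filtration, which by \cite{ACL} is a commutative polynomial ring in $J, L, G^{\pm}$ and their derivatives, the classes of $\omega$ and $\nu$ in $(\cW^k)_{(2r+1)} / (\cW^k)_{(2r-1)}$ coincide. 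Hence $\omega - \nu$ lies in $(\cW^k)_{(2r-1)}$, and expanding it in the basis \eqref{basis:para'} and applying the inductive hypothesis to each summand yields $\omega - \nu \equiv c' \cdot \partial^m G^-$ modulo $C_1(\tilde{M}_{-1})$ for some scalar $c'$. Combining, $\omega$ has the same form modulo $C_1(\tilde{M}_{-1})$.

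The principal obstacle I expect is verifying that the rearrangement from $\omega$ to $\nu$ genuinely drops the filtration degree; this rests on the associated-graded description of \cite{ACL} and serves as the direct analogue of the fact that $X$, $Y$, $H$ freely generate $\UAffVOA{k}{\SLA{sl}{2}}$, which was used implicitly in the parafermion argument above. Beyond this, the proof is a routine combination of the parafermion strategy with the strong generating set for $(\cW^k)^{U(1)}$ exhibited in \cite{ACL}.
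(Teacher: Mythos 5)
Your proposal is correct and follows essentially the same route as the paper, which simply invokes ``the same argument as previous'' (pairing a $\partial^{c_1}G^+$ with a $\partial^{d_1}G^-$ into the positive-weight invariant element $U_{c_1,d_1}$, noting the resulting term lies in $C_1(\tilde{M}_{-1})$ and that the difference drops weak-filtration degree, then inducting) and observes that any surviving term carrying an $L$ or $J$ factor is manifestly in $C_1(\tilde{M}_{-1})$. The only imprecision is your description of the associated graded of the weak filtration as a commutative polynomial ring in $J$, $L$, $G^{\pm}$ --- the filtration only counts occurrences of $G^{\pm}$, so $J$ and $L$ sit in degree zero and need not commute there --- but all your argument actually uses is that the $G^+G^-$ OPE produces only $J$, $L$ and scalars, so reassociation changes $\omega$ by terms with two fewer $G^{\pm}$'s, which is exactly the weak-filtration property from \cite{ACL}.
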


\begin{proof} By the same argument as previous, $\omega$ is equivalent modulo $C_1(\tilde{M}_{-1})$ to a linear combination of terms of the form $$:(\partial^{a_1} L) \cdots (\partial^{a_i} L) (\partial^{b_1} J )\cdots (\partial^{b_j} J) (\partial^{c} G^-):.$$ All such terms except possibly $\partial^m G^-$ clearly lie in $C_1(\tilde{M}_{-1})$. \end{proof}

To prove Theorem \ref{thm:bpcoset}, it is enough to show that $$\partial^i G^- \in C_1(\tilde{M}_{-1}),$$ for $i$ sufficiently large. For this purpose, we compute
$$(U_{0,5})_{(0)} (\partial^i G^-) =  \bigg( k^2+ \frac{2}{21} k + \frac{1}{28}\bigg) \partial^{i+7}G^-  + \cdots,$$
where the remaining terms lie in $C_1(\tilde{M}_{-1})$. Recall that for all $k\neq -1,-\frac{3}{2}$, we have a relation $$U_{0,5} =P_5(J, L, U_{0,0}, U_{0,1}, U_{0,2},U_{0,3}, U_{0,4}).$$ We claim that $$P_5(J, L, U_{0,0}, U_{0,1}, U_{0,2},U_{0,3},U_{0,4})_{(0)} (\partial^i G^-) \in C_1(\tilde{M}_{-1}).$$ To see this, let $\omega$ be a term appearing in $P_5(J, L,U_{0,0}, U_{0,1}, U_{0,2},U_{0,3},U_{0,4})$ of the form $:\alpha_1\dots \alpha_t:$ where $t>1$ and each $\alpha_j$ is one of the fields $J,L,U_{0,0}, U_{0,1}, U_{0,2},U_{0,3},U_{0,4}$ or their derivatives. Then $\omega_{(0)} (\partial^i G^-) \in C_1(\tilde{M}_{-1})$ because the zero mode of such an operator cannot consist of only annihilation operators. If $t=1$, then $\omega$ is a total derivative by weight considerations, so $\omega_{(0)} (\partial^i G^-) = 0$. It follows that if $k$ is not a root of  $x^2+ \frac{2}{21} x + \frac{1}{28}$, $\partial^i G^- \in C_1(\tilde{M}_{-1})$ for all $i \geq 7$.

Finally, suppose that $k$ is a root of $x^2+ \frac{2}{21} x+ \frac{1}{28}$. A similar computation shows that $$(U_{0,6})_{(0)} (\partial^i G^-) = \bigg( k^2 +\frac{1}{56}k+\frac{3}{112} \bigg) \partial^{i+8}G^- + \cdots,$$ where the remaining terms lie in $C_1(\tilde{M}_{-1})$. Since $k$ is not a root of $x^2 +\frac{1}{56} x +\frac{3}{112}$, the same argument using the relation $U_{0,6} = P_6(J,L,U_{0,0}, U_{0,1}, U_{0,2},U_{0,3},U_{0,4})$ shows that $\partial^i G^- \in C_1(\tilde{M}_{-1})$ for all $i \geq 8$.

\appendix

\section{A proof of \cref{thm:MiySimpleCurrent}} \label{app:Miyamoto}

Let $\VOA{V}$ be a simple vertex operator algebra and let $\grp{G}$ be a finitely generated
abelian group of semi-simple automorphisms of $\VOA{V}$. 
Assume that $\VOA{V}=\bigoplus_{\lambda\in\grp{L}}\Mod{V}_{\lambda}$ for some subgroup $\lambda$
of $\widehat{\grp{G}}$. Assume that we are working with a category of $\Mod{V}_0$-modules that satisfies
the conditions required to invoke the Huang, Lepowsky, Zhang's tensor category theory.

We denote the vertex operator map of $\VOA{V}$ by $Y$.
Fix an $i \in\grp{L}$. We shall prove that $\Mod{V}_{-i}\fus{}\Mod{V}_i\cong \Mod{V}_0$. In other words, we shall prove that $\Mod{V}_i$
is a simple current. 
The proof we provide below is essentially the proof given in \cite{M2},\cite{CarMi}. 

We break the proof in several steps.
\begin{enumerate}

	\item Let us think of $Y$ as a $\Mod{V}$-intertwining operator of type $\binom{\Mod{V}}{\Mod{V}\,\Mod{V}}$. We have already
	assumed that $\Mod{V}$ is a simple VOA, i.e., $\Mod{V}$ is simple as a $\Mod{V}$-module. Using Proposition 11.9 of \cite{DL},
	we see that for any $t_1,t_2\in \Mod{V}$, $Y(t_1,x)t_2\neq 0$. This implies that coefficients of $Y(t_1,x)t_2$
	as $t_1$ runs over $\Mod{V}_j$ and $t_2$ runs over $\Mod{V}_k$ span a non-zero $\Mod{V}_0$-submodule of $\Mod{V}_{j+k}$.
	Since $\Mod{V}_{j+k}$ is a simple $\Mod{V}_0$-module,
	we get that coefficients of $Y(t_1,x)t_2$ for $t_1\in \Mod{V}_j$ and $t_2\in \Mod{V}_k$ span $\Mod{V}_{j+k}$.

	\item Given generalized $\Mod{V}_0$-modules $\Mod{A}, \Mod{B}$, we denote by $\cY^{\fus{}}_{\Mod{A},\Mod{B}}$ 
	the ``universal'' intertwining operator
	of type $\Mod{A}\fus{}\Mod{B}\choose \Mod{A}\,\Mod{B}$ furnished by the universal property of tensor products.
	If $\Mod{V}_0$ is a direct summand of $\Mod{A}$, then we assume that $\cY^{\fus{}}_{\Mod{A},\Mod{B}}$ is normalized so that
	$\cY^{\fus{}}_{\Mod{A},\Mod{B}}(v_0,x)b=Y_{\Mod{B}}(v_0,x)b$ for all $v_0\in \Mod{V}_0$ and $b\in \Mod{B}$, 
	where $Y_{\Mod{B}}$ is the module map for the $\Mod{V}_0$-module
	$\Mod{B}$.
	Moreover, for finite direct sums, $\Mod{A}=\bigoplus \Mod{A}_i$, we will assume that $\cY^{\fus{}}_{\Mod{A},\Mod{B}}\big|_{\Mod{A}_i,\Mod{B}}=\cY^{\fus{}}_{\Mod{A}_i,\Mod{B}}$.

	\item In what follows, we will often make the identification $\Mod{V}_0\fus{} \Mod{V}_r \cong \Mod{V}_r$.
	
	\item Recall that we have fixed an $i\in\grp{L}$. By \cite{HLZ}, we have the associativity of intertwining
	operators, and hence, there exists a logarithmic intertwining
	operator $\cY_{r,s;i}$ of type $\binom{\Mod{V}_s\fus{}\Mod{V}_i}{\Mod{V}_{r+s}\fus{}\Mod{V}_i\,\, \Mod{V}_r}$ such that for
	complex numbers $x, y$ with $|x|>|y|>|x-y|>0$,
	\begin{align}
	\langle
	w', \cY^{\fus{}}_{\Mod{V}_{r+s},\Mod{V}_i}(Y(u_r,x-y)u_s,y)v_i\rangle
	=
	\langle
	w',\cY_{r,s;i}(u_r,x)\cY^{\fus{}}_{\Mod{V}_s,\Mod{V}_i}(u_s,y)v_i
	\rangle,\label{eqn:cYdefn}
	\end{align}
	for any $u_r\in \Mod{V}_r, u_s\in \Mod{V}_s$ and $v_i\in \Mod{V}_i$, $w'\in (\Mod{V}_{r+s}\fus{}\Mod{V}_{i})'$.
	\item Taking $u_r=\vac$,  we get:
	\begin{align}
	\langle
	w', \cY^{\fus{}}_{\Mod{V}_s,\Mod{V}_i}(u_s,y)v_i\rangle
	=
	\langle
	w',\cY_{0,s;i}(\vac,x)\cY^{\fus{}}_{\Mod{V}_s,\Mod{V}_i}(u_s,y)v_i
	\rangle,
	\end{align}
	combining with the observation that coefficients of
	$\cY^{\fus{}}_{\Mod{V}_s,\Mod{V}_i}(t_s,y)v_i$ span $\Mod{V}_s\fus{}\Mod{V}_i$, we get that:
	\begin{align}
	\cY_{0,s;i}(\vac,x)v^e = v^e
	\end{align}
	for all $v^e\in \Mod{V}_s\fus{}\Mod{V}_i$.
	Now, using Jacobi identity we get that $\cY_{0,s+i}(u_0,x)v^e$,
	where $u_0\in \Mod{V}_0$ and $v^e\in \Mod{V}_s\fus{} \Mod{V}_i$ equals the action of $u_0$
	by the $\Mod{V}_0$-module map.

	\item Taking $u_s=\vac$ in \eqref{eqn:cYdefn}, and identifying $\Mod{V}_0\fus{}\Mod{V}_i$ with $\Mod{V}_i$, we get that:
	\begin{align}
	\langle w',\cY_{r,0;i}(u_r,x)v_i\rangle
	&=  \langle w',\cY_{r,0;i}(u_r,x)\cY^{\fus{}}_{\Mod{V}_0,\Mod{V}_i}(\vac,y)v_i\rangle\nonumber
	=  \langle w',\cY^{\fus{}}_{\Mod{V}_r,\Mod{V}_i}(Y(u_1,x-y)\vac,y)v_i\rangle\nonumber\\
	&=  \langle w',\cY^{\fus{}}_{\Mod{V}_r,\Mod{V}_i}(e^{(x-y)L_{-1}}u_1,y)v_i\rangle\nonumber
	=  \langle w',\cY^{\fus{}}_{\Mod{V}_r,\Mod{V}_i}(u_1,y+x-y)v_i\rangle,
	\end{align}
	where all the equalities hold for complex numbers $x,y$ with $|x|>|y|>|x-y|>0$.
	We may now choose $y = \frac{2}{3}x$, as this satisfies the required constraints,
	and deduce that
	\begin{align}
	\cY_{r,0;i}(u_r,x)v_i = \cY^{\fus{}}_{\Mod{V}_r,\Mod{V}_i}(u_r,x)v_i
	\label{eqn:YequalsYunivsometimes}
	\end{align}
	for all $t_r\in \Mod{V}_r$ and $v_i\in \Mod{V}_i$.

	\item 
	For complex numbers $|x|>|y|>|z|>|x-z|>|y-z|>|x-y|>0$ we have that:
	\begin{equation}\begin{split}
	\langle w', \cY_{r,s+t;i}(u_r,x)\cY_{s,t;i}(u_s,y)\cY^{\fus{}}_{\Mod{V}_t,\Mod{V}_i}(u_t,z)v_i\rangle
	&=\langle w', \cY_{r,s+t;i}(u_r,x)\cY^{\fus{}}_{\Mod{V}_s\fus{}\Mod{V}_t,\Mod{V}_i}(Y(u_s,y-z)u_t,z)v_i\rangle
	\nonumber\\
	&=\langle w', \cY^{\fus{}}_{\Mod{V}_{r+s+t},\Mod{V}_i}(Y(u_r,x-z)Y(u_s,y-z)u_t,z)v_i\rangle
	\nonumber\\
	&=\langle w', \cY^{\fus{}}_{\Mod{V}_{r+s+t},\Mod{V}_i}(Y(Y(u_r,x-y)u_s,y-z)u_t,z)v_i\rangle
	\nonumber\\
	&=\langle w', \cY_{r+s,t;i}(Y(u_r,x-y)u_s,y)\cY^{\fus{}}_{\Mod{V}_t,\Mod{V}_i}(u_t,z)v_i\rangle.
	\end{split}
	\end{equation}
	Again, since coefficients of $\cY^{\fus{}}_{\Mod{V}_t,\Mod{V}_i}$ span $\Mod{V}_t\fus{}\Mod{V}_i$,
	we get that for all $u_r \in \Mod{V}_r$ and $u_s\in \Mod{V}_s$ and $v^e\in \Mod{V}_t\fus{}\Mod{V}_i$,
	\begin{align}
	\cY_{r,s+t;i}(u_r,x)\cY_{s,t;i}(u_s,y)v^e = \cY_{r+s,t;i}(Y(u_r,x-y)u_s,z)v^e.
	\label{eqn:YYassoc}
	\end{align}
	\item Now we consider $\Mod{V}_{-i}\fus{}\Mod{V}_i$.
	Since the $Y$ map for the vertex operator algebra $\Mod{V}$ furnishes a $\Mod{V}_0$-intertwining operator
	of type $\Mod{V}_0\choose \Mod{V}_{-i}\, \Mod{V}_i$,
	by universal property of tensor products, there exists a morphism from $\Mod{V}_{-i}\fus{}\Mod{V}_i$ to $\Mod{V}_0$.
	Since the coefficients of $Y(u_{-i},x)u_i$ for $u_{-i}\in \Mod{V}_{-i}$ and $u_i\in \Mod{V}_i$ span $\Mod{V}_0$,
	$\Mod{V}_{-i}\fus{}\Mod{V}_i$ in fact surjects onto $\Mod{V}_0$. Since the latter is simple, proving simplicity of $\Mod{V}_{-i}\fus{}\Mod{V}_i$
	will give us that $\Mod{V}_{-i}\fus{}\Mod{V}_i\cong \Mod{V}_0$.
	\item Let $\Mod{B}$ be a non-zero $\Mod{V}_0$ submodule of $\Mod{V}_{-i}\fus{}\Mod{V}_i$ ($\Mod{V}_{-i}\fus{}\Mod{V}_i$ 
	is non-zero since it surjects onto $\Mod{V}_0$) and let
	\begin{align}\nonumber
	\Mod{E} = \mathrm{Span}\{\mathrm{Coefficients\,\, of\,\, } \cY_{i,-i ; i}(u_i,x)b\, | \, u_i\in \Mod{V}_i, b \in \Mod{B} \}.
	\label{eqn:defE}
	\end{align}
	Since the type of $\cY_{i,-i;i}$ is $\Mod{V}_0\fus{}\Mod{V}_i \cong \Mod{V}_i \choose \Mod{V}_i \,\,\, \Mod{V}_{-i}\fus{}\Mod{V}_i$, $\Mod{E}$ can be regarded as a $\Mod{V}_0$-submodule of $\Mod{V}_i$.
	\item $\Mod{E}$ is in fact a non-zero submodule of $\Mod{V}_i$. Indeed, if it were $0$, then, the left-hand side of
	\eqref{eqn:YYassoc} with $r=t=-i, s=i$ would be 0 and hence we would get that
	$\cY_{0,-i;i}(Y(u_{-i},x-y)u_i,y)b$ is $0$ for all $u_{-i}\in \Mod{V}_{-i}$, $u_i\in \Mod{V}_i$ and $b\in \Mod{B}$.
	However,
	in this case, coefficients of $Y(u_{-i},x-y)u_i$ span $\Mod{V}_0$ and
	$\cY_{0,-i;i}(u_0,x)b$ for $u_0\in \Mod{V}_0$  is equal to $Y_{\Mod{B}}(u_0,x)b$ where $Y_{\Mod{B}}$ is the module
	map for the $\Mod{V}_0$-module $\Mod{B}$. Since the coefficients of the module map span the entire module,
	we have a contradiction.
	\item Since $0\subsetneq \Mod{E} \subset \Mod{V}_i$ and $\Mod{V}_i$ is simple, $\Mod{E}=\Mod{V}_i$.
	\item Using $\Mod{E}=\Mod{V}_i$ and using equation \eqref{eqn:YequalsYunivsometimes},
	\begin{align*}
	\text{Span}&\{\text{Coefficients of } \cY_{-i,0;i}(v_{-i},x)\cY_{i,-i;i}(v_i,y)b\,|\, v_{-i}\in \Mod{V}_{-i}, v_i\in \Mod{V}_i, b\in \Mod{B}\}\\
	&= \text{Span}\{\text{Coefficients of }\cY_{-i,0;i}(v_{-i},x)\epsilon\,|\, v_{-i}\in \Mod{V}_{-i}, \epsilon\in \Mod{E}\}\\
	&= \text{Span}\{\text{Coefficients of }\cY_{-i,0;i}(v_{-i},x)v_i\,|\, v_{-i}\in \Mod{V}_{-i}, v_i\in \Mod{V}_i\}\\
	&= \text{Span}\{\text{Coefficients of }\cY^{\fus{}}_{\Mod{V}_{-i},\Mod{V}_i}(v_{-i},x)v_i\,|\, v_{-i}\in \Mod{V}_{-i}, v_i\in \Mod{V}_i\}\\
	&= \Mod{V}_{-i}\fus{}\Mod{V}_i
	\end{align*}
	However, using the right-hand side of equation \eqref{eqn:YYassoc},
	\begin{align*}
	\text{Span}&\{\text{Coefficients of } \cY_{-i,0;i}(v_{-i},x)\cY_{i,-i;i}(v_i,y)b\,|\, v_{-i}\in \Mod{V}_{-i}, v_i\in \Mod{V}_i, b\in \Mod{B}\}\\
	&=\text{Span}\{\text{Coefficients of } \cY_{0,-i;i}(Y(v_{-i},x-y)v_{i},y)b\,|\, v_{-i}\in \Mod{V}_{-i}, v_i\in \Mod{V}_i, b\in \Mod{B}\}\\
	&=\text{Span}\{\text{Coefficients of } \cY_{0,-i;i}(v_0,x-y)b\,|\, v_{0}\in \Mod{V}_0, b\in \Mod{B}\}\\
	&=\text{Span}\{\text{Coefficients of } Y_B(v_0,x-y)b\,|\, v_{0}\in \Mod{V}_0, b\in \Mod{B}\}\\
	&= \Mod{B}.
	\end{align*}
	This shows that $\Mod{V}_{-i}\fus{}\Mod{V}_i=\Mod{B}$ for any non-zero submodule $\Mod{B}$ of $\Mod{V}_{-i}\fus{}\Mod{V}_i$.
	We conclude that $\Mod{V}_{-i}\fus{}\Mod{V}_i$ is simple. Hence, it equals $\Mod{V}_0$.
\end{enumerate}

\flushleft
\singlespacing


\begin{thebibliography}{DLMII}

\bibitem[Ab]{Ab-SF} T.\ Abe, A $\ZZ_2$-orbifold of the symplectic fermionic vertex operator superalgebra, Math.\ Z.\ 255 (2007), 755--792.

\bibitem[ADL]{ADL} T. Abe, C. Dong and H. Li, \textit{Fusion rules for the vertex operator algebra M(1) and $V^+_L$}, Comm. Math. Phys. 253 (2005) 171--219. 

\bibitem[Ad1]{AdaRepN2} D.~Adamovi\'{c}, \textit{Representations of the $N = 2$ superconformal vertex algebra}, Int.\ Math.\ Res.\ Not.\ (1999), no.\ 2, 61--79.

\bibitem[Ad2]{AdaVoaN2} D.~Adamovi\'{c}, \textit{Vertex algebra approach to fusion rules for $N=2$ superconformal minimal models}, J.\ Algebra 239 (2001), 549--572.

\bibitem[Ad3]{AdaCon05} D.~Adamovi\'{c}, \textit{A construction of admissible $A_1^{\left(1\right)}$-modules of level $-\frac{4}{3}$}, J. Pure Appl. Alg. 196 (2005) 119--134.  

\bibitem[AM1]{AM1} D. Adamovi\'c and A. Milas, \textit{Vertex operator algebras associated to modular invariant representations of $A^{(1)}_1$}, Math. Res. Lett. 2 (1995) 563--575.

\bibitem[AM2]{AM3} D. Adamovi\'{c} and  A. Milas, \textit{On the triplet vertex algebra $W(p)$}, Adv. Math. 217 (2008) 2664--2699.  

\bibitem[AM3]{AdaLat09} D.~Adamovi\'{c} and A.~Milas, \textit{Lattice construction of logarithmic modules for certain vertex algebras}, Selecta Math. New Ser. 15 (2009) 535--561. 

\bibitem[AM4]{AM4} D. Adamovi\'{c} and A. Milas, \textit{Some applications and constructions of intertwining operators in LCFT}.  \texttt{arXiv:1605.05561 [math.QA]}.

\bibitem[AP]{AP} D. Adamovi\'c and O. Per\v{s}e, \textit{Fusion rules and complete reducibility of certain modules for affine Lie algebras}, J. Algebra Appl. 13 (2014) 1350062.  

\bibitem[AC]{AC} C. Alfes and T. Creutzig, \textit{The mock modular data of a family of superalgebras}, Proc. Amer. Math. Soc. 142 (2014) 2265--2280. 

\bibitem[Ar1]{AraRat15} T. Arakawa, \textit{Rationality of admissible affine vertex algebras in the category $\mathcal{O}$}, Duke Math. J. 165 (2016) 67--93.  

\bibitem[Ar2]{Ar2} T. Arakawa, \textit{Rationality of Bershadsky-Polyalov vertex algebras}, Commun. Math. Phys. 323 (2013), no. 2, 627--633.

\bibitem[Ar3]{Ar3} T. Arakawa, \textit{Rationality of W-algebras: principal nilpotent cases}, Ann. Math. 182(2):565--694, 2015.

\bibitem[ACL]{ACL} T. Arakawa, T. Creutzig, and A. Linshaw, \textit{Cosets of Bershadsky-Polyakov algebras and rational $\cW$-algebras of type $A$}. \texttt{arXiv:1511.09143 [math.RT]}.

\bibitem[ACKL]{ACKL} T. Arakawa, T. Creutzig, K. Kawasetsu and A. Linshaw, \textit{Orbifolds and cosets of minimal $\cW$-algebras},  \texttt{arXiv:1610.09348 [math.RT]}.

\bibitem[AFR]{AraWei16} T. Arakawa, V. Futorny and L. Ramirez, \textit{Weight representations of admissible affine vertex algebras}.  \texttt{arXiv:1605.07580 [math.RT]}.

\bibitem[ALY]{ALY} T. Arakawa, C. H. Lam and H. Yamada, \textit{Zhu's algebra, $C_2$-algebra and $C_2$-cofiniteness of parafermion vertex operator algebras}, Adv. Math. 264 (2014) 261--295. 

\bibitem[ArMo]{AM} T. Arakawa and A. Molev, \textit{Explicit generators in rectangular affine W-algebras of type $A$}.  \texttt{arXiv:1403.1017 [math.RT]}.

\bibitem[ACR]{ACR} J. Auger, T. Creutzig and D. Ridout, \textit{Modularity of logarithmic parafermion vertex algebras}, in preparation.

\bibitem[BC]{BC} A. Babichenko and T. Creutzig, \textit{Harmonic analysis and free field realization of the Takiff supergroup of $GL(1\vert1)$}, SIGMA 11 (2015) 067.  

\bibitem[BR]{BR} A. Babichenko and D. Ridout, \textit{Takiff superalgebras and conformal field theory},  J.\ Phys.\ A46 (2013) 125204.  

\bibitem[Ben]{Ben} D. Benson, \textit{Representations and Cohomology I}, Cambridge Studies in Advanced Mathematics, Vol.\ 30, Cambridge University Press, Cambridge, 1991.

\bibitem[Ber]{Ber} M. Bershadsky, \textit{Conformal field theories via Hamiltonian reduction},  Commun.Math.Phys. 139 (1991) 71--82.

\bibitem[Ca]{C} S. Carnahan, \emph{Building vertex algebras from parts}. \texttt{arXiv:1408.5215v3 [math.RT]}.

\bibitem[CaM]{CarMi} S.\ Carnahan and M.\ Miyamoto, \emph{Regularity of fixed-point vertex operator algebras}. \texttt{arXiv: 1603.05645 [math.RT]}.

\bibitem[CG1]{CG} T. Creutzig and T. Gannon, \textit{The theory of $C_2$-cofinite VOAs}, in preparation.

\bibitem[CG2]{CG2} T. Creutzig and T. Gannon, \textit{Logarithmic conformal field theory, log-modular tensor categories and modular forms}. \texttt{arXiv:1605.04630 [math.QA]}.

\bibitem[CKL]{CKL} T. Creutzig, S. Kanade and A. R. Linshaw, \emph{Simple current extensions beyond semi-simplicity}. \texttt{arXiv:1511.08754 [math.QA]}.

\bibitem[CKM]{CKM} T. Creutzig, S. Kanade and R. McRae, \textit{Tensor categories for vertex operator algebra extensions}, in preparation.

\bibitem[CL1]{CLI} T. Creutzig and A. Linshaw, \textit{Cosets of affine vertex algebras inside larger structures}.
\texttt{arXiv:1407.8512 [math.RT]}.

\bibitem[CL2]{CLII} T. Creutzig and A. Linshaw, \textit{The super $\cW_{1+\infty}$ algebra with integral central charge}, Trans. Amer. Math. Soc. 367 (2015) 5521--5551.  

\bibitem[CL3]{CLIII} T. Creutzig and A. Linshaw, \textit{Orbifolds of symplectic fermion algebras}, Trans. Amer. Math. Soc. 369,
No. 1 (2017), 467-494. 

\bibitem[CM1]{CM1} T. Creutzig and A. Milas, \textit{False theta functions and the Verlinde formula}, Adv. Math., 262 (2014) 520--554.  

\bibitem[CM2]{CM2} T. Creutzig and A. Milas, \textit{Higher rank partial and false theta functions and representation theory}, \texttt{arXiv:1607.08563  [math.QA]}

\bibitem[CMR]{CMR} T. Creutzig, A. Milas and M. Rupert, \textit{Logarithmic link invariants of $\overline{U}^H_q(\alg{sl}_2)$ and asymptotic dimensions of singlet vertex algebras}.  \texttt{arXiv:1605.05634 [math.QA]}.

\bibitem[CR1]{CR3} T. Creutzig and D. Ridout, \textit{Relating the archetypes of logarithmic conformal field theory}, Nucl. Phys. B872 (2013) 348--391.  

\bibitem[CR2]{CR4} T. Creutzig and D. Ridout, \textit{W-algebras extending $\hat{gl}(1|1)$}, Springer Proc. Math. Stat. 36 (2013) 349--367.  

\bibitem[CR3]{CR1} T. Creutzig and D. Ridout, \textit{Modular data and Verlinde formulae for fractional level WZW models I}, Nucl.\ Phys.\ B865 (2012) 83--114. 

\bibitem[CR4]{CreLog13} T. Creutzig and D. Ridout, \textit{Logarithmic conformal field theory:  beyond an introduction}, J.\ Phys. A46 (2013) 494006.  

\bibitem[CR5]{CR2} T. Creutzig and D. Ridout, \textit{Modular Data and Verlinde Formulae for Fractional Level WZW Models II}, Nucl.\ Phys.\ B875 (2013) 423--458.  

\bibitem[CRW]{CRW} T. Creutzig, D. Ridout and S. Wood, \textit{Coset constructions of logarithmic (1,p) models}, Lett. Math. Phys. 104 (2014) 553--583.  

\bibitem[CRo]{CRo} T. Creutzig and P. Roenne \textit{The $GL(1\vert1)$-symplectic fermion correspondence}, Nucl. Phys. B815 (2009) 95--124.  

\bibitem[DL]{DL} C.\ Dong and J.\ Lepowsky, \emph{Generalized Vertex Algebras and Relative Vertex Operators}, Progress in Mathematics, Vol.\ 112, Birkh\"{a}user, Boston, 1993.

\bibitem[DLY]{DLY} C. Dong, C. H. Lam and H. Yamada, \textit{W-algebras related to parafermion algebras}, J. Algebra 322 (2009) 2366--2403.  

\bibitem[DLWY]{DLWY} C. Dong, C. H. Lam, Q. Wang and H. Yamada, \textit{The structure of parafermion vertex operator algebras}, J. Algebra 323 (2010) 371--381. 

\bibitem[DLM1]{DLM} C. Dong, H. Li and G. Mason, \textit{Compact automorphism groups of vertex operator algebras}, Int. Math. Res. Not. 1996 (1996) 913--921.  

\bibitem[DLM2]{DLM3} C. Dong, H. Li and G. Mason, \emph{Simple currents and extensions of vertex operator algebras}, Comm. Math. Phys. 180 (1996) 671--707.  

\bibitem[DLM3]{DLM-nthzhu} C. Dong, H. Li and G. Mason,  \emph{Vertex operator algebras and associative algebras}, J. Algebra 206 (1998) 67--96.  

\bibitem[DPYZ]{DPYZ} P.\ Di Vecchia, J.\ L.\ Petersen, M.\ Yu and H.\ B.\ Zheng,
\textit{Explicit construction of unitary representations of the $N = 2$ superconformal algebra}, Phys.\ Lett.\ B 174 (1986) 280--284.

\bibitem[DR]{DR} C. Dong and L. Ren, \textit{Representations of the parafermion vertex operator algebras}. \texttt{arxiv:1411.6085 [math.QA]}.

\bibitem[DW1]{DW} C. Dong and Q. Wang, \textit{On $C_2$-cofiniteness of parafermion vertex operator algebras}, J. Algebra 328 (2011) 420--431.  

\bibitem[DW2]{DW2} C. Dong and Q. Wang, \textit{Quantum dimensions and fusion rules for parafermion vertex operator algebras}, Proc. Amer. Math. Soc. 144 (2016) 1483--1492. 

\bibitem[EGNO]{EGNO} P. Etingof, S. Gelaki, D. Nikshych and V. Ostrik,
\emph{Tensor categories},
Mathematical Surveys and Monographs, 205,
American Mathematical Society, Providence, RI, 2015.

\bibitem[FZ]{FZ} A. B. Zamolodchikov and V. A. Fateev, \textit{Nonlocal (parafermion) currents in two-dimensional conformal quantum field theory and self-dual critical points in ZN -symmetric statistical systems}, Soviet Phys. JETP 62, 215--225 (1985).

\bibitem[FS]{FS} B. Feigin and A. Semikhatov, \textit{$\cW_n^{(2)}$ algebras}, Nucl. Phys. B698 (2004) 409--449.  

\bibitem[FHL]{FHL} I. Frenkel, Y. Huang and J. Lepowsky, \textit{On axiomatic approaches to vertex operator algebras and modules}, Mem. Amer. Math. Soc. 104 (1993).

\bibitem[FRS]{FRS} J. Fuchs, I. Runkel, C. Schweigert, \emph{TFT construction of RCFT correlators III: simple currents}, Nucl.\ Phys.\ B694 (2004) 277--353.  

\bibitem[Ga]{GabFus01} M.~Gaberdiel, \textit{Fusion rules and logarithmic representations of a WZW model at fractional level}, Nucl. Phys. B618 (2001) 407--436.  

\bibitem[GKP]{GKP} N. Geer, J. Kujawa and B. Patureau-Mirand, \textit{Ambidextrous objects and trace functions for nonsemisimple categories}. Proc. Amer. Math. Soc. 141 (2013) 2963--2978. 

\bibitem[Gen]{GenScr16} N. Genra, \textit{Screening operators for W-algebras}.  \texttt{arXiv:1606.00966 [math.RT]}.

\bibitem[Gep]{GepNew87} D.~Gepner, \textit{New conformal field theories associated with Lie algebras and their partition functions}, Nucl. Phys. B290 (1987) 10--24.

\bibitem[Ho]{H} R. E. Howe, \textit{Remarks on classical invariant theory}, Trans. Am. Math. Soc. 313 (2) 539--570.

\bibitem[Hu1]{H-gr} Y.-Z. Huang, \emph{Generalized rationality and a Jacobi	identity for intertwining operator algebras}, Selecta Math. 6 (2000) 225--267.  

\bibitem[Hu2]{H-rigidity} Y.-Z. Huang, \emph{Rigidity and modularity of vertex tensor categories}, Comm.\ Contemp.\ Math. 10 (2008) 871--911.  

\bibitem[Hu3]{H-projcov} Y.-Z. Huang, \emph{Cofiniteness conditions, projective	covers and the logarithmic tensor product theory}, J.\ Pure Appl.\ Alg. 213 (2009) 458--475.  

\bibitem[HKL]{HKL} Y.-Z. Huang, A. Kirillov Jr.\ and J. Lepowsky, \textit{Braided tensor categories and extensions of vertex operator algebras}, Comm. Math. Phys. 337 (2015) 1143--1159.  

\bibitem[HLZ]{HLZ} Y.-Z. Huang, J.\ Lepowsky and L.\ Zhang,
 \emph{Logarithmic tensor product theory for generalized modules for a
   conformal vertex algebra, I -- VIII}.
    \texttt{arXiv:1012.4193v7 [math.QA]},
    \texttt{arXiv:1012.4196v2 [math.QA]},
    \texttt{arXiv:1012.4197v2 [math.QA]},
    \texttt{arXiv:1012.4198v2 [math.QA]},
    \texttt{arXiv:1012.4199v3 [math.QA]},
    \texttt{arXiv:1012.4202v3 [math.QA]},
    \texttt{arXiv:1110.1929v3 [math.QA]},
    \texttt{arXiv:1110.1931v2 [math.QA]}.

\bibitem[KR]{KR} V. Kac and A. Radul, \textit{Representation theory of the vertex algebra $\cW_{1+\infty}$}, Transform. Groups, 1 (1996) 41--70.  

\bibitem[KW1]{KacMod88} V. Kac and M. Wakimoto, \textit{Modular invariant representations of infinite-dimensional Lie algebras and superalgebras}, Proc. Nat. Acad. Sci. USA 85 (1988) 4956--4960.

\bibitem[KW2]{KW} V. Kac and M. Wakimoto, \textit{Quantum reduction and representation theory of superconformal algebras}, Adv. Math. 185 (2004) 400--458.  

\bibitem[KW3]{KW2} V. Kac and M. Wakimoto, \textit{Integrable highest weight modules over affine superalgebras and Appell's function}. Comm. Math. Phys. 215 (2001) 631--682.  

\bibitem[Ka]{Ka} H.G.  Kausch,  \textit{Extended  conformal  algebras  generated  by  a multiplet  of  primary  fields},  Phys.  Lett.  B259  (1991)  448.

\bibitem[KrM]{KrMi} M.\ Krauel and M.\ Miyamoto, \textit{A modular invariance property of multivariable trace functions for regular vertex operator algebras.},
J.\ Algebra.\ Vol.\ 444 (2015),  124--142 . 

\bibitem[Lam]{Lam} C.\ Lam, \emph{Induced modules for orbifold vertex operator algebras}, J.\ Math.\ Soc.\ Japan 53 (2001) 541--557.  

\bibitem[LL]{LL} J. Lepowsky and H. Li, \emph{Introduction to Vertex Operator Algebras and Their Representations},
Progress in Mathematics, Vol.\ 227, Birkh{\"a}user, Boston, 2003.

\bibitem[LP]{LP} J. Lepowsky and M. Primc, \textit{Structure of the standard modules for the affine Lie algebra $A^{(1)}_1$}, Contemporary Mathematics, Vol.\ 46, American Mathematical Society, Providence, 1985.

\bibitem[LW1]{LW1} J. Lepowsky and R. L. Wilson, \textit{A new family of algebras underlying the Rogers-Ramanujan identities and generalizations}, Proc. Natl. Acad. Sci. USA 78 (1981) 7254--7258.

\bibitem[LW2]{LW2} J. Lepowsky and R. L. Wilson, \emph{A Lie theoretic interpretation and proof of the Rogers-Ramanujan identities},
Adv.\ Math.\ 45 (1982) 21--72.

\bibitem[LW3]{LW3} J. Lepowsky and R. L. Wilson, \textit{The structure of standard modules I: Universal algebras and the Rogers-Ramanujan identities},  Invent. Math. 77 (1984), no. 2, 199--290.

\bibitem[LW4]{LW4} J. Lepowsky and R. L. Wilson, \textit{The structure of standard modules II: The case $A^{(1)}_1$, principal gradation}, 
Invent. Math. 79 (1985), no. 3, 417--442.

\bibitem[Li]{Li} H. Li, \textit{On abelian coset generalized vertex algebras}, Commun. Contemp. Math. 3 (2001) 287--340.

\bibitem[LX]{LX} H. Li and X. Xu, \emph{A characterization of vertex algebras associated to even lattices}, J.\ Algebra 173 (1995) 253--270.

\bibitem[Lin]{Lin} X. Lin, \emph{Mirror extensions of rational vertex operator algebras}. \texttt{arXiv:1411.6742 [math.QA]}.

\bibitem[L1]{L} A. Linshaw, \textit{Invariant chiral differential operators and the $\cW_3$ algebra}, J. Pure Appl. Algebra 213 (2009) 632--648.  

\bibitem[L2]{L2} A. Linshaw, \textit{A Hilbert theorem for vertex algebras}, Transform. Groups 15 (2010), 427--448.  

\bibitem[L3]{L1} A. Linshaw, \textit{Invariant theory and the $W_{1+\infty}$ algebra with negative integral central charge},  J. Eur. Math. Soc. 13 (2011) 1737--1768.  

\bibitem[L4]{L3} A. Linshaw, \textit{Invariant theory and the Heisenberg vertex algebra}, Int. Math. Res. Not. 17 (2012) 4014--4050.  

\bibitem[L5]{L4} A. Linshaw, \textit{Invariant subalgebras of affine vertex algebras}, Adv. Math. 234 (2013) 61--84.  

\bibitem[L6]{L6} A. Linshaw, \textit{The structure of the Kac-Wang-Yan algebra}, Comm. Math. Phys. 345, No. 2, (2016), 545-585.

\bibitem[Mi]{M} A. Milas, \textit{Logarithmic intertwining operators and vertex operators}, Comm. Math. Phys. 277 (2008) 497--529.  

\bibitem[M1]{M3} M. Miyamoto, \textit{Modular invariance of vertex operator algebras satisfying $C_2$-cofiniteness}, Duke Math. J. 122 (2004) 51--91.  

\bibitem[M2]{M2} M. Miyamoto, \textit{Flatness and Semi-Rigidity of Vertex Operator Algebras}. \texttt{arXiv:1104.4675 [math.QA]}.

\bibitem[M3]{M-c1} M.\ Miyamoto, \emph{$C_1$-cofiniteness and fusion products of vertex operator algebras},
in Conformal Field Theories and Tensor Categories, ed.\ C.\ Bai, J.\ Fuchs, Y.-Z.\ Huang, L.\ Kong, I.\ Runkel and C.\ Schweigert,
271--279, Springer-Verlag, Berlin, 2014.  

\bibitem[M4]{M1} M. Miyamoto, \textit{$C_2$-cofiniteness of cyclic orbifold models},
 Comm.\ Math.\ Phys.\ 335 (2015) 1279--1286. 

\bibitem[Pol]{Pol} A. Polyakov, \textit{Gauge transformations and diffeomorphisms}, Int. J. Mod. Phys. A5 (1990) 833--842.

\bibitem[R1]{R1} D. Ridout, \textit{$\alg{sl}(2)_{-1/2}$: a case study}, Nucl. Phys. B814 (2009) 485--521.  

\bibitem[R2]{R2} D. Ridout, \textit{$\alg{sl}(2)_{-1/2}$ and the triplet model}, Nucl. Phys. B835 (2010) 314--342.  

\bibitem[R3]{R3} D. Ridout, \textit{Fusion in fractional level $\alg{sl}(2)$-theories with $k = -1/2$}, Nucl. Phys. B848 (2011) 216--250.  

\bibitem[RW1]{RidMod13} D.~Ridout and S.~Wood, \textit{Modular transformations and Verlinde formulae for logarithmic $\left( p_+, p_- \right)$-models}, Nucl. Phys. B880 (2014) 175--202.  

\bibitem[RW2]{RidVer14} D.~Ridout and S.~Wood, \textit{The Verlinde formula in logarithmic CFT}, J. Phys. Conf. Ser. 597 (2015) 012065.  

\bibitem[RW3]{RidRel15} D.~Ridout and S.~Wood, \textit{Relaxed singular vectors, Jack symmetric functions and fractional level $\widehat{\mathfrak{sl}} \left( 2 \right)$ models}, Nucl. Phys. B894 (2015) 621--664.  

\bibitem[Ru]{Ru} I. Runkel, \textit{A braided monoidal category for free super-bosons}, J. Math. Phys. 55 (2014) 041702.  

\bibitem[S]{SatoConformal} R.\ Sato, \textit{Equivalences between weight modules via $N=2$ coset constructions}. 
\texttt{arXiv:1605.02343 [math.QA]}.

\bibitem[SY]{SY} A. N. Schellekens, S. Yankielowicz, \emph{Extended chiral algebras and modular invariant partition functions}, Nucl. Phys. B327 (1989) 673--703.

\bibitem[TW1]{TW} A. Tsuchiya and S. Wood, \textit{The tensor structure on the representation category of the $W_p$ triplet algebra}, J. Phys. A 46 (2013) 445203.  

\bibitem[TW2]{TsuExt13} A.~Tsuchiya and S.~Wood, \textit{On the extended $W$-algebra of type $sl_2$ at positive rational level}, Int. Math. Res. Not. 2015 (2015) 5357--5435.  

\bibitem[Wa]{Wa} W. Wang, \textit{$\cW_{1+\infty}$ algebra, $\cW_3$ algebra, and Friedan-Martinec-Shenker bosonization},
Commun. Math. Phys. 195 (1998) 95-111.  

\bibitem[We]{W} H. Weyl, \textit{The Classical Groups: Their Invariants and Representations}, Princeton University Press, 1946.

\end{thebibliography}
\end{document}